\newtheorem{lemma}{Lemma}[section]
\newtheorem{theorem}{Theorem}[section]
\theoremstyle{definition}
\newtheorem{definition}{Definition}[section]
\theoremstyle{remark}
\newtheorem{remark}{Remark}[section]
\theoremstyle{example}
\numberwithin{figure}{section}
\numberwithin{equation}{section}
\numberwithin{table}{section}
\def\dl{\displaystyle}
\newcommand{\p}{\partial}
\newcommand{\norm}[1]{\left\Vert#1\right\Vert}
\newcommand{\dd}{\mathrm{d}}
\newcommand{\di}{\mathrm{div}}
\newcommand{\grad}{\mathrm{grad}}
\newcommand{\Rmnum}[1]{\expandafter\@slowromancap\romannumeral#1@}
\begin{document}
\title[Stationary Subsonic  Euler Flows  with
Mass-Additions]{Stability of Stationary Subsonic Compressible Euler Flows with
Mass-Additions in Two-Dimensional Straight Ducts}

\author{Junlei Gao}
\author{Hairong Yuan}
\address{
Department of Mathematics, Zhejiang Normal University, Jinhua 321004, China} \email{gaojunlei123math@zjnu.edu.cn}

\address{%Corresponding author.
School of Mathematical Sciences and Shanghai Key Laboratory of Pure Mathematics and Mathematical Practice, East China Normal University, Shanghai
200241, China} \email{hryuan@math.ecnu.edu.cn}

\keywords{Subsonic flow; compressible Euler equations; mass addition;  nonlocal elliptic problems; anisotropic H\"{o}lder spaces}

\subjclass[2010]{35F60, 35M32, 76G25, 76N10}

\date{\today}

\begin{abstract}
We show existence, uniqueness and stability for a family of stationary subsonic compressible Euler flows with mass-additions in two-dimensional rectilinear ducts, subjected to suitable time-independent multi-dimensional boundary conditions at the entrances and exits.The stationary subsonic Euler equations consist a quasi-linear system of elliptic-hyperbolic composite-mixed type, while addition-of-mass destructs the usual methods based upon conservation of mass and Lagrangian coordinates to separate the elliptical and hyperbolic modes of the system. We establish a new decomposition and nonlinear iteration scheme to overcome this major difficulty. It reveals that mass-additions introduce very strong interactions in the elliptic and hyperbolic modes, and lead to a class of second-order elliptic equations with multiple integral nonlocal terms. The linearized problem is solved by studying algebraicand analytical properties of infinite weakly coupled boundary-value problems of ordinary differential equations, each with multiple nonlocal terms, after applications of Fourier analysis methods.
\end{abstract}

\maketitle
\tableofcontents
%%%----------------------------------------------------------------------------
\section{Introduction}\label{sec1}
We  are concerned with subsonic compressible Euler flows in a two-dimensional straight duct with constant cross-sections, while mass is added with a rate proportional to the density of the flow.  An example of such case is ejecting of petrol to the flow in the combustion chamber of a subsonic ramjet \cite{price}. %The purpose of this study is to further explore the effect of mass addition about the stability of transonic shocks. This problem comes from the study of transonic shocks stability in aerodynamics, in which transonic shocks is widely used in aviation, aerospace, fluid machinery and other industries. Our boundary conditions at outlet of the ducts are physically reasonable according to the given receiving pressure proposed by Courant-Friedrichs in \cite{Courant-Friedrichs-1948}. It is worth noting that the transonic shocks is unstable under the disturbance of supersonic incoming flow at the inlet and receiving pressure at the outlet of the ducts (see \cite{Chen-Feldman-2003,Xin-Yin-2005,Chen-2005-1,Chen-2006,Yuan-2006,Fang-Liu-Yuan-2013,Chen-Yuan-2008}). However, the transonic shocks observed from physical experiments is stable (see photo 225 in \cite[p.136]{van-1982}). In addition, in engineering practice, transonic shocks is widely used in the design of subsonic combustion ramjet (see section 11.9 in \cite{Rathakrishnan-2010}). The purpose is to decelerate the supersonic incoming flow to subsonic state, so as to realize more full combustion in the inlet behind the shocks. Therefore, it is very important to understand what factors have stability for the real flow in ducts. As far as we know, the geometric, friction and heat exchange have a stability effect on the transonic shocks in ducts (see \cite{Li-Xin-Yin-2013,Fang-Xin-2020,Liu-Xu-Yuan-2016,Yuan-Zhao-2020,Gao-Liu-Yuan-2020}). In this paper, the stability of the compressible Euler flows with mass addition effect in the subsonic flows field downstream of the shocks is studied. The purpose is to further study whether the mass addition also has a stability effect on the transonic shocks in the compressible Euler flows.
To simplify the mathematical analysis, we ignore the chemical reactions, as well as changes of momentum and total energy, and consider a model problem presented by the following compressible Euler equations with mass-additions (more assumptions will be specified in the sequel):
\begin{eqnarray} \label{Euler equation1} \displaystyle
\begin{cases}
\p_t \rho+\di (\rho \mathbf{u})=\mathfrak{M}(\mathbf{x}, t), \\
\p_t (\rho \mathbf{u})+\di (\rho \mathbf{u}\otimes \mathbf{u})+\grad\, p=\mathbf{0}, \\
\p_t(\rho(e+\frac1{2}|\mathbf{u}|^2)) +\di ((\rho(e+\frac1{2}
|\mathbf{u}|^2)+p)\mathbf{u})=0.
\end{cases}
\end{eqnarray}
Here `$\di$' and `$\grad$' denote correspondingly the divergence operator and gradient operator about the spatial position ${\bf{x}}$ in the Euclidean plane $\mathbb{R}^2$, and $t \in \mathbb{R}$ represents time. % $\mathbf{x} \in \mathbb{R}^3$ represents spatial position,
%let $\mathfrak{M}(\mathbf{x})=\lambda \rho m(\mathbf{x})$,  When $\lambda > 0$\,($\lambda <0$), the fluid mass increases (decreases).
The unknowns $\rho,\, \mathbf{u},\, p,\,e$ are density of mass, velocity field, pressure and internal energy of the fluid, respectively. The function $\mathfrak{M}(\mathbf{x}, t)=\lambda \rho m(\mathbf{x})$ shows the change of mass per unit volume in unit time, with $\lambda$ a number and $m(\mathbf{x})$ a bounded positive function. Thus $\lambda m(\mathbf{x})$ denotes the mass of, say, petrol, added to unit mass of air in the duct in unit time. Notice that we also allow $\mathfrak{M} < 0$ (i.e., $\lambda<0$), which means the mass is absorbed from the flow.

The pressure $p$ is usually given by an equation of state $p=p(\rho,s)$, where $s$ is the entropy. The local sound speed is defined as $c(\rho,s)=\sqrt{\frac{\p}{\p \rho}p(\rho,s)}$. The non-dimensional number $M=|\mathbf{u}|/c$ is called Mach number. If $M>1$\,(\,$M<1$) at a space-time point, the flow is said supersonic (subsonic) there. To be specific, in this paper, we consider polytropic gases, i.e., the equation of state is $p=A(s)\rho^\gamma$, where $\gamma > 1$ is the adiabatic exponent,  $A(s)=\kappa_0\exp(s/c_\nu)$, and $\kappa_0$,  $c_\nu$ are positive constants. We then have
\begin{eqnarray*}
e=c_\nu T=\frac{1}{\gamma-1}\frac{p}{\rho}, \ \ p=R\rho T,\ \  c=\sqrt{\frac{\gamma p}{\rho}}, \ \ c_\nu=\frac{R}{\gamma-1},\ \ T=\frac{c^2}{\gamma R},
\end{eqnarray*}
where $T>0$ is temperature of the flow, and $R>0$ is the universal gas constant.

The flow is called steady or stationary, if it is time-independent. Then \eqref{Euler equation1} reads
%\begin{equation}\label{1003}
%\begin{cases}
%\di (\rho \mathbf{u})= \mathfrak{M}(\mathbf{x}),\\
%\di (\rho \mathbf{u}\otimes \mathbf{u})+\grad\, p=\mathbf{0},\\
%\di ((\rho E\mathbf{u})=0,
%\end{cases}
%\end{equation}
%Further, Thus, the steady compressible Euler flows with mass addition can be written as
\begin{eqnarray}
\varphi&\triangleq&\di (\rho \mathbf{u}\otimes \mathbf{u})+\grad\, p=\mathbf{0},\label{4101}\\
\varphi_1&\triangleq&\di (\rho \mathbf{u})-\lambda \rho m(\mathbf{x})=0,\label{4102}\\
\varphi_2&\triangleq&\di (\rho E \mathbf{u})=0,\label{4103}
\end{eqnarray}
where
\begin{equation}\label{eq14addnew}
E\triangleq\frac{1}{2}|\mathbf{u}|^2 + \frac{c^2}{\gamma - 1}
\end{equation}
is the total enthalpy, which is also called as Bernoulli constant.
We only consider the two-space-dimensional case, namely $\mathbf{x}=(x,y)\in\mathbb{R}^2$, hence the velocity $\mathbf{u}=(u,v)^{\top}$. In the following, we usually call  $u$ ($v$, resp.) the normal (tangential, resp.) velocity of the flow, since the duct with length $l$ is given by
\begin{align}\label{eq16}
\Omega=\{(x,y):~ 0<x<l,~0<y<\pi\},
\end{align}
and the flow is assumed to move from the entrance $\Sigma_0=\{(0,y):~0<y<\pi\}$ to the exit $\Sigma_l=\{(l,y):~0<y<\pi\}$, thus $u$ is the velocity component normal to each cross-section $\Sigma_x=\{(x,y):~ 0<y<\pi\}$ of the duct, while $v$ is the component tangent to it.

We now specify the boundary conditions subjected to the equations \eqref{4101}-\eqref{4103} and formulate the main problem  to study in this paper.
%The boundary conditions of $E$, $s$ and $v$ are given at the inlet of the pipe, the boundary conditions of pressure $p$ are given at the outlet, and the slip condition is given at the side of the ducts, that is,
On the entry $\Sigma_{0}$, it is required that
\begin{eqnarray}\label{4202}
E(0,y)=E_0(y),\quad s(0,y)=s_0(y), \quad v(0,y)=v_0(y),
\end{eqnarray}
and $E_0(y), s_0(y), v_0(y)$ are given functions on $[0,\pi]$.
The pressure is prescribed at the exit:
\begin{eqnarray}\label{4203}
p=p_l(y)\qquad\text{on}~ \Sigma_{l}.
\end{eqnarray}
The solid lateral walls of the duct are denoted by $W_{-}=\{(x,0):~0<x<l\}$ and $W_{+}=\{(x,\pi):~0<x<l\}$,  on which the natural slip condition is subjected:
\begin{eqnarray}\label{4204}
v \equiv 0 \qquad\text{on}~ W_{\pm}.
\end{eqnarray}

\begin{framed}
\underline{Problem  \uppercase\expandafter{\romannumeral1}}: Is it well-posed for classical solutions representing subsonic flows to the Euler equations \eqref{4101}-\eqref{4103}, with boundary conditions \eqref{4202}-\eqref{4204} in the domain $\Omega$?
\end{framed}

A classical strategy to Problem \uppercase\expandafter{\romannumeral1} is as follows. For given mass-addition $\lambda m_b(x)$, we construct a family of unidimensional subsonic classical solutions $U_b$, which are uniform on each cross-section $\Sigma_x$ but vary along $x$. They are called {\em{background solutions}}. Notice that for different $\lambda\in\mathbb{R}$, the background solutions $U_b$ are different, even if they might have the same subsonic state at the entrance $\Sigma_0$. There is also a critical length $l_*$ determined by $U_b(0), m_b(x)$ and $\lambda$ (cf. \eqref{4706}, \eqref{4707}), so that the flow is subsonic, vacuum-free, and has no stagnation point, if the duct is shorter than $l_*$. The details could be found in Section \ref{sec41}. %Next, we give boundary conditions as small perturbations of a background solution. % the total enthalpy $E_0$, entropy $s_0$ and tangential velocity $v_0$ at the entry to be small perturbations of the values $E_b(0)$, $s_b(0)$ and $v_b(0) \equiv 0$, which are known for a background solution $U_b$. Similarly, the pressure at the exit $p_l$ is close to $p_b(l)$, and the mass-addition  $m(\mathbf{x})$ is also a small perturbation from $m_b(x)$ in $\Omega$.
Then we solve Problem \uppercase\expandafter{\romannumeral1} with boundary conditions being suitable small perturbations of a background solution.

\begin{theorem}[Main theorem]\label{ThmSY}
For given uniform subsonic state $U_b(0)$ at the entrance $\Sigma_0$ and positive $C^\infty$ function $m_b(x)$ on $\{x\ge0\}$, there is a countable set $\mathcal{S}\subset\mathbb{R}$. Let  $U_b$ be a background solution determined by $U_b(0)$, $m_b(x)$ and $\lambda\in\mathbb{R}\setminus\mathcal{S}$, with $l_*$ the corresponding critical length. Suppose that the length of the duct $l$ is less than $l_*$.

Then for any $\alpha\in (0, 1)$,  there exist positive constants $\epsilon_0$ and $C$ depending only on $U_b$ and  $l,\,\alpha$, such that if the perturbations of boundary conditions and mass-addition are small:
\begin{eqnarray}\label{4206}
&&\|E_0(y) - E_b(0)\|_{C^{3,\alpha}(\Sigma_0)} + \|s_0(y) - s_b(0)\|_{C^{3,\alpha}(\Sigma_0)}+\|v_0(y)\|_{C^{3,\alpha}(\Sigma_0)} \nonumber\\
&&\quad \quad + \|p_l(y) - p_b(l)\|_{C^{3,\alpha}(\Sigma_l)} + \|m(\mathbf{x}) - m_b(x)\|_{C^{3,\alpha}(\bar{\Omega})} \leq \epsilon \leq \epsilon_0,
\end{eqnarray}
and the symmetry conditions hold:
\begin{eqnarray}\label{4207}
&\frac{\dd E_0(y)}{\dd y}=\frac{\dd s_0(y)}{\dd y} = \frac{\dd^2 v_0(y)}{\dd y^2}=\frac{\dd p_l(y)}{\dd y}=\frac{\dd^3 p_l(y)}{\dd y^3}=0, \quad &\text{for}\quad y=0,\,\pi;\nonumber\\
&\quad \p_y m(\mathbf{x})=\p^2_y m(\mathbf{x}) \equiv 0,\quad &\text{on} \quad W_- \cup W_+,
\end{eqnarray}
then the answer to Problem $\,\uppercase\expandafter{\romannumeral1}$ is affirmative:
  there is a unique subsonic flow $U$ with the following properties:
\begin{itemize}
\item[i)] $U$ satisfies the equations \eqref{4101}-\eqref{4103} and boundary conditions \eqref{4202}-\eqref{4204} point-wisely;
\item[ii)] symmetry:
\begin{eqnarray}\label{4208}
\p_y E = \p_y s = \p_y p= \p^3_y p = \p^2_y v\equiv 0\quad \text{on} \quad W_- \cup W_+;
\end{eqnarray}
\item[iii)] stability:
\begin{eqnarray}\label{4209}
\|E - E_b\|_{C^{3,\alpha}(\bar{\Omega})} + \|s - s_b\|_{C^{3,\alpha}(\bar{\Omega})} + \|p - p_b\|_{C^{3,\alpha}(\bar{\Omega})} + \|v\|_{C^{3,\alpha}(\bar{\Omega})} \leq C\epsilon.
\end{eqnarray}
\end{itemize}
\end{theorem}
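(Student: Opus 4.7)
The plan is to separate the hyperbolic and elliptic constituents of the stationary system. Using \eqref{4102} to rewrite \eqref{4103} gives the transport equation $\mathbf{u}\cdot\nabla E=-\lambda m E$, and combining the Gibbs relation $T\,\dd s=\dd h-\dd p/\rho$ with the equation of state $p=A(s)\rho^\gamma$ produces a similar transport law for $s$ with source depending on $m,\,|\mathbf{u}|$ and $T$. Thus $E$ and $s$ are carried along streamlines and satisfy inhomogeneous first-order ODEs along them; the two remaining equations — the continuity equation \eqref{4102} and one scalar consequence of the momentum equation \eqref{4101} — carry the elliptic principal part, genuinely elliptic in the subsonic regime. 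Because mass is not conserved, stream-function or Lagrangian coordinates are unavailable, so one must work in Eulerian variables and exploit that the background streamlines are horizontal.

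\textbf{Linearization, integration of hyperbolic modes, nonlocal elliptic equation.} Write $U=U_b+\tilde U$. Since $U_b$ depends only on $x$, the linearizations of the $E$- and $s$-transport equations are first-order ODEs in $x$ with data on $\Sigma_0$ given by \eqref{4202}. Integrating them from $\Sigma_0$ yields representations of the schematic form
\[
\tilde E(x,y)=K_E(x)\tilde E_0(y)+\int_0^x\mathcal{K}_E(x,\tau;y)\,[\tilde{\mathbf{u}},\tilde p](\tau,y)\,\dd\tau,
\]
and analogously for $\tilde s$, with kernels determined by $U_b$. Substituting into the linearized \eqref{4101}--\eqref{4102} and eliminating the horizontal velocity and density perturbations produces a single scalar second-order elliptic equation for a pressure-like quantity, supplemented by slip/Neumann conditions on $W_\pm\cup\Sigma_0$ (from \eqref{4204} and the remaining data in \eqref{4202}) and by a Dirichlet-type condition at $\Sigma_l$ from \eqref{4203}. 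The mass-addition terms $\lambda\rho m$ are precisely what prevent the Volterra kernels $\mathcal{K}_E,\mathcal{K}_s$ from vanishing, so the elliptic equation carries multiple integral, non-local-in-$x$ terms — the new analytic feature announced in the abstract.

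\textbf{Fourier decomposition and infinite family of ODE boundary value problems.} The symmetry assumption \eqref{4207} is exactly what is needed to reflect the data evenly/oddly across $y=0,\pi$, hence to expand $\tilde p,\tilde E,\tilde s,\tilde u$ in $\cos(ny)$ and $\tilde v$ in $\sin(ny)$. Because $U_b$ is $y$-independent, both the differential and the integral operators commute with this expansion, and the 2D problem splits into a countable family of 1D two-point boundary-value problems on $(0,l)$ of the schematic form
\[
-\bigl(a_b(x)\,\hat p_n'\bigr)'+b_b(x)\,n^2\,\hat p_n+(\mathcal{N}_n\hat p_n)(x)=\hat F_n(x),
\]
where $\mathcal{N}_n$ is a Volterra-type compact perturbation of an invertible Sturm--Liouville operator. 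The Fredholm alternative then applies mode by mode, and the countable exceptional set $\mathcal{S}$ is defined as the set of $\lambda$ for which some $n$ admits a nontrivial homogeneous solution. Uniform-in-$n$ resolvent estimates, together with the $n^2$ spectral weight smoothing in $y$, recover anisotropic $C^{3,\alpha}$ bounds after summation, yielding the linear stability estimate.

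\textbf{Nonlinear iteration, uniqueness, and the main obstacle.} With the linear solver in hand, run a Picard-type iteration: given $U^{(k)}$ in a small $C^{3,\alpha}$-ball around $U_b$, solve the above linearization of \eqref{4101}--\eqref{4103}, \eqref{4202}--\eqref{4204} about $U^{(k)}$ to obtain $U^{(k+1)}$; the bounds from the previous step combined with the smallness in \eqref{4206} make this map a contraction in a norm slightly weaker than $C^{3,\alpha}$, producing a unique fixed point, which inherits \eqref{4208} from \eqref{4207} by the same reflection argument. The \emph{main obstacle} is the uniform analysis of the nonlocal modal problems: one must prove that $\mathcal{S}$ is indeed countable and depends only on $U_b(0)$ and $m_b$ (not on the perturbations), extract a quantitative distance $\operatorname{dist}(\lambda,\mathcal{S})$, and propagate it through the iteration while retaining anisotropic Hölder estimates — this is where the algebraic and analytic study of the infinite family of Volterra-perturbed two-point boundary value problems, announced in the abstract, becomes indispensable.
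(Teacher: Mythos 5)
Your outline follows the same broad strategy as the paper (decomposition into transport modes plus a nonlocal elliptic pressure equation, Fourier analysis in $y$ reducing to a countable family of Volterra-perturbed two-point boundary value problems, a countable exceptional set $\mathcal{S}$, and a fixed-point iteration that contracts in a weaker norm). However, as written it has three genuine gaps, and you explicitly defer exactly the steps that carry the weight of the argument.

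First, the countability of $\mathcal{S}$ is asserted as ``the main obstacle'' but not resolved. The paper's mechanism is concrete: the background solution depends real-analytically on $\lambda$ (Lemma \ref{lem421}), hence so does the characteristic determinant $\chi_m(\lambda)$ of each modal problem; at $\lambda=0$ the modal system has constant coefficients and one computes $\chi_m(0)=-z^3_{m3}(l)<0$ explicitly, so each $\chi_m$ is a nontrivial real-analytic function with at most countably many zeros, and $\mathcal{S}=\cup_m\mathcal{S}_m$ is countable. Without an argument of this type you have no control over $\mathcal{S}$ at all.

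Second, your claim that ``uniform-in-$n$ resolvent estimates, together with the $n^2$ spectral weight \ldots recover anisotropic $C^{3,\alpha}$ bounds after summation'' does not work: H\"older norms are not recovered by summing Fourier coefficients with polynomial weights. The paper instead proves a Schauder a priori estimate for the nonlocal elliptic problem directly (treating the integral terms as inhomogeneities and removing the $\|\hat p\|_{C^0}$ term by a uniqueness--compactness argument), and uses the Fourier/Galerkin construction only to produce smooth approximate solutions whose convergence in $C^{k,\alpha}$ then follows from that a priori estimate. You need some substitute for this step.

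Third, your elimination produces only a scalar pressure equation and never explains how the tangential velocity $v$ is recovered with the regularity claimed in \eqref{4209}. In the paper this is a separate component of the decomposition: $\partial_y v$ is prescribed on each cross-section $\Sigma_x$, giving a two-point boundary value problem in $y$ with parameter $x$ whose compatibility condition $\int_0^\pi\partial_yv\,\dd y=0$ is enforced through an auxiliary function $k(x)$ that must be shown to vanish for the nonlinear problem. This construction only yields $v\in C^{3,\alpha}_*$ (one derivative short in $x$), and the full $C^{3,\alpha}$ bound on $v$ is recovered at the very end from the $y$-momentum equation after the fixed point is identified as an exact Euler solution. The anisotropic H\"older spaces are not optional bookkeeping here; they are what makes the iteration close with all unknowns at the same final regularity, which is the stated point of the theorem.
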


\begin{remark}
%In the case of subsonic flows, the Euler equations are of elliptic-hyperbolic mixed type. In lemma \ref{Lem41}, we will give an equivalent form to separate the hyperbolic and elliptic modes of the equations.
To avoid singularity at the corners $(\Sigma_0\cup\Sigma_l)\cap(W_- \cup W_+),$ we require that the boundary conditions and mass-addition satisfy the symmetry conditions \eqref{4207}. They are compatibility conditions for  classical solutions, since, for example,
the momentum equation $\eqref{4101}$ and slip condition $\eqref{4204}$ yield
\begin{eqnarray}\label{4205}
\p_yp = 0\quad \text{on} \quad  W_-\cup W_+.
\end{eqnarray}
It is not clear presently how to obtain classical solutions which are only continuously differentiable. \qed
\end{remark}
%It is worth emphasizing that our ultimate goal is to investigate the stability of transonic shocks for steady compressible Euler flows with mass addition. We can also deal with this problem by using the methods adopted by Liu, Xu and Yuan in \cite{Liu-Xu-Yuan-2016} or Yuan and Zhao in \cite{Yuan-Zhao-2020}. But this decomposition method has a disadvantage that it can't guarantee that all physical quantities have the same regularity. Therefore, we introduce a new decomposition method in lemma \ref{Lem41}, which can ensure that all physical quantities have the same regularity. Unfortunately, for the elliptic-hyperbolic composite problem with free boundary, we can not find an effective method. This will be our further research topic.

The major obstruction to prove Theorem  \ref{ThmSY} is the elliptic-hyperbolic composite-mixed type of the steady Euler system  \eqref{4101}-\eqref{4103} for subsonic flows, which means it has a real eigenvalue with multiplicity two and a pair of conjugate complex eigenvalues, see \cite[p.1355]{Yuan-2006}. For the two-space-dimensional case, thanks to the conservation of mass,  Lagrangian coordinates or stream function formulation could be introduced. Then by characteristic decompositions, one may separate the elliptic modes indicated by the complex eigenvalues from the hyperbolic or transport modes associated with the real eigenvalue, which is  linearly degenerate. This procedure is established by S. Chen \cite{Chen-2006} in 2006 and widely used to study subsonic flows and transonic shocks in ducts, provided conservation of mass is valid, see for example, \cite{chen-deng-xiang2012,duduan2016,fang-gao-2021,Fang-Xin-2020,Gao-Liu-Yuan-2020,Liu-2008,liuyuan2008,Xie-Xin-2007,Xie-Xin-2010,Yuan-2006} and references therein.  Obviously it fails for the present case of mass-additions. The other approach independent of conservation of mass and works for three-space-dimensional case is proposed in by Chen and Yuan in \cite{Chen-Yuan-2008}. A general decomposition lemma and iteration scheme were established later by Liu, Xu, and Yuan \cite{Liu-Xu-Yuan-2016}, and then adopted in \cite{Yuan-Zhao-2021,Yuan-Zhao-2020} to study subsonic flows and transonic shocks in ducts with frictions taken into account.  This approach could be used to study mass-additions considered in this paper. However, a disadvantage is that the  obtained physical quantities do not have the same regularity: pressure is one order smoother than other quantities such as  density, entropy, and velocity. That is why we pursue a new decomposition lemma and iteration scheme in this paper, to show that each unknown of the flow field actually has the same regularity. This is the main contribution of the work. We find that the boundary value problem for the Euler system formulated in Problem I could be decoupled to Cauchy problem for two transport equations of entropy and total enthalpy, a mixed boundary value problem for a second-order (elliptic) equation of pressure, and two-point boundary value problems for ordinary differential equations of tangential velocity on each cross-section of the duct.  For the last problem,  it is fascinating to see that the solvability conditions, necessary for the linearized problems, are fulfilled automatically for the nonlinear problem. This reveals a beautiful intrinsic structure hiding in the Euler system.

The other discovery is the very strong interactions induced by mass-addition in the elliptic and hyperbolic modes, which can be seen from the derivation of a second-order elliptic equation with up to six integral-type nonlocal terms, to design a contractive nonlinear iteration scheme, cf. Remark \ref{rmkp2}. It is more challenging than the nonlocal problems appeared in the previous studies of subsonic flows and transonic shocks in ducts with geometric effects, or friction, or heat-exchanges  \cite{Gao-Liu-Yuan-2020,liuyuan2008,Yuan-Zhao-2021,Yuan-Zhao-2020}. This difficulty arises because we are studying  perturbations of {\it large solutions} of the Euler equations.  We adapt the framework of \cite{Liu-Xu-Yuan-2016} to solve the linearized nonlocal elliptic problems, which combines Fourier analysis, linear algebra, as well as theory of real analytic functions. To guarantee uniqueness of solutions, it is found that a sufficient condition is to exclude certain background solutions determined by  numbers $\lambda$ in a countable set $\mathcal{S}\subset\mathbb{R}$ as indicated in Theorem \ref{ThmSY}, cf. Lemma \ref{Lem44}. To obtain a priori estimates,  we introduce a class of anisotropic H\"{o}lder spaces to solve the transport equations and ordinary differential equations with parameters, which incorporates the Schauder theory of elliptic equations, and might be of independent interests, see Sections \ref{sec53}-\ref{sec8}.  We wish these new observations would be helpful to study other problems for compressible Euler equations.

Apart from what mentioned above, we briefly review here some other previous studies on stationary subsonic compressible Euler equations. More references and details could be found in \cite{Gao-Liu-Yuan-2020,Yuan-Zhao-2020}. For the isentropic ir-rotational flows, Shiffman firstly studied the existence of subsonic flow by using the variational method \cite{Shiffman-1952}. Bers proved the existence and uniqueness of two-dimensional subsonic flows by using the complex analysis method \cite{Bers-1954}. Finn and Gilbarg established the existence and uniqueness for three-dimensional subsonic flows passing bodies \cite{Finn-Gilbarg-1957-1}. The result was generalized by Dong and Ou \cite{dong-1993}, and then later extended by Liu and Yuan to study subsonic potential flows in largely-open nozzles \cite{liuyuan2014}.  For the compressible Euler equations,  Xie and Xin \cite{Xie-Xin-2007} firstly studied isentropic non-swirl subsonic flows in two-dimensional infinite duct with variable boundaries,  and  proved that there is a critical mass flux below which a global subsonic flow exists in the duct.  See also  \cite{bae,chen-deng-xiang2012,duduan2016,Huang-Wang-Wang-2011,Liu-2008,Xie-Xin-2007,Xie-Xin-2010,Xie-Xin-2010-1} for further results.

The rest of the paper is organized as follows. In Section \ref{sec41}, we  construct background solutions, which are unidimensional subsonic flows with mass additions in the ducts.  In Section \ref{sec42}, we present the new  decomposition  of stationary compressible Euler equations aforementioned. %and deduce a \underline{Problem \uppercase\expandafter{\romannumeral2}} which is equivalent to the boundary value problem formulated in Problem \Rmnum{1}. % which consists Cauchy problem of transport equations with entropy and total enthalpy, mixed boundary value problem of pressure satisfying second-order elliptic equation, two-point boundary value problem of ordinary differential equation with tangential velocity along $y$-axis on arbitrary section.
In Section \ref{sec43}, the equations and boundary conditions  are linearized at a given background solution. Then Problem \Rmnum{1} could be rewritten as {Problem \uppercase\expandafter{\romannumeral2}} (see Section \ref{sec44new}). In Section \ref{sec44}, we study well-posedness and regularity of solutions to a mixed boundary value problem of second-order elliptic equations with multiple integral nonlocal terms. It is based on Galerkin method and Schauder estimates of elliptic equations. In Section \ref{sec53}, we define the $x$-directional anisotropy H\"{o}lder spaces and  prove some of their properties. The well-posedness of Cauchy problem for transport equations with nonhomogeneous terms in the anisotropy H\"{o}lder spaces is presented in Section \ref{sec54}.
The solvability of two-point boundary value problems of ordinary differential equation along each cross-section of the duct is studied in Section \ref{sec8}.
In Section \ref{sec45}, the stability of subsonic flows with mass-additions is proved by a new nonlinear iteration scheme and Schauder fixed-point theorem. By showing each component of the unique solution to the nonlinear problem shares the same order of regularity, we then complete the proof of the main theorem.

\section{Unidimensional steady subsonic flows with mass-additions}\label{sec41}
We construct the background solutions $U_b(x)=(p_b(x), s_b(x), E_b(x), v_b(x)\equiv0)$ aforementioned,  which are subsonic solutions to the Euler equations \eqref{4101}-\eqref{4103} that depend only on the $x$-direction. Hence they satisfy the following ordinary differential equations
\begin{equation*}
\frac{\dd }{\dd x}(\rho_b u_b)=\lambda \rho_b m_b(x),\quad
\frac{\dd }{\dd x}(\rho_b u_b^2+p_b)=0,\quad
\frac{\dd }{\dd x}\Big[\rho_b \big(\frac{1}{2}u_b^2+\frac{\gamma p_b}{(\gamma-1)\rho_b}\big)u_b\Big]=0.
\end{equation*}
For $C^1$ flows without vacuum,  from these we could solve that  %the background solution satisfies the following ordinary differential equations
%\begin{large}
\begin{align}\label{4201}
%\begin{cases}
&\frac{\dd u_b}{\dd x} = \frac{\lambda (\gamma+1)m_b(x)}{2}\frac{M^2_b}{1-M_b^2},\\
&\frac{\dd \rho_b}{\dd x} = \frac{\lambda}{2}\frac{\rho_bm_b(x)}{u_b}\frac{2- (\gamma +3)M_b^2}{1-M_b^2},\\
&\frac{\dd p_b}{\dd x} = - \frac{\lambda}{2}\rho_bm_b(x)u_b\frac{2+ (\gamma -1)M_b^2}{1-M_b^2},\\
&\frac{\dd M_b^2}{\dd x} = \frac{\lambda}{2}\frac{m_b(x)}{u_b}\frac{\gamma(\gamma-1)M_b^6+(3\gamma-1)M_b^4+2M_b^2}{1-M_b^2},\label{4201-4}\\
&\frac{\dd E_b}{\dd x} = -\lambda\frac{m_b(x)E_b}{u_b},\\
&\frac{\dd A(s_b)}{\dd x} =-\frac{\lambda \gamma m_b(x)\Big[1-\frac{\gamma-1}{2}M_b^2\Big]}{u_b}A(s_b),\label{4206add}
%\end{cases}
 \end{align}
%\end{large}
where we require the Mach number $M_b(x) = {u_b(x)}/{c_b(x)}$ is positive and not equal to $1$.

%\subsection{Subsonic special solutions}
Let the Mach number at the entry $\Sigma_0$ satisfy $M_b(0) > 0$. Then from \eqref{4201-4} and \eqref{4201},
we obtain
\begin{eqnarray*}
\frac{\gamma+1}{\gamma(\gamma-1)M_b^4+(3\gamma-1)M^2_b+ 2}\frac{\dd M^2_b}{\dd x}= \frac{1}{u_b}\frac{\dd u_b}{\dd x}.
\end{eqnarray*}
Direct integration shows that
\begin{eqnarray}\label{4701}
&&\frac{u_b}{u_b(0)}= \frac{\gamma M^2_b+1}{\gamma M^2_b(0)+1}\cdot\frac{(\gamma-1)M^2_b(0)+2}{(\gamma-1)M^2_b+2}.
\end{eqnarray}
Similarly, we have
\begin{equation}\label{4702}
\begin{cases}\dl
\frac{p_b}{p_b(0)}= \frac{\gamma M^{2}_b(0) + 1}{\gamma M_b^2 + 1},\\
\dl \frac{\rho_b}{\rho_b(0)}= \frac{M_b}{M_b(0)}\cdot\left(\frac{(\gamma-1) M^2_b + 2}{(\gamma-1) M^{2}_b(0) + 2}\right)^{2}\cdot\left(\frac{\gamma M^2_b(0)+1}{\gamma M^2_b+1}\right)^{3},\\
\dl \frac{E_b}{E_b(0)}= \frac{M_b}{M_b(0)}\cdot \frac{(\gamma-1) M^2_b + 2}{(\gamma-1) M^2_b(0) + 2}\cdot\left(\frac{\gamma M^{2}_b(0) + 1}{\gamma M_b^2 + 1}\right)^{2},\\
\dl \frac{A(s_b)}{A(s_b(0))}= \left(\frac{M_b(0)}{M_b}\right)^{\gamma}\cdot\left(\frac{\gamma M^{2}_b + 1}{\gamma M_b^2(0) + 1}\right)^{3\gamma-1}\cdot\left(\frac{(\gamma-1) M^{2}_b(0) + 2}{(\gamma-1) M_b^2 + 2}\right)^{2\gamma},
\end{cases}
\end{equation}
where $p_b(0),\,u_b(0),\,\rho_b(0),\,A(s_b(0)),\,E_b(0)$ are all positive constants. Substituting \eqref{4701} and \eqref{4702} into $\eqref{4201-4}$, then
\begin{eqnarray}\label{4703}
\frac{2(1 - M_b^{2})}{M_b^{2}[(\gamma-1) M_b^{2} + 2]^{2}} \frac{\dd M_b^{2}}{\dd x} = \frac{\lambda m_b(x)}{m_{0}},
\end{eqnarray}
where
\begin{eqnarray*}
m_{0} = \frac{(\gamma-1) M^{2}_b(0) + 2}{\gamma M^{2}_b(0) + 1 } > 0.
\end{eqnarray*}
Integrating $\eqref{4703}$ from $0$ to $x$, we get
\begin{eqnarray}\label{4705}
{F(M_b) = F(M_b(0)) +  \frac{\lambda}{m_{0}}\int ^{x}_{0}m_b(s)\,\dd s,}
\end{eqnarray}
%\begin{eqnarray}\label{4704}
%&&\frac{\frac{\gamma+1}{\gamma-1}}{(\gamma-1) M^{2}_b+2} + \frac{1}{2}\ln\left(\frac{M^{2}_b}{(\gamma-1) M^{2}_b+2}\right)=-\frac{\frac{\gamma+1}{\gamma-1}}{(\gamma-1) M^{2}_b(0)+2}\nonumber\\
%&&\quad \quad + \frac{1}{2}\ln\left(\frac{M^{2}_b(0)}{(\gamma-1) M^{2}_b(0)+2}\right) + \frac{\lambda}{m_{0}}\int ^{x}_{0}m_b(s)\,\dd s.
%\end{eqnarray}
\begin{figure}%[h]
\centering
	\includegraphics[width=0.8\linewidth]{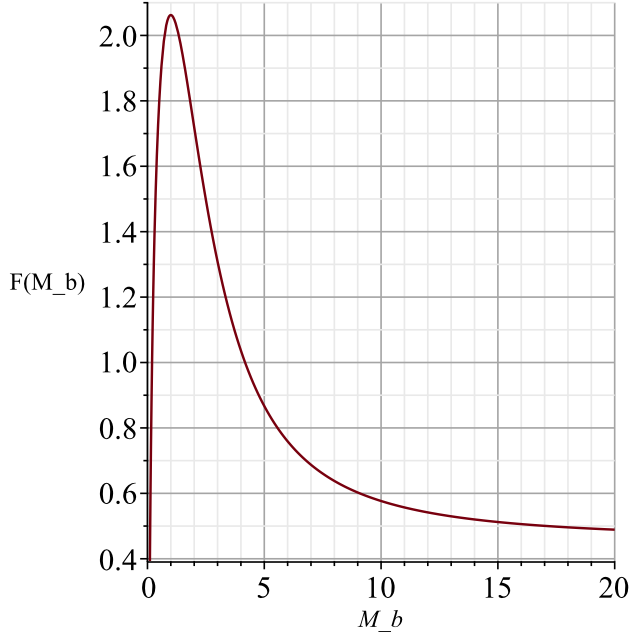}
\vspace{-8cm}
	\caption{Graph of the function $F(M_b)$.} \label{fig2}
\end{figure}
where
\begin{eqnarray*}
F(M_b) \triangleq \frac{\frac{\gamma+1}{\gamma-1}}{(\gamma-1) M^{2}_b+2}+\frac{1}{2}\ln\left[\frac{M^{2}_b}{(\gamma-1) M^{2}_b+2}\right],\end{eqnarray*}
whose graph is shown in Figure \ref{fig2}.
%Then equation \eqref{4704} can be written as
%\begin{eqnarray}\label{4705}
%\boxed{F(M_b) = F(M_b(0)) +  \frac{\lambda}{m_{0}}\int ^{x}_{0}m_b(s)\,\dd s,}
%\end{eqnarray}
Noting that $F'(M_b) = \frac{4M_b(1 - M_b^{2})}{M_b^{2}[(\gamma-1) M_b^{2} + 2]^{2}}$, so for subsonic flow, $F'(M_b) > 0$; whereas for supersonic flow, $F'(M_b) < 0$. Therefore, no matter the flow is supersonic or subsonic, once the value $F(M_b)$ is known, $M_b$ can be determined immediately by the implicit function theorem. Consequently, from \eqref{4705}, once $x$ is given, $M_b(x)$ can be uniquely determined if we know it is larger or less than $1$.

One infers from \eqref{4201}-\eqref{4206add} the monotonicity of physical quantities $u_b,\,\rho_b,\,p_b,\,M_b,\,E_b$ and $A(s_b)$ along the positive $x$-direction, as shown in Table \ref{tab3} and Table \ref{tab4}. Notice that for the subsonic case $(0 < M_b < 1)$,  the opposite role played by the addition $(\lambda > 0)$ or absorbtion $(\lambda < 0)$ of mass in the flow.
\begin{tiny}
\begin{table}%[H]
\centering
\begin{tabular}{|c|c|c|c|c|c|c|}
\hline
\diagbox{\quad}{Monotonicity}{functions}&$u_b$&$\rho_b$&$\,p_b$&$M_b$&$E_b$\\ % 添加斜线表头
\hline
mass decreases~$(\lambda < 0)$&$\downarrow$&$M_b\in(0,\sqrt{\frac{2}{\gamma+3}})$, $\downarrow$; ~~$M_b\in(\sqrt{\frac{2}{\gamma+3}}, 1)$, $\uparrow$&$\uparrow$&$\downarrow$&$\uparrow$\\
\hline
mass increase~$(\lambda > 0)$&$\uparrow$&$M_b\in(0,\sqrt{\frac{2}{\gamma+3}})$, $\uparrow$;~~ $M_b\in(\sqrt{\frac{2}{\gamma+3}}, 1)$,  $\downarrow$&$\downarrow$&$\uparrow$&$\downarrow$\\
\hline
\end{tabular}
\medskip
\caption{Monotonicity of physical quantities along the positive $x$-direction in subsonic mass-changing flows.}\label{tab3}
\end{table}
\end{tiny}
\begin{tiny}
\begin{table}%[H]
\centering
\begin{tabular}{|c|c|c|c|c|c|c|}
\hline
\diagbox{\quad}{Monotonicity}{functions}&$A(s_b)$\\ % 添加斜线表头
\hline
mass decreases~$(\lambda < 0)$&$\gamma\geq 3$,$\uparrow$. If $\gamma < 3$, then $M_b\in(0,\sqrt{\frac{2}{\gamma-1}})$,$\uparrow$;~~$M_b\in(\sqrt{\frac{2}{\gamma-1}}, 1)$, $\downarrow$\\
\hline
mass increase~$(\lambda > 0)$&$\gamma\geq 3$,$\downarrow$. If $\gamma < 3$, then $M_b\in(0,\sqrt{\frac{2}{\gamma-1}})$,$\downarrow$;~~$M_b\in(\sqrt{\frac{2}{\gamma-1}}, 1)$,$\uparrow$\\
\hline
\end{tabular}
\medskip
\caption{Monotonicity of entropy along the positive $x$-direction in subsonic mass-changing flows.}\label{tab4}
\end{table}
\end{tiny}
\iffalse
\begin{remark}
For subsonic case, it can be concluded from the above analysis that these physical quantities are affected by the Mach number at the inlet of the pipe. For the case of mass decreases $(\lambda < 0)$, from table \ref{tab3} and table \ref{tab4}, the velocity $u_b$ and Mach number $M_b$ decrease monotonically along the $x$-axis, while the total enthalpy $E_b$, entropy $s_b$ and pressure $p_b$ increase monotonically along the $x$-axis. In particular, when the Mach number at the entrance is small $(0 < M_b(0) < \sqrt{\frac{2}{\gamma+3}})$, the density $\rho_b$ decreases along the $x$-axis; when the Mach number at the entrance is large $(\sqrt{\frac{2}{\gamma+3}} \leq M_b(0) < 1)$, the density $\rho_b$ increases along the $x$-axis, reaches the maximum at $M_b = \sqrt{\frac{2}{\gamma-1}}$, and then decreases; When $\gamma\geq 3$, the entropy increases along the $x$-axis; When $\gamma < 3$, if the Mach number at the entrance is small ( $0<M_b(0)<\sqrt{\frac{2}{\gamma-1}}$), the entropy decreases along the $x$-axis; If the Mach number at the entrance is large ($\sqrt{\frac{2}{\gamma-1}}<M_b(0)< 1$), the entropy first increases along the $x$-axis, reaches the maximum at $M_b = \sqrt{\frac{2}{\gamma-1}}$, and then decreases. For the case of increasing mass $(\lambda > 0)$, the monotonicity of all physical quantities along the $x$-axis is just opposite to the case of mass increase.
\end{remark}
\fi

For the case of mass-absorbtion $(\lambda < 0)$, we notice that there is a critical length of the duct, denoted as $l^-_{sub}$, such that at the exit,  the Mach number along the flow direction decreases to $0$. So is the density, see \eqref{4702}. It can be solved from $\eqref{4705}$:
\begin{eqnarray}\label{4706}
F(0) = F(M_b(0)) + \frac{\lambda}{m_{0}}\int ^{l^-_{sub}}_{0}m_b(s)\,\dd s.
\end{eqnarray}
For the case of mass-addition $(\lambda > 0)$, there is also a critical length of the duct, denoted by $l^+_{sub}$,  such that the flow is sonic at the exit, namely $M_b(l^+_{sub})=1$. We see that
\begin{eqnarray}\label{4707}
F(1) = F(M_b(0)) +  \frac{\lambda}{m_{0}}\int ^{l^+_{sub}}_{0}m_b(s)\,\dd s.
\end{eqnarray}
It is easy to check that if $m_b(x)$ is bounded and has a positive lower bound, then the critical length $l_*=l^{\mp}_{sub}$ are both finite and positive.

We now specify the background subsonic solutions. Let $m_b(x)$ have a positive lower bound. At the entry $\Sigma_0$ we prescribe $E_b(0),\,s_b(0),\,p_b(0)$ and $v_b\equiv 0$. Then by $\eqref{eq14addnew}$, we could solve $u_b(0)$ and hence $M_b(0)$. The critical length $l_*$  can be determined by $\eqref{4706}$ or $\eqref{4707}$ for a fixed $\lambda\in\mathbb{R}$. We choose a $l\leq l_*$. For any $x \in [0, l]$, using $\eqref{4705}$ and implicit function theorem, we get a unique $M_b(x)\in(0, 1)$. Furthermore, by substituting $M_b(x)$ into \eqref{4701} and \eqref{4702}, we obtain $u_b(x),\,\rho_b(x),\,p_b(x),\,E_b(x)$, and $A(s_b)(x)$. Then we have the following lemma.

\begin{lemma}[Subsonic background solution]\label{lem421}
For a given positive smooth function $m_b(x)$ which is bounded away from zero, and $\lambda\in\mathbb{R},\,l\in (0, l_*)$, (if $\lambda=0$, set $l_*=+\infty$), and given positive constants $E_b(0),\,s_b(0),\,p_b(0)$ at the entry so the flow is subsonic there, the equations $\eqref{4201}$ have a unique smooth subsonic solution $E_b(x),\,s_b(x),\,u_b(x)$ and $p_b(x)$. Furthermore, the solution is a real analytic function of the parameter $\lambda$.
\end{lemma}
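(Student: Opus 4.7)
The plan is to reduce the ODE system \eqref{4201}--\eqref{4206add} to the single implicit scalar relation \eqref{4705} for the Mach number $M_b(x)$, and then to recover every other flow quantity algebraically via the closed-form expressions \eqref{4701} and \eqref{4702}. First I would verify directly from the formula for $F'(M_b)$ given right after \eqref{4707} that $F'(M_b) > 0$ on the subsonic interval $(0,1)$, so $F : (0,1) \to (F(0^+), F(1^-))$ is a strictly increasing real-analytic bijection onto its image. Setting $\Phi(x,\lambda) := F(M_b(0)) + \frac{\lambda}{m_0}\int_0^x m_b(s)\,\dd s$, the definitions \eqref{4706} and \eqref{4707} of $l^\pm_{sub}$ are precisely the conditions under which $\Phi(x,\lambda)$ exits the image of $F$; therefore, provided $l < l_*$ and $m_b$ is bounded and bounded away from zero, $\Phi(\cdot,\lambda)$ maps $[0,l]$ into a compact subinterval of $(F(0^+), F(1^-))$.

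Next I would invoke the inverse function theorem to define $M_b(x) := F^{-1}(\Phi(x,\lambda))$ on $[0,l]$, a smooth function taking values strictly inside $(0,1)$. Smoothness of $x \mapsto M_b(x)$ follows from the smoothness of $m_b$ and of $F^{-1}$ on its open domain. Plugging $M_b(x)$ into \eqref{4701}--\eqref{4702} produces the remaining quantities $u_b, \rho_b, p_b, E_b, A(s_b)$ as smooth functions of $x$; a short differentiation check shows they satisfy the ODEs \eqref{4201}--\eqref{4206add} by construction, since \eqref{4705} was obtained by integrating the decoupled equation \eqref{4703}. Uniqueness is immediate: any subsonic $C^1$ solution must satisfy \eqref{4705} and the algebraic relations \eqref{4701}--\eqref{4702}, and injectivity of $F$ on $(0,1)$ pins down $M_b$, whence all other quantities are determined by the prescribed Cauchy data at $\Sigma_0$.

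For real analyticity in $\lambda$, I would observe that $\Phi(x,\lambda)$ is affine (in particular entire) in $\lambda$ for each fixed $x \in [0,l]$, and $F^{-1}$ is real analytic on the relevant open interval; hence $\lambda \mapsto M_b(x)$ is real analytic in a neighborhood of any admissible $\lambda_0$. Since the formulas \eqref{4701}--\eqref{4702} are rational (and rationally dependent on powers involving $\gamma$) in $M_b$ with non-vanishing denominators on $(0,1)$, the remaining quantities inherit real analyticity in $\lambda$ by composition.

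The only delicate point is controlling the endpoints of the range of $F$: as $M_b \to 0^+$ the term $\frac{1}{2}\ln(M_b^2/[(\gamma-1)M_b^2+2])$ drives $F \to -\infty$, and as $M_b \to 1^-$ we have $F \to F(1^-)$ finite. I expect the main obstacle to be a careful bookkeeping argument showing that strict inequality $l < l_*$ combined with the assumed positive lower bound on $m_b$ forces $\Phi([0,l],\lambda)$ to lie in a compact subset of $(F(0^+), F(1^-))$ uniformly for $\lambda$ in a small neighborhood of any chosen value, which is exactly what is needed to keep $F^{-1}$ real analytic along the trajectory and to prevent the flow from touching vacuum or sonic states.
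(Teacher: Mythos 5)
Your proposal is correct and follows essentially the same route as the paper: reduce the system to the scalar relation \eqref{4705}, use strict monotonicity of $F$ on the subsonic branch to invert for $M_b(x)$, and recover all other quantities from the algebraic formulas \eqref{4701}--\eqref{4702}. Your explicit argument for real analyticity in $\lambda$ (affinity of $\Phi$ in $\lambda$ composed with the real-analytic $F^{-1}$) is a slightly more self-contained justification than the paper's implicit appeal to analytic dependence of ODE solutions on parameters, but it is the same construction in substance.
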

We remark that the last observation, namely  real analyticity of background solution on the parameter $\lambda$, is of crucial importance for our analysis in the sequel, cf. Lemma \ref{Lem44}.

\section{A decomposition lemma}\label{sec42}
%For subsonic flows, the Euler equations are hyperbolic-elliptic complex. For this kind of equations, it is very difficult to solve it directly. Because the conservation of mass equation contains source term, the Lagrange coordinate transformation is invalid. To this end, we have to find a new idea, and propose a new decomposition method that separates the main part of the elliptic mode and the hyperbolic mode of the Euler equations and couples the low-order terms. In particular, using this decomposition method, the physical quantities can have the same regularity. To write briefly, we stipulate that for vector field $\mathbf{u}$ and function $f$, set $D_{\mathbf{u}}f = \mathbf{u}\cdot \grad f$.

%\subsection{Decomposition lemma of mass addition problem in two-dimensional case}

In this section, we present and prove the new decomposition lemma of Euler equations with mass-additions for the two-space-dimensional case. For a vector field $\mathbf{u}$ and a function $f$, we set ${\mathrm{D}}_{\mathbf{u}}f = \mathbf{u}\cdot \grad f$. The integral curves of the velocity field $\mathbf{u}$ are called streamlines.

\begin{lemma}\label{Lem41}
For polytropic gases in the duct $\Omega$, suppose that $\,p,\,\rho,\,\mathbf{u},\,m\in C^2(\Omega)\cap C^1(\bar{\Omega})$ and $\rho>0,\,u > 0$ in $\bar{\Omega}$. Let $k(x)\in C^1([0, l])$ be an axillary function.  If they satisfy the following problems (where $\lambda\in \mathbb{R}$, and $\gamma>1$ is the adiabatic exponent,  $M$ is the  Mach number;  moreover, $L^m(\cdot),\,L_n(\cdot)$ are  smooth functions satisfying $L^m(0) = 0,\,L_n(0) = 0$ for $m = 1,\,2,\,3,\,n=1,\,2$):
\begin{eqnarray}
&&\begin{cases}\label{4301}
{\mathrm{D}}_{\mathbf{u}} E+\lambda m(\mathbf{x})E=0&\quad \text{\rm in}\quad \Omega,\\
E=E_0(y)&\quad \text{\rm on}\quad \Sigma_0;\\
\end{cases}\\
&&\begin{cases}\label{4302}
{\mathrm{D}}_{\mathbf{u}} A(s)+\lambda \gamma m(\mathbf{x})\Big[1-\frac{\gamma-1}{2}M^2\Big]A(s)=0&\quad \text{\rm in}\quad \Omega,\\
A(s)=A(s_0)(y)&\quad \text{\rm on}\quad \Sigma_0;
\end{cases}\\
&&\begin{cases}\label{4304}
\p_y v =  - \frac{{\mathrm{D}}_{\mathbf{u}}p}{\gamma p} + \lambda m(\mathbf{x})\Big[1+\frac{(\gamma-1)M^2}{2}\Big]
+ \frac{1}{u}\Big[v\p_y u + \frac{1}{\rho}\p_xp\Big] - k(x)&\quad \text{\rm in}\quad \Omega,\\
v=v_0(y)&\quad \text{\rm on}\quad \Sigma_0,\\
v=0&\quad \text{\rm on}\quad W_-\cup W_+;
\end{cases}\\
&&\begin{cases}\label{4303}
{\mathrm{D}}_{\mathbf{u}}\Big[\frac{{\mathrm{D}}_{\mathbf{u}} p}{\gamma p}\Big]-\di \Big[\frac{\grad\,
	p}{\rho}\Big]- \Big[(\p_x u)^2+2\p_y u\p_x v+(\p_y v)^2\Big]- \frac{\lambda(\gamma-1)}{2}{\mathrm{D}}_\mathbf{u}\Big[m(\mathbf{x})M^2\Big]\\
\quad -\lambda \di \Big[m(\mathbf{x})\mathbf{u}\Big]+ L^1({\mathrm{D}}_{\mathbf{u}} E+\lambda m(\mathbf{x})E)\\
\qquad+L^2\left({\mathrm{D}}_{\mathbf{u}} A(s)+\lambda \gamma m(\mathbf{x})\Big[1-\frac{\gamma-1}{2}M^2\Big]A(s)\right)\\
\quad\quad+L^3\left(\p_y v+\frac{{\mathrm{D}}_{\mathbf{u}}p}{\gamma p} - \lambda m(\mathbf{x})\Big[1+\frac{(\gamma-1)M^2}{2}\Big] \right.
\left.- \frac{1}{u}\Big[v\p_y u + \frac{1}{\rho}\p_xp\Big] + k(x)\right) = 0\quad \text{\rm in}\ \Omega, \\
\frac{{\mathrm{D}}_{\mathbf{u}}p}{\gamma p} + \p_y v - \frac{1}{u}\Big[v\p_y u + \frac{1}{\rho}\p_x p\Big] - \lambda m(\mathbf{x})\Big[1+\frac{(\gamma-1)M^2}{2}\Big]+ L_1({\mathrm{D}}_{\mathbf{u}} E+\lambda m(\mathbf{x})E)\\ \quad+L_2\left({\mathrm{D}}_{\mathbf{u}} A(s)+\lambda \gamma m(\mathbf{x})\Big[1-\frac{\gamma-1}{2}M^2\Big]A(s)\right) = 0\quad \text{\rm on}\quad \Sigma_0,\\
p=p_l(y)\quad\quad\quad\quad \text{\rm on}\quad \Sigma_l,\\
\p_yp= 0\quad\quad\quad\quad\ \  \text{\rm on}\quad W_-\cup W_+,
\end{cases}
%&&\begin{cases}\label{4304}
%\p_y v =  - \frac{D_{\mathbf{u}}p}{\gamma p} + \lambda m(\mathbf{x})\Big[1+\frac{(\gamma-1)M^2}{2}\Big]
%+ \frac{1}{u}\Big[v\p_y u + \frac{1}{\rho}\p_xp\Big] - k(x)&\quad \text{\rm in}\quad \Omega,\\
%v=v_0(y)&\quad \text{\rm on}\quad \Sigma_0,\\
%v=0&\quad \text{\rm on}\quad W_-\cup W_+,\\
%\end{cases}
\end{eqnarray}
 then $k(x)\equiv0$, and $(p,\, \rho,\, \mathbf{u})$ is a solution to the boundary value problem \eqref{4101}-\eqref{4103}, \eqref{4202}-\eqref{4204} in {Problem \uppercase\expandafter{\romannumeral1}}. On the contrary, for given $\lambda\in\mathbb{R}$ and $m(\mathbf{x})\in C^2(\Omega)\cap C^1(\bar{\Omega})$, any solution $\,p,\,\rho,\,\mathbf{u}\in C^2(\Omega)\cap C^1(\bar{\Omega})$ with $\rho>0,~ u>0$ to the Problem \Rmnum{1} is a solution to the coupled problems \eqref{4301}-\eqref{4303} (in which $k(x)\equiv0$).
\end{lemma}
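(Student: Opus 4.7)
My plan is to prove both directions by explicit algebraic manipulation, reducing the forward direction to a cross-section integration that pins down $k(x)\equiv 0$.

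For the converse direction (Euler $\Rightarrow$ decomposed with $k\equiv 0$), equation \eqref{4301} follows by expanding $\di(\rho E\mathbf{u})=0$ and substituting $\di(\rho\mathbf{u})=\lambda\rho m$. Combining the nonconservative form of momentum $\rho\mathrm{D}_\mathbf{u}\mathbf{u}+\grad p+\lambda\rho m\mathbf{u}=\mathbf{0}$ (derived from the conservative form via mass) with \eqref{4301} yields the identity
\[\mathrm{D}_\mathbf{u}p-c^2\mathrm{D}_\mathbf{u}\rho=-\lambda mc^2\rho\Bigl[1-\tfrac{\gamma-1}{2}M^2\Bigr],\]
which is \eqref{4302} after rewriting via $A(s)=p\rho^{-\gamma}$. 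Substituting this identity back into the mass equation produces the divergence identity $\di\mathbf{u}=-\mathrm{D}_\mathbf{u}p/(\gamma p)+\lambda m(\gamma-1)M^2/2$; subtracting $\p_x u$ (expressed via $x$-momentum in nonconservative form) gives \eqref{4304} with $k\equiv 0$. Applying $\mathrm{D}_\mathbf{u}$ to the divergence identity, using the kinematic identity
\[\mathrm{D}_\mathbf{u}(\di\mathbf{u})=\di(\mathrm{D}_\mathbf{u}\mathbf{u})-\bigl[(\p_x u)^2+2\p_y u\,\p_x v+(\p_y v)^2\bigr]\]
and substituting momentum for $\mathrm{D}_\mathbf{u}\mathbf{u}$, produces \eqref{4303}. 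The boundary condition $\p_y p=0$ on $W_\pm$ is precisely \eqref{4205}, a consequence of $y$-momentum and slip.

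For the forward direction, I would introduce the defects
\[\mathcal{M}:=\di\mathbf{u}+\tfrac{\mathrm{D}_\mathbf{u}p}{\gamma p}-\tfrac{\lambda m(\gamma-1)M^2}{2},\qquad \vec{\mathcal{P}}:=\mathrm{D}_\mathbf{u}\mathbf{u}+\tfrac{\grad p}{\rho}+\lambda m\mathbf{u},\]
and aim to show $\mathcal{M}\equiv 0$, $\vec{\mathcal{P}}\equiv\mathbf{0}$, and $k\equiv 0$. Since $L^m(0)=L_n(0)=0$, equations \eqref{4301}, \eqref{4302}, \eqref{4304} force the $L$-terms in \eqref{4303} to vanish; the kinematic identity then reduces the interior equation to $\mathrm{D}_\mathbf{u}\mathcal{M}=\di\vec{\mathcal{P}}$. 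Four algebraic consequences follow by direct computation: (a) \eqref{4302} gives $\di(\rho\mathbf{u})-\lambda\rho m=\rho\mathcal{M}$, so mass conservation is equivalent to $\mathcal{M}\equiv 0$; (b) \eqref{4301} and \eqref{4302} together yield $\mathbf{u}\cdot\vec{\mathcal{P}}=0$ (direct verification using the identity behind \eqref{4302}); (c) multiplying \eqref{4304} by $u$ and regrouping gives $\mathcal{P}_x=u[\mathcal{M}+k(x)]$; (d) on $\Sigma_0$ the boundary condition of \eqref{4303}, after $L$-terms cancel via \eqref{4301}--\eqref{4302}, coincides with \eqref{4304} evaluated with $k$ replaced by $0$, so comparison with \eqref{4304} itself forces $k(0)=0$. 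On the walls $W_\pm$, the slip condition $v=0$ and the boundary condition $\p_y p=0$ yield $\mathcal{P}_y|_{W_\pm}=0$ directly from the definition of $\vec{\mathcal{P}}$; combined with (b), (c) this forces $\mathcal{M}|_{W_\pm}=-k(x)$. Integrating the reduced interior equation $\mathrm{D}_\mathbf{u}\mathcal{M}=\di\vec{\mathcal{P}}$ across a cross-section $\Sigma_x$, using the wall data to cancel boundary contributions from $\int_0^\pi\p_y\mathcal{P}_y\,\mathrm{d}y$ and from the integration-by-parts of $\int_0^\pi v\p_y\mathcal{M}\,\mathrm{d}y$, produces a first-order linear ODE for $k(x)$ with initial datum $k(0)=0$; uniqueness forces $k\equiv 0$. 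With $k\equiv 0$, (c) becomes $\mathcal{P}_x=u\mathcal{M}$, and the reduced elliptic equation becomes a linear transport equation for $\mathcal{M}$ with zero data on $\Sigma_0\cup W_\pm$; streamline propagation (valid since $u>0$ in $\bar\Omega$) gives $\mathcal{M}\equiv 0$. Then (a) gives mass, (b) and (c) give $\vec{\mathcal{P}}\equiv\mathbf{0}$ (both momentum components), and \eqref{4301} combined with mass gives energy.

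The principal obstacle is the closure of the ODE for $k(x)$: the identity $\mathcal{M}=-k$ is known only on the walls, so to extract a closed equation one must integrate the reduced elliptic equation over a cross-section, and the wall boundary conditions $v=0$ and $\p_y p=0$ are precisely what is needed to cancel the boundary terms arising from the $y$-integration. The positivity $u>0$ in $\bar\Omega$ is essential both to make the ODE non-degenerate and to enable the streamline-propagation argument for $\mathcal{M}$ in the final step. These two wall conditions are therefore structural necessities of the decomposition rather than compatibility artifacts.
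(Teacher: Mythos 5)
Your easy direction and the first several reductions of the hard direction coincide with the paper's proof: the vanishing of the $L$-terms, the orthogonality $\mathbf{u}\cdot\vec{\mathcal{P}}=0$ deduced from \eqref{4301}--\eqref{4302} (the paper's $\varphi_0\cdot\mathbf{u}=0$), the identity $\mathcal{P}_x=u[\mathcal{M}+k(x)]$ (the paper's \eqref{4317}), $k(0)=0$ from the inlet condition, and $\mathcal{P}_y=0$ on $W_\pm$ are all exactly the paper's steps. The two closing steps, however, have genuine gaps. Integrating $\mathrm{D}_{\mathbf{u}}\mathcal{M}=\di\vec{\mathcal{P}}$ over a cross-section does \emph{not} produce a closed ODE for $k$: after the wall boundary terms cancel you are left with $\int_0^\pi u\,\p_x\mathcal{M}\,\dd y-\int_0^\pi \p_yv\,\mathcal{M}\,\dd y=\p_x\int_0^\pi\mathcal{P}_x\,\dd y$, and since (b),(c) only give $\mathcal{M}=-k-\frac{v}{u^2}\mathcal{P}_y$, these interior integrals still contain the unknown $\mathcal{P}_y$ and are not functions of $k$ alone. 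The paper closes this by substituting (b),(c) into the reduced interior equation to obtain the \emph{pointwise} first-order ODE in $y$, \eqref{4318}, for $\mathcal{P}_y=\varphi_0^y$ on each cross-section, with forcing $-\frac{u^3}{u^2+v^2}\,k'(x)$ and the two boundary conditions $\varphi_0^y(x,0)=\varphi_0^y(x,\pi)=0$; solving explicitly (equivalently, integrating only after multiplying by the integrating factor $e^{\int\alpha}$) gives \eqref{4320}, and the condition at $y=\pi$ forces $k'(x)\int_0^\pi\beta\,e^{\int\alpha}\dd s=0$ with $\beta>0$, hence $k'\equiv0$. Your unweighted cross-sectional average is missing precisely this integrating factor, and without it the argument does not close.

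The final step is also unsound as stated: $\mathcal{M}$ is not known to vanish on $\Sigma_0$ (the inlet condition $\eqref{4303}_2$ only yields $\mathcal{P}_x=u\mathcal{M}$ there, which compared with (c) gives $k(0)=0$, not $\mathcal{M}|_{\Sigma_0}=0$), and rewriting $\di\vec{\mathcal{P}}$ as a transport operator acting on $\mathcal{M}$ requires $\mathcal{P}_y=-\frac{u^2}{v}\mathcal{M}$, which is singular wherever $v=0$ --- in particular on the walls and generically in the interior. No propagation argument is needed: once $k'\equiv0$, the explicit formula \eqref{4320} gives $\mathcal{P}_y\equiv0$ outright, then $\mathcal{P}_x=-\frac{v}{u}\mathcal{P}_y\equiv0$ by (b), and $\mathcal{M}=\frac{1}{u}\mathcal{P}_x-k\equiv0$ by (c); mass, momentum and energy then follow as you indicate. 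So the skeleton of your plan is the right one, but both of its load-bearing analytic steps must be replaced by the two-point boundary-value (over-determination) argument of \eqref{4318}--\eqref{4320}.
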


%\begin{remark}\label{rmk41}
By $\eqref{4301}$ and $\eqref{4302}$, we infer that  for $\lambda= 0$ (no mass-addition),   the well-known fact that total enthalpy $E$ and entropy $s$ remain unchanged along the streamlines of $C^1$ flows.
%\end{remark}

%\subsubsection{Necessity}
We now prove Lemma \ref{Lem41}. Suppose that $\rho,\, p,\, \mathbf{u}$ with indicated regularity solve the Euler equations and fulfill the boundary conditions \eqref{4202}-\eqref{4204}.
As it is assumed that the flow contains no vacuum ($\rho>0$),  the conservation of mass $\varphi_1$ can be written as
\begin{eqnarray}\label{4305}
\varphi_1 = {\mathrm{D}}_{\mathbf{u}} \rho+\rho\, \di\, \mathbf{u}-\lambda \rho m(\mathbf{x}).
\end{eqnarray}
By the energy  equation, we obtain the following transport equation of total enthalpy
\begin{eqnarray}\label{4306}
\varphi_3\triangleq\frac{1}{\rho}({\varphi_2 - E\varphi_1}) = {\mathrm{D}}_{\mathbf{u}} E+\lambda m(\mathbf{x})E.
\end{eqnarray}
This implies \eqref{4301}.

By the identity $\di (\mathbf{u}\otimes \mathbf{v}) = (\di\, \mathbf{v})\mathbf{u} + {\mathrm{D}}_{\mathbf{v}} \mathbf{u}$, where $\mathbf{u}$ and $\mathbf{v}$ are vector fields, the momentum  equation becomes
\begin{eqnarray}\label{4307}
\varphi_0=(\varphi_0^x,\varphi_0^y)^{\top}\triangleq\frac{1}{\rho}({\varphi-\varphi_1 \mathbf{u}}) = {\mathrm{D}}_{\mathbf{u}}
\mathbf{u}+\frac{\grad\, p}{\rho}+\lambda m(\mathbf{x})\mathbf{u}.
\end{eqnarray}
Taking scalar product with  $\mathbf{u}$ yields $$\frac{1}{2} {\mathrm{D}}_{\mathbf{u}}\big(|\mathbf{u}|^2\big) = \varphi_0\cdot \mathbf{u} - \frac 1\rho {\mathrm{D}}_{\mathbf{u}}p-\lambda m(\mathbf{x})|\mathbf{u}|^2.$$  It follows  that
\begin{eqnarray}\label{4308}
\varphi_4\triangleq\frac{\gamma-1}{\rho^{\gamma-1}}\big(\varphi_3-\varphi_0\cdot
\mathbf{u}\big) = {\mathrm{D}}_{\mathbf{u}}A(s)+\lambda \gamma m(\mathbf{x})\Big[1-\frac{\gamma-1}{2}M^2\Big]A(s),
\end{eqnarray}
from which \eqref{4302} follows.

Next,  we derive the second-order  equation for pressure  in \eqref{4303}.
By $\eqref{4305}$ and $\eqref{4308}$, we  have
\begin{eqnarray}\label{4309}
\bar{\varphi}_1\triangleq\frac{\varphi_1}{\rho} + \frac{\varphi_4}{\gamma A(s)} = \frac{{\mathrm{D}}_{\mathbf{u}}p}{\gamma p} + \di\, \mathbf{u}-\frac{\lambda(\gamma-1)m(\mathbf{x})M^2}{2}.
\end{eqnarray}
Then $\eqref{4307}$ and $\eqref{4309}$ yield
\begin{eqnarray}\label{4310}
&&{\mathrm{D}}_{\mathbf{u}}\bar{\varphi}_1-\di \varphi_0={\mathrm{D}}_{\mathbf{u}}\Big[\frac{{\mathrm{D}}_{\mathbf{u}} p}{\gamma p}\Big]-\di \Big[\frac{\grad\,
	p}{\rho}\Big]- [\di ({\mathrm{D}}_{\mathbf{u}}\mathbf{u}) - {\mathrm{D}}_{\mathbf{u}}(\di\, \mathbf{u})]\nonumber\\
&&\quad \quad -\frac{\lambda(\gamma-1)}{2}{\mathrm{D}}_{\mathbf{u}}\Big[m(\mathbf{x})M^2\Big]- \lambda \di \Big[m(\mathbf{x})\mathbf{u}\Big]{\mathrm{D}}_{\mathbf{u}}\Big[\frac{{\mathrm{D}}_{\mathbf{u}} p}{\gamma p}\Big] - \di \Big[\frac{\grad\,
	p}{\rho}\Big]\nonumber\\
&&\quad = -\Big[(\p_x u)^2+2\p_x v\p_y u+(\p_y v)^2\Big]-\frac{\lambda(\gamma-1)}{2}{\mathrm{D}}_{\mathbf{u}}\Big[m(\mathbf{x})M^2\Big]\nonumber\\
&&\quad \quad  -\lambda \di \Big[m(\mathbf{x})\mathbf{u}\Big].
\end{eqnarray}
By the momentum equation in the $x$-direction, we have
\begin{eqnarray*}
\p_x u = - \frac{1}{u}\Big[v\p_y u + \frac{1}{\rho}\p_x p\Big]-\lambda m(\mathbf{x}) + \frac{1}{u}\varphi^x_0.
\end{eqnarray*}
Substituting it into the mass equation $\bar{\varphi}_1$, we have \eqref{4304} and the second equation in \eqref{4303}:
\begin{eqnarray}\label{4311}
\bar{\varphi}_1 - \frac{1}{u}\varphi^x_0 = \p_y v  - \frac{1}{u}\Big[v\p_y u + \frac{1}{\rho}\p_x p\Big]-\lambda m(\mathbf{x})\Big[1+\frac{(\gamma-1)M^2}{2}\Big] + \frac{{\mathrm{D}}_{\mathbf{u}}p}{\gamma p}.
\end{eqnarray}
Notice that $(\ref{4303})_4$ follows from \eqref{4205}.
\iffalse
\begin{remark}\label{rmk42}
In $\eqref{4311}$, we replace its normal derivative with the tangential derivative of velocity $u$, which can improve the tangential regularity of velocity, and derive the momentum conservation equation when proving the sufficiency of the equivalence theorem.
\end{remark}
\fi
%\subsubsection{Sufficiency}

We now prove the more difficult converse conclusion. The boundary conditions $\eqref{4301}_2$, $\eqref{4302}_2$, $\eqref{4303}_3$ and $\eqref{4304}_2$ are  \eqref{4202}, \eqref{4203}, and  $\eqref{4204}$ stated in {Problem \uppercase\expandafter{\romannumeral1}}. From $\eqref{4301}_1$ and $\eqref{4306}$, we infer that
\begin{eqnarray}\label{4312}
\varphi_3 = \frac{1}{\rho}({\varphi_2 - E\varphi_1}) = {\mathrm{D}}_\mathbf{u} E+\lambda m(\mathbf{x})E = 0.
\end{eqnarray}
Similarly, from $\eqref{4302}_1$ and $\eqref{4308}$, one has
\begin{eqnarray}\label{4313}
\varphi_4 = \frac{\gamma - 1}{\rho^{\gamma - 1}}\big(\varphi_3 - \varphi_0\cdot \mathbf{u}\big) = {\mathrm{D}}_\mathbf{u} A(s)+\lambda \gamma m(\mathbf{x})\Big[1-\frac{\gamma-1}{2}M^2\Big]A(s)=0.
\end{eqnarray}
Hence  $\varphi_0\cdot \mathbf{u} = 0$, or recalling the assumption $u>0$,
\begin{eqnarray}\label{4314}
\varphi^x_0 = - \frac{v}{u}\varphi_0^y.
\end{eqnarray}
Furthermore, by $\eqref{4310}$ and $\eqref{4301}_1$, $\eqref{4302}_1$, $\eqref{4303}_1$ and $\eqref{4304}_1$,
\begin{eqnarray}\label{4315}
{\mathrm{D}}_{\mathbf{u}}\bar{\varphi}_1-\di \varphi_0 = {\mathrm{D}}_{\mathbf{u}}\bar{\varphi}_1-\di \varphi_0 + L^1(\varphi_4) + L^2(\varphi_3) = -L^3(\bar{\varphi}_1 - \frac{1}{u}\varphi^x_0 + k(x)).
\end{eqnarray}
For any $x\in [0, l]$, we integrate $\eqref{4304}_1$ on $\Sigma_{x}$,
\begin{eqnarray*}
\int_{\Sigma_{x}}\p_y v\,\dd y = \int_{\Sigma_{x}}\left\{ - \frac{{\mathrm{D}}_{\mathbf{u}}p}{\gamma p} + \lambda m(\mathbf{x})\Big[1+\frac{(\gamma-1)M^2}{2}\Big] + \frac{1}{u}\Big[v\p_y u + \frac{1}{\rho}\p_x p\Big] - k(x)\right\}\dd y.
\end{eqnarray*}
By the slip condition $\eqref{4204}$, the left-hand-side equals zero, so
\begin{eqnarray}\label{4316}
k(x) = \frac{1}{\pi}\int_{0}^{\pi}  \left\{- \frac{{\mathrm{D}}_{\mathbf{u}}p}{\gamma p} + \lambda m(\mathbf{x})\Big[1+\frac{(\gamma-1)M^2}{2}\Big] + \frac{1}{u}\Big[v\p_y u + \frac{1}{\rho}\p_x p\Big]\right\}\,\dd y.
\end{eqnarray}
We assert that $k(x) \equiv 0$.

In fact, by $\eqref{4304}_1$ and $\eqref{4311}$,
\begin{equation}\label{4317}
\bar{\varphi}_1 - \frac{1}{u}\varphi^x_0 + k(x) = 0 \quad \text{in}\quad \Omega.
\end{equation}
From the boundary condition $\eqref{4303}_2$ on the inlet, namely $(\bar{\varphi}_1 - \frac{1}{u}\varphi^x_0)|_{\Sigma_0} = 0$, we have $k(0) = 0$. Furthermore, substituting $\eqref{4314}$ and $\eqref{4317}$ into $\eqref{4315}$ yields \begin{eqnarray*}
&&0 = {\mathrm{D}}_{\mathbf{u}}\bar{\varphi}_1 - \di \varphi_0 = {\mathrm{D}}_{\mathbf{u}}\Big[\frac{1}{u}\varphi^x_0 - k(x)\Big] - \p_y\varphi^y_0 + \p_x\Big[\frac{v}{u}\varphi_0^y\Big]\nonumber\\
&&\quad =u\p_x\Big[\frac{1}{u}\varphi^x_0 - k(x)\Big] + v\p_y\Big[\frac{1}{u}\varphi^x_0 - k(x)\Big]- \p_y\varphi^y_0 + \p_x\Big[\frac{v}{u}\varphi_0^y\Big].
\end{eqnarray*}
Some simplification shows that
\begin{eqnarray}\label{4318}
\p_y\varphi^y_0 +\frac{uv}{u^2+v^2} \left[{\mathrm{D}}_{\mathbf{u}}\Big[\frac{1}{u}\Big]+\p_y\Big[\frac{v}{u}\Big]\right]\varphi_0^y =  - \frac{u^3}{u^2+v^2}k'(x).
\end{eqnarray}
By $\eqref{4303}_4$ and $\eqref{4304}_3$, the boundary conditions of equations $\eqref{4318}$ are
\begin{eqnarray}\label{4319}
\varphi_0^y = 0\quad \text{on}\quad \Sigma_{x}\cap (W_-\cup W_+).
\end{eqnarray}
For given $x\in[0,l]$,  the two-point boundary value problem \eqref{4318}-\eqref{4319} on the cross-section $\Sigma_{x}$ is solved by
\begin{eqnarray}\label{4320}
\varphi_0^y(x,y) = -k'(x)\int_{0}^{y}\beta(x,s)\mathrm{e}^{\int_{s}^{y}\alpha(x,t)\,\dd t}\,\dd s,
\end{eqnarray}
where $\alpha(x,y)=\frac{uv}{u^2+v^2} \left[{\mathrm{D}}_{\mathbf{u}}\Big[\frac{1}{u}\Big]+\p_y\Big[\frac{v}{u}\Big]\right]$, $\beta(x,y)=\frac{u^3}{u^2+v^2}>0$. Taking $y=\pi$, we get
\begin{eqnarray*}
\varphi_0^y(x,\pi) = -k'(x)\int_{0}^{\pi}\beta(x,s)\mathrm{e}^{\int_{s}^{\pi}\alpha(x,t)\,\dd t}\,\dd s=0.
\end{eqnarray*}
So $k'(x) \equiv 0$, thus $k(x) \equiv k(0) = 0$.
%Therefore the boundary value problem \eqref{4318}-\eqref{4319} becomes
%\begin{equation}\label{4321}
%\begin{cases}
%\p_y\varphi^y_0 +\frac{uv}{u^2+v^2} \left[{\mathrm{D}}_{\mathbf{u}}\Big[\frac{1}{u}\Big]+\p_y\Big[\frac{v}{u}\Big]\right]\varphi_0^y= 0&\quad \text{in}\quad \Sigma_{x},\\
%\varphi_0^y = 0&\quad \text{on}\quad \Sigma_{x}\cap (W_-\cup W_+).
%\end{cases}
%\end{equation}
%By solving the boundary value problem $\eqref{4321}$, we can get $\varphi_0^y = 0$. By combining $\eqref{4314}$,

Therefore,  \eqref{4320} shows  $\varphi_0^y(x,y) \equiv 0$, and by \eqref{4314}, $\varphi_0^x(x,y) \equiv 0$. This demonstrates $\varphi = 0$ from $\eqref{4307}$, namely the conservation of momentum in the Euler equations.  So \eqref{4317} implies that  $\bar{\varphi}_1 = 0$.  Combining this with \eqref{4309} and \eqref{4313}, we see  $\varphi_1 = 0$, i.e., the equation of mass holds.  Finally, we infer the conservation of energy $\varphi_2 = 0$ from $\eqref{4312}$. The proof of Lemma \ref{Lem41} is completed.

\begin{remark}
A similar decomposition holds for the three-space-dimensional stationary compressible Euler equations, for which there is an additional transport equation for the normal component of vorticity, and the ordinary differential equations of tangential velocity component on each cross-section is replaced by a nonhomogeneous Cauchy-Riemann system of tangential velocities.\qed
\end{remark}

\section{A nonlinear boundary-value problem for an elliptic-hyperbolic coupled system}\label{sec43}

In this section we apply Lemma \ref{Lem41} to formulate a Problem \Rmnum{2}, which is equivalent to Problem \Rmnum{1}. The main efforts are to calculate explicitly a nonlinear elliptic-hyperbolic coupled system,  by linearization around a background solution, so that the  leading parts are  linear, and the other parts involving lower-order derivatives are small in suitable metrics, although they might be nonlinear and coupled to each other. The idea is simple and the computations are straightforward. We present the detailed expressions for readers convenience to check the correctness of nonlinear iteration scheme constructed in Section \ref{sec45}.

\subsection{Boundary conditions and equation for pressure}\label{sec31}
We calculate the second equation in \eqref{4303} to obtain a Robin condition for the pressure on the inlet $\Sigma_0$. To  this end, multiplying both sides of $\eqref{4303}_2$ by $\rho u$,  we get \begin{eqnarray}\label{4401}
&&\rho u\left[\frac{{\mathrm{D}}_\mathbf{u}p}{\gamma p} + \p_y v - \frac{1}{u}\Big(v\p_y u + \frac{1}{\rho}\p_x p\Big) - \lambda m(\mathbf{x})\Big(1+\frac{(\gamma-1)M^2}{2}\Big)\right]\nonumber\\
&&\quad  = \Big[\frac{u^2}{c^2} - 1\Big]\p_xp  - \frac{\lambda \gamma m(\mathbf{x})u \Big[1+\frac{(\gamma-1)M^2}{2}\Big] }{c^2}p\nonumber\\
&&\quad \quad +\rho u\p_y v+uv\Big[\frac{1}{u^2}+\frac{1}{c^2}\Big]\p_y p - \frac{\rho v}{u}\p_y E +  \frac{v\rho^{\gamma}}{(\gamma-1)u}\p_y A(s) + \frac{\rho v^2}{u}\p_y v = 0.\nonumber\\
\end{eqnarray}
For subsonic flow, ${u^2}/{c^2} < 1$, we divide both sides of $\eqref{4401}$ by $\Big[\frac{u^2}{c^2} - 1\Big]$. Then
\begin{eqnarray}\label{4402}
 \p_x p - \frac{\lambda \gamma m(\mathbf{x})u \Big[1+\frac{(\gamma-1)M^2}{2}\Big] }{u^2 - c^2}p=G_1(U)+G_2(U),
\end{eqnarray}
where
\begin{equation}\label{4403}
\begin{cases}
G_1 \triangleq - \frac{uv}{\frac{u^2}{c^2}-1}\Big(\frac{1}{u^2}+\frac{1}{c^2}\Big)\p_y p,&\\
G_2\triangleq - \frac{1}{\frac{u^2}{c^2} - 1}\left[\rho u\p_y v - \frac{\rho v}{u}\p_y E +  \frac{v\rho^{\gamma}}{(\gamma-1)u}\p_y A(s) + \frac{\rho v^2}{u}\p_y v\right].&
\end{cases}
\end{equation}

Next we specify the equation of pressure $p$ in $\eqref{4303}_1$. %In the lemma \ref{Lem41}, the transport equation of total enthalpy $E$ and entropy $A(s)$ has been obtained. The second order elliptic equation for the pressure $p$ will be obtained from $\eqref{4303}_1$.
%{\mathrm{D}}ue to the complexity of $\eqref{4303}_1$,
It is calculated term by term as follows.

The first term:
\begin{eqnarray*}
&&{\mathrm{D}}_{\mathbf{u}}\Big[\frac{{\mathrm{D}}_{\mathbf{u}}p}{\gamma p}\Big] = {\mathrm{D}}_{\mathbf{u}}\Big[\frac{u\p_xp+v\p_yp}{\gamma p}\Big] = u\p_x\Big[\frac{u\p_xp+v\p_yp}{\gamma p}\Big]+v\p_y\Big[\frac{u\p_xp+v\p_yp}{\gamma p}\Big]\nonumber\\
&&\quad = \frac{u\p_xu\p_xp+u^2\p_x^2p}{\gamma p} - \frac{u^2}{\gamma p^2}(\p_xp)^2+\frac{u\p_xv\p_yp+uv\p_{xy}^2p}{\gamma p} - \frac{uv}{\gamma p^2}\p_xp\p_yp\nonumber\\
&&\quad \quad +\frac{v\p_yu\p_xp+uv\p_{xy}^2p}{\gamma p} - \frac{uv}{\gamma p^2}\p_xp\p_yp+\frac{v\p_yv\p_yp+v^2\p_y^2p}{\gamma p} - \frac{v^2}{\gamma p^2}(\p_yp)^2\nonumber\\
&&\quad =\frac{u\p_xu}{\gamma p}\p_xp+\frac{u^2}{\gamma p}\p_x^2p+\frac{v^2}{\gamma p}\p_y^2p+2\frac{uv}{\gamma p}\p_{xy}^2p - \frac{u^2}{\gamma p^2}(\p_xp)^2+C_1(U),
\end{eqnarray*}
where $\p_{xy}^2p=\frac{\p^2 p}{\p x\p y},$ and
\begin{eqnarray*}
C_1(U) = \frac{u\p_xv\p_yp+v\p_yu\p_xp+v\p_yv\p_yp}{\gamma p} - 2\frac{uv}{\gamma p^2}\p_xp\p_yp-\frac{v^2}{\gamma p^2}(\p_yp)^2
\end{eqnarray*}
is an expression of $U$ and its first-order derivatives ${\mathrm{D}}U$.

The second term:
\begin{eqnarray*}
\di\,\Big[\frac{\grad\, p}{\rho}\Big] =\frac{\p^2_xp}{\rho} + \frac{\p^2_yp}{\rho}- \frac{1}{\rho^2}\p_x\rho\p_xp + C_2(U),
\end{eqnarray*}
where
%\begin{eqnarray*}
$C_2(U) = - \frac{1}{\rho^2}\p_y\rho\p_yp$
%\end{eqnarray*}
contains only $U$ and ${\mathrm{D}}U$.

The third term:
\begin{eqnarray*}
(\p_xu)^2+2\p_yu\p_xv + (\p_yv)^2 = (\p_xu)^2+C_3(U),
\end{eqnarray*}
where
%\begin{eqnarray*}
$C_3(U) =2\p_yu\p_xv + (\p_yv)^2$
%\end{eqnarray*}
also contains only $U$ and ${\mathrm{D}}U$.

The fourth term:
\begin{align*}
{\mathrm{D}}_\mathbf{u}\Big[m(\mathbf{x})M^2\Big]&=u\p_x\Big[m(\mathbf{x})M^2\Big]
+v\p_y\Big[m(\mathbf{x})M^2\Big]\\
&=uM^2\p_xm(\mathbf{x})+2uMm(\mathbf{x})\p_xM + C_4(U),
\end{align*}
where
%\begin{eqnarray*}
$C_4(U) = v\p_y\Big[m(\mathbf{x})M^2\Big]$
%\end{eqnarray*}
contains only $U,\,m(\mathbf{x})$, and their first-order derivatives ${\mathrm{D}}U,$ $ {\mathrm{D}}m(\mathbf{x})$.

The fifth term:
\begin{eqnarray*}
&&\di \Big[m(\mathbf{x})\mathbf{u}\Big] = \p_x\Big[m(\mathbf{x})u\Big]+\p_y\Big[m(\mathbf{x})v\Big]\nonumber\\
&&\quad =u\p_x m(\mathbf{x})+m(\mathbf{x})\p_x u+v\p_y m(\mathbf{x})+m(\mathbf{x})\p_y v\nonumber\\
&&\quad =u\p_x m(\mathbf{x})+m(\mathbf{x})\p_xu+m(\mathbf{x})\p_y v+C_5(U)
\end{eqnarray*}
where
%\begin{eqnarray*}
$C_5(U) = v\p_y m(\mathbf{x}).$
%\end{eqnarray*}
%contains only $U$ and first derivative terms of $m(\mathbf{x})$.

From these, we get
\begin{eqnarray}\label{4404}
&&{\mathrm{D}}_\mathbf{u}\Big[\frac{{\mathrm{D}}_\mathbf{u} p}{\gamma p}\Big]-\di \Big[\frac{\grad\,
	p}{\rho}\Big]- \Big[(\p_x u)^2 +2\p_yu\p_xv + (\p_y v)^2\Big] \nonumber\\
&&\qquad\qquad\qquad\qquad\qquad- \frac{\lambda(\gamma-1)}{2}{\mathrm{D}}_\mathbf{u}\Big[m(\mathbf{x})M^2\Big] - \lambda \di \Big[m(\mathbf{x})\mathbf{u}\Big]\nonumber\\
&&\quad =\frac{u^2-c^2}{\gamma p}\p_x^2p+\frac{v^2-c^2}{\gamma p}\p_y^2p+2\frac{uv}{\gamma p}\p_{xy}^2p-\frac{u^2}{\gamma p^2}(\p_x p)^2+\frac{u\p_x u+\frac{c^2}{\rho}\p_x\rho}{\gamma p}\p_x p -(\p_x u)^2\nonumber\\
&&\quad \quad \qquad\qquad-\frac{\lambda(\gamma-1)M^2+2\lambda}{2}u\p_x m(\mathbf{x})-\frac{\lambda(\gamma-1)u m(\mathbf{x})}{2}\p_xM^2 \nonumber\\
&&\qquad\qquad\qquad\qquad-\lambda m(\mathbf{x})\p_x u-\lambda m(\mathbf{x})\p_y v + F_1(U),
\end{eqnarray}
where
\begin{eqnarray}\label{4405}
F_1(U) = C_1(U)-C_2(U)-C_3(U)-\frac{\lambda(\gamma-1)}{2}C_4(U)-\lambda C_5(U).
\end{eqnarray}

The next step is using equations $\eqref{4301}_1$,\,$\eqref{4302}_1$, and $\eqref{4304}_1$, to replace normal non-small quantities in \eqref{4404}, such as  $u\p_xu$, \,$(\p_xu)^2$, \,$\p_x\rho$, and $\p_yv$, by tangential small quantities. In fact, we have
\begin{equation}\label{4406}
\begin{cases}
\p_xu= -\frac{1}{\rho u}\p_xp-\lambda m(\mathbf{x}) - \frac{v}{u^2}\p_y E +\frac{1}{\gamma - 1}\frac{v\rho^{\gamma - 1}}{u^2}\p_y A(s)-\frac{v}{u}\p_xv -\frac{\lambda m(\mathbf{x})v^2}{u^2},&\\
\p_x\rho=\frac{1}{c^2}\p_xp+\frac{\lambda m(\mathbf{x})\rho}{u}\Big[1-\frac{\gamma - 1}{2}M^2\Big]+\frac{v \rho^\gamma}{u c^2}\p_y A(s),&\\
\p_y v=-\frac{{\mathrm{D}}_{\mathbf{u}}p}{\gamma p} + \lambda m(\mathbf{x})\Big[1+\frac{(\gamma-1)M^2}{2}\Big] + \frac{1}{u}\Big[v\p_y u + \frac{1}{\rho}\p_xp\Big]-k(x).&\\
\end{cases}
\end{equation}
%\begin{remark}\label{rmk43}
%In the simplification of $\eqref{4303}_1$, For example,
%\begin{itemize}
%\item[i)]
Notice that $\eqref{4406}_3$ follows directly from $\eqref{4304}_1$.
To show $\eqref{4406}_1$, from the definition of total enthalpy $$E=\frac{1}{2}u^2+\frac{1}{2}v^2+\frac{\gamma}{\gamma-1}p^{1-\frac{1}{\gamma}}A(s)^{\frac{1}{\gamma}},$$ there holds $u^2=2E-v^2-\frac{2\gamma}{\gamma-1}p^{1-\frac{1}{\gamma}}A(s)^{\frac{1}{\gamma}}$. Taking derivatives with respect to $x$ yields $2u\p_xu=2\p_xE-2v\p_xv-\frac{2}{\rho}\p_xp-\frac{2}{\gamma-1}\rho^{\gamma-1}\p_xA(s)$. Then
one obtains $\eqref{4406}_1$ after replacing $\p_xE$ and $\p_xA(s)$ using equations $\eqref{4301}_1$, $\eqref{4302}_1$.
For $\eqref{4406}_2$, noticing that $\rho=p^{\frac{1}{\gamma}}A(s)^{-\frac{1}{\gamma}}$, thanks to the state equation $p=A(s)\rho^{\gamma}$, differentiate the two sides  with respect to $x$, one has $\p_x\rho=\frac{1}{c^2}\p_xp-\frac{\rho^{\gamma}}{c^2}\p_xA(s)$. Then $\eqref{4406}_2$ follows by replacing $\p_xA(s)$ using $\eqref{4302}_1$.
%For $\eqref{4406}_3$, it is obtained directly from $\eqref{4304}_1$.
%\end{itemize}
The alerted reader may recognize that we introduce the functions $L^m\,(m=1,\,2,\,3\,)$ in $\eqref{4304}_1$ to make the above procedure of replacing $\p_xE$ and $\p_xA(s)$ etc. precisely.
%\end{remark}

Thanks to $\eqref{4406}$, direct computation shows
\begin{eqnarray}\label{4407}
&&\frac{u\p_x u+\frac{c^2}{\rho}\p_x\rho}{\gamma p}\p_x p-(\p_x u)^2-\lambda m(\mathbf{x})\p_x u-\lambda m(\mathbf{x})\p_y v-\frac{\lambda(\gamma-1)u m(\mathbf{x})}{2}\p_xM^2\nonumber\\
&&\quad =-\frac{1}{\rho^2}\frac{1}{2E - \frac{2}{\gamma - 1}c^2}(\p_x p)^2+\Big[\frac{(\gamma-1)^2}{2}\frac{u^4}{c^4}+\frac{\gamma-1}{2}\frac{u^2}{c^2}-1\Big]\frac{\lambda m(\mathbf{x})c^2}{\gamma p u}\p_x p\nonumber\\
&&\quad \quad +\lambda^2m^2(\mathbf{x})\Big[\frac{(\gamma-1)^2}{4}\frac{u^4}{c^4}-1\Big]+\lambda m(\mathbf{x})k(x)
+F_2(U),
\end{eqnarray}
where
\begin{eqnarray}\label{4408}
&&F_2(U)=\frac{1}{\rho^2}\frac{v^2}{[2E - \frac{2}{\gamma - 1}c^2][2E - \frac{2}{\gamma - 1}c^2 - v^2]}(\p_x p)^2-\frac{(\gamma-1)\lambda m(\mathbf{x})v^2}{2\gamma pu}\p_x p\nonumber\\
&&\quad \quad -\frac{(\gamma-1)\lambda^2 m^2(\mathbf{x})v^2}{2c^2}+\frac{v \rho^{\gamma-1}}{\gamma pu}\p_xp\p_y A(s)+\frac{\lambda m(\mathbf{x})v\p_y p}{\gamma p}\nonumber\\
&&\quad \quad +\frac{\Big[- \frac{v}{u}\p_y E +\frac{1}{\gamma - 1}\frac{v\rho^{\gamma - 1}}{u}\p_y A(s)-v\p_xv -\frac{\lambda m(\mathbf{x})v^2}{u}\Big]^2}{u^2}\nonumber\\
&&\quad \quad -\frac{2\Big[-\frac{1}{\rho}\p_x p-\lambda m(\mathbf{x})u\Big]\Big[-\frac{v}{u}\p_y E +\frac{1}{\gamma - 1}\frac{v\rho^{\gamma - 1}}{u}\p_y A(s)-v\p_xv -\frac{\lambda m(\mathbf{x})v^2}{u}\Big]}{u^2}\nonumber\\
&&\quad \quad +\frac{-\frac{v}{u}\p_y E +\frac{1}{\gamma - 1}\frac{v\rho^{\gamma - 1}}{u}\p_y A(s)-v\p_xv -\frac{\lambda m(\mathbf{x})v^2}{u}}{\gamma p}\p_x p\nonumber\\
&&\quad \quad -\lambda m(\mathbf{x})\frac{- \frac{v}{u}\p_y E +\frac{v\rho^{\gamma - 1}}{(\gamma - 1)u}\p_y A(s)-v\p_x v -\frac{\lambda m(\mathbf{x})v^2}{u}-v\p_y u}{u}\nonumber\\
&&\quad \quad -\frac{\lambda(\gamma-1)u m(\mathbf{x})}{2}\frac{2c^2\Big[- \frac{v}{u}\p_y E +\frac{v\rho^{\gamma - 1}}{(\gamma - 1)u}\p_y A(s)-v\p_xv -\frac{\lambda m(\mathbf{x})v^2}{u}\Big]}{c^4}\nonumber\\
&&\quad \quad +\frac{\lambda(\gamma-1)u m(\mathbf{x})}{2}\frac{u^2\Big[\frac{\gamma-1}{2}\frac{\lambda m(\mathbf{x})v^2}{u}-\frac{v\rho^{\gamma-1}}{u}\p_y A(s)\Big]}{c^4}.
\end{eqnarray}
Substituting $\eqref{4407}$ into $\eqref{4404}$, one has
\begin{eqnarray}\label{4409}
&&{\mathrm{D}}_\mathbf{u}\Big[\frac{{\mathrm{D}}_\mathbf{u} p}{\gamma p}\Big]-\di \Big[\frac{\grad\,
	p}{\rho}\Big]-\Big[(\p_x u)^2+2\p_y u\p_x v+(\p_y v)^2\Big]\nonumber\\
&&\quad \quad -\frac{\lambda(\gamma-1)}{2}{\mathrm{D}}_\mathbf{u}\Big[m(\mathbf{x})M^2\Big]-\lambda \di \Big[m(\mathbf{x})\mathbf{u}\Big]\nonumber\\
&&\quad = \frac{1 }{\gamma p}\left[2E - \frac{\gamma + 1}{\gamma - 1}c^2\right] \p^2_x p - \frac{2}{\gamma p^2}\left[E-\frac{c^2}{\gamma-1} + \frac{c^4}{4\gamma}\frac{1}{E- \frac{c^2}{\gamma - 1}}\right](\p_x p)^2\nonumber\\
&&\quad \quad +\Big[\frac{(\gamma-1)^2}{2}\frac{u^4}{c^4}+\frac{\gamma-1}{2}\frac{u^2}{c^2}-1\Big]\frac{\lambda m(\mathbf{x})c^2}{\gamma p u}\p_x p-\frac{\lambda(\gamma-1)M^2+2\lambda}{2}u\p_x m(\mathbf{x})\nonumber\\
&&\quad \quad +\lambda^2m^2(\mathbf{x})\Big[\frac{(\gamma-1)^2}{4}\frac{u^4}{c^4}-1\Big]+\frac{v^2-c^2}{\gamma p}\p_y^2p+2\frac{uv}{\gamma p}\p_{xy}^2p+\lambda m(\mathbf{x})k(x)\nonumber\\
&&\quad \quad + F_1(U) + F_2(U).
\end{eqnarray}
Multiplying both sides of $\eqref{4409}$ by $\gamma p$, we have
\begin{eqnarray}\label{4410}
&&L_p(U)\triangleq\left[2E - \frac{\gamma + 1}{\gamma - 1}c^2\right] \p^2_x p-c^2\p^2_y p+\Big[v^2\p^2_y p + 2uv\p^2_{xy}p\Big] \nonumber\\
&&\quad \quad\quad -\frac{2 }{ p}\left[E-\frac{c^2}{\gamma-1} + \frac{c^4}{4\gamma}\frac{1}{E- \frac{c^2}{\gamma - 1}}\right](\p_x p)^2+\Big[\frac{(\gamma-1)^2}{2}\frac{u^4}{c^4}+\frac{\gamma-1}{2}\frac{u^2}{c^2}-1\Big]\nonumber\\
&&\quad \quad\quad \times\frac{\lambda m(\mathbf{x})c^2}{u}\p_x p+\gamma \lambda^2m^2(\mathbf{x})\Big[\frac{(\gamma-1)^2}{4}\frac{u^4}{c^4}-1\Big]p-\frac{\lambda(\gamma-1)M^2+2\lambda}{2}\nonumber\\
&&\quad \quad\quad \times\gamma pu\p_x m(\mathbf{x})+\lambda m(\mathbf{x})\gamma pk(x)\nonumber\\
&&\quad =F_3(U)\triangleq-\gamma p(F_1+F_2).
\end{eqnarray}

\iffalse
\subsubsection{Problem  \uppercase\expandafter{\romannumeral2}}
In this subsection we separate the linear main part from the nonlinear small parts in \eqref{4410} and \eqref{4402}, since we are dealing with a small perturbation of a special subsonic background solution $U_b$ as assumed in Theorem \ref{ThmSY}.

 By the above calculation and lemma \ref{Lem41}, the Problem \uppercase\expandafter{\romannumeral1} can be written as the following equivalent form, where the mixed boundary value problem with $\eqref{4410}$ as the equation and $\eqref{4303}_2$,\,$\eqref{4303}_4$,\,$\eqref{4402}$ as the boundary condition is written as problem T.

\underline{Problem  \uppercase\expandafter{\romannumeral2}}:
Solving problems $\eqref{4301}$,\,$\eqref{4302}$,\,$\eqref{4304}$ and problem T in $\Omega$.
\fi
\subsection{Linearization}
In this subsection, we separate the linear main parts from the nonlinear small parts in \eqref{4301}, \eqref{4302}, \eqref{4304}, \eqref{4410}, and \eqref{4402}, since we are dealing with a small perturbation of a special subsonic background solution $U_b$ as assumed in Theorem \ref{ThmSY}.
%This section will discuss the small perturbation problem of subsonic flows. Next, we linearize the equations and boundary conditions for Problem \uppercase\expandafter{\romannumeral2} respectively as the sum of linear principal part and higher order terms.
For convenience, we use $O(x)$ to denote a quantity satisfying $|O(x)| < C|x|$, with $C$ a generic positive constant depending only on the background solution ${U_b}$ and length $l$ of the duct.

\begin{definition}[Higher-order term]\label{Def41}
An expression of $\hat{U}=U-U_b,\,m(\mathbf{x})$ and their derivatives is called
a higher-order term, if it contains either
\begin{itemize}
\item $m(\mathbf{x}) - m_b(x)$ and its first-order derivatives;
\item  products of $\hat{U}$;
\item  the product of $\hat{U}$ and its derivatives ${\mathrm{D}}\hat{U}$.
\end{itemize}
\end{definition}
We notice that $G_1,\,F_1,\,F_2$ defined in \eqref{4403}, \eqref{4405}, and \eqref{4408} are all high-order terms. However, $G_2$ in \eqref{4403} is not, due to the first term in it.

\subsubsection{Linearize equation of total enthalpy $E$}
The background solution $E_b$ is constructed in Section \ref{sec41},  where it is shown that $\frac{\dd E_b}{\dd x} = -\lambda\frac{m_b(x)E_b}{u_b}$. By straightforward calculations,
\begin{eqnarray}\label{4411}
&&{\mathrm{D}}_{\mathbf{u}} E+\lambda m(\mathbf{x})E={\mathrm{D}}_{\mathbf{u}}\hat{E}+\frac{u\lambda m_b(x)}{u_b}\Big[\frac{1}{2}-\frac{1}{(\gamma-1)M^2_b}\Big]\hat{E}+\frac{u\lambda m_b(x)E_b}{\rho_bu^3_b}\hat{p}\nonumber\\
&&\quad \quad\quad \quad\quad \quad\quad \quad\quad \quad +\frac{u\lambda m_b(x)E_b\rho_b^{\gamma-1}}{(\gamma-1)u^3_b}\widehat{A(s)}+F_{E},
\end{eqnarray}
where
\begin{eqnarray*}
&&F_{E} = \frac{u\lambda E}{u_b}(m(\mathbf{x})-m_b(x))-\frac{u\lambda m_b(x)E_b}{u_b}(u-u_b)\Big[\frac{1}{u} - \frac{1 }{u_b}\Big]\nonumber\\
&&\quad \quad +\lambda u(m(\mathbf{x})E-m_b(x)E_b)\Big[\frac{1}{u} - \frac{1 }{u_b}\Big]+\frac{u\lambda m_b(x)E_b(u-u_b)}{(u+u_b)u^3_b}\nonumber\\
&&\quad \quad \times\Big[\hat{E}-\frac{1}{\rho_b}\hat{p}-\frac{\rho_b^{\gamma-1}}{\gamma-1 }\widehat{A(s)}\Big]+\frac{u\lambda m_b(x)E_bv^2}{(u+u_b)u^2_b} + O(|\hat{p}|^2+|\widehat{A(s)}|^2).\nonumber
\end{eqnarray*}
In the calculations, we used the following identity:
\begin{eqnarray}\label{4412}
&&u-u_b = \frac{1}{u_b}\Big[\hat{E}-\frac{1}{\rho_b}\hat{p}-\frac{\rho_b^{\gamma-1}}{\gamma-1 }\widehat{A(s)}\Big]+\frac{u_b-u}{(u+u_b)u_b}\Big[\hat{E}-\frac{1}{\rho_b}\hat{p}-\frac{\rho_b^{\gamma-1}}{\gamma-1 }\widehat{A(s)}\Big]
\nonumber\\
&&\quad \quad -\frac{v^2}{u+u_b} + O(|\hat{p}|^2+|\widehat{A(s)}|^2).
\end{eqnarray}
%\begin{remark}\label{rmk44}
%To get $\eqref{4412}$, %we use the total enthalpy $E=\frac{1}{2}u^2+\frac{1}{2}v^2+\frac{\gamma}{\gamma-1}p^{1-\frac{1}{\gamma}}A(s)^{\frac{1}{\gamma}}$ to get
To see this, recalling that $u^2=2E-v^2-\frac{2\gamma}{\gamma-1}p^{1-\frac{1}{\gamma}}A(s)^{\frac{1}{\gamma}}$, then
\begin{eqnarray*}
u-u_b=\frac{1}{u+u_b}(u^2-u^2_b)=\frac{1}{u+u_b}\left[2E-v^2-\frac{2\gamma}{\gamma-1}p^{1-\frac{1}{\gamma}}A(s)^{\frac{1}
{\gamma}}-2E_b+\frac{2\gamma}{\gamma-1}p^{1-\frac{1}{\gamma}}_bA(s_b)^{\frac{1}{\gamma}}\right],
\end{eqnarray*}
and $\eqref{4412}$ is a consequence of  Taylor expansion up to second order at the background state $U_b$ with respect to the variables $E,\,A(s)$ and $p$.
%\end{remark}

Dividing both sides of $\eqref{4411}$ by $u$, one has
\begin{eqnarray}\label{4413}
{\mathrm{D}}'_{\mathbf{u}}\hat{E} \triangleq \p_x\hat{E} + \frac{v}{u}\p_y\hat{E}=\lambda a_1(\lambda,x)\hat{E} + \lambda a_2(\lambda,x)\widehat{A(s)} + \lambda a_3(\lambda,x)\hat{p}+ \lambda F_{E}',
\end{eqnarray}
where
\begin{equation}\label{4414}
\begin{cases}
a_1(\lambda,x)\triangleq\frac{m_b(x)}{u_b}\Big[\frac{1}{2}-\frac{1}{(\gamma-1)M^2_b}\Big]
,\\
a_2(\lambda,x)\triangleq\frac{m_b(x)E_b\rho_b^{\gamma-1}}{(\gamma-1)u^3_b} > 0,\\
a_3(\lambda,x)\triangleq\frac{m_b(x)E_b}{\rho_bu^3_b} > 0,
\end{cases}
\end{equation}
and
\begin{eqnarray}\label{4415}
&&F_{E}'=\frac{E}{u_b}(m(\mathbf{x})-m_b(x))-\frac{m_b(x)E_b}{u_b}(u-u_b)\Big[\frac{1}{u} - \frac{1}{u_b}\Big]+(m(\mathbf{x})E-m_b(x)E_b)\Big[\frac{1}{u} - \frac{1 }{u_b}\Big]\nonumber\\
&&\quad \quad +\frac{m_b(x)E_b(u-u_b)}{(u+u_b)u^3_b}\Big[\hat{E}-\frac{1}{\rho_b}\hat{p}-\frac{\rho_b^{\gamma-1}}{\gamma-1 }\widehat{A(s)}\Big]+\frac{m_b(x)E_bv^2}{(u+u_b)u_b} + O(|\hat{p}|^2+|\widehat{A(s)}|^2),
\end{eqnarray}
which are composed of  high-order terms and do not contain any derivative of $\hat{U}$.

\subsubsection{Linearize equation of entropy}
%Similar to the linearization of the total enthalpy equation,
We linearize $\eqref{4302}_1$ at the background solution $A(s_b)$ in a similar way, where $\frac{\dd A(s_b)}{\dd x} = -\frac{\lambda \gamma m_b(x)A(s_b)}{u_b}\Big[1-\frac{\gamma-1}{2}M_b^2\Big]$. It follows that
\begin{eqnarray*}
&&{\mathrm{D}}_{\mathbf{u}} A(s)+\lambda \gamma m(\mathbf{x})A(s)\Big[1-\frac{\gamma-1}{2}M^2\Big]\nonumber\\
&&\quad = {\mathrm{D}}_{\mathbf{u}}\widehat{A(s)}+\frac{u}{u^3_b}\lambda \gamma m_b(x)A(s_b)\Big[\frac{3(\gamma-1)}{2}M_b^2-1\Big]\hat{E}\nonumber\\
&&\quad \quad +\frac{u\lambda  m_b(x)}{u_b}\Big[\gamma+\frac{1}{2}-\frac{(\gamma-1)^2}{2}M_b^2+\frac{1}{(\gamma-1)M_b^2}\Big]\widehat{A(s)}\nonumber\\
&&\quad \quad +\frac{u\lambda \gamma m_b(x)A(s_b)}{u^3_b\rho_b}\Big[1+\frac{\gamma-1}{2}M_b^2+\frac{(\gamma-1)^2}{2}M_b^4\Big]\hat{p}+\lambda F_{s},
\end{eqnarray*}
with
\begin{align*}
&F_{s} = \frac{u \gamma (\gamma-1)m_b(x)A(s_b)}{2u_b}\frac{u_b^2}{c_b^2}[c^2-c_b^2]\left[\frac{1}{c^2}-\frac{1}{c_b^2}\right]+u \gamma\left[m(\mathbf{x})A(s)\Big[1-\frac{\gamma-1}{2}M^2\Big]\right.\nonumber\\
&\left.\quad \quad -m_b(x)A(s_b)\Big[1-\frac{\gamma-1}{2}M_b^2\Big]\right]\Big[\frac{1}{u}-\frac{1}{u_b}\Big]+\frac{\gamma A(s)u}{u_b}\Big[1-\frac{\gamma-1}{2}M^2\Big]\nonumber\\
&\quad \quad \times[m(\mathbf{x})- m_b(x)] -\frac{\gamma m_b(x)A(s_b)u}{u_b} \Big[1-\frac{\gamma-1}{2}M_b^2\Big](u-u_b)\Big[\frac{1}{u}-\frac{1}{u_b}\Big]\nonumber\\
&\quad \quad -\frac{\gamma (\gamma-1)m_b(x)u }{2u_b}\widehat{A(s)}[M^2-M_b^2]-\frac{\gamma (\gamma-1)m_b(x)A(s_b)u }{2u_b}(|\mathbf{u}|^2-u_b^2)\nonumber\\
&\quad \quad \times\Big[\frac{1}{c^2}-\frac{1}{c_b^2}\Big]-\frac{\gamma m_b(x)A(s_b)uv^2}{u^2_b[u+u_b]}\Big[1-\frac{\gamma-1}{2}M_b^2\Big]\nonumber\\
&\quad \quad +\frac{\gamma m_b(x)A(s_b)u(u-u_b)} {u^2_b[(u+u_b)u_b]}\Big[1-\frac{\gamma-1}{2}M_b^2\Big]\Big[\hat{E}-\frac{1}{\rho_b}\hat{p}-\frac{\rho_b^{\gamma-1}}{\gamma-1 }\widehat{A(s)}\Big]\nonumber\\
&\quad \quad + O(|\hat{p}|^2 + |\widehat{A(s)}|^2).
\end{align*}
In the above calculations, we used $\eqref{4412}$ and the identity
\begin{eqnarray}\label{4416}
c^2 - c_b^2 = \frac{\gamma - 1}{\rho_b}\hat{p}+\rho_b^{\gamma - 1}\widehat{A(s)}+ O(|\hat{p}|^2 + |\widehat{A(s)}|^2)
\end{eqnarray}
based upon the definition
$c^2=\gamma p^{1-\frac{1}{\gamma}}A(s)^{\frac{1}{\gamma}}$ and Taylor expansion.
%\begin{remark}\label{rmk45}
%In order to obtain $\eqref{4416}$, we get $c^2 - c_b^2=\gamma p^{1-\frac{1}{\gamma}}A(s)^{\frac{1}{\gamma}}-\gamma p^{1-\frac{1}{\gamma}}_bA(s_b)^{\frac{1}{\gamma}}$ by means of $c^2=\gamma p^{1-\frac{1}{\gamma}}A(s)^{\frac{1}{\gamma}}$. Then, the right-side expression of $c^2 - c_b^2$ is expanded by first-order Taylor at the background solution with respect to $p$ and $A(s)$.
%\end{remark}
Therefore we have
\begin{eqnarray}\label{4417}
&&{\mathrm{D}}'_{\mathbf{u}}\widehat{A(s)} \triangleq \p_x\widehat{A(s)} + \frac{v}{u}\p_y\widehat{A(s)}=\lambda  b_1(\lambda,x) \hat{E} + \lambda  b_2(\lambda,x)\widehat{A(s)}\nonumber\\
&&\quad \quad \quad \quad\quad \quad+ \lambda b_3(\lambda,x)\hat{p} + \lambda F_{s}',
\end{eqnarray}
where
\begin{equation}\label{4418}
\begin{cases}\dl
b_1(\lambda,x)\triangleq\frac{\gamma m_b(x)A(s_b)}{u^3_b}\Big[1+\frac{\gamma-1}{2}M_b^2\Big]<0
,\\
\dl b_2(\lambda,x)\triangleq \frac{m_b(x)}{u_b}\Big[\gamma+\frac{1}{2}-\frac{(\gamma-1)^2}{2}M_b^2+\frac{1}{(\gamma-1)M_b^2}\Big] ,\\
\dl b_3(\lambda,x)\triangleq\frac{\gamma m_b(x)A(s_b)}{u^3_b\rho_b}\Big[1+\frac{\gamma-1}{2}M_b^2+\frac{(\gamma-1)^2}{2}M_b^4\Big]> 0,
\end{cases}
\end{equation}
and
\begin{align}\label{4419}
&F_{s}' =  \frac{\gamma (\gamma-1)m_b(x)A(s_b)}{2u_b}\frac{u_b^2}{c_b^2}[c^2-c_b^2]\Big[\frac{1}{c^2}-\frac{1}{c_b^2}\Big]+\gamma\left[m(\mathbf{x})A(s)\Big[1-\frac{\gamma-1}{2}M^2\Big]\right.\nonumber\\
&\left.\quad \quad - m_b(x)A(s_b)(1-\frac{\gamma-1}{2}M_b^2)\right]\Big[\frac{1}{u}-\frac{1}{u_b}\Big]+\frac{\gamma A(s)}{u_b}\Big[1-\frac{\gamma-1}{2}M^2\Big][m(\mathbf{x})- m_b(x)]\nonumber\\
&\quad \quad -\frac{\gamma m_b(x)A(s_b)}{u_b} \Big[1-\frac{\gamma-1}{2}M_b^2\Big](u-u_b)\Big[\frac{1}{u}-\frac{1}{u_b}\Big]-\frac{\gamma (\gamma-1)m_b(x)}{2u_b}\widehat{A(s)}[M^2-M_b^2]\nonumber\\
&\quad \quad -\frac{\gamma (\gamma-1)m_b(x)A(s_b)}{2u_b}(|\mathbf{u}|^2-u_b^2)\Big[\frac{1}{c^2}-\frac{1}{c_b^2}\Big]-\frac{\gamma m_b(x)A(s_b)v^2}{u^2_b[u+u_b]}\Big[1-\frac{\gamma-1}{2}M_b^2\Big]\nonumber\\
&\quad \quad +\frac{\gamma m_b(x)A(s_b)(u-u_b)}{u^2_b[(u+u_b)u_b]}\Big[1-\frac{\gamma-1}{2}M_b^2\Big]\Big[\hat{E}-\frac{1}{\rho_b}\hat{p}-\frac{\rho_b^{\gamma-1}}{\gamma-1 }\widehat{A(s)}\Big]\nonumber\\
&\quad \quad + O(|\hat{p}|^2 + |\widehat{A(s)}|^2).
\end{align}
Notice that $F_{s}'$ is also made of  higher order terms and does not contain any derivative of $\hat{U}$.

\subsubsection{Linearize equation of tangential velocity $v$}
Next, we linearize the equation $\eqref{4304}_1$. Using $\eqref{4412}$ and $\eqref{4416}$, one obtains
\begin{eqnarray}\label{4420}
&&\p_y v = -\frac{{\mathrm{D}}_{\mathbf{u}}p}{\gamma p} + \lambda m(\mathbf{x})\Big[1+\frac{(\gamma-1)M^2}{2}\Big] + \frac{1}{u}\Big[v\p_y u + \frac{1}{\rho}\p_xp\Big] - k(x)\nonumber\\
&&\quad = c_1(\lambda,x)\p_x\hat{p}+c_2(\lambda,x)\hat{p}+c_3(\lambda,x)\hat{E}+c_4(\lambda,x)\widehat{A(s)}+F_{v}-k(x),
\end{eqnarray}
where
\begin{equation}\label{4421}
\begin{cases}
c_1(\lambda,x)\triangleq \frac{1-M^2_b}{\rho_b u_b},\\
c_2(\lambda,x)\triangleq -\frac{\lambda m_b(x)}{2\rho_bc^2_b}\frac{(\gamma-1)^2M^8_b+(\gamma-1)M^6_b+(5\gamma^2-12\gamma+10)M^4_b+(7\gamma-11)M^2_b+2}{M_b^2(1-M^2_b)},\\
c_3(\lambda,x)\triangleq \frac{\lambda m_b(x)}{2u^2_b}\frac{3(\gamma-1)M^4_b+(\gamma+3)M^2_b+2}{1-M^2_b},\\
c_4(\lambda,x)\triangleq -\frac{\lambda m_b(x)\rho_b^{\gamma-1}}{2c^2_b}\frac{(\gamma-1)M^8_b+(\gamma-1)M^6_b+(2\gamma-6)M^4_b+(\gamma+5)M^2_b+2}{M_b^2(1-M^2_b)},
\end{cases}
\end{equation}
and
\begin{eqnarray}\label{4422}
&&F_{v} = \Big[\frac{1-\frac{u^2}{c^2}}{\rho u}-\frac{1-\frac{u^2_b}{c^2}}{\rho_b u_b}\Big]\p_x\hat{p}-\frac{\dd p_b}{\dd x}\Big[\frac{u^2}{c^2}-\frac{u^2_b}{c^2_b}\Big]\Big[\frac{1}{\rho u}-\frac{1}{\rho_b u_b}\Big]\nonumber\\
&&\quad \quad + [1-M^2_b]\frac{\dd p_b}{\dd x}\Big[\frac{1}{\rho}-\frac{1}{\rho_b}\Big]\Big[\frac{1}{u}-\frac{1}{u_b}\Big]-\frac{1}{\rho_b u_b}\frac{\dd p_b}{\dd x}[u^2-u^2_b]\Big[\frac{1}{c^2}-\frac{1}{c^2_b}\Big]\nonumber\\
&&\quad \quad - \frac{1}{\rho_b u_b}\frac{\dd p_b}{\dd x}[u-u_b]\Big[\frac{1}{u}-\frac{1}{u_b}\Big]-\frac{1}{\rho_b u_b}\frac{\dd p_b}{\dd x}[\rho-\rho_b]\Big[\frac{1}{\rho}-\frac{1}{\rho_b}\Big]\nonumber\\
&&\quad \quad + \frac{u_b}{\rho_bc^2_b}\Big[\frac{1}{c^2}-\frac{1}{c^2_b}\Big][c^2-c^2_b]+\Big[1+\frac{(\gamma-1)\lambda }{2}\frac{u^2}{c^2}\Big][\lambda m(\mathbf{x})-\lambda m_b(x)]\nonumber\\
&&\quad \quad + \frac{(\gamma-1)\lambda m(\mathbf{x})}{2}\frac{v^2}{c^2}+\frac{v}{u}\p_y u-\frac{v\p_y p}{\gamma p}-\frac{(\gamma-1)\lambda m_b(x)M^2_b}{2}\Big[\frac{1}{c^2}-\frac{1}{c^2_b}\Big]\nonumber\\
&&\quad \quad \times [c^2-c^2_b]-\frac{\lambda m_b(x)v^2}{2u_b(u+u_b)}\Big[2+(\gamma-1)M^2_b\Big]- \frac{\lambda m_b(x)v^2}{2c^2_b}\frac{3+(\gamma-1)M^2_b}{1-M^2_b}\nonumber\\
&&\quad \quad +\frac{\lambda m_b(x)(u_b-u)}{2u^2_b(u+u_b)}\Big[2+(\gamma-1)M^2_b\Big]\Big[\hat{E}-\frac{1}{\rho_b}\hat{p}-\frac{\rho_b^{\gamma-1}}{\gamma-1 }\widehat{A(s)}\Big] \nonumber\\
&&\quad \quad + \frac{(\gamma-1)\lambda m_b(x)}{2}\Big[\frac{1}{c^2}-\frac{1}{c^2_b}\Big][u^2-u^2_b]+ O(|\hat{p}|^2+|\widehat{A(s)}|^2).
\end{eqnarray}
By $\eqref{4316}$, one has
\begin{eqnarray}\label{4423}
&&k(x) = \frac{1}{\pi}\int_{0}^{\pi}\big[c_1(\lambda,x)\p_x\hat{p}+c_2(\lambda,x)\hat{p}+c_3(\lambda,x)\hat{E}+c_4(\lambda,x)\widehat{A(s)}+F_{v}\big]\,\dd y\nonumber\\
&&\quad =\frac{c_1(\lambda,x)}{\pi}\int_{0}^{\pi}\p_x\hat{p}\,\dd y+\frac{c_2(\lambda,x)}{\pi}\int_{0}^{\pi}\hat{p}\,\dd y+\frac{c_3(\lambda,x)}{\pi}\int_{0}^{\pi}\hat{E}\,\dd y\nonumber\\
&&\quad \quad + \frac{c_4(\lambda,x)}{\pi}\int_{0}^{\pi}\widehat{A(s)}\,\dd y+\frac{1}{\pi}\int_{0}^{\pi}F_{v}\,\dd y.
\end{eqnarray}
%and
%\begin{eqnarray*}
%k_b(x) \triangleq \frac{1}{\pi}\int_{0}^{\pi}  - \frac{u_b\frac{\dd p}{\dd x}}{\gamma p_b} + \lambda m_b(x)\Big[1+\frac{(\gamma-1)M^2_b}{2}\Big] + \frac{1}{u_b}\Big[v_b\p_y u_b + \frac{1}{\rho_b}\frac{\dd p}{\dd x}\Big]\dd y\equiv 0.
%\end{eqnarray*}
So $\eqref{4420}$ can be written as
\begin{eqnarray}\label{4424}
&&\p_y v = c_1(\lambda,x)\p_x\hat{p}+c_2(\lambda,x)\hat{p}+c_3(\lambda,x)\hat{E}+c_4(\lambda,x)\widehat{A(s)}-\frac{c_1(\lambda,x)}{\pi}\int_{0}^{\pi}\p_x\hat{p}\,\dd y\nonumber\\
&&\quad \quad - \frac{c_2(\lambda,x)}{\pi}\int_{0}^{\pi}\hat{p}\,\dd y-\frac{c_3(\lambda,x)}{\pi}\int_{0}^{\pi}\hat{E}\,\dd y-\frac{c_4(\lambda,x)}{\pi}\int_{0}^{\pi}\widehat{A(s)}\,\dd y+F_{v}',
\end{eqnarray}
with
\begin{eqnarray}\label{4425}
F_{v}'=F_{v}-\frac{1}{\pi}\int_{0}^{\pi}F_{v}\,\dd y.
\end{eqnarray}
Notice that $F_{v}'$ is consisted of higher-order terms and does not contain any derivative of $u,\,v$ with respect to $x$.

\subsubsection{Linearize equation of pressure $p$}
Observing that $L_p(U_b) = 0$, we will linearize $\eqref{4410}$ to obtain the equation of perturbed pressure in the form $\mathcal {L}(\hat{U}) = L_p(U) - L_p(U_b) + F_4(U)$, where $F_4(U)$ are higher-order terms. We  carry out the calculations term by term.

The first term:
\begin{eqnarray}\label{4426}
&&\Big[2E- \frac{\gamma + 1}{\gamma - 1}c^2\Big]\p^2_x p -\Big[2E_b- \frac{\gamma + 1}{\gamma - 1}c^2_b\Big]\frac{\dd^2 p_b}{\dd x^2}\nonumber\\
&&\quad =\Big[2\hat{E}- \frac{\gamma + 1}{\gamma - 1}(c^2 - c^2_b)\Big]\p_x^2\hat{p}+(u^2_b - c^2_b)\p_x^2\hat{p} + \Big[2\hat{E}- \frac{\gamma + 1}{\gamma - 1}(c^2 - c^2_b)\Big]\frac{\dd^2 p_b}{\dd x^2}.
\end{eqnarray}

The second term:
\begin{eqnarray}\label{4427}
c^2\p^2_y p =  [c^2 -c^2_b]\p^2_y \hat{p} + c^2_b\p^2_y \hat{p}.
\end{eqnarray}

The third term:
\begin{eqnarray}\label{4428}
&&\frac{2}{p}\Big[E- \frac{c^2}{\gamma - 1}\Big](\p_x p)^2 - \frac{2}{p_b}\Big[E_b- \frac{c^2_b}{\gamma - 1}\Big]\Big[\frac{\dd p_b}{\dd x}\Big]^2\nonumber\\
&&\quad = \frac{2}{p_b}\Big[\frac{\dd p_b}{\dd x}\Big]^2\hat{E}-\frac{2}{(\gamma - 1)p_b}\Big[\frac{\dd p_b}{\dd x}\Big]^2[c^2 - c^2_b]- \frac{u^2_b}{p^2_b}\Big[\frac{\dd p_b}{\dd x}\Big]^2\hat{p}\nonumber\\
&&\quad \quad + \frac{2u^2_b}{p_b}\frac{\dd p_b}{\dd x}\p_x\hat{p} + D_1(U),
\end{eqnarray}
where
\begin{eqnarray*}
&&D_1(U) =  \p_x(p + p_b)\p_x\hat{p}\Big[\frac{|\mathbf{u}|^2}{p} - \frac{u^2_b}{p_b}\Big]+ \Big[\frac{\dd p_b}{\dd x}\Big]^2(|\mathbf{u}|^2 - u^2_b)\Big[\frac{1}{p} - \frac{1}{p_b}\Big]\nonumber\\
&&\quad \quad - \Big[\frac{\dd p_b}{\dd x}\Big]^2\frac{\hat{p}u^2_b}{p_b}\Big[\frac{1}{p} - \frac{1}{p_b}\Big]
\end{eqnarray*}
involves only $\hat{U}$ and its first-order derivatives.

The fourth term:
\begin{eqnarray}\label{4429}
&&\frac{2}{p}\Big[\frac{c^4}{4\gamma }\frac{1}{E- \frac{c^2}{\gamma - 1}}\Big](\p_x p)^2 - \frac{2}{p_b}\Big[\frac{c^4_b}{4\gamma }\frac{1}{E_b- \frac{c_b^2}{\gamma - 1}}\Big]\Big[\frac{\dd p_b}{\dd x}\Big]^2 \nonumber\\
&&\quad = \frac{1}{\gamma}\left[\frac{c^4}{p|\mathbf{u}|^2}(\p_x p)^2 -  \frac{c_b^4}{p_b u_b^2}\Big(\frac{\dd p_b}{\dd x}\Big)^2\right] = - \frac{2}{\gamma p_b M_b^4}\Big[\frac{\dd p_b}{\dd x}\Big]^2\hat{E}\nonumber\\
&&\quad \quad +\frac{2}{\gamma(\gamma - 1) p_b M_b^4}[1+(\gamma - 1)M_b^2]\Big[\frac{\dd p_b}{\dd x}\Big]^2[c^2 - c_b^2]
- \frac{c_b^4}{\gamma p_b^2u_b^2}\Big[\frac{\dd p_b}{\dd x}\Big]^2\hat{p}\nonumber\\
&&\quad \quad + \frac{2c_b^4}{\gamma p_b u_b^2}\frac{\dd p_b}{\dd x}\p_x \hat{p} + D_2(U),
\end{eqnarray}
where
\begin{eqnarray*}
&&D_2(U) = \frac{\p_x(p + p_b)\p_x\hat{p}}{\gamma}\Big[\frac{c^4}{p|\mathbf{u}|^2} - \frac{c_b^4}{p_b u_b^2}\Big] + \frac{1}{\gamma}\frac{\dd p_b}{\dd x}\Big[\frac{1}{|\mathbf{u}|^2} - \frac{1}{u_b^2}\Big]\Big[\frac{c^4}{p} - \frac{c_b^4}{p_b}\Big]\nonumber\\
&&\quad \quad -\Big[\frac{\dd p_b}{\dd x}\Big]^2\frac{2c_b^4}{\gamma p_b u_b^2}\Big[\hat{E} - \frac{1}{\gamma -1}(c^2 - c_b^2)\Big]\Big[\frac{1}{|\mathbf{u}|^2} - \frac{1}{u_b^2}\Big] + \frac{c_b^4}{\gamma p_b u_b^2}(\p_x \hat{p})^2\nonumber\\
&&\quad \quad -\Big[\frac{\dd p_b}{\dd x}\Big]^2\frac{\hat{p}c_b^4}{\gamma p_b u_b^2}\Big[\frac{1}{p} - \frac{1}{p_b}\Big]+\frac{(c^2 - c_b^2)}{\gamma u_b^2}\Big[\frac{\dd p_b}{\dd x}\Big]^2\Big[\frac{c^2}{p} - \frac{c_b^2}{p_b}\Big]\nonumber\\
&&\quad \quad +\Big[\frac{\dd p_b}{\dd x}\Big]^2\frac{c_b^2}{\gamma u_b^2}[c^2 - c_b^2]\Big[\frac{1}{p} - \frac{1}{p_b}\Big].
\end{eqnarray*}
It contains only $\hat{U}$ and first-order derivatives of $\hat{U}$.

The fifth term:
\begin{eqnarray}\label{4430}
&&\frac{\lambda(\gamma-1)^2}{2} \frac{m(\mathbf{x})u^3}{c^2}\p_x p-\frac{\lambda(\gamma-1)^2}{2} \frac{m_b(x)u^3_b}{c^2_b}\frac{\dd p_b}{\dd x}\nonumber\\
&&\quad =\frac{\lambda(\gamma-1)^2}{2} \frac{m_b(x)u^3_b}{c^2_b}\p_x\hat{p}+\frac{3\lambda(\gamma-1)^2}{2}\frac{m_b(x)u_b^2}{c^2_b}\frac{\dd p_b}{\dd x}[u-u_b]\nonumber\\
&&\quad \quad - \frac{\lambda(\gamma-1)^2}{2}\frac{m_b(x)u^3_b}{c^4_b}\frac{\dd p_b}{\dd x}[c^2-c^2_b]+D_3(U),
\end{eqnarray}
where
\begin{eqnarray*}
&&D_3(U) =\frac{\lambda(\gamma-1)^2}{2}\Big[\frac{m(\mathbf{x})u^3}{c^2}-\frac{m_b(x)u^3_b}{c^2_b}\Big]\p_x\hat{p}+\frac{\lambda(\gamma-1)^2}{2}\frac{\dd p_b}{\dd x}\frac{u^3}{c^2}[m(\mathbf{x})-m_b(x)]\nonumber\\
&&\quad \quad +\frac{\lambda(\gamma-1)^2}{2}\frac{m_b(x)(u^2+u u_b-2u_b^2)}{c^2_b}\frac{\dd p_b}{\dd x}[u-u_b]+\frac{\lambda(\gamma-1)^2}{2}m_b(x)\frac{\dd p_b}{\dd x}[u^3-u^3_b]\nonumber\\
&&\quad \quad \times\Big[\frac{1}{c^2}-\frac{1}{c^2_b}\Big]-\frac{\lambda(\gamma-1)^2}{2}\frac{m_b(x)u^3_b}{c^2_b}\frac{\dd p_b}{\dd x}\Big[\frac{1}{c^2}-\frac{1}{c^2_b}\Big][c^2-c^2_b]
\end{eqnarray*}
consists of $\hat{U}$ and the first-order derivatives of $\hat{U}$.

The sixth term:
\begin{eqnarray}\label{4431}
&&\frac{\lambda(\gamma-1)}{2}m(\mathbf{x})u\p_x p-\frac{\lambda(\gamma-1)}{2}m_b(x)u_b\frac{\dd p_b}{\dd x}\nonumber\\
&&\quad =\frac{(\gamma-1)\lambda m_b(x) u_b}{2}\p_x\hat{p}+\frac{\lambda (\gamma-1)m_b(x)}{2}\frac{\dd p_b}{\dd x}[u-u_b]+D_4(U),
\end{eqnarray}
where
\begin{eqnarray*}
D_4(U) = \frac{\lambda (\gamma-1)}{2}[m(\mathbf{x})u-m_b(x)u_b]\p_x\hat{p}-\frac{\lambda (\gamma-1)u}{2}\frac{\dd p_b}{\dd x}[m(\mathbf{x})-m_b(x)]
\end{eqnarray*}
depends only on the first-order derivatives of $\hat{U}$ and $\hat{U}$ itself.

The seventh term:
\begin{eqnarray}\label{4432}
&&\frac{\lambda m(\mathbf{x})c^2}{u}\p_x p - \frac{\lambda m_b(x)c^2_b}{u_b}\frac{\dd p_b}{\dd x}\nonumber\\
&&\quad =\frac{\lambda m_b(x)c^2_b}{u_b}\p_x\hat{p}+\frac{\lambda m_b(x)}{u_b}\frac{\dd p_b}{\dd x}[c^2-c^2_b]-\frac{\lambda m_b(x)c^2_b}{u^2_b}\frac{\dd p_b}{\dd x}[u-u_b]+D_5(U),
\end{eqnarray}
with
\begin{eqnarray*}
&&D_5(U) = \Big[\frac{\lambda m(\mathbf{x})c^2}{u} - \frac{\lambda m_b(x)c^2_b}{u_b}\Big]\p_x \hat{p}+\frac{c^2}{u}\frac{\dd p_b}{\dd x}[\lambda m(\mathbf{x})-\lambda m_b(x)]\nonumber\\
&&\quad \quad + \lambda m_b(x)\frac{\dd p_b}{\dd x}[c^2-c^2_b]\Big[\frac{1}{u}-\frac{1}{u_b}\Big]-\frac{\lambda m_b(x)c^2_b}{u_b}\frac{\dd p_b}{\dd x}\Big[\frac{1}{u}-\frac{1}{u_b}\Big][u-u_b]
\end{eqnarray*}
involving only $\mathrm{D}\hat{U}$ and $\hat{U}$ itself.

The eighth term:
\begin{eqnarray}\label{4433}
&&\frac{\lambda(\gamma-1)M^2+2\lambda}{2}\gamma pu\p_xm(\mathbf{x})- \frac{\lambda(\gamma-1)M^2_b+2\lambda}{2}\gamma p_bu_b\frac{\dd m_b(x)}{\dd x}\nonumber\\
&&\quad =\frac{\gamma u_b(\lambda(\gamma-1)M^2_b+2\lambda)}{2} \frac{\dd m_b(x)}{\dd x}\hat{p}+\frac{\gamma p_b(\lambda(\gamma-1)M^2_b+2\lambda)}{2}\frac{\dd m_b(x)}{\dd x}[u-u_b]\nonumber\\
&&\quad \quad +\lambda(\gamma-1)\rho_bu_b\frac{\dd m_b(x)}{\dd x}\hat{E}-\lambda\rho_bu_b\left[1+\frac{\gamma-1}{2}M^2_b\right]\frac{\dd m_b(x)}{\dd x}[c^2-c^2_b]\nonumber\\
&&\quad \quad\quad \quad+D_6(U),
\end{eqnarray}
where
\begin{eqnarray*}
&&D_6(U) = \frac{\gamma pu(\lambda(\gamma-1)M^2+2\lambda)}{2}\Big[\p_1m(\mathbf{x})-\frac{\dd m_b(x)}{\dd x}\Big]-\frac{\lambda(\gamma-1)\rho_bu^3_b}{2}\frac{\dd m_b(x)}{\dd x}\nonumber\\
&&\quad \quad \times\Big[\frac{1}{c^2}-\frac{1}{c^2_b}\Big][c^2-c^2_b]+\frac{\lambda(\gamma-1)M^2_b+2\lambda}{2}\frac{\dd m_b(x)}{\dd x}\gamma \hat{p}[u-u_b]\nonumber\\
&&\quad +\frac{\lambda\gamma (\gamma-1) p_bu_b}{2}\frac{\dd m_b(x)}{\dd x}\Big[\frac{1}{c^2}-\frac{1}{c^2_b}\Big][|\mathbf{u}|^2-u^2_b]\nonumber\\
&&\quad \quad +\frac{\dd m_b(x)}{\dd x}\Big[\frac{\lambda(\gamma-1)M^2+2\lambda}{2}-\frac{\lambda(\gamma-1)M^2_b+2\lambda}{2}\Big][\gamma pu-\gamma p_bu_b].
\end{eqnarray*}
Each term in it only contains $\hat{U}$.

The ninth item:
\begin{eqnarray}\label{4434}
&&\gamma\lambda^2m^2(\mathbf{x})\Big[\frac{(\gamma-1)^2}{4}\frac{u^4}{c^4}-1\Big]p-\gamma\lambda^2m^2_b(x)\Big[\frac{(\gamma-1)^2}{4}\frac{u^4_b}{c^4_b}-1\Big]p_b\nonumber\\
&&\quad =\gamma\lambda^2m^2_b(x)\Big[\frac{(\gamma-1)^2}{4}\frac{u^4_b}{c^4_b}-1\Big]\hat{p} +\frac{\gamma\lambda^2(\gamma-1)^2m^2_b(x)u_b^3p_b}{c^4_b}[u-u_b]\nonumber\\
&&\quad \quad -\frac{\gamma\lambda^2(\gamma-1)^2m^2_b(x)u_b^4p_b}{2c^6_b}[c^2-c^2_b]+D_7(U),
\end{eqnarray}
where
\begin{eqnarray*}
&&D_7(U)=\gamma p_b\Big[\frac{(\gamma-1)^2}{4}\frac{u^4}{c^4}-1\Big][\lambda^2m^2(\mathbf{x})-\lambda^2m^2_b(x)]+\frac{\gamma \lambda^2(\gamma-1)^2m^2_b(x) p_b}{4}\nonumber\\
&&\quad \quad \times [u^4-u^4_b]\Big[\frac{1}{c^4}-\frac{1}{c^4_b}\Big]-\frac{\gamma \lambda^2(\gamma-1)^2m^2_b(x)u^4_b p_b}{4c^4_b}[c^4-c^4_b]\Big[\frac{1}{c^4}-\frac{1}{c^4_b}\Big]\nonumber\\
&&\quad \quad + \frac{\gamma \lambda^2(\gamma-1)^2m^2_b(x) p_b}{4c^4_b}[u^2-u^2_b]^2-\frac{\gamma \lambda^2(\gamma-1)^2m^2_b(x)u^4_b p_b}{4c^8_b}[c^2-c^2_b]^2\nonumber\\
&&\quad \quad +\gamma\left(\lambda^2m^2(\mathbf{x})\Big[\frac{(\gamma-1)^2}{4}\frac{u^4}{c^4}-1\Big]-\lambda^2m^2_b(x)\Big[\frac{(\gamma-1)^2}{4}\frac{u^4_b}{c^4_b}-1\Big]\right)\hat{p}\nonumber\\
&&\quad \quad
+\frac{\gamma \lambda^2(\gamma-1)^2m^2_b(x)u^2_b p_b}{2c^4_b}[u-u_b]^2
\end{eqnarray*}
contains terms depending only on $\hat{U}$.

For the tenth term, by $\eqref{4423}$, we have
\begin{eqnarray}\label{4435}
&&\lambda \gamma m(\mathbf{x})pk(x)=\lambda \gamma[m(\mathbf{x})-m_b(x)] pk(x)+\lambda \gamma m_b(x)k(x)\hat{p}+\lambda \gamma m_b(x)p_bk(x)\nonumber\\
&&\quad =\lambda \gamma[m(\mathbf{x})-m_b(x)] pk(x)+\lambda \gamma m_b(x)k(x)\hat{p}+\frac{\lambda \gamma m_b(x)p_bc_1(\lambda,x)}{\pi}\int_{0}^{\pi}\p_x\hat{p}\,\dd y\nonumber\\
&&\quad \quad +\frac{\lambda \gamma m_b(x)p_bc_2(\lambda,x)}{\pi}\int_{0}^{\pi}\hat{p}\,\dd y+\frac{\lambda \gamma m_b(x)p_bc_3(\lambda,x)}{\pi}\int_{0}^{\pi}\hat{E}\,\dd y\nonumber\\
&&\quad \quad +\frac{\lambda \gamma m_b(x)p_bc_4(\lambda,x)}{\pi}\int_{0}^{\pi}\widehat{A(s)}\,\dd y+\frac{\lambda \gamma m_b(x)p_b}{\pi}\int_{0}^{\pi}F_{v}\,\dd y\nonumber\\
&&\quad =\frac{\lambda \gamma m_b(x)p_bc_1(\lambda,x)}{\pi}\int_{0}^{\pi}\p_x\hat{p}\,\dd y+\frac{\lambda \gamma m_b(x)p_bc_2(\lambda,x)}{\pi}\int_{0}^{\pi}\hat{p}\,\dd y\nonumber\\
&&\quad \quad +\frac{\lambda \gamma m_b(x)p_bc_3(\lambda,x)}{\pi}\int_{0}^{\pi}\hat{E}\,\dd y+\frac{\lambda \gamma m_b(x)p_bc_4(\lambda,x)}{\pi}\int_{0}^{\pi}\widehat{A(s)}\,\dd y\nonumber\\
&&\quad \quad +D_8(U),
\end{eqnarray}
where
\begin{eqnarray*}
&&D_8(U)=\lambda \gamma[m(\mathbf{x})-m_b(x)] pk(x)+\lambda \gamma m_b(x)k(x)\hat{p}+\frac{\lambda \gamma m_b(x)p_b}{\pi}\int_{0}^{\pi}F_{v}\dd y
\end{eqnarray*}
depends only on $\hat{U}$.

Now substituting \eqref{4426}-\eqref{4435} into $\eqref{4410}$, we have
\begin{eqnarray}\label{4436}
&&L_p(\hat{U})=\left[2\hat{E}- \frac{\gamma + 1}{\gamma - 1}(c^2 - c^2_b)\right]\p_x^2\hat{p} + (u^2_b - c^2_b)\p_x^2\hat{p} - c^2_b\p^2_y \hat{p}
- [c^2 -c^2_b]\p^2_y \hat{p}\nonumber\\
&&\quad \quad +\Big[v^2\p^2_yp + 2uv\p^2_{xy}p\Big]+d_1(\lambda,x)\p_x\hat{p}+d_2(\lambda,x)\hat{p}+d_3(\lambda,x)\hat{E}\nonumber\\
&&\quad \quad +d_4(\lambda,x)[u-u_b]+d_5(\lambda,x)[c^2 - c^2_b]+d_6(\lambda,x)\int_{0}^{\pi}\p_x\hat{p}\,\dd y\nonumber\\
&&\quad \quad +d_7(\lambda,x)\int_{0}^{\pi}\hat{p}\,\dd y+d_8(\lambda,x)\int_{0}^{\pi}\hat{E}\,\dd y+d_9(\lambda,x)\int_{0}^{\pi}\widehat{A(s)}\,\dd y\nonumber\\
&&\quad \quad -D_1(U)-D_2(U)+D_3(U)+D_4(U)-D_5(U)-D_6(U)\nonumber\\
&&\quad \quad +D_7(U)+D_8(U),
\end{eqnarray}
where
\begin{equation}\label{4437}
\begin{cases}
d_1(\lambda,x)\triangleq-\frac{2u_b^2}{p_b}\Big[1+\frac{1}{\gamma M^4}\Big]\frac{\dd p_b}{\dd x}+\frac{\lambda m_b(x)c^2_b}{u_b}\Big[\frac{(\gamma-1)^2}{2}M^4_b+\frac{\gamma - 1}{2}M^2_b-1\Big]
,\\
d_2(\lambda,x)\triangleq\frac{u_b^2}{p_b}\Big[1+\frac{1}{\gamma M^4_b}\Big]\Big[\frac{\dd p_b}{\dd x}\Big]^2-\lambda\gamma u_b\Big[1+\frac{\gamma-1}{2}M^2_b\Big]\frac{\dd m_b(x)}{\dd x}\\ \quad\quad\quad \quad+\gamma\lambda^2m^2_b(x)\Big[\frac{(\gamma-1)^2}{4}\frac{u^4_b}{c^4_b}-1\Big],\\
d_3(\lambda,x)\triangleq2\frac{\dd^2 p_b}{\dd x^2}-\frac{2}{p_b}\Big[1-\frac{1}{\gamma M^4_b}\Big]\Big[\frac{\dd p_b}{\dd x}\Big]^2-\lambda(\gamma-1)\rho_b u_b\frac{\dd m_b(x)}{\dd x},\\
d_4(\lambda,x)\triangleq\frac{\lambda m_b(x)\left([3(\gamma-1)^2+2]M^2_b+(\gamma-1)\right)}{2}\frac{\dd p_b}{\dd x}-\frac{\gamma p_b(\lambda(\gamma-1)M^2_b+2\lambda)}{2}\frac{\dd m_b(x)}{\dd x}\\ \quad \quad\quad \quad+\frac{\gamma\lambda^2(\gamma-1)^2m^2_b(x)u_b^3p_b}{c^4_b},\\
d_5(\lambda,x)\triangleq-\frac{\gamma+1}{\gamma-1}\frac{\dd^2 p_b}{\dd x^2}+\frac{2}{(\gamma-1)p_b}\Big[1-\frac{1+(\gamma-1)M^2_b}{\gamma M_b^4}\Big]\Big[\frac{\dd p_b}{\dd x}\Big]^2\\
\quad \quad \quad \quad+\frac{\lambda m_b(x)}{u_b}\frac{\dd p_b}{\dd x}\Big[1-\frac{(\gamma-1)^2M_b^4}{2}\Big]+\lambda\rho_bu_b\Big[1+\frac{\gamma-1}{2}M^2_b\Big]\frac{\dd m_b(x)}{\dd x}\\
\quad \quad \quad \quad-\frac{\gamma\lambda^2(\gamma-1)^2m^2_b(x)u_b^4p_b}{2c^6_b},\\
d_6(\lambda,x)\triangleq\frac{\lambda \gamma m_b(x)p_bc_1(\lambda,x)}{\pi},\quad d_7(\lambda,x)\triangleq\frac{\lambda \gamma m_b(x)p_bc_2(\lambda,x)}{\pi},\\
d_8(\lambda,x)\triangleq\frac{\lambda \gamma m_b(x)p_bc_3(\lambda,x)}{\pi},\quad d_9(\lambda,x)\triangleq\frac{\lambda \gamma m_b(x)p_bc_4(\lambda,x)}{\pi}.
\end{cases}
\end{equation}
After replacing $u-u_b$ and $c^2-c^2_b$ in $\eqref{4436}$ by $\eqref{4412}$ and $\eqref{4416}$, and dividing $c^2_b$, we have
\begin{eqnarray}\label{4438}
&&\mathcal {L}(\hat{p}) \triangleq e_1(\lambda,x)\p_x^2\hat{p}-\p^2_y\hat{p}+
e_2(\lambda,x)\p_x\hat{p} + e_3(\lambda,x)\hat{p} +  e_4(\lambda,x)\hat{E} + e_5(\lambda,x)\widehat{A(s)}\nonumber\\
&&\quad \quad + e_6(\lambda,x)\int_{0}^{\pi}\p_x\hat{p}\,\dd y+e_7(\lambda,x)\int_{0}^{\pi}\hat{p}\,\dd y+e_8(\lambda,x)\int_{0}^{\pi}\hat{E}\,\dd y\nonumber\\
&&\quad \quad + e_9(\lambda,x)\int_{0}^{\pi}\widehat{A(s)}\,\dd y+\frac{1}{c_b^2}\Big[2\hat{E}- \frac{\gamma + 1}{\gamma - 1}(c^2 - c^2_b)\Big]\p_x^2\hat{p}-\frac{c^2-c_b^2}{c_b^2}\p^2_y\hat{p}\nonumber\\
&&\quad \quad + \frac{v^2}{c_b^2}\p^2_y\hat{p}+\frac{2uv}{c_b^2}\p^2_{xy}\hat{p} = F_5\triangleq \frac{1}{c_b^2}(F_3+F_4),
\end{eqnarray}
where
\begin{equation}\label{4439}
\begin{cases}
e_1(\lambda,x)\triangleq \Big[M_b^2(\lambda,x) - 1\Big] < 0,\quad e_2(\lambda,x)\triangleq \frac{d_1(\lambda,x)}{c^2_b},\\
e_3(\lambda,x)\triangleq \frac{d_2(\lambda,x)}{c^2_b}+\frac{(\gamma-1)d_5(\lambda,x)-\frac{d_4(\lambda,x)}{u_b}}{\rho_bc^2_b},\quad e_4(\lambda,x)\triangleq \frac{d_3(\lambda,x)}{c^2_b}+\frac{d_4(\lambda,x)}{u_bc^2_b},\\
e_5(\lambda,x)\triangleq \frac{\rho^{\gamma-1}_b}{(\gamma-1)c^2_b}[(\gamma-1)d_5(\lambda,x)-\frac{d_4(\lambda,x)}{u_b}],\quad e_6(\lambda,x)\triangleq \frac{d_6(\lambda,x)}{c^2_b},\\
e_7(\lambda,x)\triangleq \frac{d_7(\lambda,x)}{c^2_b},\quad e_8(\lambda,x)\triangleq \frac{d_8(\lambda,x)}{c^2_b},\quad e_9(\lambda,x)\triangleq \frac{d_9(\lambda,x)}{c^2_b},
\end{cases}
\end{equation}
and
\begin{eqnarray}\label{4440}
&&F_4(U) = - D_1(U)-D_2(U)+D_3(U)+D_4(U)-D_5(U)+D_6(U)+D_7(U)\nonumber\\
&&\quad \quad + D_8(U) + c_4(\lambda,x)\frac{u_b-u}{(u+u_b)u_b}\Big[\hat{E}-\frac{1}{\rho_b}\hat{p}-\frac{\rho_b^{\gamma-1}}{\gamma-1 }\widehat{A(s)}\Big] \nonumber\\
&&\quad \quad - c_4(\lambda,x)\frac{v^2}{u+u_b} + O(|\hat{p}|^2+|\widehat{A(s)}|^2).
\end{eqnarray}
Note that $F_4(U)$ consists of higher-order terms,  and  contains only $\hat{U}$, as well as its first-order derivatives. Therefore, the same is true for $F_5(U)$ defined in \eqref{4438}.

\begin{remark}\label{Remp1}
In previous works, such as the geometric effect studied in \cite[p.415, (4.22)]{Liu-Xu-Yuan-2016} or frictions considered in \cite[p.478, (2.26)]{Yuan-Zhao-2020}, the pressure equations do not contain nonlocal terms for purely subsonic flows. However, $\eqref{4438}$ has four integral nonlocal terms. This indicates that mass-additions have a stronger coupling effect. In addition, compared with  \cite{Liu-Xu-Yuan-2016} and \cite{Yuan-Zhao-2020}, we retain all the second-order derivatives of pressure $\hat{p}$ in the definition of the operator $\mathcal{L}$, which is useful for decreasing regularity required to design a nonlinear iteration scheme for further studies of subsonic Euler equations. The induced difficulty is solved by openness of invertible bounded linear operators on Banach spaces. \qed
\end{remark}

\subsubsection{Linearize boundary conditions of pressure $p$ on inlet}
We have obtained the boundary condition $\eqref{4402}$ for the pressure at the inlet. Now we linearize it at the background solution $p_b$:
\begin{eqnarray*}
\p_x(p-p_b) - \lambda \gamma\left[\frac{m(\mathbf{x})u [1+\frac{(\gamma-1)M^2}{2}] }{u^2 - c^2}p-\frac{m_b(x)u_b [1+\frac{(\gamma-1)M^2_b}{2}] }{u^2_b - c^2_b}p_b\right] = G_1+G_2,
\end{eqnarray*}
which implies the Robin condition
\begin{eqnarray}\label{4450}
\p_x\hat{p} + \lambda \gamma_0\hat{p}=G_p\triangleq G_1+ G_2+ \lambda G_3\qquad\text{on}\quad\Sigma_0,
\end{eqnarray}
where
\begin{eqnarray*}
\gamma_0 \triangleq -  \frac{\gamma m_b(0)u_b(0)}{c^2_b(0)}\frac{\frac{\gamma^2+1}{2 \gamma}M^4_b(0)+\frac{\gamma^2+3}{2 \gamma}M^2_b(0)+\frac{3\gamma-1}{2 \gamma}} {(M^2_b(0)-1)^2}<0,
\end{eqnarray*}
and
\begin{eqnarray}\label{4451}
&&G_3 = - \gamma\left\{\left[\frac{m(\mathbf{x})u[1+\frac{(\gamma-1)M^2}{2}] }{u^2 - c^2}-\frac{m_b(x)u_b [1+\frac{(\gamma-1)M^2_b}{2}] }{u^2_b - c^2_b}\right]\hat{p}+\frac{p_bu [1+\frac{(\gamma-1)M^2}{2}] }{u^2 - c^2}\right.\nonumber\\
&&\phantom{=\;\;}\left.\times[m(\mathbf{x})-m_b(x)] +\frac{(\gamma-1)}{2}\frac{p_bm_b(x)}{u^2_b - c^2_b}\Big[\frac{|\mathbf{u}|^2}{c^2}-\frac{u^2_b}{c^2_b}\Big][u-u_b]+\frac{(\gamma-1)}{2}\frac{p_bm_b(x)u_b}{u^2_b - c^2_b}\right.\nonumber\\
&&\phantom{=\;\;}\left.\times[|\mathbf{u}|^2-u^2_b]\Big[\frac{1}{c^2}-\frac{1}{c^2_b}\Big]+p_bm_b(x)\left[u \Big[1+\frac{(\gamma-1)M^2}{2}\Big]-u_b \Big[1+\frac{(\gamma-1)M^2_b}{2}\Big]\right]\right.\nonumber\\
&&\phantom{=\;\;}\left.\times\Big[\frac{1}{u^2 - c^2}-\frac{1}{u^2_b - c^2_b}\Big] +\frac{(\gamma-1)}{2}\frac{p_bm_b(x)M^2_b}{u^2_b - c^2_b}\frac{u_b-u}{(u+u_b)u_b}\Big[\hat{E}-\frac{1}{\rho_b}\hat{p}-\frac{\rho_b^{\gamma-1}}{\gamma-1 }\widehat{A(s)}\Big] \right.\nonumber\\
&&\phantom{=\;\;}\left.- \frac{(\gamma-1)}{2}\frac{p_bm_b(x)M^2_b}{u^2_b - c^2_b}\frac{v^2}{u+u_b} -\frac{\gamma+1}{2}\frac{p_bm_b(x)u_b}{(u^2_b - c^2_b)^2} [2\hat{E}-\frac{2\rho_b^{\gamma - 1}}{\gamma-1}\widehat{A(s)}]\right.\nonumber\\
&&\phantom{=\;\;}\left.+\frac{(\gamma-1)}{2}\frac{p_bm_b(x)M^2_b}{u^2_b - c^2_b}\frac{1}{u_b}\Big[\hat{E}-\frac{\rho_b^{\gamma-1}}{\gamma-1 }\widehat{A(s)}\Big] +\frac{p_bm_b(x)u_bv^2}{(u^2_b - c^2_b)^2} \Big[1+\frac{(\gamma-1)M^2_b}{2}\Big]\right.\nonumber\\
&&\phantom{=\;\;}\left.+\frac{p_bm_b(x)u_b}{(u^2_b - c^2_b)^2} \Big[1+(\gamma-1)M^2_b-\frac{\gamma-1}{2}M^4_b\Big]\Big[\frac{\gamma - 1}{\rho_b}\hat{p}+\rho_b^{\gamma - 1}\widehat{A(s)}\Big]\right.\nonumber\\
&&\phantom{=\;\;}\left.-\frac{p_bm_b(x)u_b}{u^2_b - c^2_b} \Big[1+\frac{(\gamma-1)M^2_b}{2}\Big]\Big[[u^2-u^2_b]-[c^2-c^2_b]\Big]\Big[\frac{1}{u^2 - c^2}-\frac{1}{u^2_b - c^2_b}\Big]\right.\nonumber\\
&&\phantom{=\;\;}\left.-\frac{(\gamma-1)}{2}\frac{p_bm_b(x)u^3_b}{c^2_b(u^2_b - c^2_b)}\left[\frac{1}{c^2}-\frac{1}{c^2_b}\right][c^2-c^2_b]
\right\}+ O(|\hat{p}|^2 + |\widehat{A(s)}|^2),
\end{eqnarray}
which are  higher-order terms depending only on $\hat{U}$.

\subsection{A decoupled nonlinear problem of pressure}

Up to now the (perturbed) total enthalpy $\hat{E}$ and entropy $\widehat{A(s)}$ are coupled with  \eqref{4438}, the equation of  (perturbed) pressure $\hat{p}$.  We solve the Cauchy problems for the transport equations of $\hat{E}$ and $\widehat{A(s)}$, i.e.,  $\eqref{4413}$ and $\eqref{4417}$, to obtain a decoupled second-order elliptic equation of $\hat{p}$. We will see the price is introducing more nonlocal terms that reflecting the effects of the hyperbolic modes.

Let $\gamma(t;x,y)$ be the integral curve of the vector field $\p_x+\frac{v}{u}\p_y$ that passing through the point $(x,y)$, satisfying $y=\gamma(x;x,y)$, cf. \eqref{4708}. Set \begin{align}\label{eq452new}\xi(x,y)=\gamma(0;x,y).\end{align} Then  $(0,\xi(x,y))$ is the point where the streamline  intersects with the entrance $\Sigma_0$. Now introducing
\begin{eqnarray*}
\begin{cases}
\hat{E}^{\flat}(t) \triangleq \hat{E}(t, \gamma(t;x,y)),&\\
\widehat{A(s^{\flat})}(t) \triangleq \widehat{A(s)}(t, \gamma(t;x,y)),&\\
\hat{p}^{\flat}(t) \triangleq \hat{p}(t, \gamma(t;x,y)),&\\
F_{E}^{\flat}(t) \triangleq F_{E}'(t, \gamma(t;x,y)),& (\text{cf.}~ \eqref{4415})\\
F_{s}^{\flat}(t) \triangleq F_{s}'(t, \gamma(t;x,y)),& (\text{cf}.~ \eqref{4419})\\
\end{cases}
\end{eqnarray*}
then equations $\eqref{4413}$ and $\eqref{4417}$ read
\begin{eqnarray}\label{4441}
\begin{cases}
\frac{\dd\hat{E}^{\flat}}{\dd t} = \lambda a_1(\lambda,t)\hat{E}^{\flat} + \lambda a_2(\lambda,t)\widehat{A(s^{\flat})} + \lambda a_3(\lambda,t)\hat{p}^{\flat} + \lambda F_{E}^{\flat}&\quad t \in [0,l],\\
\frac{\dd\widehat{A(s^{\flat})}}{\dd t} =  \lambda b_1(\lambda,t)\hat{E}^{\flat} + \lambda b_2(\lambda,t)\widehat{A(s^{\flat})} + \lambda b_3(\lambda,t)\hat{p}^{\flat} + \lambda F_{s}^{\flat}&\quad t \in [0,l],\\
\hat{E}^{\flat}(0)= \hat{E}_0(\xi(x,y))= E_0(\xi(x,y)) - E_b|_{t=0}&\quad t = 0,\\
\widehat{A(s^{\flat}(0))} = \widehat{A(s_0)}(\xi(x,y)) = A(s_0)(\xi(x,y)) - A(s_b|_{t=0})&\quad t = 0.
\end{cases}
\end{eqnarray}
It is considered as a Cauchy problem for a nonhomogeneous system of first-order linear ordinary differential equations with variable coefficients and the unknowns $\hat{E}^{\flat}$, ~$\widehat{A(s^{\flat})}$. Let $\Phi(\lambda, t)$ be the $2\times2$ standard fundamental matrix, where $\Phi(0, t)$ is the unit matrix. Then $\eqref{4441}$ is solved by
%\begin{small}
\begin{eqnarray}\label{4442}
&\left[
\begin{matrix}
\hat{E}(x,y)\\
\widehat{A(s)}(x,y)
\end{matrix}
\right]=
\left[
\begin{matrix}
\hat{E}^{\flat}(x)\\
\widehat{A(s^{\flat})}(x)
\end{matrix}
\right]=\Phi(\lambda, x)\left[
\begin{matrix}
\hat{E}_0(\xi(x,y))\\
\widehat{A(s_0)}(\xi(x,y))
\end{matrix}
\right]\\
&\qquad\qquad\qquad\qquad\qquad+\lambda\Phi(\lambda, x)\int^{x}_0\Phi^{-1}(\lambda, t)\Big(\left[
\begin{matrix}
a_3(\lambda,t)\\
b_3(\lambda,t)
\end{matrix}
\right]\hat{p}^{\flat}+\left[
\begin{matrix}
F_{E}^{\flat}\\
F_{s}^{\flat}
\end{matrix}
\right]\Big)\dd t.\nonumber
\end{eqnarray}
%\end{small}
We use $\eqref{4442}$ to replace the term $$e_4(\lambda,x)\hat{E} + e_5(\lambda,x)\widehat{A(s)}+e_8(\lambda,x)\int_{0}^{\pi}\hat{E}\,\dd y+e_9(\lambda,x)\int_{0}^{\pi}\widehat{A(s)}\,\dd y$$ in $\eqref{4438}$. %and then $\eqref{4438}$ will be decoupled into an equation only about $\hat{p}$.
For the convenience of calculations, we divide it into two parts. Firstly, for  $e_4(\lambda,x)\hat{E} + e_5(\lambda,x)\widehat{A(s)}$, we have %$e_8(\lambda,x)\int_{0}^{\pi}\hat{E}\dd y+e_9(\lambda,x)\int_{0}^{\pi}\widehat{A(s)}\dd y$ to calculate separately. We first calculate the part $e_4(\lambda,x)\hat{E} + e_5(\lambda,x)\widehat{A(s)}$, where
%\begin{small}
\begin{eqnarray}\label{4443}
&&e_4(\lambda,x)\hat{E}(x,y) + e_5(\lambda,x)\widehat{A(s)}(x,y)= \Big[
\begin{matrix}
e_4(\lambda,x),e_5(\lambda,x)\\
\end{matrix}
\Big]\Phi(\lambda, x)\left[
\begin{matrix}
\hat{E}_0(\xi(x,y))\\
\widehat{A(s_0)}(\xi(x,y))
\end{matrix}
\right]\nonumber\\
&&\quad \quad + \lambda\Big[
\begin{matrix}
e_4(\lambda,x),e_5(\lambda,x)\\
\end{matrix}
\Big]\Phi(\lambda, x)\int^{x}_0\Phi^{-1}(\lambda, t)\Big(\left[
\begin{matrix}
a_3(\lambda,t)\\
b_3(\lambda,t)
\end{matrix}
\right]\hat{p}^{\flat} + \left[
\begin{matrix}
F_{E}^{\flat}\\
F_{s}^{\flat}
\end{matrix}
\right]\Big)\dd t\nonumber\\
&&\quad =e_{10}(\lambda,x)\hat{E}_0(\xi(x,y)) + e_{11}(\lambda,x)\widehat{A(s_0)}(\xi(x,y)) +  \lambda e_{10}(\lambda,x)\int^{x}_0a_4(\lambda, t)\hat{p}^{\flat}\,\dd t\nonumber\\
&&\quad \quad +\lambda e_{11}(\lambda, x)\int^{x}_0b_4(\lambda, t)\hat{p}^{\flat}\,\dd t + \lambda \left[
\begin{matrix}
e_{10}(\lambda,x), e_{11}(\lambda,x)\\
\end{matrix}
\right]\int^{x}_0\Phi^{-1}(\lambda, t)\left[
\begin{matrix}
F_{E}^{\flat}\\
F_{s}^{\flat}
\end{matrix}
\right] \dd t\nonumber\\
&&\quad =e_{10}(\lambda, x)\hat{E}_0(\xi(x,y)) + e_{11}(\lambda, x)\widehat{A(s_0)}(\xi(x,y)) +  \lambda e_{10}(\lambda, x)\int^{x}_0a_4(\lambda, t)\hat{p}(t, y)\,\dd t\nonumber\\
&&\quad \quad + \lambda e_{11}(\lambda, x)\int^{x}_0b_4(\lambda, t)\hat{p}(t, y)\,\dd t + \lambda e_{10}(\lambda, x)\int^{x}_0a_4(\lambda, t)(\hat{p}^{\flat}(t) - \hat{p}(t, y))\,\dd t\nonumber\\
&&\quad \quad +\lambda e_{11}(\lambda, x)\int^{x}_0b_4(\lambda, \tau)(\hat{p}^{\flat}(t)-\hat{p}(t, y))\,\dd t \nonumber\\
&&\quad \quad+ \lambda\left[
\begin{matrix}
e_{10}(\lambda, x),e_{11}(\lambda, x)\\
\end{matrix}
\right]\int^{x}_0\Phi^{-1}(\lambda, t)\left[
\begin{matrix}
F_{E}^{\flat}\\
F_{s}^{\flat}
\end{matrix}
\right]\dd t.
\end{eqnarray}
%\end{small}
Secondly, for the part $e_8(\lambda,x)\int_{0}^{\pi}\hat{E}\,\dd y+e_9(\lambda,x)\int_{0}^{\pi}\widehat{A(s)}\,\dd y$, we get
%\begin{small}
\begin{eqnarray}\label{4444}
&&e_8(\lambda,x)\int_{0}^{\pi}\hat{E}(x,y)\,\dd y+e_9(\lambda,x)\int_{0}^{\pi}\widehat{A(s)}(x,y)\,\dd y\nonumber\\
&&\quad=\Big[
\begin{matrix}
e_8(\lambda,x),e_9(\lambda,x)\\
\end{matrix}
\Big]\left[
\begin{matrix}
\int_{0}^{\pi}\hat{E}(x,y)\,\dd y\\
\int_{0}^{\pi}\widehat{A(s)}(x,y)\,\dd y
\end{matrix}
\right]\nonumber\\
&&\quad =\Big[
\begin{matrix}
e_8(\lambda,x),e_9(\lambda,x)\\
\end{matrix}
\Big]\Phi(\lambda, x)\left[
\begin{matrix}
\int_{0}^{\pi}\hat{E}_0(\xi(x,y))\,\dd y\\
\int_{0}^{\pi}\widehat{A(s_0)}(\xi(x,y))\,\dd y
\end{matrix}
\right]+\lambda\Big[
\begin{matrix}
e_8(\lambda,x),e_9(\lambda,x)\\
\end{matrix}
\Big]\Phi(\lambda, x)\nonumber\\
&&\quad \quad \times\int_{0}^{\pi}\int^{x}_0\Phi^{-1}(\lambda, t)\left[
\begin{matrix}
a_3(\lambda,t)\\
b_3(\lambda,t)
\end{matrix}
\right]\hat{p}^{\flat}\,\dd t\dd y \nonumber\\
&&\quad\quad\quad+\lambda\Big[
\begin{matrix}
e_8(\lambda,x),e_9(\lambda,x)\\
\end{matrix}
\Big]\Phi(\lambda, x)\int_{0}^{\pi}\int^{x}_0\Phi^{-1}(\lambda, t)\left[
\begin{matrix}
F_{E}^{\flat}\\
F_{s}^{\flat}
\end{matrix}
\right]\,\dd t\dd y\nonumber\\
&&\quad =e_{12}(\lambda,x)\int_{0}^{\pi}\hat{E}_0(\xi(x,y))\,\dd y+e_{13}(\lambda,x)\int_{0}^{\pi}\widehat{A(s_0)}(\xi(x,y))\,\dd y\nonumber\\
&&\quad \quad +\lambda e_{12}(\lambda,x)\int_{0}^{\pi}\int^{x}_0a_4(\lambda, t)\hat{p}^{\flat}\,\dd t\dd y+\lambda e_{13}(\lambda,x)\int_{0}^{\pi}\int^{x}_0b_4(\lambda, t)\hat{p}^{\flat}\,\dd t\dd y\nonumber\\
&&\quad \quad +\lambda\left[
\begin{matrix}
e_{12}(\lambda, x),e_{13}(\lambda, x)\\
\end{matrix}
\right]\int_{0}^{\pi}\int^{x}_0\Phi^{-1}(\lambda, t)\left[
\begin{matrix}
F_{E}^{\flat}\\
F_{s}^{\flat}
\end{matrix}
\right]\,\dd t\dd y\nonumber\\
&&\quad = e_{12}(\lambda,x)\int_{0}^{\pi}\hat{E}_0(\xi(x,y))\,\dd y+e_{13}(\lambda,x)\int_{0}^{\pi}\widehat{A(s_0)}(\xi(x,y))\,\dd y\nonumber\\
&&\quad \quad +\lambda e_{12}(\lambda,x)\int_{0}^{\pi}\int^{x}_0a_4(\lambda, t)\hat{p}(t, y)\,\dd t\dd y+\lambda e_{13}(\lambda,x)\int_{0}^{\pi}\int^{x}_0b_4(\lambda, t)\hat{p}(t, y)\,\dd t\dd y\nonumber\\
&&\quad \quad +\lambda e_{12}(\lambda,x)\int_{0}^{\pi}\int^{x}_0a_4(\lambda, t)(\hat{p}^{\flat}(t)-\hat{p}(t, y))\,\dd t\dd y \nonumber\\
&&\qquad\qquad\qquad+\lambda e_{13}(\lambda,x)\int_{0}^{\pi}\int^{x}_0b_4(\lambda, t)(\hat{p}^{\flat}(t)-\hat{p}(t, y))\,\dd t\dd y\nonumber\\
&&\qquad\qquad\qquad+\lambda\left[
\begin{matrix}
e_{12}(\lambda, x),e_{13}(\lambda, x)\\
\end{matrix}
\right]\int_{0}^{\pi}\int^{x}_0\Phi^{-1}(\lambda, t)\left[
\begin{matrix}
F_{E}^{\flat}\\
F_{s}^{\flat}
\end{matrix}
\right]\dd t\dd y.
\end{eqnarray}
%\end{small}
Notice that "$\times$" in the third line of $\eqref{4444}$  represents  multiplication of matrices. For simplicity, we have also introduced the following notations
\begin{align}\label{4445}
%\begin{cases}
&\left[
\begin{matrix}
a_4(\lambda, x),b_4(\lambda, x)\\
\end{matrix}
\right]^{\top} = \Phi^{- 1}(\lambda, x)\left[
\begin{matrix}
a_3(\lambda, x),b_3(\lambda, x)\\
\end{matrix}
\right]^{\top},\\
&\left[
\begin{matrix}
e_{10}(\lambda, x),e_{11}(\lambda, x)\\
\end{matrix}
\right] = \left[
\begin{matrix}
e_4(\lambda, x),e_5(\lambda, x)\\
\end{matrix}
\right]\Phi(\lambda, x),\\
&\left[
\begin{matrix}
e_{12}(\lambda, x),e_{13}(\lambda, x)\\
\end{matrix}
\right] = \left[
\begin{matrix}
e_8(\lambda, x),e_9(\lambda, x)\\
\end{matrix}
\right]\Phi(\lambda, x).
%\end{cases}
\end{align}
Substituting $\eqref{4443}$ and $\eqref{4444}$ into $\eqref{4438}$, we have
\begin{align}\label{4446}
\mathcal {L}(\hat{p}) = \mathcal {L}_1(\hat{p}) + \mathcal {L}_2(\hat{p})=F_0+F_p,\end{align}
where $\mathcal {L}_1(\hat{p})$ is the principal part of $\mathcal {L}(\hat{p})$, and $\mathcal {L}_2(\hat{p})$ is the quadratic higher-order part:
\begin{equation}\label{4449}
\begin{cases}
\mathcal {L}_1(\hat{p})\triangleq e_1(\lambda,x)\p_x^2\hat{p}-\p_y^2\hat{p}+\lambda e_2(\lambda,x)\p_x\hat{p}+e_3(\lambda,x)\hat{p}+e_6(\lambda,x)\int_{0}^{\pi}\p_x\hat{p}(x, y)\,\dd y\\
\quad\quad+e_7(\lambda,x)\int_{0}^{\pi}\hat{p}(x, y)\,\dd y+\lambda e_{10}(\lambda, x)\int^{x}_0a_4(\lambda, t)\hat{p}(t, y)\,\dd t\\
\quad\quad+\lambda e_{11}(\lambda, x)\int^{x}_0b_4(\lambda, t)\hat{p}(t, y)\,\dd t+\lambda e_{12}(\lambda,x)\int_{0}^{\pi}\int^{x}_0a_4(\lambda, t)\hat{p}(t, y)\,\dd t\dd y\\
\quad\quad+\lambda e_{13}(\lambda,x)\int_{0}^{\pi}\int^{x}_0b_4(\lambda, t)\hat{p}(t, y)\,\dd t\dd y,&\\
\mathcal {L}_2(\hat{p})\triangleq \left\{\frac{1}{c_b^2}\left[2({E-E_b})- \frac{\gamma + 1}{\gamma - 1}(c^2 - c^2_b)\right]\right\}\p_x^2\hat{p}-\left\{\frac{c^2-c_b^2}{c_b^2}\right\}\p^2_y\hat{p}+\left\{\frac{v^2}{c_b^2}
\right\}\p^2_y\hat{p}+\left\{\frac{2uv}{c_b^2}\right\}\p^2_{xy}\hat{p}.
\end{cases}
\end{equation}
\iffalse
\begin{eqnarray}\label{4446}
&&\mathcal {L}(\hat{p}) \triangleq e_1(\lambda,x)\p_{x}^2\hat{p}-\p^2_{y}\hat{p}+
e_2(\lambda,x)\p_{x}\hat{p}+e_3(\lambda,x)\hat{p}+e_6(\lambda,x)\int_{0}^{\pi}\p_x\hat{p}\,\dd y\nonumber\\
&&\quad \quad + e_7(\lambda,x)\int_{0}^{\pi}\hat{p}\dd y+\lambda e_{10}(\lambda, x)\int^{x}_{0}a_4(\lambda, t)\hat{p}(t, y)\,\dd t+\lambda e_{11}(\lambda, x)\int^{x}_{0}b_4(\lambda, t)\hat{p}(t, y)\,\dd t\nonumber\\
&&\quad \quad + \lambda e_{12}(\lambda,x)\int_{0}^{\pi}\int^{x}_0a_4(\lambda, t)\hat{p}(t, y)\dd t\dd y+\lambda e_{13}(\lambda,x)\int_{0}^{\pi}\int^{x}_0b_4(\lambda, t)\hat{p}(t, y)\,\dd t\dd y\nonumber\\
&&\quad \quad + \frac{1}{c_b^2}\Big[2\hat{E}- \frac{\gamma + 1}{\gamma - 1}(c^2 - c^2_b)\Big]\p_{x}^2\hat{p}-\frac{c^2-c_b^2}{c_b^2}\p^2_{y}\hat{p}+\frac{v^2}{c_b^2}\p^2_{y}\hat{p}+\frac{2uv}{c_b^2}\p_{xy}\hat{p}\nonumber\\
&&\quad = F_0+F_p,
\end{eqnarray}
\fi
The right-hand side of \eqref{4446} is given by
\begin{align}\label{4447}
%\begin{cases}
&- F_0 = e_{10}(\lambda, x)\hat{E}_0(\xi(x,y)) + e_{11}(\lambda, x)\widehat{A(s_0)}(\xi(x,y))+e_{12}(\lambda,x)\int_{0}^{\pi}\hat{E}_0(\xi(x,y))\,\dd y\nonumber\\
&\quad \quad \quad+e_{13}(\lambda,x)\int_{0}^{\pi}\widehat{A(s_0)}(\xi(x,y))\,\dd y,\\
&- F_6 = e_{10}(\lambda, x)\int^{x}_{0}a_4(\lambda, t)(\hat{p}(t, \gamma(t;x,y)) - \hat{p}(t, y))\,\dd t\nonumber\\
&\quad \quad \quad+ e_{11}(\lambda, x)\int^{x}_{0}b_4(\lambda, t)(\hat{p}(t, \gamma(t;x,y)) - \hat{p}(t, y))\,\dd t\nonumber\\
&\quad \quad \quad+e_{12}(\lambda,x)\int_{0}^{\pi}\int^{x}_0a_4(\lambda, t)(\hat{p}(t, \gamma(t;x,y))-\hat{p}(t, y))\,\dd t\dd y\nonumber\\
&\quad \quad \quad+e_{13}(\lambda,x)\int_{0}^{\pi}\int^{x}_0b_4(\lambda, t)(\hat{p}(t, \gamma(t;x,y))-\hat{p}(t, y))\,\dd t\dd y\nonumber\\
&\quad \quad \quad+ \left[
\begin{matrix}
e_{10}(\lambda, x),e_{11}(\lambda, x)
\end{matrix}
\right]\int^{x}_{0}\Phi^{-1}(\lambda, t)\left[
\begin{matrix}
F_{E}^{\flat}\\
F_{s}^{\flat}
\end{matrix}
\right]\dd t\nonumber\\
&\quad \quad \quad+\left[
\begin{matrix}
e_{12}(\lambda, x),e_{13}(\lambda, x)\\
\end{matrix}
\right]\int_{0}^{\pi}\int^{x}_0\Phi^{-1}(\lambda, t)\left[
\begin{matrix}
F_{E}^{\flat}\\
F_{s}^{\flat}
\end{matrix}
\right]\,\dd t\dd y,\label{eq462addnew12}\\
& F_p =  F_5 + \lambda F_6.\label{eq463addnew2}
%\end{cases}
\end{align}
%\begin{remark}\label{rmk46}
By Lagrange's mean value theorem,
\begin{eqnarray}\label{4448}
&&\hat{p}(t, \gamma(t;x,y)) - \hat{p}(t, y)=\hat{p}(t, \gamma(t;x,y)) - \hat{p}(t, \gamma(x;x,y))\nonumber\\
&&\quad =\p_{\gamma}\hat{p}(t,\xi_0)(\gamma(t;x,y)-\gamma(x;x,y))=\p_{\gamma}\hat{p}(t, \xi_0)\Big[\frac{\dd \gamma(t;x,y)}{\dd t}\Big]\Big|_{t=\theta_0}(t-x)\nonumber\\
&&\quad =\p_{\gamma}\hat{p}(t, \xi_0)\frac{v(\theta_0, \gamma(\theta_0;x,y))}{u(\theta_0, \gamma(\theta_0;x,y))}(t-x),
\end{eqnarray}
where  $\xi_0$ lies between $\gamma(t;x,y)$ and $y=\gamma(x;x,y)$, and $\theta_0$ is between $t$ and $x$. From $\eqref{4448}$,  we infer that $F_6(U)$ is a high-order term.
%\end{remark}
\iffalse
Set
\begin{equation}\label{4449}
\begin{cases}
\mathcal {L}_1(\hat{p})\triangleq e_1(\lambda,x)\p_x^2\hat{p}-\p_y^2\hat{p}+\lambda e_2(\lambda,x)\p_x\hat{p}+e_3(\lambda,x)\hat{p}+e_6(\lambda,x)\int_{0}^{\pi}\p_x\hat{p}(x, y)\dd y\\
\quad\quad+e_7(\lambda,x)\int_{0}^{\pi}\hat{p}(x, y)\dd y+\lambda e_{10}(\lambda, x)\int^{x}_0a_4(\lambda, t)\hat{p}(t, y)\dd t\\
\quad\quad+\lambda e_{11}(\lambda, x)\int^{x}_0b_4(\lambda, t)\hat{p}(t, y)\dd t+\lambda e_{12}(\lambda,x)\int_{0}^{\pi}\int^{x}_0a_4(\lambda, t)\hat{p}(t, y)\dd t\dd y\\
\quad\quad+\lambda e_{13}(\lambda,x)\int_{0}^{\pi}\int^{x}_0b_4(\lambda, t)\hat{p}(t, y)\dd t\dd y,&\\
\mathcal {L}_2(\hat{p})\triangleq \frac{1}{c_b^2}\left[2\hat{E}- \frac{\gamma + 1}{\gamma - 1}(c^2 - c^2_b)\right]\p_x^2\hat{p}-\frac{c^2-c_b^2}{c_b^2}\p^2_y\hat{p}+\frac{v^2}{c_b^2}\p^2_y\hat{p}+\frac{2uv}{c_b^2}\p^2_{xy}\hat{p};
\end{cases}
\end{equation}
then $\mathcal {L}(\hat{p}) = \mathcal {L}_1(\hat{p}) + \mathcal {L}_2(\hat{p})$, where $\mathcal {L}_1(\hat{p})$ is the principal term of $\mathcal {L}(\hat{p})$, and $\mathcal {L}_2(\hat{p})$ is the quadratic higher-order term of $\mathcal {L}(\hat{p})$.
\fi

\begin{remark}\label{rmkp2}
We see that $\eqref{4446}$ contains six integral nonlocal terms in different directions, which further indicates that the mass-additions have stronger coupling effect. Comparing this with frictions, for which the decoupled pressure equation in \cite{Yuan-Zhao-2020} (p.487, (2.75)) contains only one integral nonlocal term. This brings more trouble to the analysis of linear problem in the next section. \qed% In subsection \ref{sec p}, we will see that it is quite difficult to prove the uniqueness of solutions for mixed boundary value problems of elliptic equations with six nonlocal terms.
\end{remark}

\subsection{Problem \uppercase\expandafter{\romannumeral2}}\label{sec44new}
By Lemma \ref{Lem41}, from $\eqref{4413},\,\eqref{4417},\,\eqref{4424},\,\eqref{4450},\,\eqref{4446},$ and \eqref{4449}, we state Problem \uppercase\expandafter{\romannumeral2}, which is equivalent to  Problem \uppercase\expandafter{\romannumeral1}.
\begin{framed}
\underline{Problem \uppercase\expandafter{\romannumeral2}}:\
Show well-posedeness of classical subsonic solutions for the following coupled nonlinear problems \eqref{4452}-\eqref{4455} in $\Omega$.
\end{framed}
%\begin{small}
%\begin{frame}
\begin{eqnarray}
&&\begin{cases}\label{4452}
{\mathrm{D}}'_{\mathbf{u}}\hat{E} = \lambda a_1(\lambda,x)\hat{E} + \lambda a_2(\lambda,x)\widehat{A(s)} + \lambda a_3(\lambda,x)\hat{p}+ \lambda F_{E}'&\quad \text{in}\quad \Omega,\\
\hat{E}=E_0(y)-E_b&\quad \text{on}\quad \Sigma_0;\\
\end{cases}\\
&&\begin{cases}\label{4453}
{\mathrm{D}}'_{\mathbf{u}}\widehat{A(s)} =   \lambda b_1(\lambda,x) \hat{E} + \lambda b_2(\lambda,x)\widehat{A(s)}  + \lambda b_3(\lambda,x)\hat{p} + \lambda F_{s}'&\quad \text{in}\quad \Omega,\\
\widehat{A(s)}=A(s_0)(y)-A(s_b)&\quad \text{on}\quad \Sigma_0;\\
\end{cases}\\
&&\begin{cases}\label{4454}
\mathcal{L}(\hat{p}) = \mathcal{L}_1(\hat{p}) + \mathcal {L}_2(\hat{p}) = F_0 + F_p &\quad \text{in}\quad \Omega,\\
\p_y\hat{p}= 0&\quad \text{on}\quad W_-\cup W_+,\\
\p_x\hat{p} + \lambda\gamma_0\hat{p}=G_p&\quad \text{on}\quad \Sigma_0,\\
\hat{p}=p_l(y) - p_b(l)&\quad \text{on}\quad \Sigma_l;\\
\end{cases}\\
&&\begin{cases}\label{4455}
\p_yv=c_1(\lambda,x)\p_x\hat{p}+c_2(\lambda,x)\hat{p}+c_3(\lambda,x)\hat{E}  +c_4(\lambda,x)\widehat{A(s)}+\frac{c_1(\lambda,x)}{\pi}\int_{0}^{\pi}\p_x\hat{p}\,\dd y\\
\quad \quad +\frac{c_2(\lambda,x)}{\pi}\int_{0}^{\pi}\hat{p}\,\dd y +\frac{c_3(\lambda,x)}{\pi}\int_{0}^{\pi}\hat{E}\,\dd y+\frac{c_4(\lambda,x)}{\pi}\int_{0}^{\pi}\widehat{A(s)}\,\dd y+F_{v}' \quad\text{in}\ \Omega,\\
v=0\quad \text{on}\ W_-\cup W_+.
\end{cases}
\end{eqnarray}
%\end{frame}
%\end{small}
We see that \eqref{4452} and \eqref{4453} are Cauchy problems for transport equations of (perturbed) total enthalpy $\hat{E}$ and entropy $\widehat{A(s)}$; $\eqref{4454}$ is a mixed boundary value problem for a second-order elliptic equation of pressure $\hat{p}$ with multiple integral nonlocal terms; and $\eqref{4455}$ is a two-point boundary value problem for ordinary differential equations of tangential velocity $v$ on each cross-section $\Sigma_x$, with $x\in[0,l]$ being a parameter. %To solve the nonlinear problem,
In the following four sections, we study the related linear problems in various H\"{o}lder spaces.

%\underline{Problem \uppercase\expandafter{\romannumeral3}}:
%Solving problems $\eqref{4452}-\eqref{4455}$ in $\Omega$.

\section{Mixed boundary-value problems for second-order linear elliptic equations with multiple integral nonlocal terms }\label{sec44}
In this section, we establish existence and estimates of solutions for the following boundary-value problem of a second-order elliptic equation with multiple integral nonlocal terms, which is deduced from \eqref{4454}, and the nonhomogeneous terms satisfy $h\in C^{k - 2,\alpha}(\bar{\Omega}), g_0\in C^{k - 1,\alpha}(\Sigma_0),$ and $g_1\in C^{k - 1,\alpha}(\Sigma_l)$, $k = 2,3,\cdots$:
%To solve the following mixed boundary value problem of second order elliptic equation with multiple integral nonlocal terms, Robin boundary condition is given on the inlet $\Sigma_0$, Dirichlet boundary condition is given on the outlet $\Sigma_l$, and Neumann boundary condition is given on the wall sides $W_-\cup W_+$:
\begin{eqnarray}\label{4503}
(\mathbb{L}):~~
\begin{cases}
\mathcal {L}(\hat{p}) = \mathcal {L}_1(\hat{p}) + \mathcal {L}_2(\hat{p}) = h(x)&\quad \text{in} \quad \Omega,\\
\p_y\hat{p}= 0&\quad \text{on} \quad W_-\cup W_+,\\
\p_x\hat{p} + \lambda\gamma_0\hat{p} = g_0(y)&\quad \text{on} \quad \Sigma_0,\\
\hat{p} = g_l(y)&\quad \text{on} \quad \Sigma_l.
\end{cases}
\end{eqnarray}
This is a linear problem, since the coefficients in ``$\{~\}$" in the definition of  $\mathcal {L}_2(\hat{p})$ in \eqref{4449} are assumed to be given $C^{1,\alpha}$ functions. Moreover, coefficients of the operator $\mathcal {L}_1(\hat{p})$ are $C^\infty$ and could be calculated explicitly for a given background solution.  % It is very difficult to solve the problem $(\mathbb{L})$ directly. Therefore,

The methods we developed to show uniqueness and existence of Problem ($\mathbb{L}$) are based upon Fourier series, which require that the solutions, the coefficients, the boundary conditions, and all non-homogeneous terms,  being periodic in the $y$-direction with a period $2\pi$.   % We will prove the uniqueness of the solution for elliptic problem $(\mathbb{L}_1)$ and construct the approximate solution sequence by using the method of separating variables.
This is achieved by an argument as in  \cite[pp.5310--5311]{Gao-Liu-Yuan-2020}. Briefly speaking,  we extend a solution $U$ %to elliptic problem $(\mathbb{L})$ %and $\eqref{4505}$
to $(0,l)\times(-\infty, +\infty)$ as follows. According to the slip condition $\eqref{4204}$ and symmetry condition $\eqref{4208}$, $v$ is odd-extended along $y$-variable with respect to $y = k\pi$, while $E,\,p,\,s$ and $m(\mathbf{x})$ are even-extended along $y$-variable with respect to $y = k\pi$, with $k\in\mathbb{Z}$ here. It can be verified that $u, \rho$ have the same symmetry as $E, p, s$. Based on the assumption of symmetry $\eqref{4207}$ at the inlet and outlet of the duct, we can also make similar extensions to these boundary conditions.  %Therefore, in the following, we always consider the coefficients and the solutions are

The main part of this section is devoted to studying well-posedness of the following simpler problem:
\begin{eqnarray}\label{4504}
(\mathbb{L}_1):~~
\begin{cases}
\mathcal {L}_1(\hat{p}) = h(\mathbf{x})&\quad \text{in} \quad \Omega,\\
\p_y\hat{p}= 0&\quad \text{on} \quad W_-\cup W_+,\\
\p_x\hat{p} + \lambda\gamma_0\hat{p} = g_0(y)&\quad \text{on} \quad \Sigma_0,\\
\hat{p} = g_l(y)&\quad \text{on} \quad \Sigma_l.
\end{cases}
\end{eqnarray}
%where the nonhomogeneous terms also satisfies $h\in C^{k - 2,\alpha}(\bar{\Omega}), g_0\in C^{k - 1,\alpha}(\Sigma_0), g_1\in C^{k - 1,\alpha}(\Sigma_l)$, $k = 2,3,\cdots$.
Conclusions on Problem ($\mathbb{L}$) follows easily by smallness of coefficients in the operator $\mathcal{L}_2(\hat{p})$ and a well-known fact in functional analysis.

\subsection{Uniqueness}\label{secp21}
%Since the second order elliptic mixed boundary value problem $(\mathbb{L}_1)$ has six nonlocal terms, the maximum value principle and energy estimation of the elliptic equation are invalid, which is also one of the difficulties encountered in this paper. Next, we use the Fourire analysis method to prove that in the Sobolev space $H^2(\Omega)$, the strong solution $\hat{p}$ of the second order elliptic problem with six nonlocal terms is unique. Next,
We need to find sufficient conditions so that the homogeneous problem
\begin{eqnarray}\label{4505}
\begin{cases}
\mathcal {L}_1(\hat{p}) = 0&\quad \text{in} \quad \Omega,\\
\p_y\hat{p}= 0&\quad \text{on} \quad W_-\cup W_+,\\
\p_x\hat{p} + \lambda\gamma_0\hat{p} = 0&\quad \text{on} \quad \Sigma_0,\\
\hat{p} = 0&\quad \text{on} \quad \Sigma_l
\end{cases}
\end{eqnarray}
admits only one strong solution, namely $\hat{p}\equiv0$, in the Sobolev space $H^2(\Omega)$.
With such a regularity,  the nonlocal term
\begin{eqnarray}\label{44addnew}
&&e_6(\lambda,x)\int_{0}^{\pi}\p_x\hat{p}(x, y)\,\dd y+e_7(\lambda,x)\int_{0}^{\pi}\hat{p}(x, y)\,\dd y+\lambda e_{10}(\lambda, x)\int^{x}_0a_4(\lambda, t)\hat{p}(t, y)\,\dd t\nonumber\\
&&\quad \quad +\lambda e_{11}(\lambda, x)\int^{x}_0b_4(\lambda, t)\hat{p}(t, y)\,\dd t+\lambda e_{12}(\lambda,x)\int_{0}^{\pi}\int^{x}_0a_4(\lambda, t)\hat{p}(t, y)\,\dd t\dd y\nonumber\\
&&\quad \quad +\lambda e_{13}(\lambda,x)\int_{0}^{\pi}\int^{x}_0b_4(\lambda, t)\hat{p}(t, y)\,\dd t\dd y,
\end{eqnarray}
lies in $H^{\frac12}(\Omega)$ by trace theorem.
Regard it as a nonhomogeneous term for the elliptic operator $e_1(\lambda,x)\p_x^2\hat{p}-\p_y^2\hat{p}+\lambda e_2(\lambda,x)\p_x\hat{p}+e_3(\lambda,x)\hat{p}$ appeared in \eqref{4449}, whose coefficients are $C^\infty$ because of $m_b\in C^\infty$,   by the standard regularity theory of  second order elliptic equations with Dirichlet and oblique derivative conditions in Sobolev spaces (see Gilbarg-Trudinger \cite[Chapter 8, and p.215]{Gilbarg-Tudinger-1998}), we have $\hat{p}\in H^{5/2}(\Omega)$, so \eqref{44addnew} belongs to $H^1(\Omega)$. Regularity theory then implies that $\hat{p}\in H^3(\Omega)$, hence \eqref{44addnew} lies in $H^2(\Omega)$.   Sobolev embedding theorem yields that  for any $\alpha \in (0,1)$, $H^2(\Omega)\subset C^{0,\alpha}(\bar{\Omega})$.  Thus by the Schauder theory \cite[Chapter 6]{Gilbarg-Tudinger-1998}, we infer $\hat{p}\in C^{2,\alpha}(\bar{\Omega})$. A bootstrap argument shows that  %$\hat{p} \in C^{2,\alpha}(\bar{\Omega})$. Furthermore, we can prove by the boot-strap method that if $\hat{p} \in H^2(\Omega)$, then
$\hat{p} \in C^{\infty}(\bar{\Omega})$.
%According to the Sobolev embedding theorem, for any $\alpha \in (0,1)$, $H^2(\Omega)$ can be embedded in $C^{0,\alpha}(\bar{\Omega})$. Therefore, the solution is bounded in $C^{0,\alpha}(\bar{\Omega})$.

The smoothness of $\hat{p}$ implies  that its  Fourier series
\begin{eqnarray}\label{4506}
\hat{p}(x,y) = \frac{p_0(x)}{2}+\sum\limits_{m = 1}^{\infty}p_m(x)\cos(my)
\end{eqnarray}
converges uniformly in $\Omega$ and one can calculate its derivatives term by term, where the Fourier coefficients
\begin{eqnarray*}
p_m(x) = \frac{2}{\pi}\int^{\pi}_0\hat{p}(x,y)\cos(my)\,\dd y, \quad m=0,1,2\cdots
\end{eqnarray*}
are smooth functions on $[0,l]$.

%\begin{itemize}
%\item[i)]
Substituting $\eqref{4506}$ into $\eqref{4505}_1$, one has
\begin{eqnarray*}
&&0=\mathcal {L}_1(\hat{p})=\frac{1}{2}\Big[e_1(\lambda,x)p''_0(x)+[\lambda e_2(\lambda,x)+\pi e_6(\lambda,x)]p'_0(x)+[e_3(\lambda,x)\nonumber\\
&&\quad \quad +\pi e_7(\lambda,x)]p_0(x)+\lambda [e_{10}(\lambda, x)+\pi e_{12}(\lambda, x)]\int^{x}_0a_4(\lambda, t)p_0(\tau)\,\dd t\nonumber\\
&&\quad \quad +\lambda [e_{11}(\lambda, x)+\pi e_{13}(\lambda, x)]\int^{x}_0b_4(\lambda, t)p_0(t)\,\dd t\Big]+\sum\limits_{m = 1}^{\infty}\Big[e_1(\lambda,x)p''_m(x)\nonumber\\
&&\quad \quad +\lambda e_2(\lambda,x)p'_m(x)+[e_3(\lambda,x)+m^2]p_m(x)+\lambda e_{10}(\lambda, x)\int^{x}_0a_4(\lambda, t)p_m(t)\,\dd t\nonumber\\
&&\quad \quad +\lambda e_{11}(\lambda, x)\int^{x}_0b_4(\lambda, t)p_m(t)\,\dd t\Big]\cos(my).
\end{eqnarray*}
%\item[ii)]
Substitute $\eqref{4506}$ into $\eqref{4505}_3$ shows
\begin{eqnarray*}
0=\p_x\hat{p}(0,y) + \lambda\gamma_0\hat{p}(0,y)=\frac{1}{2}\Big[p'_0(0)+\lambda\gamma_0p'_0(0)\Big] + \sum\limits_{m = 1}^{\infty}\Big[p'_m(0)+\lambda\gamma_0p'_m(0)\Big]\cos(my).
\end{eqnarray*}
%\item[iii)]
Similarly, $\eqref{4506}$ and $\eqref{4505}_4$ yield
%\begin{small}
\begin{eqnarray*}
0=\hat{p}(l,y)=\frac{p_0(l)}{2}+\sum\limits_{m = 1}^{\infty}p_m(l)\cos(my).
\end{eqnarray*}
%\end{small}
%\end{itemize}
From these,  we obtain countable-infinite two-point boundary value problems for integro-differential equations of the Fourier coefficients.
%\begin{itemize}
%\item[i)]
For $m=0$,
\begin{eqnarray}\label{4507}
\begin{cases}
e_1(\lambda,x)p''_0(x)+[\lambda e_2(\lambda,x)+\pi e_6(\lambda,x)]p'_0(x)+[e_3(\lambda,x)+\pi e_7(\lambda,x)]p_0(x)\\
\quad \quad +\lambda [e_{10}(\lambda, x)+\pi e_{12}(\lambda, x)]\int^{x}_0a_4(\lambda, t)p_0(t)\,\dd t\\
\quad \quad +\lambda [e_{11}(\lambda, x)+\pi e_{13}(\lambda, x)]\int^{x}_0b_4(\lambda, t)p_0(t)\,\dd t=0,\qquad  x \in (0, l),\\
p_{0}'(0) + \lambda\gamma_0p_{0}(0) = 0,\\
p_{0}(l) = 0.
\end{cases}
\end{eqnarray}
%\item[ii)]
For $m=1,\,2,\,\cdots$,
\begin{eqnarray}\label{4508}
\begin{cases}
e_1(\lambda,x)p_{m}'' + \lambda e_2(\lambda,x)p_{m}'+ [e_3(\lambda,x) + m^2]p_{m}+\lambda e_{10}(\lambda,x)\int^{x}_0a_4(\lambda,t)p_{m}(t)\,\dd t\\
\quad \quad +\lambda e_{11}(\lambda,x)\int^{x}_0b_4(\lambda,t)p_{m}(t)\,\dd t = 0,\qquad x \in (0, l),\\
p_{m}'(0) + \lambda\gamma_0p_{m}(0) = 0,\\
p_{m}(l) = 0.
\end{cases}
\end{eqnarray}
%\end{itemize}
%For any $x \in [0, l],\,m=0,\,1,\,2,\,\cdots$, every $p_m(x)$ is a solution of $\eqref{4507}$ or $\eqref{4508}$. Here,
We call  \eqref{4507}-\eqref{4508} as  Problem ($\mathscr{C}$).  %In this way, we transform the elliptic problem $\eqref{4505}$ with multiple integral nonlocal terms into the problem $\mathscr{C}$.
Notice that the Problem ($\mathscr{C}$) is totally determined by the background solution $U_b$ and $l$, from which the coefficients were calculated explicitly.
\begin{definition}
  We say the background solution $U_b$ fulfills the S-condition, if Problem ($\mathscr{C}$) has only the trivial solutions  $p_m\equiv0$ for each $m\in\mathbb{N}$.
\end{definition}

%Next, we will give the equivalent condition that the background solution $U_b$ satisfies the S-condition.
Obviously the S-condition guarantees uniqueness of Problem $(\mathbb{L}_1)$. To characterize it,
%To deal with nonlocal terms in Problem ($\mathscr{C}$),
we  introduce
\begin{align}\label{eq48add}
%\begin{cases}
&\mathcal {P}^{(1)}_{m}(x) \triangleq \int^{x}_0b_4(\lambda,t)p_{m}(t) \,\dd t,\quad
\mathcal {P}^{(2)}_{m}(x) \triangleq \int^{x}_0a_4(\lambda,t)p_{m}(t) \,\dd t,\nonumber\\
&\qquad\qquad\mathcal {P}^{(3)}_{m}(x) \triangleq p_{m}(x),\quad
\mathcal {P}^{(4)}_{m}(x) \triangleq p'_{m}(x)
%\end{cases}
\end{align}
%thus, the two-point boundary value problem $\eqref{4507}-\eqref{4508}$ of the integro-differential equation
to eliminate the nonlocal terms in Problem ($\mathscr{C}$), which are transformed to the following  two-point boundary value problems for system of ordinary differential equations:
\begin{eqnarray}\label{4509}
\begin{cases}
\tilde{e}_1(\lambda, x){\mathcal {P}^{(4)}}'_{m}(x) + \tilde{e}_2(\lambda, x){\mathcal {P}^{(4)}}_{m}(x) + \tilde{e}_3(\lambda, x){\mathcal {P}^{(3)}}_{m}(x) + \tilde{e}_4(\lambda, x){\mathcal {P}^{(2)}}_{m}(x)\\
\quad \quad + \tilde{e}_5(\lambda, x){\mathcal {P}^{(1)}}_{m}(x) = 0,\\
{\mathcal {P}^{(3)}}'_{m}(x) = {\mathcal {P}^{(4)}}_{m}(x),\\
{\mathcal {P}^{(2)}}'_{m}(x) = a_4(\lambda,x){\mathcal {P}^{(3)}}_{m}(x),\\
{\mathcal {P}^{(1)}}'_{m}(x) = b_4(\lambda,x){\mathcal {P}^{(3)}}_{m}(x),\quad x \in (0, l),\\
{\mathcal {P}^{(1)}}_{m}(0)=0,\\
{\mathcal {P}^{(2)}}_{m}(0) = 0,\\
{\mathcal {P}^{(4)}}_{m}(0) + \lambda\gamma_0{\mathcal {P}^{(3)}}_{m}(0) = 0,\\
{\mathcal {P}^{(3)}}_{m}(l) = 0,
\end{cases}
\end{eqnarray}
where
\begin{eqnarray}\label{4510}
\begin{cases}
\tilde{e}_1(\lambda, x) \triangleq e_1(\lambda, x)\quad \quad \quad m=0,1,2,\cdots\\
\tilde{e}_2(\lambda, x) \triangleq
\begin{cases}
\lambda e_2(\lambda,x)+\pi e_6(\lambda,x)&\quad m=0, \\
\lambda e_2(\lambda,x)&\quad m=1,2,\cdots
\end{cases}\\
\tilde{e}_3(\lambda, x) \triangleq
\begin{cases}
e_3(\lambda,x)+\pi e_7(\lambda,x)&\quad m=0, \\
e_3(\lambda,x) + m^2&\quad m=1,2,\cdots
\end{cases}\\
\tilde{e}_4(\lambda, x) \triangleq
\begin{cases}
\lambda [e_{10}(\lambda, x)+\pi e_{12}(\lambda, x)]&\quad m=0, \\
\lambda e_{10}(\lambda, x)&\quad m=1,2,\cdots
\end{cases}\\
\tilde{e}_5(\lambda, x) \triangleq
\begin{cases}
\lambda [e_{11}(\lambda, x)+\pi e_{13}(\lambda, x)]&\quad m=0, \\
\lambda e_{11}(\lambda, x)&\quad m=1,2,\cdots
\end{cases}
\end{cases}
\end{eqnarray}
Let $z_{mi}={\mathcal {P}^{(i)}}_{m}(x), ~i=1,\, 2,\, 3,\, 4$. Then $\eqref{4509}$ can be written as
\begin{eqnarray}\label{4511}
\begin{cases}
\mathbf{z}_m'= A_m\mathbf{z}_m,&\\
B\mathbf{z}_m(0)+C\mathbf{z}_m(l)=\mathbf{0},
\end{cases}
\end{eqnarray}
where
\begin{eqnarray*}
\mathbf{z}_m=\left[\begin{matrix}
z_{m1}\\
z_{m2}\\
z_{m3}\\
z_{m4}\\
\end{matrix}\right],\quad
A_m \triangleq \left[\begin{matrix}
0&0&b_4(\lambda, x)&0\\
0&0&a_4(\lambda, x)&0\\
0&0&0&1\\
-\frac{\tilde{e}_5(\lambda, x)}{\tilde{e}_1(\lambda, x)}&-\frac{\tilde{e}_4(\lambda, x)}{\tilde{e}_1(\lambda, x)}&-\frac{\tilde{e}_3(\lambda, x)}{\tilde{e}_1(\lambda, x)}&-\frac{\tilde{e}_2(\lambda, x)}{\tilde{e}_1(\lambda, x)}\\
\end{matrix}\right],\\
B \triangleq \left[\begin{matrix}
1&0&0&0\\
0&1&0&0\\
0&0&\lambda \gamma_0&1\\
0&0&0&0\\
\end{matrix}\right],\quad C \triangleq \left[\begin{matrix}
0&0&0&0\\
0&0&0&0\\
0&0&0&0\\
0&0&1&0\\
\end{matrix}\right].
\end{eqnarray*}
%If the initial conditions are $\mathbf{z}_m^1(0) = [1 ,0 ,0 ,0]^{\top}, \mathbf{z}_m^2(0) = [0 ,1 ,0 ,0]^{\top}, \mathbf{z}_m^3(0) = [0 ,0 ,1 ,0]^{\top}$, $\mathbf{z}_m^4(0) = [0 ,0 ,0 ,1]^{\top}$ respectively, then by the existence and uniqueness theorem of ordinary differential equations, the following homogeneous linear ordinary differential equations
%\begin{eqnarray*}
%\begin{cases}
%\mathbf{z}_m'= A_m\mathbf{z}_m,\\
%\mathbf{z}_m=\mathbf{z}_m^i(0),
%\end{cases}
%\end{eqnarray*}
%have a unique solution $\mathbf{z}_m^i=(z_{m1}^i,\,z_{m2}^i,\,z_{m3}^i,\,z_{m4}^i)\,(i = 1, 2, 3, 4)$. Let
%\begin{eqnarray*}
%\Psi_m(\lambda,x) \triangleq \left[\begin{matrix}
%\mathbf{z}_m^1 &\mathbf{z}_m^2 &\mathbf{z}_m^3 &\mathbf{z}_m^4\\
%\end{matrix}\right],
%\end{eqnarray*}
Let $\Psi_m(\lambda,x)=(z_{mi}^j)_{1\le i,j\le 4}$ be the standard fundamental matrix corresponding to the linear system of  ordinary differential equations in $\eqref{4511}$, thus $\Psi_m(\lambda,0)$ is the identity matrix.
Then $\mathbf{z}_m = \Psi_m(\lambda,x)[z^1_m, z^2_m, z^3_m, z^4_m]^{\top}$ is the solution to the Cauchy problem
\begin{equation}\label{4512}
\begin{cases}
\mathbf{z}_m'= A_m\mathbf{z}_m,&\\
\mathbf{z}_m(0)=[z^1_m, z^2_m, z^3_m, z^4_m]^{\top}.
\end{cases}
\end{equation}
%then the solution of Cauchy problem $\eqref{4512}$ is
%\begin{eqnarray*}
%\mathbf{z}_m = \Psi_m(\lambda,x)[z^1_m, z^2_m, z^3_m, z^4_m]^{\top}.
%\end{eqnarray*}
It solves \eqref{4511} if and only if
%\begin{eqnarray*}
%B[z^1_m, z^2_m, z^3_m, z^4_m]^{\top} + C\Big[\Psi_m(\lambda,l)[z^1_m, z^2_m, z^3_m, z^4_m]^{\top}\Big] = \mathbf{0}.
%\end{eqnarray*}
%After simplification,
\begin{eqnarray}\label{4513}
\Big[B + C\Psi_m(\lambda,l)\Big][z^1_m, z^2_m, z^3_m, z^4_m]^{\top} = \mathbf{0}.
\end{eqnarray}
%If the background solution $U_b$ satisfies the S-condition, then for any $m=0,\,1,\,2,\,\cdots$, the homogeneous linear equation system $\eqref{4513}$ about $z^1_m,\,z^2_m,\,z^3_m,\,z^4_m$ has only zero solutions, that is, the matrix $B + C\Psi_m(\lambda,l)$ is invertible, where the matrix $B + C\Psi_m(\lambda,l)$ is
So the determinant of
\begin{eqnarray*}
B + C\Psi_m(\lambda,l) =
\left[\begin{matrix}
1&0&0&0&\\
0&1&0&0&\\
0&0&\lambda \gamma_0&1\\
z_{m3}^1(l)&z_{m3}^2(l)&z_{m3}^3(l)&z_{m3}^4(l)&\\
\end{matrix}\right]
\end{eqnarray*}
is nonzero, namely
\begin{eqnarray}\label{4514}
\chi_m(\lambda) \triangleq \lambda \gamma_0z_{m3}^4(l)-z_{m3}^3(l) \neq 0
\end{eqnarray}
is both sufficient and necessary for $\mathbf{z}_m\equiv0$, or equivalently, $p_m\equiv0$.
%the matrix $B + C\Psi_m(\lambda,l)$ is invertible.
Thus, we have proved the following lemma.
\begin{lemma}\label{Lem43}
The following statements are equivalent:
\begin{itemize}
\item[i)] if  Problem ($\mathbb{L}_1$) has a strong solution in $H^2(\Omega)$, then it is unique;
\item[ii)]
$U_b$ satisfies S-condition;
\item[iii)]
for any $m=0, 1, 2, \cdots$, the matrix $B + C\Psi_m(l)$ is invertible;
\item[iv)]
for any $m=0, 1, 2, \cdots$, $\eqref{4514}$ holds.
\end{itemize}
\end{lemma}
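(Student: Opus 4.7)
The plan is to establish the equivalences in the cycle i) $\Leftrightarrow$ ii) $\Leftrightarrow$ iii) $\Leftrightarrow$ iv), by pairing each equivalence with the construction already built up in the preceding discussion. The key conceptual point is that the second-order problem $(\mathbb{L}_1)$ can be reduced, mode-by-mode, to a family of linear two-point boundary value problems for a $4\times 4$ first-order ODE system, whose unique solvability is controlled by a single scalar determinant.

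First I would prove i) $\Leftrightarrow$ ii). Uniqueness of a strong $H^2$ solution is equivalent to the assertion that the only $H^2$ solution of the homogeneous problem \eqref{4505} is $\hat p\equiv 0$. Any such $\hat p$ extends (evenly in $y$ across $W_\pm$) to a $2\pi$-periodic function; the bootstrap argument sketched in the excerpt (elliptic regularity applied to the local second-order operator with the nonlocal piece treated as a lower-order right-hand side, then iterated using the fact that $m_b\in C^\infty$) shows $\hat p\in C^\infty(\bar\Omega)$, so the Fourier cosine series \eqref{4506} and its termwise derivatives converge uniformly. Substituting into \eqref{4505}, the completeness and orthogonality of $\{\cos(my)\}_{m\ge 0}$ force each Fourier coefficient $p_m$ to solve Problem ($\mathscr{C}$), i.e.\ \eqref{4507}--\eqref{4508}. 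Hence $\hat p\equiv 0$ is equivalent to $p_m\equiv 0$ for every $m\ge 0$, which is precisely the S-condition.

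Next I would prove ii) $\Leftrightarrow$ iii). The substitution \eqref{eq48add} turns the integro-differential boundary value problem ($\mathscr{C}$) into the first-order linear system \eqref{4511} with separated boundary data encoded by the matrices $B$ and $C$; this transformation is a bijection between solutions, since $\mathcal P_m^{(1)}, \mathcal P_m^{(2)}$ are recovered from $p_m=\mathcal P_m^{(3)}$ by quadrature with zero initial data at $x=0$, which is captured by the first two rows of $B$. Using the fundamental matrix $\Psi_m(\lambda,x)$ of \eqref{4512}, every solution of the ODE is $\mathbf z_m(x)=\Psi_m(\lambda,x)\mathbf z_m(0)$, and the boundary conditions become the linear system \eqref{4513}. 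Thus ($\mathscr{C}$) has only the trivial solution for every $m$ iff $B+C\Psi_m(\lambda,l)$ is injective, hence invertible, for every $m$.

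Finally I would prove iii) $\Leftrightarrow$ iv) by direct cofactor expansion of
\[
B+C\Psi_m(\lambda,l)=
\begin{pmatrix}
1 & 0 & 0 & 0\\
0 & 1 & 0 & 0\\
0 & 0 & \lambda\gamma_0 & 1\\
z_{m3}^{1}(l) & z_{m3}^{2}(l) & z_{m3}^{3}(l) & z_{m3}^{4}(l)
\end{pmatrix},
\]
along the first two rows, which collapses the determinant to $\lambda\gamma_0 z_{m3}^{4}(l)-z_{m3}^{3}(l)=\chi_m(\lambda)$; invertibility is then exactly $\chi_m(\lambda)\neq 0$.

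The routine algebra of the last step is painless; the genuine labor lies in the first equivalence, where one must justify passing from a weak/$H^2$ solution all the way to smoothness with $y$-periodicity so that Fourier analysis is legitimate, and in verifying that the $y$-extension preserves the boundary conditions and the structure of the operator $\mathcal L_1$. I expect this bootstrap-and-Fourier step, together with the need to handle the six nonlocal terms as lower-order data without destroying the regularity gain, to be the main obstacle.
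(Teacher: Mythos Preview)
Your proposal is correct and mirrors the paper's own argument essentially step for step: the paper derives Lemma~\ref{Lem43} as a summary of the preceding discussion, using exactly the bootstrap-to-$C^\infty$ plus Fourier decomposition for i)$\Leftrightarrow$ii), the reduction \eqref{eq48add} to the first-order system \eqref{4511}--\eqref{4513} for ii)$\Leftrightarrow$iii), and the $2\times2$ minor computation yielding \eqref{4514} for iii)$\Leftrightarrow$iv). Your identification of the regularity bootstrap as the only nontrivial step is also accurate.
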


The following lemma shows that for {\em almost all} $\lambda \in \mathbb{R}$, the resultant background solution $U_b$ determined by the parameters $U_b(0)$, $m_b$ and $\lambda$ satisfies the S-condition. %In this way, we show that the background solution that satisfies the S-condition not only exists, but also many.

\begin{lemma}\label{Lem44}
For given constant state  $U_b(0)$ on the entrance and positive $C^\infty(\mathbb{R}^+)$  function $m_b(x)$, there is a  set $\mathcal {S}\subset \mathbb{R}$ with at most countable infinite points, so that for $\lambda\in \mathbb{R}\backslash \mathcal {S}$, the resultant background solution $U_b$ satisfies the S-condition.
\end{lemma}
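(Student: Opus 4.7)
The plan is to exploit the real-analytic dependence of the background solution on $\lambda$ (Lemma~\ref{lem421}) to make each $\chi_m$ a real-analytic function of $\lambda$, to verify $\chi_m\not\equiv 0$ by direct computation at the base point $\lambda=0$, and then to conclude via the fact that a nontrivial real-analytic function has a discrete (hence at most countable) zero set on any connected interval on which it is defined. This will exhibit
\[
\mathcal{S}:=\bigcup_{m\ge 0}\{\lambda\in\mathbb{R}:\chi_m(\lambda)=0\}
\]
as a countable union of countable sets, and Lemma~\ref{Lem43} will finish the proof.

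For the analyticity, I would trace through the linearizations of Section~\ref{sec43} and observe that the coefficients $e_1,e_2,e_3,\tilde e_i,a_4,b_4$ appearing in the ODE system \eqref{4511} are assembled algebraically (and by integration in $x$) from the background quantities $p_b,\rho_b,u_b,c_b,E_b,A(s_b),M_b$, the fixed datum $m_b(x)$, the parameter $\lambda$, and the fundamental matrix $\Phi(\lambda,x)$ entering \eqref{4445}; cf.~\eqref{4414}, \eqref{4418}, \eqref{4421}, \eqref{4437}, \eqref{4439}. By Lemma~\ref{lem421} each background quantity is real-analytic in $\lambda$ uniformly for $x\in[0,l]$; since $\Phi$ solves a linear ODE whose coefficients inherit this analyticity and are smooth in $x$, the classical theorem on analytic dependence of ODE solutions on parameters gives that $\Phi$, $\Phi^{-1}$, and hence the fundamental matrix $\Psi_m(\lambda,x)$ of \eqref{4511}, are real-analytic in $\lambda$. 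In particular each $\chi_m(\lambda)=\lambda\gamma_0 z_{m3}^4(l)-z_{m3}^3(l)$ is real-analytic on the open set of $\lambda$ for which the subsonic background is defined on $[0,l]$, a set containing $\lambda=0$.

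For the nonvanishing, I would exploit the collapse that occurs at $\lambda=0$: the background is then the constant state prescribed at $\Sigma_0$, so every $x$-derivative of a background quantity vanishes. An inspection of \eqref{4437} and \eqref{4439} yields $e_2(0,x)=e_3(0,x)=0$ and $\tilde e_2(0,x)=\tilde e_4(0,x)=\tilde e_5(0,x)=0$ for every $m$, while $\tilde e_3(0,x)=0$ for $m=0$ and $\tilde e_3(0,x)=m^2$ for $m\ge1$. Consequently the equations for the $\mathcal{P}_m^{(3)},\mathcal{P}_m^{(4)}$ components decouple from the integral components $\mathcal{P}_m^{(1)},\mathcal{P}_m^{(2)}$ and reduce to $e_1(0)\,p_m''(x)+\tilde e_3(0,x)\,p_m(x)=0$ with $e_1(0)=M_b^2(0)-1<0$ constant. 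Solving with the initial data corresponding to the third and fourth columns of $\Psi_m(0,\cdot)$ gives, for $m=0$, $z_{03}^{3}(l)=1$ and $z_{03}^{4}(l)=l$, so $\chi_0(0)=-1$; and for $m\ge 1$, with $\omega_m:=m/\sqrt{1-M_b^2(0)}>0$,
\begin{equation*}
z_{m3}^{3}(l)=\cosh(\omega_m l),\qquad z_{m3}^{4}(l)=\frac{\sinh(\omega_m l)}{\omega_m},
\end{equation*}
so $\chi_m(0)=-\cosh(\omega_m l)\neq 0$. Thus every $\chi_m$ is a nonzero real-analytic function of $\lambda$, and the conclusion follows. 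The main obstacle is not the final analytic-zero-set argument but the bookkeeping in the first two steps: verifying that every manipulation of Section~\ref{sec43} preserves analyticity in $\lambda$, and checking that all coefficients carrying an explicit $\lambda$-factor or an $x$-derivative of a background quantity vanish at $\lambda=0$, which is what makes the closed-form evaluation of $\chi_m(0)$ possible.
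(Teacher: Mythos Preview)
Your proposal is correct and follows essentially the same approach as the paper's own proof: establish real-analyticity of $\chi_m(\lambda)$ via Lemma~\ref{lem421} and analytic dependence of ODE solutions on parameters, then evaluate $\chi_m(0)$ explicitly by observing that at $\lambda=0$ the system collapses to $e_1(0)p_m''+m^2 p_m=0$, yielding $\chi_0(0)=-1$ and $\chi_m(0)=-\cosh(\omega_m l)$ for $m\ge1$, and conclude by the discreteness of zeros of nontrivial real-analytic functions. The paper carries out precisely these steps with the same computations.
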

We remark that for the given   $U_b(0)$  and  $m_b(x)$, for $\lambda$ in a bounded set, the critical length $l_*$ has a positive lower bound which is uniform with respect to any such $\lambda$. Hence the length $l$ of the duct could be chosen a priori.
\begin{proof}
By real-analytic dependance on parameters for solutions of ordinary differential equations (Walter \cite[Section 13, p.149]{Walter-1998}) and Lemma \ref{lem421}, we know that  for any $m=0, 1, 2, \cdots$, the coefficients in  $\eqref{4509}$ are real analytic with respect to $\lambda\in \mathbb{R}$.  Thus for all $m=0, 1, 2, \cdots$, $\chi_m(\lambda)$ is real analytic on $\mathbb{R}$.

Now for the special case  $\lambda = 0$,
\begin{itemize}
\item[i)] by $\eqref{4414}$, $a_j(0,x)\,(j=1,\,2,\,3)$ are constants, and $a_2(0,x)>0,\,a_3(0,x)>0$;
\item[ii)]
by $\eqref{4418}$, $b_j(0,x)\,(j=1,\,2,\,3)$ are constants, and $b_1(0,x)>0,\,b_3(0,x)>0$;
\item[iii)]
by $\eqref{4421}$, $c_j(0,x)\,(j=1,\,2,\,3,\,4)$ are constants, and $c_1(0,x)>0, c_k(0,x)\equiv0,\,(k=2,\,3,\,4)$;
\item[iv)]
by $\eqref{4437}$, $d_j(0,x)\equiv0,\,(j=1,\,\cdots,\,9)$;
\item[v)]
by $\eqref{4439}$, $e_j(0,x)\,(j=1,\,\cdots,\,9)$ are constants, and $e_1(0,x)<0,\, e_k(0,x)\equiv0, \,(k=2,\,\cdots,\,9)$;
\item[vi)]
by $\eqref{4445}$, $a_4(0,x),~ b_4(0,x),~ e_{j}(0,x)\,(j=10,\,11,\,12,\,13)$ are constants, and $a_4(0,x)=a_3(0,x)>0,~ b_4(0,x)=b_3(0,x)>0,~ e_{k}(0,x)\equiv0,\,(k=10,\,11,\,12,\,13)$;
\item[vii)]
by $\eqref{4510}$, $\tilde{e}_j(0,x)\,(j=1,\,\cdots,\,5)$ are constants, and $\tilde{e}_1(0,x)<0,~ \tilde{e}_3(0,x)=|m|^2\geq0,~ ~\tilde{e}_k(0,x)\equiv0,\,(k=2,\,4,\,5)$.
\end{itemize}
Hence, we obtain that
\begin{eqnarray}\label{4515}
\chi_m(0)=-z_{m3}^3(l),
\end{eqnarray}
where $z_{m3}^3(l)$ is the value of $z_{m3}^3$ at $x = l$, and $z_{m3}^3$ is the third component of the solution to Cauchy problem $\eqref{4512}$ (with $\lambda = 0$) corresponding to the initial data $\mathbf{z}_m^3(0) = [0, 0, 1, 0]^{\top}$. %By $\eqref{4515}$, we only need to prove that for any $m=0,\,1,\,2,\,\cdots$, $z_{m3}^3(l)\neq 0$. Then $\chi_m(\lambda)$ is not trivial, and it has finite zeros in any bounded interval, hence at most countable zeros in $\mathbb{R}$.
We now show that for any $m=0,\,1,\,2,\,\cdots$, it holds $z_{m3}^3(l)\neq 0$.

When $\lambda = 0$, for any $m=0,\,1,\,2,\,\cdots$, the equations in $\eqref{4512}$ can be written as the following linear homogeneous system with constant-coefficients
\begin{eqnarray}\label{4516}
\left[\begin{matrix}
z_{m1}'\\
z_{m2}'\\
z_{m3}'\\
z_{m4}'\\
\end{matrix}\right]
=\left[\begin{matrix}
0&0&b_4(0,x)&0\\
0&0&a_4(0,x)&0\\
0&0&0&1\\
0&0&-\frac{m^2}{\tilde{e}_1(0,x)}&0\\
\end{matrix}\right]\left[\begin{matrix}
z_{m1}\\
z_{m2}\\
z_{m3}\\
z_{m4}\\
\end{matrix}\right].
\end{eqnarray}
The solution $(z_{m1}^3, z_{m2}^3, z_{m3}^3, z_{m4}^3)^\top$ to $\eqref{4516}$ with initial data $\mathbf{z}_m^3(0) = [0, 0, 1, 0]^{\top}$ are as follows:
\begin{itemize}
\item[i)]
$z_{01}^3=b_4(0,x)x,~ z_{02}^3=a_4(0,x)x,~ z_{03}^3=1, ~z_{04}^3=0$;
\item[ii)]
for $m=1,\,2,\,\cdots$,
\begin{eqnarray*}
\begin{cases}
z^3_{m1}=\frac{b_4(0,x)}{2}\left(-\frac{\sqrt{-\tilde{e}_1(0,x)}}{m}\exp(-\frac{m}{\sqrt{-\tilde{e}_1(0,x)}}x)+\frac{\sqrt{-\tilde{e}_1(0,x)}}{m}\exp(\frac{m}{\sqrt{-\tilde{e}_1(0,x)}}x)\right),&\\
z^3_{m2}=\frac{a_4(0,x)}{2}\left(-\frac{\sqrt{-\tilde{e}_1(0,x)}}{m}\exp(-\frac{m}{\sqrt{-\tilde{e}_1(0,x)}}x)+\frac{\sqrt{-\tilde{e}_1(0,x)}}{m}\exp(\frac{m}{\sqrt{-\tilde{e}_1(0,x)}}x)\right),&\\
z^3_{m3}=\frac{1}{2}\left(\exp(-\frac{m}{\sqrt{-\tilde{e}_1(0,x)}}x)+\exp(\frac{m}{\sqrt{-\tilde{e}_1(0,x)}}x)\right),&\\
z^3_{m4}=\frac{1}{2}\left(-\frac{m}{\sqrt{-\tilde{e}_1(0,x)}}\exp(-\frac{m}{\sqrt{-\tilde{e}_1(0,x)}}x)+\frac{m}{\sqrt{-\tilde{e}_1(0,x)}}\exp(\frac{m}{\sqrt{-\tilde{e}_1(0,x)}}x)\right),&
\end{cases}
\end{eqnarray*}
where %$a_4(0,x), ~b_4(0,x),~ \tilde{e}_1(0,x)$ are constants, and
$a_4(0,x)>0,~ b_4(0,x)>0,~ \tilde{e}_1(0,x)<0$ are constants now.
\end{itemize}
Therefore, for $m=0$, there holds $\chi_m(0) = -z_{03}^3(l) = -1<0$; for $m=1,2,\cdots$, there holds $\chi_m(0) = -z_{m3}^3(l)=-\frac{1}{2}\left(\exp(-\frac{m}{\sqrt{-\tilde{e}_1(0,x)}}l)+\exp(\frac{m}{\sqrt{-\tilde{e}_1(0,x)}}l)\right)<0$.
Since for a nontrivial real-analytic function, there are at most finite roots lying in a compact set,  we see that there exists a countable set $\mathcal {S}_m\subset \mathbb{R}$ for any $m=0,\,1,\,2,\,\cdots$, such that   $\chi_m(\lambda) \neq 0$ on $\mathbb{R}\backslash \mathcal {S}_m$.  Let $\mathcal {S}=\cup_{m\in \mathbb{N}}\mathcal {S}_m$.  Then $\mathcal {S}$ is also a countable subset of $\mathbb{R}$, and for any $\lambda\in \mathbb{R}\backslash \mathcal {S}$, $m=0,\,1,\,2,\,\cdots$, there holds $\chi_m(\lambda) \neq 0$; that is, \eqref{4514} is valid. By Lemma \ref{Lem43}, the resultant background solution $U_b$ satisfies the S-condition.
\end{proof}

\subsection{A priori estimates}\label{secp22}

For Problem $(\mathbb{L}_1)$,  we consider the nonlocal terms in \eqref{44addnew}
%\begin{eqnarray*}
%&&e_6(\lambda,x)\int_{0}^{\pi}\p_x\hat{p}(x, y)\dd y+e_7(\lambda,x)\int_{0}^{\pi}\hat{p}(x, y)\dd y+\lambda e_{10}(\lambda, x)\int^{x}_0a_4(\lambda, \tau)\hat{p}(\tau, y)\dd\tau\nonumber\\
%&&\quad \quad +\lambda e_{11}(\lambda, x)\int^{x}_0b_4(\lambda, \tau)\hat{p}(\tau, y)\dd\tau+\lambda e_{12}(\lambda,x)\int_{0}^{\pi}\int^{x}_0a_4(\lambda, \tau)\hat{p}(\tau, y)\dd\tau\dd y\nonumber\\
%&&\quad \quad +\lambda e_{13}(\lambda,x)\int_{0}^{\pi}\int^{x}_0b_4(\lambda, \tau)\hat{p}(\tau, y)\dd\tau\dd y
%\end{eqnarray*}
as nonhomogeneous terms. %In this case, the problem $(\mathbb{L}_1)$ can be regarded as a second order elliptic problem with Dirichlet and oblique derivative boundary conditions.
Then by Schauder estimates and interpolation inequality (see Gilbarg-Truding \cite[Chapter 6]{Gilbarg-Tudinger-1998}), if there is a solution $\hat{p}\in C^{k, \alpha}(\bar{\Omega})\,(k = 2,\,3)$, then it satisfies the following inequality
\begin{eqnarray}\label{4734}
\|\hat{p}\|_{C^{k, \alpha}(\bar{\Omega})} \leq C\left(\|\hat{p}\|_{C^{0}(\bar{\Omega})} + \|h\|_{C^{k - 2, \alpha}(\bar{\Omega})} + \|g_0\|_{C^{k - 1, \alpha}(\Sigma_0)}
 + \|g_l\|_{C^{k, \alpha}(\Sigma_l)}\right),
\end{eqnarray}
where $C$ is a positive constant that  depends only on the background solution $U_b$ and $\alpha\in (0,1)$, as well as length  $l$ of the duct. Furthermore, if the solution to Problem $(\mathbb{L}_1)$ is unique in $C^{k, \alpha}(\bar{\Omega})\,(k = 2,\,3)$, then the following estimate could be obtained from \eqref{4734}, by a contradiction-compactness argument as in \cite[Lemma 9.17]{Gilbarg-Tudinger-1998},
\begin{eqnarray}\label{4735}
\|\hat{p}\|_{C^{k, \alpha}(\bar{\Omega})} \leq C\left(\|h\|_{C^{k - 2, \alpha}(\bar{\Omega})} + \|g_0\|_{C^{k - 1, \alpha}(\Sigma_0)} + \|g_l\|_{C^{k, \alpha}(\Sigma_l)}\right).
\end{eqnarray}
See also Lemma 4.10 in \cite{Gao-Liu-Yuan-2020} for details.

\subsection{Approximate solutions} \label{secp23}
We now  construct approximate solutions to  Problem $(\mathbb{L}_1)$ by the Galerkin method used in \cite{Gao-Liu-Yuan-2020}. As indicated there, for the given $h,\,g_0$ and $g_1$, there are sequences of functions  $\{h^{(n)}\}_n$ defined in $\Omega^{\infty}\triangleq(0,l)\times(-\infty,+\infty)$, $\{g^{(n)}_0\}_n$ on $\Sigma_0^{\infty}\triangleq\{0\}\times(-\infty,+\infty)$, and $\{g^{(n)}_l\}_n$ on $\Sigma_l^{\infty}\triangleq\{l\}\times(-\infty,+\infty)$, such that
\begin{eqnarray}\label{47361}
\p_yh^{(n)}=\frac{\dd g^{(n)}_0}{\dd y}=\frac{\dd g^{(n)}_l}{\dd y}=0\quad \text{if}\quad y=k\pi,\quad k\in\mathbb{Z},
\end{eqnarray}
and as $n\rightarrow +\infty$,  $\{h^{(n)}\}_n$ converges to $h$ in $C^{1,\alpha'}(\Omega^{\infty})$, $\{g^{(n)}_0\}_n$ converges to $g_0$ in $C^{1, \alpha'}(\Sigma^{\infty}_0)$, $\{g^{(n)}_l\}_n$ converges to $g_l$ in $C^{1, \alpha'}(\Sigma^{\infty}_l)$,  whenever $0<\alpha'<\alpha$.

By $\eqref{47361}$, we have the following Fourier series: %corresponding to $h^{(n)}(\mathbf{x}),\,g^{(n)}_0(y)$ and $g^{(n)}_l(y)$:
\begin{eqnarray}\label{4736}
\begin{cases}
h^{(n)}(\lambda, \mathbf{x}) = \frac{h^{(n)}_{0}(\lambda, x)}{2}+\sum\limits_{m = 1}^{\infty}h^{(n)}_{m}(\lambda, x)\cos(my),\\
g^{(n)}_0(y) = \frac{(g^{(n)}_0)_{0}}{2}+\sum\limits_{m=1}^{\infty}(g^{(n)}_0)_{ m}\cos(my),\\
g^{(n)}_l(y) = \frac{(g^{(n)}_l)_{ 0}}{2}+\sum\limits_{m=1}^{\infty}(g^{(n)}_l)_{ m}\cos(my).
\end{cases}
\end{eqnarray}
Taking these as non-homogeneous terms in Problem $(\mathbb{L}_1)$, and supposing that $\hat{p}$ is given by \eqref{4506}, %Substituting $\eqref{4736}$ into the problem $(\mathbb{L}_1)$,
then $p_{m}$ should solve the following problems:
\begin{itemize}
\item[i)]
for $m=0$,
\begin{eqnarray}\label{4737}
\begin{cases}
e_1(\lambda,x)p''_0(x)+[\lambda e_2(\lambda,x)+\pi e_6(\lambda,x)]p'_0(x)+[e_3(\lambda,x)+\pi e_7(\lambda,x)]p_0(x)\\
\quad +\lambda [e_{10}(\lambda, x)+\pi e_{12}(\lambda, x)]\int^{x}_0a_4(\lambda, t)p_0(t)\,\dd t\\
\quad +\lambda [e_{11}(\lambda, x)+\pi e_{13}(\lambda, x)]\int^{x}_0b_4(\lambda, t)p_0(t)\,\dd t=h^{(n)}_{0},\\
p_{0}'(0) + \lambda\gamma_0p_{0}(0) = (g^{(n)}_0)_{0}\\
p_{0}(l) = (g^{(n)}_l)_{0};
\end{cases}
\end{eqnarray}
\item[ii)]
for $m=1,\,2,\,\cdots$,
\begin{eqnarray}\label{4738}
\begin{cases}
e_1(\lambda,x)p_{m}'' + \lambda e_2(\lambda,x)p_{m}'+ [e_3(\lambda,x) + m^2]p_{m} + \lambda e_{10}(\lambda,x)\int^{x}_0a_4(\lambda,t)p_{m}(t)\,\dd t\\
 \quad+\lambda e_{11}(\lambda,x)\int^{x}_0b_4(\lambda,t)p_{m}(t)\,\dd t= h^{(n)}_{m}(\lambda,x),\\
p_{m}'(0) + \lambda\gamma_0p_{m}(0) = (g^{(n)}_0)_{m},\\
p_{m}(l) = (g^{(n)}_l)_{m}.
\end{cases}
\end{eqnarray}
\end{itemize}
%For any $m=0,\,1,\,2,\,\cdots$, let
%\begin{equation}\label{4739}
%\begin{cases}
%\mathcal {P}^{(1)}_{m}(x) \triangleq \int^{x}_0b_4(\lambda,t)p_{m}(t) \dd t,\\
%\mathcal {P}^{(2)}_{m}(x) \triangleq \int^{x}_0a_4(\lambda,t)p_{m}(t) \dd t,\\
%\mathcal {P}^{(3)}_{m}(x) \triangleq p_{m}(x),\\
%\mathcal {P}^{(4)}_{m}(x) \triangleq p'_{m}(x),
%\end{cases}
%\end{equation}
Introducing the new unknowns  $\mathcal {P}^{(j)}_{m}(x) $ $(j=1, 2, 3, 4)$ as in \eqref{eq48add}, these problems could be written equivalently as
%Substituting $\eqref{4739}$ into $\eqref{4737}-\eqref{4738}$, then the two-point boundary value problem of the integro-differential equation $\eqref{4737}-\eqref{4738}$ can be transformed into the following ordinary differential equations two-point boundary value problem:
\begin{eqnarray}\label{4740}
\begin{cases}
\tilde{e}_1(\lambda, x){\mathcal {P}^{(4)}}'_{m}(x) + \tilde{e}_2(\lambda, x){\mathcal {P}^{(4)}}_{m}(x) + \tilde{e}_3(\lambda, x){\mathcal {P}^{(3)}}_{m}(x) + \tilde{e}_4(\lambda, x){\mathcal {P}^{(2)}}_{m}(x)\\
\quad \quad + \tilde{e}_5(\lambda, x){\mathcal {P}^{(1)}}_{m}(x) = h^{(n)}_{m},\\
{\mathcal {P}^{(3)}}'_{m}(x) = {\mathcal {P}^{(4)}}_{m}(x),\\
{\mathcal {P}^{(2)}}'_{m}(x) = a_4(\lambda,x){\mathcal {P}^{(3)}}_{m}(x),\\
{\mathcal {P}^{(1)}}'_{m}(x) = b_4(\lambda,x){\mathcal {P}^{(3)}}_{m}(x),\\
{\mathcal {P}^{(1)}}_{m}(0)=0,\\
{\mathcal {P}^{(2)}}_{m}(0) = 0,\\
{\mathcal {P}^{(4)}}_{m}(0) + \lambda\gamma_0{\mathcal {P}^{(3)}}_{m}(0)= (g^{(n)}_0)_{m},\\
{\mathcal {P}^{(3)}}_{m}(l) = (g^{(n)}_l)_{m},
\end{cases}
\end{eqnarray}
where the coefficients $\tilde{e}_i,\,(i = 1,\,2,\,3,\,4,\,5)$ were given by $\eqref{4510}$. Using notations defined in \eqref{4511}, %Let $z_{mi}={\mathcal {P}^{(i)}}_{m}(x), i=1,\,2,\,3,\,4$. Then
problem $\eqref{4740}$ reads
\begin{eqnarray}\label{4741}
\begin{cases}
\mathbf{z}_m'= A_m\mathbf{z}_m+\Big[0,0,0,\frac{h^{(n)}_{m}(\lambda,x)}{\tilde{e}_1(\lambda, x)}\Big]^{\top},&\\
B\mathbf{z}_m(0)+C\mathbf{z}_m(l)=\Big[0,0,(g^{(n)}_0)_{m},(g^{(n)}_l)_{m}\Big]^{\top}.
\end{cases}
\end{eqnarray}
%where $A_m,\,B$ and $C$ are consistent with the definition of question $\eqref{4511}$.
To solve it, we firstly consider the Cauchy problem %Next, we construct a Cauchy problem for the linear ordinary differential equations so that the solution of the problem is equivalent to the solution of the two-point boundary value problem $\eqref{4741}$. Let
\begin{equation}\label{4742}
\begin{cases}
\mathbf{z}_m'= A_m\mathbf{z}_m+\Big[0,0,0,\frac{h^{(n)}_{m}(\lambda,x)}{\tilde{e}_1(\lambda, x)}\Big]^{\top},&\\
\mathbf{z}_m(0)=\Big[z^1_m, z^2_m, z^3_m, z^4_m\Big]^{\top},
\end{cases}
\end{equation}
%then the solution of Cauchy problem $\eqref{4742}$ is
whose solution is
\begin{eqnarray}\label{4743}
\mathbf{z}_m = \Psi_m(\lambda,x)\Big[z^1_m, z^2_m, z^3_m, z^4_m\Big]^{\top} + \Psi_m(\lambda,x)\int^{x}_{0}\Psi^{- 1}_m(\lambda,t)\Big[0,0,0,\frac{h^{(n)}_{m}(\lambda,t)}{\tilde{e}_1(\lambda, t)}\Big]^{\top}\,\dd t,
\end{eqnarray}
where $\Psi_m(\lambda,x)$ is the standard fundamental matrix for $\eqref{4512}$.  For  $\eqref{4743}$ to satisfy the boundary conditions in problem $\eqref{4740}$, it requires that
%\begin{eqnarray*}
%B\Big[z^1_m, z^2_m, z^3_m, z^4_m\Big]^{\top}+C\left\{\Psi_m(\lambda,l)\Big[z^1_m, z^2_m, z^3_m, z^4_m\Big]^{\top} +\Psi_m(\lambda,l)\int^{l}_0\Psi^{- 1}_m(\lambda, t)\right.\nonumber\\
%\left.\quad \quad \times \Big[0,0,0,\frac{h^{(n)}_{m}(\lambda,t)}{\tilde{e}_1(\lambda, t)}\Big]^{\top}\dd t\right\} = \Big[0,0,(g^{(n)}_0)_{m},(g^{(n)}_1)_{m}\Big]^{\top}.
%\end{eqnarray*}
%Simplify, we have
\begin{eqnarray}\label{4744}
&&\Big[B + C\Psi_m(\lambda,l)\Big]\Big[z^1_m, z^2_m, z^3_m, z^4_m\Big]^{\top} = \Big[0,0,(g^{(n)}_0)_{m},(g^{(n)}_l)_{m}\Big]^{\top}\nonumber\\
&&\quad \quad - C\Psi_m(\lambda,l)\int^{l}_0\Psi^{- 1}_m(\lambda, t)\Big[0,0,0,\frac{h^{(n)}_{m}(\lambda,t)}{\tilde{e}_1(\lambda, t)}\Big]^{\top}\,\dd t.
\end{eqnarray}
By Lemma \ref{Lem43}, if the background solution satisfies the S-condition, %that is, the homogeneous linear equation system corresponding to the problem $\eqref{4744}$ has only zero solutions. So, the nonhomogeneous problem $\eqref{4744}$ has a unique solution
we have
\begin{eqnarray}\label{4745}
\Big[z^1_m, z^2_m, z^3_m, z^4_m\Big]^{\top} = \Big[B + C\Psi_m(\lambda,l)\Big]^{-1}\Big\{\Big[0,0,(g^{(n)}_0)_{m},(g^{(n)}_l)_{m}\Big]^{\top}\nonumber\\
\quad \quad \quad\quad- C\Psi_m(\lambda,l)\int^{l}_0\Psi^{- 1}_m(\lambda, t)\Big[0,0,0,\frac{h^{(n)}_{m}(\lambda,t)}{\tilde{e}_1(\lambda,t)}\Big]^{\top}\,\dd t\Big\}.
\end{eqnarray}
Therefore, the $C^\infty$ smooth funciton $p^{(n)}_m(x)$ could be uniquely determined by $\eqref{4745}$ and \eqref{4743}.

%We now define a sequence of approximate solutions $\{\hat{p}^{(n)}(x)\}_n$ to Problem $(\mathbb{L}_1)$ with given non-homogeneous terms .
Now set
\begin{eqnarray*}
\begin{cases}
\hat{p}^{(n)}_N(\mathbf{x}) = \frac{p^{(n)}_0(x)}{2}+\sum\limits_{m = 1}^{N}p^{(n)}_m(x)\cos(my),\\
h^{(n)}_N(\lambda,\mathbf{x}) = \frac{h^{(n)}_{0}(\lambda, x)}{2}+\sum\limits_{m = 1}^{N}h^{(n)}_{m}(\lambda, x)\cos(my),\\
(g^{(n)}_0)_N(y) = \frac{(g^{(n)}_0)_{0}}{2}+\sum\limits_{m=1}^{N}(g^{(n)}_0)_{ m}\cos(my),\\
(g^{(n)}_l)_N(y) = \frac{(g^{(n)}_l)_{ 0}}{2}+\sum\limits_{m=1}^{\infty}(g^{(n)}_l)_{ m}\cos(my).
\end{cases}
\end{eqnarray*}
It can be verified easily that $\hat{p}^{(n)}_{N}, h^{(n)}_{N}\in C^{\infty}(\bar{\Omega}^{\infty})$, $(g^{(n)}_0)_{N}\in  C^{\infty}(\Sigma_{0}^{\infty})$, $(g^{(n)}_l)_{N}\in  C^{\infty}(\Sigma_l^{\infty})$, and $\hat{p}^{(n)}_{N}$ is a solution to the following problem:
\begin{equation*}
\begin{cases}
\mathcal{L}_1(\hat{p}^{(n)}_{N}) = h^{(n)}_{N}&\quad \mathbf{x} \in \Omega^{\infty}, \\
\p_{x}\hat{p}^{(n)}_{N} + \lambda\gamma_0\hat{p}^{(n)}_{N} = (g^{(n)}_0)_{N}&\quad \mathbf{x} \in \Sigma_{0}^{\infty},\\
\hat{p}^{(n)}_{N} = (g^{(n)}_l)_{N}&\quad \mathbf{x} \in \Sigma_l^{\infty}.
\end{cases}
\end{equation*}
For any $N_1,\,N_2\in\mathbb{N}$, where $N_1 < N_2$, by the a priori estimate $\eqref{4735}$,
\begin{eqnarray}\label{4746}
&&\|\hat{p}^{(n)}_{N_2} - \hat{p}^{(n)}_{N_1}\|_{C^{k, \alpha}(\bar{\Omega})} \leq C\Big(\|h^{(n)}_{N_2} - h^{(n)}_{N_1}\|_{C^{k - 2, \alpha}(\bar{\Omega})} + \|(g^{(n)}_0)_{N_2} - (g^{(n)}_0)_{N_1}\|_{C^{k - 1, \alpha}(\Sigma_0)}\nonumber\\
&&\quad \qquad\qquad\quad\quad + \|(g^{(n)}_l)_{N_2} - (g^{(n)}_l)_{N_1}\|_{C^{k - 1, \alpha}(\Sigma_l)}\Big).
\end{eqnarray}
%can be obtained by periodicity and a priori estimate $\eqref{4735}$.
Noticing that as $N \rightarrow + \infty$, the Fourier series $h^{(n)}_N\rightarrow h^{(n)}$ in $C^{k - 2, \alpha}(\bar{\Omega})$, $(g^{(n)}_0)_{N}\rightarrow (g^{(n)}_0)$ in $C^{k - 1, \alpha}(\Sigma_0)$, and $(g^{(n)}_l)_{N}\rightarrow (g^{(n)}_l)$ in $C^{k - 1, \alpha}(\Sigma_{l})$. Therefore, by $\eqref{4746}$,  $\{\hat{p}^{(n)}_N\}_N$ is a Cauchy sequence in $C^{k, \alpha}(\bar{\Omega})$. So %$\{\hat{p}^{(n)}_N\}_N$ converges in $C^{k, \alpha}(\bar{\Omega})$, that is,
there exists $\hat{p}^{(n)} \in C^{k, \alpha}(\bar{\Omega})$, such that as $N \rightarrow + \infty$, $\hat{p}^{(n)}_N\rightarrow \hat{p}^{(n)}$, and $\hat{p}^{(n)}$ is the solution to the following problem:
\begin{equation}\label{4747}
\begin{cases}
\mathcal{L}_1(\hat{p}^{(n)}) = h^{(n)}&\quad \mathbf{x} \in \Omega, \\
\p_y\hat{p}^{(n)}= 0&\quad \mathbf{x} \in W_-\cup W_+,\\
\p_x\hat{p}^{(n)} + \lambda\gamma_0\hat{p}^{(n)} = g^{(n)}_0&\quad \mathbf{x} \in \Sigma_{0},\\
\hat{p}^{(n)} = g^{(n)}_l&\quad \mathbf{x} \in \Sigma_l.
\end{cases}
\end{equation}
Hence we get a sequence of approximate solutions $\{\hat{p}^{(n)}\}_n$ to Problem $(\mathbb{L}_1)$.

\subsection{Existence}\label{secp24}

Applying the estimate \eqref{4735} to the approximate solutions $\hat{p}^{(n)}$ obtained from \eqref{4747}, we have
\begin{eqnarray}\label{47355}
&&\|\hat{p}^{(n)}\|_{C^{k, \alpha}(\bar{\Omega})} \leq C\left(\|h^{(n)}\|_{C^{k - 2, \alpha}(\bar{\Omega})} + \|g_0^{(n)}\|_{C^{k - 1, \alpha}(\Sigma_0)} + \|g_l^{(n)}\|_{C^{k, \alpha}(\Sigma_l)}\right)\nonumber\\
&&\quad\qquad\qquad\qquad \leq C\left(\|h\|_{C^{k - 2, \alpha}(\bar{\Omega})} + \|g_0\|_{C^{k - 1, \alpha}(\Sigma_0)} + \|g_l\|_{C^{k, \alpha}(\Sigma_l)}\right).
\end{eqnarray}
By a standard compactness argument (cf. Lemma 4.11 in \cite{Gao-Liu-Yuan-2020} and Lemma \ref{Lem52}), there exists a subsequence $\{\hat{p}^{(n_j)}\}_j$ of $\{\hat{p}^{(n)}\}_n$, which converges to a function $\hat{p}$ in $C^{k, \alpha}(\bar{\Omega})$. % where $\hat{p}^{(n_j)}$ satisfies the boundary value problem \eqref{4747}.
Let $j\rightarrow +\infty$ in \eqref{4747}. Then $\hat{p}$ is a classical solution to Problem $(\mathbb{L}_1)$. Combining this with conclusions in Sections \ref{secp21}-\ref{secp23}, we proved the following theorem.
\begin{theorem}\label{Thm42}
Suppose that the background solution $U_b$ satisfies the S-condition. Then the nonlocal elliptic problem $(\mathbb{L}_1)$ has a unique solution $\hat{p} \in C^{k, \alpha}(\bar{\Omega})$, and the following estimate holds for $k=2,3,$
\begin{eqnarray}\label{4517}
\|\hat{p}\|_{C^{k, \alpha}(\bar{\Omega})} \leq C\left(\|h\|_{C^{k - 2, \alpha}(\bar{\Omega})} + \|g_0\|_{C^{k - 1, \alpha}(\Sigma_0)} + \|g_l\|_{C^{k, \alpha}(\Sigma_l)}\right).
\end{eqnarray}
\end{theorem}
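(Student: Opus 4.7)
The plan is to assemble the four ingredients developed in Sections \ref{secp21}--\ref{secp24}. For uniqueness under the S-condition, I would take any $H^2$-strong solution of the homogeneous version of $(\mathbb{L}_1)$ and upgrade it to $C^\infty(\bar\Omega)$ by a bootstrap: place the six integral nonlocal terms in \eqref{44addnew} on the right-hand side and iterate $H^s$-regularity theory for the mixed Dirichlet/oblique problem associated with the local elliptic operator $e_1\partial_x^2-\partial_y^2+\lambda e_2\partial_x+e_3$. With this smoothness, expand $\hat p$ in the cosine Fourier series \eqref{4506} and reduce the PDE to the countable family of integro-differential two-point problems \eqref{4507}--\eqref{4508}. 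Rewriting each as the first-order $4\times4$ system \eqref{4511} through the auxiliary unknowns $\mathcal P^{(j)}_m$ and reading off the solvability condition from the standard fundamental matrix $\Psi_m(\lambda,\cdot)$, unique solvability of the two-point problem is exactly equivalent to $\chi_m(\lambda)=\lambda\gamma_0 z^4_{m3}(l)-z^3_{m3}(l)\neq0$ for every $m$. This is precisely the S-condition, hence every Fourier mode $p_m$ vanishes and $\hat p\equiv0$.

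For the a priori bound, I would again put the six nonlocal contributions on the right-hand side of the local elliptic operator and invoke the classical interior/boundary Schauder estimates for the mixed Dirichlet/oblique problem to obtain the preliminary bound \eqref{4734}. Since the nonlocal terms involve only $\hat p$ and at most its first $x$-derivative integrated against smooth coefficients, an interpolation inequality absorbs them into the leading $C^{k,\alpha}$-norm at the cost of a $\|\hat p\|_{C^0}$ term. A standard contradiction-compactness argument in the spirit of \cite[Lemma 9.17]{Gilbarg-Tudinger-1998}, together with the uniqueness just established, removes this lower-order term and yields the clean estimate \eqref{4735}.

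For existence, extend $h,g_0,g_l$ evenly in $y$ so that the symmetry conditions \eqref{47361} hold, and mollify to obtain smooth approximations $h^{(n)},g_0^{(n)},g_l^{(n)}$. For each $n$ and each truncation level $N$, assemble the partial Fourier sum $\hat p^{(n)}_N$ by solving the decoupled two-point problems \eqref{4740} through the explicit fundamental-matrix formula \eqref{4743}--\eqref{4745}; solvability is guaranteed because the S-condition makes $B+C\Psi_m(\lambda,l)$ invertible mode-by-mode. The linear estimate applied to differences of partial sums, as in \eqref{4746}, shows $\{\hat p^{(n)}_N\}_N$ is Cauchy in $C^{k,\alpha}(\bar\Omega)$ as $N\to\infty$, yielding a solution $\hat p^{(n)}$ of the mollified problem \eqref{4747}. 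The uniform bound \eqref{47355} then permits an Arzel\`a--Ascoli extraction of a subsequence converging in $C^{k,\alpha}(\bar\Omega)$ to a classical solution $\hat p$ of $(\mathbb{L}_1)$, and the desired estimate \eqref{4517} follows by passing to the limit in \eqref{47355}. The principal obstacle throughout is the presence of up to six integral couplings in different directions, which blocks a direct application of classical elliptic theory; it is precisely the mode-by-mode reduction to ODEs and the algebraic nondegeneracy $\chi_m(\lambda)\neq0$ furnished by Lemma \ref{Lem44} that unlocks both the uniqueness and the constructive existence.
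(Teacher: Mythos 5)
Your proposal is correct and follows essentially the same route as the paper: bootstrap the homogeneous $H^2$ solution to $C^\infty$, reduce via cosine Fourier series to the countable family of nonlocal two-point problems, convert to $4\times4$ ODE systems whose unique solvability is governed by $\chi_m(\lambda)\neq0$ (equivalently, the S-condition, via Lemma \ref{Lem43}), then obtain the estimate by Schauder theory plus a compactness--contradiction argument, and existence by the Galerkin/Fourier-truncation construction with a final Arzel\`a--Ascoli extraction. The only minor slip is attributing the nondegeneracy $\chi_m(\lambda)\neq0$ to Lemma \ref{Lem44}; within Theorem \ref{Thm42} it follows from the assumed S-condition through Lemma \ref{Lem43}, whereas Lemma \ref{Lem44} only guarantees the S-condition for $\lambda$ outside a countable set.
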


\subsection{Well-posedness of Problem $(\mathbb{L})$}\label{LP}
Finally we show well-posedness of Problem $(\mathbb{L})$. Without loss of generality, suppose that $g_l(y)=0$. Otherwise, we consider $\tilde{p} = \hat{p} - g_l(y)$.
Set $$X = \{\hat{p} \in C^{k, \alpha}(\bar{\Omega}):~ \text{$\p_y(i_*\hat{p}) = 0$ on $W_-\cup W_+$ and $i_*\hat{p} = 0$ on $\Sigma_l$}\},$$
equipped with the norm $\norm{\hat{p}}_X=\norm{\hat{p}}_{C^{k,\alpha}(\bar{\Omega})}$,
where $k = 2,\,3,\,\cdots$ is fixed.
We define a linear bounded operator $\mathcal {F}$ from $X$ to $C^{k - 2, \alpha}(\bar{\Omega}) \times C^{k - 1, \alpha}(\Sigma_0)$, by $$\mathcal {F}(\hat{p})\triangleq(\mathcal {L}(\hat{p}),~ \p_x(i_*\hat{p}) + \lambda\gamma_0(i_*\hat{p})),$$ where $i_*$ is the trace operator.   We also define the linear bounded operators
$$\mathcal {F}_1(\hat{p}) \triangleq (\mathcal {L}_1(\hat{p}),~ \p_x(i_*\hat{p}) + \lambda\gamma_0(i_*\hat{p})),~~~~\qquad\mathcal {F}_2(\hat{p}) \triangleq (\mathcal {L}_2(\hat{p}), 0),$$ then  $\mathcal {F} = \mathcal {F}_1 + \mathcal {F}_2$, where the operator $\mathcal {F}_1$ is totally determined by the background solution $U_b$. Theorem \ref{Thm42} implies that if $U_b$ satisfies the S-condition, then the operator $\mathcal {F}_1$ is invertible. Therefore, if it holds that
\begin{align}\label{eq433add}
\|\mathcal {F}_2\|< \frac{1}{\|\mathcal {F}_1^{-1}\|},
\end{align}
where $\|\cdot\|$ is the operator norm, %of $X$ to $C^{k - 2, \alpha}(\bar{\Omega}) \times C^{k - 1, \alpha}(\Sigma_0)$.
then $\mathcal {F}$ is also reversible, which means that Problem $(\mathbb{L})$ is well-posed. We state this as the following theorem. %has a unique solution, where the regularity estimate of $\eqref{4518}$ is similar to the proof of the estimate $\eqref{4517}$ in theorem \ref{Thm42}.

\begin{theorem}\label{Thm43}
Let  the background solution $U_b$ satisfy the S-condition, and \eqref{eq433add} holds. Suppose also that  $h \in C^{k - 2, \alpha}(\bar{\Omega}), g_0 \in C^{k - 1, \alpha}(\Sigma_0), g_l \in C^{k - 1, \alpha}(\Sigma_l)~(k = 2, 3)$. Then Problem $(\mathbb{L})$ has a unique solution $\hat{p} \in C^{k, \alpha}(\bar{\Omega})$, which obeys the estimate
\begin{eqnarray}\label{4518}
\|\hat{p}\|_{C^{k, \alpha}(\bar{\Omega})} \leq C\left(\|h\|_{C^{k - 2, \alpha}(\bar{\Omega})} + \|g_0\|_{C^{k - 1, \alpha}(\Sigma_0)} + \|g_l\|_{C^{k, \alpha}(\Sigma_l)}\right).
\end{eqnarray}
\end{theorem}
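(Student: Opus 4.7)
\textbf{Proof proposal for Theorem \ref{Thm43}.} The plan is to view Problem $(\mathbb{L})$ as the operator equation $\mathcal{F}(\hat{p}) = (h, g_0)$ on the Banach space $X$ (after normalizing the exit datum), and then invoke the standard perturbation lemma for bounded invertible operators together with Theorem \ref{Thm42}. The whole analytic work has already been carried out in Theorem \ref{Thm42}; what remains is essentially a short functional-analytic argument.

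First, I would reduce to the case $g_l \equiv 0$. Extend $g_l(y)$ trivially to a function on $\bar{\Omega}$ depending only on $y$, and write $\hat{p} = \tilde{p} + g_l(y)$. Since $\p_y g_l = 0$ on $W_\pm$ (by the symmetry condition \eqref{4207} inherited through the extension procedure of Section \ref{secp21}), the new unknown $\tilde{p}$ lies in the space
\begin{equation*}
X = \{\hat{p} \in C^{k,\alpha}(\bar{\Omega}):~ \p_y(i_*\hat{p}) = 0 \ \text{on}\ W_-\cup W_+,\ \ i_*\hat{p} = 0\ \text{on}\ \Sigma_l\},
\end{equation*}
equipped with the $C^{k,\alpha}(\bar{\Omega})$ norm, and the right-hand sides $(h, g_0)$ are modified by the corresponding action of $\mathcal{L}$ and the Robin trace applied to $g_l(y)$, with the modifications being controlled by $\|g_l\|_{C^{k-1,\alpha}(\Sigma_l)}$ (after lifting via a standard trace right-inverse). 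This preserves the structure of the problem, so without loss of generality $g_l \equiv 0$.

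Next, with $Y = C^{k-2,\alpha}(\bar{\Omega}) \times C^{k-1,\alpha}(\Sigma_0)$, the operators $\mathcal{F}_1, \mathcal{F}_2 : X \to Y$ introduced in Section \ref{LP} are bounded and linear, and by Theorem \ref{Thm42} the operator $\mathcal{F}_1$ is bijective; the a priori estimate \eqref{4517} combined with the open mapping theorem shows that $\mathcal{F}_1^{-1} : Y \to X$ is bounded. Therefore I can rewrite
\begin{equation*}
\mathcal{F} = \mathcal{F}_1 + \mathcal{F}_2 = \mathcal{F}_1\bigl(\mathrm{Id}_X + \mathcal{F}_1^{-1} \mathcal{F}_2\bigr).
\end{equation*}
The hypothesis $\|\mathcal{F}_2\| < 1/\|\mathcal{F}_1^{-1}\|$ yields $\|\mathcal{F}_1^{-1}\mathcal{F}_2\|_{X \to X} \le \|\mathcal{F}_1^{-1}\|\,\|\mathcal{F}_2\| < 1$, so the Neumann series $\sum_{j\ge 0} (-\mathcal{F}_1^{-1}\mathcal{F}_2)^j$ converges in $\mathcal{B}(X,X)$ and provides a bounded inverse of $\mathrm{Id}_X + \mathcal{F}_1^{-1}\mathcal{F}_2$. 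Hence $\mathcal{F}$ is invertible with
\begin{equation*}
\mathcal{F}^{-1} = \bigl(\mathrm{Id}_X + \mathcal{F}_1^{-1}\mathcal{F}_2\bigr)^{-1} \mathcal{F}_1^{-1}, \qquad \|\mathcal{F}^{-1}\| \le \frac{\|\mathcal{F}_1^{-1}\|}{1 - \|\mathcal{F}_1^{-1}\|\,\|\mathcal{F}_2\|}.
\end{equation*}
Existence and uniqueness of $\hat{p} \in C^{k,\alpha}(\bar{\Omega})$ follow at once, and applying the bound on $\|\mathcal{F}^{-1}\|$ to $(h, g_0)$ (and restoring the $g_l$ contribution absorbed in the first step) yields the Schauder-type estimate \eqref{4518}.

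There is no genuine obstacle in this step: all the heavy lifting, namely uniqueness via the S-condition (Lemma \ref{Lem43}), the Galerkin-Fourier construction of approximate solutions, and the Schauder estimates with infinitely many nonlocal terms, has already been completed in Sections \ref{secp21}--\ref{secp24}. The only point that requires mild care is bookkeeping in the reduction $\hat{p} \mapsto \hat{p} - g_l(y)$, to make sure that the lifted extension still lies in the domain $X$ (so the compatibility $\p_y g_l(0) = \p_y g_l(\pi) = 0$ from \eqref{4207} is used) and that its contribution to the right-hand side is controlled by the claimed norm $\|g_l\|_{C^{k-1,\alpha}(\Sigma_l)}$ up to the same factor $C$.
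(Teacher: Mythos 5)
Your proposal is correct and follows essentially the same route as the paper: reduce to $g_l\equiv 0$, set up $\mathcal{F}=\mathcal{F}_1+\mathcal{F}_2$ on $X$, invoke Theorem \ref{Thm42} for the invertibility of $\mathcal{F}_1$, and conclude via the perturbation (Neumann series) argument under \eqref{eq433add}. The paper states this more tersely as ``openness of invertible bounded linear operators,'' but the substance is identical.
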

\begin{remark}\label{rmknewa}
We notice that the constants $C$ in \eqref{4517} and \eqref{4518} depend only on $U_b,\,  \alpha,\,  l$ and $k$.\qed
\end{remark}
\section{Anisotropic H\"{o}lder spaces}\label{sec53}
%The symbol
%$C_*^{k, \alpha}(\bar{\Omega})$ appeared in \eqref{4502} represents an $x$-direction anisotropic H\"{o}lder space, which we define as follows.
In the previous section we have dealt with the elliptic mode of the subsonic Euler equations. To treat the hyperbolic mode in a way compatible with the elliptic mode, we need to introduce anisotropic H\"{o}lder spaces of functions. The results in the following three sections might be of independent interests.

Let $U$ be a connected open subset in $\mathbb{R}^n$, and $\alpha=(\alpha_1, \cdots,\alpha_n)$ be a multi-index, with $|\alpha|=\sum_{i=1}^{n}\alpha_i$. For $k\ge1$, set %
\begin{eqnarray*}
C^{k}_*(\bar{U}) = \{u\in C^{k-1}(\bar{U}): ~ \text{If $|\alpha| \leq k $ and $\alpha\neq (k, 0,\cdots, 0)$, then ${\mathrm{D}}^\alpha u(x) \in C(\bar{U})$} \},
\end{eqnarray*}
which is equipped with the norm
\begin{eqnarray*}
\|u\|_{C^{k}_*(\bar{U})} = \sum\limits_{|\alpha| \leq k,\, \alpha\neq (k, 0,\cdots, 0) }\sup\limits_{x\in \bar{U}}|{\mathrm{D}}^{\alpha}u(x)|.
\end{eqnarray*}

%In section \ref{secES}, we define the k-th continuous differentiable $x$-direction anisotropic function space $(C^{k}_*(\bar{U}), \|\cdot\|_{C^{k}_*(\bar{U})})$. The following theorem shows that the space is a complete.

\begin{theorem}\label{Thm5}
$(C^{k}_*(\bar{U}), \|\cdot\|_{C^{k}_*(\bar{U})})$ is a real Banach space.
\end{theorem}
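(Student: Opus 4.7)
The approach is the standard one for spaces of differentiable functions: first check that $\|\cdot\|_{C^{k}_*(\bar{U})}$ is indeed a norm on a vector space, and then establish completeness by producing a candidate limit and verifying it lies in $C^{k}_*(\bar{U})$ with the required convergence. The linear structure and the norm axioms (non-negativity, homogeneity, definiteness, triangle inequality) are routine, since each summand in the definition of the norm is a $C(\bar{U})$-sup-norm of a linear operator $D^{\alpha}$ applied to $u$, and differentiation commutes with linear combinations. So the substantive content is completeness.

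Given a Cauchy sequence $\{u_n\}\subset C^{k}_*(\bar{U})$, for every admissible multi-index $\alpha$ (that is, $|\alpha|\le k$ with $\alpha\ne(k,0,\dots,0)$) the sequence $\{D^{\alpha}u_n\}$ is Cauchy in $(C(\bar{U}),\|\cdot\|_{\sup})$, which is complete, and therefore converges uniformly to some $v_{\alpha}\in C(\bar{U})$. I will then prove by induction on $j=0,1,\dots,k$ that for every admissible $\alpha$ with $|\alpha|=j$ the identity $D^{\alpha}u_{0}=v_{\alpha}$ holds, where $u_0:=v_0$. The base case $j=0$ is merely the definition of $u_0$, and each inductive step uses the classical lemma: if $f_n\to f$ uniformly, each $f_n$ has a continuous partial $\partial_i f_n$, and $\partial_i f_n\to g$ uniformly, then $\partial_i f$ exists and equals $g$.

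The main point, and the only place where the anisotropy really matters, is the case $|\alpha|=k$. For such an $\alpha\ne(k,0,\dots,0)$ there must exist some index $i\ge 2$ with $\alpha_i\ge 1$, because $|\alpha|=k$ forces at least one component other than the first to be positive once $\alpha_1<k$ and, if $\alpha_1=k$, one has $\alpha=(k,0,\dots,0)$, which is excluded. Writing $\beta=\alpha-e_i$, we have $|\beta|=k-1$, so by the previous inductive step $D^{\beta}u_n\to D^{\beta}u_0$ uniformly; moreover $\partial_i D^{\beta}u_n=D^{\alpha}u_n\to v_{\alpha}$ uniformly, and each $\partial_i D^{\beta}u_n$ is continuous by the very definition of $C^{k}_*(\bar{U})$. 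The classical lemma then yields $\partial_i D^{\beta}u_0=v_{\alpha}$, i.e.\ $D^{\alpha}u_0=v_{\alpha}\in C(\bar{U})$. This is the step where the restriction $\alpha\ne(k,0,\dots,0)$ is essential, since that excluded derivative is not controlled by the norm and the argument above would fail for it. After this, $u_0\in C^{k-1}(\bar{U})$ with all admissible $k$-th order derivatives in $C(\bar{U})$, hence $u_0\in C^{k}_*(\bar{U})$.

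Finally, uniform convergence of $D^{\alpha}u_n$ to $D^{\alpha}u_0$ for every admissible $\alpha$, combined with the fact that the norm is a finite sum of such sup-norms, gives $\|u_n-u_0\|_{C^{k}_*(\bar{U})}\to 0$, completing the proof of completeness. I expect the only delicate step to be the anisotropic inductive argument at top order described above; everything else is bookkeeping and invocations of the standard uniform-convergence-of-derivatives principle.
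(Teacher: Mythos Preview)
Your proposal is correct and follows essentially the same approach as the paper: extract uniform limits of all admissible derivatives and then use the classical principle that uniform convergence of a function together with uniform convergence of its partial derivative implies the limit is differentiable. Your write-up is in fact more careful than the paper's, which simply asserts that one may exchange limit and differentiation; your explicit inductive step, in particular the observation that any admissible $\alpha$ with $|\alpha|=k$ must satisfy $\alpha_i\ge 1$ for some $i\ge 2$ so that $\beta=\alpha-e_i$ has $|\beta|=k-1$ and is automatically admissible, is exactly the point that makes the anisotropic case go through.
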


\begin{proof}
It is obvious that $\|\cdot\|_{C^{k}_*(\bar{U})}$ is a norm on the linear space $C^{k}_*(\bar{U})$. We only need to prove that every Cauchy sequence $\{u_n\}$ in $C^{k}_*(\bar{U})$ is convergent.  %Let $\{u_n\}\subseteq C^{k}_*(\bar{U})$ be a Cauchy sequence, namely
So for any $\epsilon > 0$, there exists $N \in\mathbb{N}$, such that when $m, n\geq N$, there holds $\|u_n - u_m\|_{C^{k}_*(\bar{U})} < \epsilon$; that is,
\begin{eqnarray*}
\sum\limits_{|\alpha| \leq k,\, \alpha\neq (k, 0,\cdots, 0) }\sup\limits_{x\in \bar{U}}|{\mathrm{D}}^{\alpha}u_n(x) - {\mathrm{D}}^{\alpha}u_m(x)| < \epsilon.
\end{eqnarray*}
%Therefore, %we obtain that for any $\epsilon > 0$, there exists $N \in\mathbb{N}$,
%for any $ |\alpha| \leq k,\alpha\neq (k, 0,\cdots, 0),\,\forall x \in U$, if $m, n\geq N$, there holds
%\begin{eqnarray*}
% |{\mathrm{D}}^{\alpha}u_n(x) - {\mathrm{D}}^{\alpha}u_m(x)| < \epsilon.
%\end{eqnarray*}
Hence $\{{\mathrm{D}}^{\alpha}u_n\}$ is a Cauchy sequence in $C(\bar{U})$ whenever $\alpha\neq (k, 0,\cdots, 0)$ and $|\alpha| \leq k$. Then there exist continuous functions $u$ and $g^{\alpha}$, such that $u_n$ and ${\mathrm{D}}^{\alpha}u_n$  converge uniformly to $u$ and $g^{\alpha}$  respectively. %Since for any $|\alpha| \leq k,\alpha\neq (k, 0,\cdots, 0)$,  $D^{\alpha}u_n$ uniformly converges to $D^{\alpha}u_n$,
By the uniform convergence of derivatives, we could change the order of limit and derivatives: % the following limit operation and derivation operation can be exchanged in order,
 for any $|\alpha| \leq k,\alpha\neq (k, 0,\cdots, 0)$, there holds
\begin{eqnarray*}
{\mathrm{D}}^{\alpha}u = {\mathrm{D}}^{\alpha}(\lim\limits_{n\rightarrow + \infty} u_n) = \lim\limits_{n\rightarrow + \infty}{\mathrm{D}}^{\alpha}( u_n) = g^{\alpha}.
\end{eqnarray*}
Consequently, we proved that $\{u_n\}$ converges in $C^{k}_*(\bar{U})$ to $u$.% that is, the space $(C^{k}_*(\bar{U}), \|\cdot\|_{C^{k}_*(\bar{U})})$ is complete.
\end{proof}

%The following theorem shows that the space $C^{k, \lambda}_*(\bar{U})$ defined by definition \ref{Def41} is a complete.

\begin{definition}\label{Def41}
Let $U \subset \mathbb{R}^n$ be an open connected set, $0 < \lambda \leq 1$. For $u \in C^{k}_*(\bar{U})$, set
\begin{eqnarray*}
\|u\|_{C^{k, \lambda}_*(\bar{U})} = \|u\|_{C^{k}_*(\bar{U})} +  \sum\limits_{|\alpha| \le k,\,\alpha\neq (k, 0,\cdots, 0) }\sup\limits_{x,y\in \bar{U},\,x \neq y }\left\{\frac{\mid {\mathrm{D}}^{\alpha}u(x) - {\mathrm{D}}^{\alpha}u(y)\mid}{\mid x - y\mid^\lambda}\right\}.
\end{eqnarray*}
We call %the real Banach space
\begin{eqnarray*}
C^{k, \lambda}_*(\bar{U}) = \{u \in C^{k}_*(\bar{U}):\|u\|_{C^{k, \lambda}_*(\bar{U})} < +\infty\}
\end{eqnarray*}
%We call $(C^{k, \lambda}_*(\bar{U}), \|\cdot\|_{C^{k, \lambda}_*(\bar{U})})$ an $x$-direction anisotropic H\"{o}lder space, which is abbreviated as $C^{k, \lambda}_*(\bar{U})$, where the completeness of $x$-direction anisotropic H\"{o}lder space can be seen in Appendix \ref{sec53}.
as a ($x$-directional) anisotropic H\"{o}lder space.\qed
\end{definition}

\begin{theorem}\label{Thm51}
%The $x$-direction anisotropic H\"{o}lder spaces
$C^{k, \lambda}_*(\bar{U})$ is a real Banach space.
\end{theorem}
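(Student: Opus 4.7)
The plan is to mimic the argument of Theorem \ref{Thm5} and then upgrade convergence in $C^k_*(\bar{U})$ to convergence in $C^{k,\lambda}_*(\bar{U})$ by a standard lower-semicontinuity argument for H\"older seminorms. First I would verify that $\|\cdot\|_{C^{k,\lambda}_*(\bar{U})}$ is a norm on the linear space $C^{k,\lambda}_*(\bar{U})$: positive definiteness and absolute homogeneity are immediate, and subadditivity of each term in the sum (including the H\"older seminorms) follows from the triangle inequality applied inside the supremum. This part is routine.

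The substantive step is completeness. Let $\{u_n\}\subset C^{k,\lambda}_*(\bar{U})$ be Cauchy. Since $\|\cdot\|_{C^{k}_*(\bar{U})}\le\|\cdot\|_{C^{k,\lambda}_*(\bar{U})}$, the sequence is also Cauchy in $C^k_*(\bar{U})$, so Theorem \ref{Thm5} yields a function $u\in C^k_*(\bar{U})$ with $u_n\to u$ in $C^k_*(\bar{U})$; in particular ${\mathrm{D}}^\alpha u_n \to {\mathrm{D}}^\alpha u$ uniformly on $\bar U$ for every $\alpha$ with $|\alpha|\le k$ and $\alpha\ne(k,0,\dots,0)$. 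It remains to show $u\in C^{k,\lambda}_*(\bar U)$ and $u_n\to u$ in this finer norm.

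The key step, and the main (mild) obstacle, is to pass the H\"older difference quotients through the limit. Fix $\alpha$ as above and $\varepsilon>0$; by the Cauchy property there exists $N$ such that for all $m,n\ge N$ and all $x\ne y$ in $\bar U$,
\begin{equation*}
\frac{\bigl|{\mathrm{D}}^\alpha u_n(x)-{\mathrm{D}}^\alpha u_m(x)-{\mathrm{D}}^\alpha u_n(y)+{\mathrm{D}}^\alpha u_m(y)\bigr|}{|x-y|^\lambda}<\varepsilon.
\end{equation*}
Letting $m\to\infty$ and using the uniform (hence pointwise) convergence ${\mathrm{D}}^\alpha u_m\to{\mathrm{D}}^\alpha u$, the left-hand side tends to
\begin{equation*}
\frac{\bigl|({\mathrm{D}}^\alpha u_n-{\mathrm{D}}^\alpha u)(x)-({\mathrm{D}}^\alpha u_n-{\mathrm{D}}^\alpha u)(y)\bigr|}{|x-y|^\lambda}\le\varepsilon
\end{equation*}
for every $x\ne y$. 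Taking the supremum over $x,y$ and summing over admissible $\alpha$, together with $\|u_n-u\|_{C^k_*(\bar U)}\to 0$, gives $\|u_n-u\|_{C^{k,\lambda}_*(\bar U)}\to 0$. In particular, taking $n=N$ and using the triangle inequality shows $\|u\|_{C^{k,\lambda}_*(\bar U)}\le\varepsilon+\|u_N\|_{C^{k,\lambda}_*(\bar U)}<\infty$, so $u\in C^{k,\lambda}_*(\bar U)$. This completes the proof.
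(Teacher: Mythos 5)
Your proposal is correct and follows essentially the same route as the paper: reduce to completeness of $C^k_*(\bar U)$ (Theorem \ref{Thm5}), then let $m\to\infty$ in the Cauchy condition for the H\"older difference quotients using uniform convergence of the derivatives, and finally deduce finiteness of $\|u\|_{C^{k,\lambda}_*(\bar U)}$ (the paper does this via a limsup over the seminorms of the $u_n$, you via the triangle inequality with $u_N$ — an immaterial difference).
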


\begin{proof}
It is easy to verify that $\|\cdot\|_{C^{k, \lambda}_*(\bar{U})}$ is a norm.
\iffalse
\begin{itemize}
\item[i)]Positivity.

For $\|u\|_{C^{k, \lambda}_*(\bar{U})} \geq 0$ is obvious. If $\|u\|_{C^{k, \lambda}_*(\bar{U})} = 0$, then $\|u\|_{C_*^k(\bar{U})} = 0$, so $u$ zero element in $\overline{U}$.

\item[ii)]Absolutely homogeneous.

The proof of absolute homogeneity is obvious.

\item[iii)]Triangle inequality.

Let $u,\,v\in C^{k, \lambda}_*(\bar{U})$, then
\begin{small}
\begin{eqnarray*}
&&\|u + v\|_{C^{k, \lambda}_*(\bar{U})} = \|u + v\|_{C^{k}_*(\bar{U})} +  \sum\limits_{|\alpha| = k,\alpha\neq (k, 0,\cdots, 0) }\sup\limits_{x,y\in U,x \neq y }\left\{\frac{\mid {\mathrm{D}}^{\alpha}u(x) + {\mathrm{D}}^{\alpha}v(x) - {\mathrm{D}}^{\alpha}u(y) - {\mathrm{D}}^{\alpha}v(y)\mid}{\mid x - y\mid^\lambda}\right\}\\
&&\quad \leq \|u\|_{C^{k}_*(\bar{U})} + \|v\|_{C^{k}_*(\bar{U})} +  \sum\limits_{|\alpha| = k,\alpha\neq (k, 0,\cdots, 0) }\sup\limits_{x,y\in U,x \neq y }\left\{\frac{\mid {\mathrm{D}}^{\alpha}u(x) - {\mathrm{D}}^{\alpha}u(y)\mid}{\mid x - y\mid^\lambda}\right\}\\
&&\quad \quad + \sum\limits_{|\alpha| = k,\alpha\neq (k, 0,\cdots, 0) }\sup\limits_{x,y\in U,x \neq y }\left\{\frac{\mid {\mathrm{D}}^{\alpha}v(x) - {\mathrm{D}}^{\alpha}v(y)\mid}{\mid x - y\mid^\lambda}\right\} = \|u\|_{C^{k, \lambda}_*(\bar{U})} + \|v\|_{C^{k, \lambda}_*(\bar{U})}.
\end{eqnarray*}
\end{small}
\end{itemize}
\fi
%Step 2:  Verify that $(C^{k, \lambda}_*(\bar{U}), \|\cdot\|_{C^{k, \lambda}_*(\bar{U})})$ is a Banach space.
Let $\{u_n\}$ be a Cauchy sequence in $C^{k, \lambda}_*(\bar{U})$. Then for any $\epsilon>0$, as $m,\,n\rightarrow +\infty$, we have
\begin{eqnarray*}
\|u_n - u_m\|_{C^{k}_*(\bar{U})}<\epsilon,
\end{eqnarray*}
and
\begin{eqnarray}\label{eq61addnew1}
\sum\limits_{|\alpha| \le k,\,\alpha\neq (k, 0,\cdots, 0) }\sup\limits_{x,y\in \bar{U},\,x \neq y }\left\{\frac{\mid {\mathrm{D}}^{\alpha}u_n(x) - {\mathrm{D}}^{\alpha}u_m(x) - ({\mathrm{D}}^{\alpha}u_n(y) - {\mathrm{D}}^{\alpha}u_m(y))\mid}{\mid x - y\mid^\lambda}\right\}<\epsilon.
\end{eqnarray}
Since $(C^{k}_*(\bar{U}), \|\cdot\|_{C^{k}_*(\bar{U})})$ is complete, there exists $u \in C^{k}_*(\bar{U})$, so that $u_n$ converges to $u$ in $C^{k}_*(\bar{U})$. Therefore, for any $|\alpha| \le k,\alpha\neq (k, 0,\cdots, 0)$, the sequence $\{{\mathrm{D}}^{\alpha}u_n\}$ uniformly converges to ${\mathrm{D}}^{\alpha}u$ in $\bar{U}$. Taking $m\rightarrow +\infty$ in \eqref{eq61addnew1}, there holds
\begin{eqnarray}\label{eq62addnew1}
\sum\limits_{|\alpha| \le k,\,\alpha\neq (k, 0,\cdots, 0) }\sup\limits_{x,y\in \bar{U},\, x \neq y }\left\{\frac{\mid {\mathrm{D}}^{\alpha}u_n(x) - {\mathrm{D}}^{\alpha}u(x) - ({\mathrm{D}}^{\alpha}u_n(y) - {\mathrm{D}}^{\alpha}u(y))\mid}{\mid x - y\mid^\lambda}\right\}\le\epsilon.
\end{eqnarray}
Furthermore, $\forall x,y \in \bar{U},\,  x \neq y,\, \forall|\alpha| \le
 k,\, \alpha\neq (k, 0,\cdots, 0)$,
\begin{eqnarray*}
&&\frac{\mid {\mathrm{D}}^{\alpha}u(x) - {\mathrm{D}}^{\alpha}u(y)\mid}{\mid x - y\mid^\lambda} \leq \limsup\limits_{n\rightarrow +\infty}\frac{\mid {\mathrm{D}}^{\alpha}u_n(x) - {\mathrm{D}}^{\alpha}u_n(y)\mid}{\mid x - y\mid^\lambda} \nonumber\\
&&\quad\quad\quad\quad \leq \limsup\limits_{n\rightarrow +\infty}\sup\limits_{x,y\in \bar{U},\, x \neq y }\left\{\frac{\mid {\mathrm{D}}^{\alpha}u_n(x) - {\mathrm{D}}^{\alpha}u_n(y)\mid}{\mid x - y\mid^\lambda}\right\} < + \infty,
\end{eqnarray*}
The last inequality holds because $\sup\limits_{x,y\in \bar{U},\, x \neq y }\left\{\frac{\mid {\mathrm{D}}^{\alpha}u_n(x) - {\mathrm{D}}^{\alpha}u_n(y)\mid}{\mid x - y\mid^\lambda}\right\}$ is a Cauchy sequence, thanks to \eqref{eq61addnew1}.   Therefore we show $u \in C^{k, \lambda}_*(\bar{U})$, and \eqref{eq62addnew1} implies that $u_n\rightarrow u \,~(C^{k,\lambda}_*(\bar{U}))$.
\end{proof}

\begin{lemma}\label{Lem52}
A closed ball $B$ in $C^{m, \lambda}(\bar{U})$ is a compact set in $C^{m - 1, \lambda}(\bar{U})$.
\end{lemma}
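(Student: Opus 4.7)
My plan is to establish the result by the standard compactness scheme: Arzel\`a--Ascoli gives a subsequence along which all derivatives up to order $m$ converge uniformly, and then an interpolation inequality upgrades this convergence to the $C^{m-1,\lambda}$ norm.

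First I would take an arbitrary sequence $\{u_n\} \subset B$ with, say, $\|u_n\|_{C^{m,\lambda}(\bar{U})} \le R$. For every multi-index $|\alpha|\le m-1$, the family $\{D^\alpha u_n\}$ is uniformly bounded and uniformly Lipschitz (hence equicontinuous); for $|\alpha|=m$, it is uniformly bounded and uniformly $\lambda$-H\"older. A diagonal extraction based on Arzel\`a--Ascoli (using compactness of $\bar{U}$) then produces a subsequence, still denoted $\{u_n\}$, along which $D^\alpha u_n$ converges uniformly on $\bar{U}$ for every $|\alpha|\le m$. Denote the limit of $u_n$ by $u$. Because uniform limits of derivatives agree with derivatives of the limit, $u\in C^m(\bar{U})$ with $D^\alpha u=\lim_n D^\alpha u_n$, and passing to the limit in the H\"older quotient at level $m$ yields $u\in C^{m,\lambda}(\bar{U})$ with $\|u\|_{C^{m,\lambda}}\le R$.

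The substantive step, and the one I expect to be the only nontrivial point, is to upgrade the uniform convergence of derivatives to convergence in the $C^{m-1,\lambda}$ norm. Fix $|\alpha|\le m-1$ and set $v_n = D^\alpha(u_n-u)$. Then $\|v_n\|_{C^0(\bar{U})}\to 0$, while $\|v_n\|_{C^{1,\lambda}(\bar{U})}\le 2R$ by the triangle inequality. For $x\neq y$ the two elementary bounds
\begin{equation*}
\frac{|v_n(x)-v_n(y)|}{|x-y|^\lambda}
\;\le\; \min\!\left(\frac{2\|v_n\|_{C^0}}{|x-y|^\lambda},\ 2R\,|x-y|^{1-\lambda}\right)
\end{equation*}
hold. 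The supremum over $\{|x-y|\le\delta\}$ is controlled by the second term, of order $\delta^{1-\lambda}$, while the supremum over $\{|x-y|>\delta\}$ is controlled by the first, of order $\|v_n\|_{C^0}\,\delta^{-\lambda}$. Given $\varepsilon>0$, choose $\delta$ small enough that $2R\,\delta^{1-\lambda}<\varepsilon/2$, then take $n$ large enough that $2\|v_n\|_{C^0}<\varepsilon\delta^\lambda/2$; both contributions are then below $\varepsilon$. Summing over the finitely many $|\alpha|\le m-1$ gives $u_n\to u$ in $C^{m-1,\lambda}(\bar{U})$, which is the desired compactness.

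Beyond this interpolation argument the proof is purely bookkeeping (equicontinuity, passage to the limit under sups), so I do not anticipate further difficulty; the same strategy would, if needed later, give analogous compact embeddings for the anisotropic spaces $C^{k,\lambda}_*(\bar{U})$ introduced above, simply by dropping the missing derivative direction from the indexing set.
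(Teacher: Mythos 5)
Your proof is correct, and it reaches the conclusion by a more direct route than the paper's. The paper first cites Arzel\`a--Ascoli to get precompactness of $B$ in $C^{m-1,\lambda}(\bar U)$ and then devotes the proof to showing that $B$ is \emph{closed} in the $C^{m-1,\lambda}$ topology; for that it detours through the intermediate spaces $C^{m,\mu}(\bar U)$ with $\mu<\lambda$, extracting a subsequence converging in $C^{m,\mu}$ and running a case analysis on $|x-y|$ with $\mu=\lambda-\frac{1}{n_0}$ chosen according to the size of $|x-y|$, before concluding $\|a\|_{C^{m,\lambda}(\bar U)}\le\liminf_n\|a_n\|_{C^{m,\lambda}(\bar U)}$. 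You instead prove sequential compactness in one pass: since the order-$m$ derivatives of members of $B$ are uniformly $\lambda$-H\"older, Arzel\`a--Ascoli already gives uniform convergence of $D^\alpha u_n$ for \emph{all} $|\alpha|\le m$ along a subsequence; lower semicontinuity of the H\"older quotient under pointwise convergence puts the limit back in $B$ with no need for the auxiliary exponents $\mu$; and your interpolation bound supplies explicitly the step the paper leaves implicit, namely why uniform convergence of the derivatives upgrades to convergence of the order-$\le m-1$ H\"older seminorms. Two small caveats, neither of which matters for the paper's application (where $U=\Omega$ is a rectangle and $\lambda=\alpha\in(0,1)$): the bound $|v_n(x)-v_n(y)|\le 2R|x-y|$ invokes the mean value theorem along the segment joining $x$ and $y$, so it requires $\bar U$ to be convex or at least quasiconvex — an assumption the paper's argument also uses tacitly; and if $\lambda=1$, which the definition of these spaces formally allows, the term $2R\,\delta^{1-\lambda}$ does not become small, but there your interpolation is superfluous anyway, because $Dv_n=D^{\alpha'}(u_n-u)$ with $|\alpha'|\le m$ converges uniformly to zero and bounds the Lipschitz seminorm of $v_n$ directly.
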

\begin{proof}
%Let $B$ be a closed ball in $C^{m, \lambda}(\bar{U})$.
By the Ascoli-Arzela Lemma, $B$ is a pre-compact set in $C^{m - 1, \lambda}(\bar{U})$. Let $\bar{B}$ be the closure of $B$ in $C^{m - 1, \lambda}(\bar{U})$. Then $\bar{B}$ is compact in $C^{m - 1, \lambda}(\bar{U})$. Therefore, we only need to verify $\bar{B} \subseteq B$; that is, if there is a sequence $\{a_n\} \subseteq B$ such that in $C^{m - 1, \lambda}(\bar{U})$, there holds $a_{n} \rightarrow a~(n\rightarrow + \infty)$, then one has $a\in B$.

Recall that for any $\mu \in (0, \lambda)$, $C^{m, \lambda}(\bar{U})$ can be compactly embedded in $C^{m, \mu}(\bar{U})$. Because the sequence $\{a_n\}$ is bounded in $C^{m, \lambda}(\bar{U})$, there is a subsequences $\{a_{n_k}\}$ of $\{a_n\}$ and $\tilde{a}\in C^{m, \mu}(\bar{U})$, so that $a_{n_k} \rightarrow \tilde{a}~ (k\rightarrow + \infty)$ in $C^{m, \mu}(\bar{U})$. Since $C^{m, \mu}(\bar{U})$ can be embedded in $C^{m - 1, \lambda}(\bar{U})$, thus $\|a_{n_k} - \tilde{a}\|_{C^{m - 1, \lambda}(\bar{U})}\leq C\|a_{n_k} - \tilde{a}\|_{C^{m, \mu}(\bar{U})}$; that is, we have $a_{n_k} \rightarrow \tilde{a}~(k\rightarrow + \infty)$ in $C^{m - 1, \lambda}(\bar{U})$. Thus we infer that $\tilde{a} = a$, and $a\in C^{m, \mu}(\overline{U})$ for each $0<\mu<\lambda$.

Next, we prove $a \in C^{m, \lambda}(\overline{U})$. For $|\alpha| \leq m$, and $x,y \in \bar{U}$, there are two cases.

%Case 1:  $|x - y| \geq 1$.  Then
%\begin{eqnarray*}
%\sup\limits_{x,y\in U, |x - y| \geq1 }\left\{\frac{\mid {\mathrm{D}}^{\alpha}a(x) - {\mathrm{D}}^{\alpha}a(y)\mid}{\mid x - y\mid^\lambda}\right\} \leq 2 \sup\limits_{x\in U}\mid {\mathrm{D}}^{\alpha}a(x)\mid.
%\end{eqnarray*}

Case 1: $0 < |x - y| < 1$. There exists $n_0 \in \mathbb{N}$ such that $2^{- n_0}\leq |x - y| < 2^{- (n_0 - 1)}$. Let $\mu = \lambda - \frac{1}{n_0}$.  Then there is a subsequence $\{a_{k'}\}$ such that $\|a_{k'} - a\|_{C^{m, \mu}(\bar{U})}\rightarrow 0\,(k'\rightarrow + \infty)$. Therefore, for any $\delta > 0$, there exists $k_0 \in \mathbb{N}$ such that $\|a_{k_0} - a\|_{C^{m, \mu}(\bar{U})} < \delta$, and
%\begin{small}
\begin{eqnarray*}
&&\frac{\mid {\mathrm{D}}^{\alpha}a(x) - {\mathrm{D}}^{\alpha}a(y)\mid}{\mid x - y\mid^\lambda} \leq \frac{\mid ({\mathrm{D}}^{\alpha}a(x) - {\mathrm{D}}^{\alpha}a(y)) - ({\mathrm{D}}^{\alpha}a_{k_0}(x) - {\mathrm{D}}^{\alpha}a_{k_0}(y))\mid}{\mid x - y\mid^\lambda}\nonumber\\
&&\qquad\qquad\qquad\qquad\qquad\qquad+\frac{\mid {\mathrm{D}}^{\alpha}a_{k_0}(x) - {\mathrm{D}}^{\alpha}a_{k_0}(y)\mid}{\mid x - y\mid^\lambda}\nonumber\\
&&\quad \leq \|a_{k_0}\|_{C^{m, \lambda}(\overline{U})} + \frac{\mid ({\mathrm{D}}^{\alpha}a(x) - {\mathrm{D}}^{\alpha}a(y)) - ({\mathrm{D}}^{\alpha}a_{k_0}(x) - {\mathrm{D}}^{\alpha}a_{k_0}(y))\mid}{\mid x - y\mid^{\lambda - \frac{1}{n_0}}} \frac{1}{\mid x - y\mid^{\frac{1}{n_0}}}\nonumber\\
&&\quad \leq 2\frac{\mid ({\mathrm{D}}^{\alpha}a(x) - {\mathrm{D}}^{\alpha}a(y)) - ({\mathrm{D}}^{\alpha}a_{k_0}(x) - {\mathrm{D}}^{\alpha}a_{k_0}(y))\mid}{\mid x - y\mid^{\mu}} + \|a_{k_0}\|_{C^{m, \lambda}(\bar{U})}\nonumber\\
&&\quad \leq 2\|a_{k_0} - a\|_{C^{m, \mu}(\overline{U})} + \|a_{k_0}\|_{C^{m, \lambda}(\overline{U})} \leq 2\delta + \|a_{k_0}\|_{C^{m, \lambda}(\bar{U})} < +\infty.
\end{eqnarray*}
%\end{small}

Case 2:  $|x - y| \geq 1$. We just modify the above computation as follows:
\begin{eqnarray*}
&&\frac{\mid {\mathrm{D}}^{\alpha}a(x) - {\mathrm{D}}^{\alpha}a(y)\mid}{\mid x - y\mid^\lambda} \leq \frac{\mid ({\mathrm{D}}^{\alpha}a(x) - {\mathrm{D}}^{\alpha}a(y)) - ({\mathrm{D}}^{\alpha}a_{k_0}(x) - {\mathrm{D}}^{\alpha}a_{k_0}(y))\mid}{\mid x - y\mid^\lambda}\nonumber\\
&&\qquad\qquad\qquad\qquad\qquad\qquad+\frac{\mid {\mathrm{D}}^{\alpha}a_{k_0}(x) - {\mathrm{D}}^{\alpha}a_{k_0}(y)\mid}{\mid x - y\mid^\lambda}\nonumber\\
&&\quad \leq \|a_{k_0}\|_{C^{m, \lambda}(\overline{U})} + {\mid ({\mathrm{D}}^{\alpha}a(x) - {\mathrm{D}}^{\alpha}a(y)) - ({\mathrm{D}}^{\alpha}a_{k_0}(x) - {\mathrm{D}}^{\alpha}a_{k_0}(y))\mid}\nonumber\\
&&\quad \leq 2\|a_{k_0} - a\|_{C^{m, \mu}(\overline{U})} + \|a_{k_0}\|_{C^{m, \lambda}(\overline{U})} \leq 2\delta + \|a_{k_0}\|_{C^{m, \lambda}(\bar{U})} < +\infty.
\end{eqnarray*}

We then conclude that $a\in C^{m, \alpha}(\bar{U})$. Furthermore, from the above computation,  we also could infer that  $\|a\|_{C^{m, \lambda}(\bar{U})} \leq \liminf\limits_{n \rightarrow + \infty}\|a_n\|_{C^{m, \lambda}(\bar{U})}$. {This implies that $a \in B$.} %\footnote{This proves the Lemma if it is a ball. In general, do we need $B$ to be a convex and closed bounded set?}
\end{proof}

In a similar way, we could prove the following conclusion which is a counter part of Lemma \ref{Lem52} to $x$-directional anisotropic H\"{o}lder spaces.

\begin{lemma}\label{Lem53}
A closed ball in $C_*^{m, \lambda}(\bar{U})$ is a compact set in  $C_*^{m - 1, \lambda}(\bar{U})$.
\end{lemma}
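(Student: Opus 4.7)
The plan is to imitate the proof of Lemma \ref{Lem52}, carefully tracking the role of the excluded multi-index $(k,0,\dots,0)$ in each $C^{k,\lambda}_*$ norm. Throughout, let $B\subset C^{m,\lambda}_*(\bar U)$ be a closed ball and $\{u_n\}\subset B$.

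First, I would establish precompactness of $B$ in $C^{m-1,\lambda}_*(\bar U)$. For any multi-index $\alpha$ with $|\alpha|\le m-1$ and $\alpha\ne(m-1,0,\dots,0)$, every first-order derivative $\partial_i\partial^\alpha u_n$ satisfies $|\alpha+e_i|\le m$ and $\alpha+e_i\ne(m,0,\dots,0)$, so $\{\partial^\alpha u_n\}$ is uniformly bounded in $C^{1,\lambda}(\bar U)$; for the remaining index $\alpha=(m-1,0,\dots,0)$, only $\partial_x\partial_x^{m-1}u_n=\partial_x^m u_n$ escapes the source norm, so $\{\partial_x^{m-1}u_n\}$ is uniformly bounded in $C^{0,\lambda}(\bar U)$---its H\"older seminorm is still part of the $C^{m,\lambda}_*$ norm since $(m-1,0,\dots,0)\ne(m,0,\dots,0)$---and hence equicontinuous. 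Applying the compact embedding $C^{1,\lambda}\hookrightarrow C^{0,\lambda}$ to the first group and Ascoli--Arzel\`a to $\partial_x^{m-1}u_n$ (which needs to converge only in $C^0$, not $C^{0,\lambda}$, because the H\"older sum in $C^{m-1,\lambda}_*$ excludes exactly $\alpha=(m-1,0,\dots,0)$), together with a diagonal extraction, produces a subsequence $u_{n_k}\to u$ in $C^{m-1,\lambda}_*(\bar U)$.

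Next I would verify that the limit lies in $B$, following the intermediate-exponent scheme of Lemma \ref{Lem52}. A routine diagonal argument using $C^{0,\lambda}\hookrightarrow C^{0,\mu}$ applied to $\partial^\alpha u_n$ for each $|\alpha|\le m$ with $\alpha\ne(m,0,\dots,0)$ yields the compact embedding $C^{m,\lambda}_*\hookrightarrow C^{m,\mu}_*$ for every $\mu\in(0,\lambda)$, and the continuous embedding $C^{m,\mu}_*\hookrightarrow C^{m-1,\lambda}_*$ holds because a $C^{m,\mu}_*$ bound makes $\partial^\alpha u$ Lipschitz in all variables whenever $|\alpha|\le m-1$ and $\alpha\ne(m-1,0,\dots,0)$, while it leaves $\partial_x^{m-1}u$ merely continuous---exactly what the target norm requires. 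Uniqueness of limits then gives $u\in C^{m,\mu}_*$ for every $\mu<\lambda$. To upgrade this to $u\in C^{m,\lambda}_*$ with $\|u\|_{C^{m,\lambda}_*}\le\liminf_n\|u_n\|_{C^{m,\lambda}_*}$, I would adopt the dyadic splitting of Lemma \ref{Lem52} verbatim: for any $\alpha$ with $|\alpha|\le m$, $\alpha\ne(m,0,\dots,0)$, and $x,y\in\bar U$ with $0<|x-y|<1$, choose $n_0$ with $2^{-n_0}\le|x-y|<2^{-(n_0-1)}$, set $\mu=\lambda-1/n_0$, approximate $u$ by some $u_{k_0}$ within $\delta$ in $C^{m,\mu}_*$, and split $|x-y|^{-\lambda}=|x-y|^{-\mu}|x-y|^{-1/n_0}$ so that the quotient $|\partial^\alpha u(x)-\partial^\alpha u(y)|/|x-y|^\lambda$ is bounded by $2\delta+\|u_{k_0}\|_{C^{m,\lambda}_*}$; the case $|x-y|\ge 1$ is direct.

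The main obstacle is handling the degenerate direction $(m-1,0,\dots,0)$ in the Ascoli--Arzel\`a step. The saving grace is the asymmetry between the source and target norms: $C^{m,\lambda}_*$ still controls the H\"older seminorm of $\partial_x^{m-1}u$, whereas $C^{m-1,\lambda}_*$ asks only for its continuity, so $C^0$ convergence via Ascoli--Arzel\`a is enough. All remaining steps are a line-by-line adaptation of Lemma \ref{Lem52}.
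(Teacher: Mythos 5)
Your proposal is correct and follows exactly the route the paper intends: the paper gives no proof of this lemma, merely asserting it follows ``in a similar way'' from Lemma \ref{Lem52}, and your argument is a faithful elaboration of that adaptation. Your careful bookkeeping of the excluded index $(m,0,\dots,0)$ versus $(m-1,0,\dots,0)$ — noting that every first derivative $\partial_i\partial^\alpha u$ with $|\alpha|\le m-1$, $\alpha\neq(m-1,0,\dots,0)$ remains controlled by the source norm, while the degenerate derivative $\partial_x^{m-1}u$ is not needed for the target norm but is still recovered in the limit via the dyadic-exponent argument (since its H\"older seminorm is part of the $C^{m,\lambda}_*$ norm) — is precisely the verification the paper leaves implicit.
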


\section{Solutions to transport equations with non-homogeneous terms in anisotropic H\"{o}lder spaces}\label{sec54}

%\subsection{Cauchy problem for transport equations with variable coefficients}\label{secES}
Let $\Omega=[0,L]\times\Sigma=\{(x,\mathbf{y}):~0\le x\le L,\, \mathbf{y}\in\Sigma\}$ be a duct lying in $\mathbb{R}^n$ with constant  cross-section $\Sigma\subset\mathbb{R}^{n-1}$ and length $L$, and $\Sigma_0$ be the inlet, $\Sigma_L$ be the exit. The lateral wall is denoted by $\Gamma=[0,L]\times\p\Sigma$. We consider the Cauchy  problem
\begin{eqnarray}\label{4501}
\begin{cases}
\p_x\mathbf{E}(x,\mathbf{y}) + \nabla_{\mathbf{y}} \mathbf{E}(x,\mathbf{y})\cdot \mathbf{q}(x,\mathbf{y}) = \mathbf{A}(x)\mathbf{E}(x,\mathbf{y})+\mathbf{F}(x,\mathbf{y})&\quad (x,\mathbf{y})\in \Omega,\\
\mathbf{E} =\mathbf{E}_0(\mathbf{y})&\quad (x,\mathbf{y})\in \Sigma_0,
\end{cases}
\end{eqnarray}
where $\mathbf{y}\in \mathbb{R}^{n-1}$, $\nabla_{\mathbf{y}}$ is the gradient operator with respect to $\mathbf{y}$, and  $$\mathbf{q}(x,\mathbf{y})\triangleq(q_1(x,\mathbf{y}),\,q_2(x,\mathbf{y}),\,\cdots,\,q_{n-1}(x,\mathbf{y}))$$ is  an $(n-1)$-dimensional vector-valued function defined in $\bar{\Omega}$. The unknowns $\mathbf{E}$ and the nonhomogeneous terms  $\mathbf{F}$ are $m$-dimensional vector-valued function defined in $\bar{\Omega}$.  The initial data $\mathbf{E}_0$ is a $m$-dimensional vector-valued  function defined on $\Sigma_0$, and $\mathbf{A}(x)$ is a given $m\times m$ matrix-valued function defined for $x\in[0,L]$. The term $\nabla_{\mathbf{y}} \mathbf{E}(x,\mathbf{y})\cdot \mathbf{q}(x,\mathbf{y}) $ is understood as each row of the Jacobi matrix $ \nabla_{\mathbf{y}} \mathbf{E}$ taking scalar product with $\mathbf{q}$, thus obtain a column vector like $\mathbf{E}$ or $\mathbf{F}$.   We state the existence, uniqueness and regularity of solutions to problem $\eqref{4501}$ as follows.%below, while the proof is postponed  in Appendix \ref{sec54}.

\begin{theorem}\label{Thm41}
For given $\alpha\in(0, 1)$ and $k \in \mathbb{N}^{+}$, suppose that $\p\Sigma$ is $C^{k,\alpha}$,  $\mathbf{q}(x,\mathbf{y})$ and $\mathbf{F}\in C_*^{k, \alpha}(\bar{\Omega})$,  $\mathbf{A}\in C^{k-1, \alpha}([0,L])$, $\mathbf{E}_0 \in C^{k, \alpha}(\Sigma_0)$, and $(1,\mathbf{q}(x,\mathbf{y}))\cdot\mathbf{\nu}\equiv 0$ on $\Gamma$, with $\mathbf{\nu}$ being the unit outward normal vector on $\Gamma$.  Then problem $\eqref{4501}$ is uniquely solvable, and there exists a positive constant $C = C(L, \|\mathbf{q}\|_{C^{k, \alpha}_*(\bar{\Omega})},\,\|\mathbf{A}\|_{C^{k-1, \alpha}([0,L])})$, such that
\begin{eqnarray}\label{4502}
\|\mathbf{E}\|_{C^{k, \alpha}(\bar{\Omega})} \leq C\big(\|\mathbf{E}_0\|_{C^{k, \alpha}(\Sigma_0)} + \|\mathbf{F} \|_{C^{k, \alpha}_*(\bar{\Omega})}\big).
\end{eqnarray}
\end{theorem}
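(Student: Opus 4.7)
The plan is to construct the solution by the method of characteristics and then bootstrap the regularity using the equation itself to trade pure $x$-derivatives for mixed ones. First I would introduce the characteristic ODE $\frac{\dd Y}{\dd x}=\mathbf{q}(x,Y)$, $Y(0;\xi)=\xi$, for $\xi\in\Sigma_0$. The hypothesis $(1,\mathbf{q})\cdot\nu\equiv 0$ on $\Gamma$ guarantees that no characteristic exits through the lateral wall, so $Y(x;\xi)$ is defined on $[0,L]\times\bar\Sigma_0$, and uniqueness of the ODE yields an inverse $\xi=\xi(x,\mathbf{y})$. Along each characteristic the transport equation collapses to the linear ODE system $\frac{\dd}{\dd x}\mathbf{E}^\flat=\mathbf{A}(x)\mathbf{E}^\flat+\mathbf{F}(x,Y(x;\xi))$, solvable through the fundamental matrix $\Phi(x)$ with $\Phi'=\mathbf{A}\Phi$, $\Phi(0)=I$, producing the explicit representation
\begin{equation*}
\mathbf{E}(x,\mathbf{y})=\Phi(x)\mathbf{E}_0(\xi(x,\mathbf{y}))+\Phi(x)\int_0^x\Phi^{-1}(t)\,\mathbf{F}(t,Y(t;\xi(x,\mathbf{y})))\,\dd t,
\end{equation*}
which simultaneously gives existence, uniqueness and continuous dependence.

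Next I would show that $\mathbf{E}\in C^{k,\alpha}_*(\bar\Omega)$. By standard smooth-dependence theory applied to the characteristic ODE, the tangential (i.e.\ $\xi$-) derivatives of $Y$ up to order $k$ lie in $C^\alpha$ since $\nabla_{\mathbf{y}}^j\mathbf{q}\in C^\alpha$ for $j\le k$, while any derivative $\partial_x^i\partial_\xi^j Y$ with $i\le k-1$ is expressed through the ODE as a polynomial in mixed derivatives of $\mathbf{q}$, all of which are hypothesised to sit in $C^\alpha$; the pure $\partial_x^k Y$ is exactly the derivative the anisotropic class allows us to ignore. The inverse map $\xi(x,\mathbf{y})$ inherits the same anisotropic regularity. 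Substituting into the representation formula, chain-rule expansions of $\mathbf{E}_0(\xi)$ and $\mathbf{F}(t,Y(t;\xi))$ produce finite sums whose highest-order inputs from $\mathbf{q},\mathbf{F},\mathbf{A},\mathbf{E}_0$ are all covered by $\|\mathbf{q}\|_{C^{k,\alpha}_*},\|\mathbf{F}\|_{C^{k,\alpha}_*},\|\mathbf{A}\|_{C^{k-1,\alpha}},\|\mathbf{E}_0\|_{C^{k,\alpha}}$, so $\mathbf{E}\in C^{k,\alpha}_*(\bar\Omega)$ with the corresponding bound.

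Finally I would recover the missing pure $\partial_x^k$ derivative directly from the equation. Writing $\partial_x\mathbf{E}=\mathbf{A}\mathbf{E}+\mathbf{F}-\nabla_{\mathbf{y}}\mathbf{E}\cdot\mathbf{q}$ and differentiating $k-1$ further times in $x$ gives
\begin{equation*}
\partial_x^k\mathbf{E}=\sum_{j=0}^{k-1}\binom{k-1}{j}\Bigl[\partial_x^{j}\mathbf{A}\cdot\partial_x^{k-1-j}\mathbf{E}+\delta_{j,k-1}\partial_x^{k-1}\mathbf{F}-\partial_x^{j}(\nabla_{\mathbf{y}}\mathbf{E})\cdot\partial_x^{k-1-j}\mathbf{q}\Bigr].
\end{equation*}
Every term on the right is a mixed derivative: $\partial_x^{k-1-j}\mathbf{q}$ and $\partial_x^{j}\nabla_{\mathbf{y}}\mathbf{E}$ always contain either a $\nabla_{\mathbf{y}}$ factor or fewer than $k$ $x$-derivatives, and similarly for $\mathbf{F}$; hence each factor lies in $C^\alpha$ by the $C^{k,\alpha}_*$ bounds just established. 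This promotes $\mathbf{E}$ from $C^{k,\alpha}_*$ to $C^{k,\alpha}(\bar\Omega)$ and yields estimate \eqref{4502}.

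The main obstacle I anticipate is the combinatorial bookkeeping in the middle step: verifying by induction on $k$ (via a Fa\`a di Bruno type expansion) that every multi-indexed derivative appearing in the composed representation formula for $\mathbf{E}$ avoids the forbidden pure $\partial_x^k$ applied to $\mathbf{q}$ or $\mathbf{F}$. Once this is carried out carefully, the final algebraic identity for $\partial_x^k\mathbf{E}$ is automatic and closes the argument.
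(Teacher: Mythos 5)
Your proposal is correct and follows essentially the same route as the paper: integrate along the characteristics, represent the solution via the fundamental matrix $\Phi$ and Duhamel's formula, and control the regularity of the flow map and of the composed data by Gronwall-type and chain-rule (Fa\`a di Bruno) estimates, observing that the only derivatives of $\mathbf{q}$ and $\mathbf{F}$ that ever appear are of order at most $k$ and never the pure $\partial_x^k$. The one (harmless) organizational difference is that you recover the top pure normal derivative $\partial_x^k\mathbf{E}$ by differentiating the equation $k-1$ times in $x$, whereas the paper (which writes out only the case $k=1$ and leaves the rest to induction) reads all derivatives, including the pure $x$-derivative, directly off the representation formula.
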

Notice that the assumption $(1,\mathbf{q}(x,\mathbf{y}))\cdot\mathbf{\nu}\equiv 0$ on $\Gamma$ ensures that the lateral wall is a  characteristic boundary of the transport equations.
\iffalse
The symbol
$C_*^{k, \alpha}(\bar{\Omega})$ appeared in \eqref{4502} represents an $x$-direction anisotropic H\"{o}lder space, which we define as follows.
Let $U$ be a connected open subset in $\mathbb{R}^n$.  Let $\alpha=(\alpha_1, \cdots,\alpha_n)$ be multi-index. Set %
\begin{eqnarray*}
C^{k}_*(\bar{U}) = \{u: \text{If $|\alpha| \leq k $ and $\alpha\neq (k, 0,\cdots, 0)$, then ${\mathrm{D}}^\alpha u(x) \in C(\bar{U})$} \}.
\end{eqnarray*}
It is equipped with the norm
\begin{eqnarray*}
\|u\|_{C^{k}_*(\bar{U})} = \sum\limits_{|\alpha| \leq k,\alpha\neq (k, 0,\cdots, 0) }\sup\limits_{x\in U}|{\mathrm{D}}^{\alpha}u(x)|.
\end{eqnarray*}

\begin{definition}\label{Def41}
Let $U \subset \mathbb{R}^n$ be an open connected set, $0 < \lambda \leq 1$. For $u \in C^{k}_*(\bar{U})$, set
\begin{eqnarray*}
\|u\|_{C^{k, \lambda}_*(\bar{U})} = \|u\|_{C^{k}_*(\bar{U})} +  \sum\limits_{|\alpha| = k,\alpha\neq (k, 0,\cdots, 0) }\sup\limits_{x,y\in U,x \neq y }\left\{\frac{\mid {\mathrm{D}}^{\alpha}u(x) - {\mathrm{D}}^{\alpha}u(y)\mid}{\mid x - y\mid^\lambda}\right\}.
\end{eqnarray*}
We call the real Banach space
\begin{eqnarray*}
C^{k, \lambda}_*(\bar{U}) = \{u \in C^{k}_*(\bar{U}):\|u\|_{C^{k, \lambda}_*(\bar{U})} < +\infty\}.
\end{eqnarray*}
%We call $(C^{k, \lambda}_*(\bar{U}), \|\cdot\|_{C^{k, \lambda}_*(\bar{U})})$ an $x$-direction anisotropic H\"{o}lder space, which is abbreviated as $C^{k, \lambda}_*(\bar{U})$, where the completeness of $x$-direction anisotropic H\"{o}lder space can be seen in Appendix \ref{sec53}.
a $x$-direction anisotropic H\"{o}lder space.\qed
\end{definition}
%See Appendix \ref{sec53} for completeness and other elementary properties.
\fi
%\subsubsection{Proof of theorem \ref{Thm41}}
\begin{proof}
Here we only present details of the proof for the case $k =1$. The general cases could be proved similarly by induction.

Let $\xi=\Upsilon(t;x,\mathbf{y})=(\gamma_1(t;x,\mathbf{y}),\,\gamma_2(t;x,\mathbf{y}),\,\cdots,\,\gamma_{n-1}(t;x,\mathbf{y}))\in\mathbb{R}^{n-1}$ be the characteristic curve of problem $\eqref{4501}$ passing through $(x,\mathbf{y})$, i.e.,
\begin{eqnarray}\label{4708}
\frac{\dd}{\dd t} \Upsilon(t;x,\mathbf{y})=\mathbf{q}(t,\Upsilon(t;x,\mathbf{y})),\qquad
\Upsilon(x;x,\mathbf{y})=\mathbf{y}.
\end{eqnarray}
Set
\begin{eqnarray*}
\Xi(x,\mathbf{y})\triangleq(\xi_1(x,\mathbf{y}),\, \xi_2(x,\mathbf{y}),\, \cdots,\, \xi_{n-1}(x,\mathbf{y}))=\Upsilon(0; x,\mathbf{y}).
\end{eqnarray*}
Then $P:\, (0,\Xi(x,\mathbf{y}))$ is the point where the  characteristic curve $\eqref{4708}$ intersects with $\Sigma_0$.

In the following we  prove that $\Upsilon(t;x,\mathbf{y})$ belongs to $C^{1,\alpha}(\overline{(0,L)\times\Omega})$.

Integrating $\eqref{4708}$ from $0$ to $x$ yields
\begin{eqnarray}\label{4709}
\Upsilon(t;x,\mathbf{y})=\mathbf{y}+\int_x^t \mathbf{q}(\tau,\Upsilon(\tau;x,\mathbf{y}))\,\dd \tau,
\end{eqnarray}
consequently,  %$\eqref{4709}$ the partial derivatives of  $x$ and $y_i\,(i=1,\,2,\,\cdots,\,n-1)$ are calculated respectively,
\begin{align}\label{4710}
\p_x\Upsilon(t;x,\mathbf{y})&=\int_x^t \p_{\Upsilon}\mathbf{q}(\tau,\Upsilon(\tau;x,\mathbf{y}))\p_x\Upsilon(\tau;x,\mathbf{y})\,\dd \tau-q(x,\Upsilon(x;x,\mathbf{y}))\nonumber\\
&= \int_x^t \p_{\Upsilon}\mathbf{q}(\tau,\Upsilon(\tau;x,\mathbf{y}))\p_x\Upsilon(\tau;x,\mathbf{y})\,\dd \tau-\mathbf{q}(x,\mathbf{y}),
\end{align}
and
\begin{eqnarray}\label{4711}
\p_{y_i}\Upsilon(t;x,\mathbf{y})=\mathbf{e}_i+\int_x^t \p_{\Upsilon}\mathbf{q}(\tau,\Upsilon(\tau;x,\mathbf{y}))\p_{y_i}\Upsilon(\tau;x,\mathbf{y})\,\dd \tau, \quad(i=1,\,2,\,\cdots,\,n-1),
\end{eqnarray}
where $\mathbf{e}_i$ refers to the column vector in $\mathbb{R}^{n-1}$ whose $i$-th component is $1$ and all the other components are $0$. In particular,
\begin{eqnarray*}
&&\left|\p_x\Upsilon(t;x,\mathbf{y})+\sum_{i=1}^{n-1}q_i(x,\mathbf{y})\p_{y_i}\Upsilon(t;x,\mathbf{y})\right|\nonumber\\
&&\quad =\Big|\int_x^t \p_{\Upsilon}\mathbf{q}(\tau,\Upsilon(\tau;x,\mathbf{y}))\big[\p_x\Upsilon(\tau;x,\mathbf{y})+\sum_{i=1}^{n-1}q_i(x,\mathbf{y})\p_{y_i}\Upsilon(\tau;x,\mathbf{y})\big]\,\dd \tau\Big|\nonumber\\
&&\quad \leq \|\mathbf{q}\|_{C^1_*(\bar{\Omega})}\int_x^t \big|\p_x\Upsilon(\tau;x,\mathbf{y}) +\sum_{i=1}^{n-1}q_i(x,\mathbf{y})\p_{y_i}\Upsilon(\tau;x,\mathbf{y})\big|\,\dd \tau.
\end{eqnarray*}
The Gronwall inequality implies that
\begin{eqnarray}
\p_x\Upsilon(t;x,\mathbf{y})+\sum_{i=1}^{n-1}q_i(x,\mathbf{y})\p_{y_i}\Upsilon(t;x,\mathbf{y})\equiv 0.
\end{eqnarray}

By $\eqref{4708}$, we directly have
\begin{eqnarray}\label{4712}
\left|\frac{\dd \Upsilon(t;x,\mathbf{y})}{\dd t}\right|=|\mathbf{q}(t,\Upsilon(\tau;x,\mathbf{y}))|\leq \|\mathbf{q}\|_{C(\bar{\Omega})}.
\end{eqnarray}
From $\eqref{4710}$,
\begin{eqnarray*}
|\p_x\Upsilon(t;x,\mathbf{y})|\leq \|\mathbf{q}\|_{C^1_*(\bar{\Omega})}\int_x^t |\p_x\Upsilon(\tau;x,\mathbf{y})|\,\dd \tau+\|\mathbf{q}\|_{C(\bar{\Omega})},
\end{eqnarray*}
thus Gronwall inequality tells us
\begin{align}\label{4713}
|\p_x\Upsilon(t;x,\mathbf{y})| &\leq \|\mathbf{q}\|_{C(\bar{\Omega})}\left(1+\|\mathbf{q}\|_{C^1_*(\bar{\Omega})}t\exp(\|\mathbf{q}\|_{C^1_*(\bar{\Omega})}t)\right) \nonumber\\&\leq\|\mathbf{q}\|_{C(\bar{\Omega})}\left(1+\|\mathbf{q}\|_{C^1_*(\bar{\Omega})}L\exp(L\|\mathbf{q}\|_{C^1_*(\bar{\Omega})})\right).
\end{align}
Similarly, by $\eqref{4711}$,
\begin{eqnarray*}
|\p_{y_i}\Upsilon(t;x,\mathbf{y})|\leq \|\mathbf{q}\|_{C^1_*(\bar{\Omega})}\int_x^t |\p_{y_i}\Upsilon(\tau;x,\mathbf{y})|\,\dd \tau+1,
\end{eqnarray*}
whence
\begin{eqnarray}\label{4714}
|\p_{y_i}\Upsilon(t;x,\mathbf{y})|\leq 1+\|\mathbf{q}\|_{C^1_*(\bar{\Omega})}t\exp(\|\mathbf{q}\|_{C^1_*(\bar{\Omega})}t) \leq 1+\|\mathbf{q}\|_{C^1_*(\bar{\Omega})}L\exp(L\|\mathbf{q}\|_{C^1_*(\bar{\Omega})}).
\end{eqnarray}
Combining $\eqref{4712},\,\eqref{4713}$ and $\eqref{4714}$, we obtain that $\Upsilon\in C^{1}(\overline{(0,L)\times\Omega})$.

Now fix $\mathbf{z}=(x,\mathbf{y})\in \Omega$. For any $t_1\neq t_2$, substituting $(t_1,\mathbf{z}),\,(t_2,\mathbf{z})$ into $\eqref{4708}$ respectively and subtracting one from the other,
\begin{eqnarray}\label{4715}
&&\left|\frac{\dd \Upsilon(t_1;\mathbf{z})}{\dd t}-\frac{\dd \Upsilon(t_2;\mathbf{z})}{\dd t}\right|=|\mathbf{q}(t_1,\Upsilon(t_1;\mathbf{z}))-\mathbf{q}(t_2,\Upsilon(t_2;\mathbf{z}))|\nonumber\\
&&\quad \leq |\mathbf{q}(t_1,\Upsilon(t_1;\mathbf{z}))-\mathbf{q}(t_2,\Upsilon(t_1;\mathbf{z}))|+|\mathbf{q}(t_2,\Upsilon(t_1;\mathbf{z}))-\mathbf{q}(t_2,\Upsilon(t_2;\mathbf{z}))|\nonumber\\
&&\quad \leq \|\mathbf{q}\|_{C^{0,\alpha}(\bar{\Omega})}|t_1-t_2|^{\alpha} + \|\mathbf{q}\|_{C^{0,\alpha}(\bar{\Omega})}\|\Upsilon\|^{\alpha}_{C^1(\overline{(0,L)\times\Omega})}|t_1-t_2|^{\alpha}\leq C|t_1-t_2|^{\alpha},
\end{eqnarray}
where $C$ is a constant  depends only on $\|\mathbf{q}\|_{C^{0,\alpha}(\bar{\Omega})}$ and $\|\Upsilon\|_{C^1(\overline{(0,L)\times\Omega})}$.

Similarly, from $\eqref{4710}$, one has, if $t_1<t_2$,
\begin{eqnarray}\label{4716}
&&|\p_x\Upsilon(t_1;\mathbf{z})-\p_x\Upsilon(t_2;\mathbf{z})|\nonumber\\
&&\quad \leq\left|\int_{x}^{t_1} \p_{\Upsilon}\mathbf{q}(\tau,\Upsilon(\tau;\mathbf{z}))\p_x\Upsilon(\tau;\mathbf{z})\,\dd \tau - \int_{x}^{t_2} \p_{\Upsilon}\mathbf{q}(\tau,\Upsilon(\tau;\mathbf{z}))\p_x\Upsilon(\tau;\mathbf{z})\,\dd \tau\right|\nonumber\\
&&\quad \leq \int_{t_1}^{t_2} \left|\p_{\Upsilon}\mathbf{q}(\tau,\Upsilon(\tau;\mathbf{z}))\p_x\Upsilon(\tau;\mathbf{z})\right|\,\dd \tau \leq \|\mathbf{q}\|_{C^{1}_*(\bar{\Omega})}\|\Upsilon\|_{C^{1}(\overline{(0,L)\times\Omega})}|t_1-t_2|\nonumber\\
&&\quad \leq C|t_1-t_2|^{\alpha},
\end{eqnarray}
with $C$  a constant depending only on $\|\mathbf{q}\|_{C^{1}_*(\bar{\Omega})}, \|\Upsilon\|_{C^{1}(\overline{(0,L)\times\Omega})}$ and $L$.
For any $i=1,\,2,\,\cdots,\,n-1$, substituting $(t_1,\mathbf{z}), (t_2,\mathbf{z})$ into $\eqref{4711}$ and subtracting, then
\begin{eqnarray}\label{4717}
&&|\p_{y_i}\Upsilon(t_1;\mathbf{z})-\p_{y_i}\Upsilon(t_2;\mathbf{z})|\nonumber\\
&&\quad =\left|\int_{x}^{t_1} \p_{\Upsilon}\mathbf{q}(\tau,\Upsilon(\tau;\mathbf{z}))\p_{y_i}\Upsilon(\tau;\mathbf{z})\,\dd \tau -\int_{x}^{t_2} \p_{\Upsilon}\mathbf{q}(\tau,\Upsilon(\tau;\mathbf{z}))\p_{y_i}\Upsilon(\tau;\mathbf{z})\,\dd \tau\right|\nonumber\\
&&\quad \leq \left|\int_{t_1}^{t_2} \p_{\Upsilon}\mathbf{q}(\tau,\Upsilon(\tau;\mathbf{z}))\p_{y_i}\Upsilon(\tau;\mathbf{z})\,\dd \tau\right|\leq \|\mathbf{q}\|_{C^{1}_*(\bar{\Omega})}\|\Upsilon\|_{C^{1}(\overline{(0,L)\times\Omega})}|t_1-t_2|\nonumber\\
&&\quad \leq C|t_1-t_2|^{\alpha}.
\end{eqnarray}
%where $C$ is a constant that depends only on $\|\mathbf{q}\|_{C^{1}_*(\bar{\Omega})}, \|\Upsilon\|_{C^{1,\alpha}(\overline{(0,L)\times\Omega})}$ and $\Omega$.

Next we fix $t$.  For any $\mathbf{z}_1=(x_1,\mathbf{y}_1), \mathbf{z}_2=(x_2,\mathbf{y}_2)\in \Omega$ and $\mathbf{z}_1\neq\mathbf{z}_2$, substituting $(t,\mathbf{z}_1)$ and $(t,\mathbf{z}_2)$ into $\eqref{4708}$ leads to
\begin{eqnarray}\label{4718}
&&\left|\frac{\dd \Upsilon(t;\mathbf{z}_1)}{\dd t}-\frac{\dd \Upsilon(t;\mathbf{z}_2)}{\dd t}\right|=|\mathbf{q}(t,\Upsilon(t;\mathbf{z}_1))-\mathbf{q}(t,\Upsilon(t;\mathbf{z}_2))|\nonumber\\
&&\qquad\qquad\quad \leq \|\mathbf{q}\|_{C^{0,\alpha}(\bar{\Omega})}\|\Upsilon\|^{\alpha}_{C^1(\overline{(0,L)\times\Omega})}|\mathbf{z}_1-\mathbf{z}_2|^{\alpha}\leq C|\mathbf{z}_1-\mathbf{z}_2|^{\alpha}.
\end{eqnarray}
%where $C$ is a constant that depends only on $\|\mathbf{q}\|_{C^{0,\alpha}(\bar{\Omega})}$ and $\|\Upsilon\|_{C^{1}(\overline{(0,L)\times\Omega})}$.
Substituting $(t,\mathbf{z}_1), (t,\mathbf{z}_2)$ into $\eqref{4710}$ yields
\begin{eqnarray*}
&&|\p_x\Upsilon(t;\mathbf{z}_1)-\p_x\Upsilon(t;\mathbf{z}_2)|\nonumber\\
&&\quad \leq\left|\int_{x_1}^{t} \p_{\Upsilon}\mathbf{q}(\tau,\Upsilon(\tau;\mathbf{z}_1))\p_x\Upsilon(\tau;\mathbf{z}_1)\,\dd \tau - \int_{x_2}^{t} \p_{\Upsilon}\mathbf{q}(\tau,\Upsilon(\tau;\mathbf{z}_2))\p_x\Upsilon(\tau;\mathbf{z}_2)\,\dd \tau\right|\nonumber\\
&&\quad \quad +|\mathbf{q}(\mathbf{z}_1)-\mathbf{q}(\mathbf{z}_2)|\nonumber\\
&&\quad \leq \left|\int_{x_1}^{t} [\p_{\Upsilon}\mathbf{q}(\tau,\Upsilon(\tau;\mathbf{z}_1))-\p_{\Upsilon}\mathbf{q}(\tau,\Upsilon(\tau;\mathbf{z}_2))]\p_x\Upsilon(\tau;\mathbf{z}_1)\,\dd \tau\right|\nonumber\\
&&\quad \quad +\left|\int_{x_1}^{t} \p_{\Upsilon}\mathbf{q}(\tau,\Upsilon(\tau;\mathbf{z}_2))[\p_x\Upsilon(\tau;\mathbf{z}_1)-\p_x\Upsilon(\tau;\mathbf{z}_2)]\,\dd \tau\right|\nonumber\\
&&\quad \quad +\left|\int_{x_1}^{x_2} \p_{\Upsilon}\mathbf{q}(\tau,\Upsilon(\tau;\mathbf{z}_2))\p_x\Upsilon(\tau;\mathbf{z}_2)\,\dd \tau\right|+\|\mathbf{q}\|_{C^{0,\alpha}(\bar{\Omega})}|\mathbf{z}_1-\mathbf{z}_2|^{\alpha}\nonumber\\
&&\quad \leq L\|\mathbf{q}\|_{C^{1,\alpha}_*(\bar{\Omega})}\|\Upsilon\|^{1+\alpha}_{C^{1}(\overline{(0,L)\times\Omega})}|\mathbf{z}_1-\mathbf{z}_2|^{\alpha}+\|\mathbf{q}\|_{C^{1}_*(\bar{\Omega})}\|\Upsilon\|_{C^{1}(\overline{(0,L)\times\Omega})}|x_1-x_2|\nonumber\\
&&\quad \quad +\|\mathbf{q}\|_{C^{0,\alpha}(\bar{\Omega})}|\mathbf{z}_1-\mathbf{z}_2|^{\alpha}+\|\mathbf{q}\|_{C^{1}_*(\bar{\Omega})}\int_{x_1}^{t} |\p_x\Upsilon(\tau;\mathbf{z}_1)-\p_x\Upsilon(\tau;\mathbf{z}_2)|\,\dd \tau\nonumber\\
&&\quad \leq C_1|\mathbf{z}_1-\mathbf{z}_2|^{\alpha}+\|\mathbf{q}\|_{C^{1}_*(\bar{\Omega})}\int_{x_1}^{t} |\p_x\Upsilon(\tau;\mathbf{z}_1)-\p_x\Upsilon(\tau;\mathbf{z}_2)|\,\dd \tau,
\end{eqnarray*}
where $C_1$ is a constant depends only on $\|\mathbf{q}\|_{C^{1,\alpha}_*(\bar{\Omega})}$, $\|\Upsilon\|_{C^{1}(\overline{(0,L)\times\Omega})}$ and $L$. Thanks to Gronwall inequality, one infers
\begin{align}\label{4719}
|\p_x\Upsilon(t;\mathbf{z}_1)-\p_x\Upsilon(t;\mathbf{z}_2)|&\leq C_1\left(1+\|\mathbf{q}\|_{C^1_*(\bar{\Omega})}t\exp(\|\mathbf{q}\|_{C^1_*(\bar{\Omega})}t)\right)|\mathbf{z}_1-\mathbf{z}_2|^{\alpha} \nonumber\\ &\leq C|\mathbf{z}_1-\mathbf{z}_2|^{\alpha}.
\end{align}
where $C=C_1\left(1+\|\mathbf{q}\|_{C^1_*(\bar{\Omega})}L\exp(\|\mathbf{q}\|_{C^1_*(\bar{\Omega})}L)\right)$ also depends only on $\|\mathbf{q}\|_{C^{1,\alpha}_*(\bar{\Omega})}$, $\|\Upsilon\|_{C^{1}(\overline{(0,L)\times\Omega})}$ and $L$.

Similarly, for any $i=1,\,2,\,\cdots,\,n-1$, substituting $(t,\mathbf{z}_1),\,(t,\mathbf{z}_2)$ into $\eqref{4711}$,% respectively to get the difference
\begin{eqnarray*}
&&|\p_{y_i}\Upsilon(t;\mathbf{z}_1)-\p_{y_i}\Upsilon(t;\mathbf{z}_2)|\nonumber\\
&&\quad =\left|\int_{x_1}^{t} \p_{\Upsilon}\mathbf{q}(\tau,\Upsilon(\tau;\mathbf{z}_1))\p_{y_i}\Upsilon(\tau;\mathbf{z}_1)\,\dd \tau -\int_{x_2}^{t} \p_{\Upsilon}\mathbf{q}(\tau,\Upsilon(\tau;\mathbf{z}_2))\p_{y_i}\Upsilon(\tau;\mathbf{z}_2)\,\dd \tau\right|\nonumber\\
&&\quad \leq \left|\int_{x_1}^{t} [\p_{\Upsilon}\mathbf{q}(\tau,\Upsilon(\tau;\mathbf{z}_1)) -\p_{\Upsilon}\mathbf{q}(\tau,\Upsilon(\tau;\mathbf{z}_2))]\p_{y_i}\Upsilon(\tau;\mathbf{z}_1)\,\dd \tau\right|\nonumber\\
&&\quad \quad+\left|\int_{x_1}^{t} \p_{\Upsilon}\mathbf{q}(\tau,\Upsilon(\tau;\mathbf{z}_2))[\p_{y_i}\Upsilon(\tau;\mathbf{z}_1)-\p_{y_i}\Upsilon(\tau;\mathbf{z}_2)]\,\dd \tau\right|\nonumber\\
&&\quad \quad +\left|\int_{x_1}^{x_2} \p_{\Upsilon}\mathbf{q}(\tau,\Upsilon(\tau;\mathbf{z}_2))\p_{y_i}\Upsilon(\tau;\mathbf{z}_2)\,\dd \tau\right|\nonumber\\
&&\quad \leq L\|\mathbf{q}\|_{C^{1,\alpha}_*(\bar{\Omega})}\|\Upsilon\|^{1+\alpha}_{C^{1}(\overline{(0,L)\times\Omega})}|\mathbf{z}_1-\mathbf{z}_2|^{\alpha}+\|\mathbf{q}\|_{C^{1}_*(\bar{\Omega})}\|\Upsilon\|_{C^{1}(\overline{(0,L)\times\Omega})}|x_1-x_2|\nonumber\\
&&\quad \quad +\|\mathbf{q}\|_{C^{1}_*(\bar{\Omega})}\int_{x_1}^{t} |\p_{y_i}\Upsilon(\tau;\mathbf{z}_1)-\p_{y_i}\Upsilon(\tau;\mathbf{z}_2)|\,\dd \tau\nonumber\\
&&\quad \leq C_2|\mathbf{z}_1-\mathbf{z}_2|^{\alpha}+\|\mathbf{q}\|_{C^{1}_*(\bar{\Omega})}\int_{x_1}^{t} |\p_{y_i}\Upsilon(\tau;\mathbf{z}_1)-\p_{y_i}\Upsilon(\tau;\mathbf{z}_2)|\,\dd \tau,
\end{eqnarray*}
%where $C_2$ is a constant which only depends on $\|\mathbf{q}\|_{C^{1,\alpha}_*(\bar{\Omega})}, \|\Upsilon\|_{C^{1,\alpha}(\overline{(0,L)\times\Omega})}$ and $\Omega$. By integral
hence by Gronwall inequality,
\begin{align}\label{4720}
|\p_{y_i}\Upsilon(t;\mathbf{z}_1)-\p_{y_i}\Upsilon(t;\mathbf{z}_2)| &\leq  C_2\left(1+\|\mathbf{q}\|_{C^1_*(\bar{\Omega})}t\exp(\|\mathbf{q}\|_{C^1_*(\bar{\Omega})}t)\right)|\mathbf{z}_1-\mathbf{z}_2|^{\alpha}\nonumber\\
& \leq C|\mathbf{z}_1-\mathbf{z}_2|^{\alpha},
\end{align}
where $C=C_2\left(1+\|\mathbf{q}\|_{C^1_*(\bar{\Omega})}L\exp(\|\mathbf{q}\|_{C^1_*(\bar{\Omega})}L)\right)$ is still a constant depends only on $L$,\,$\|\mathbf{q}\|_{C^{1,\alpha}_*(\bar{\Omega})}$ and $\|\Upsilon\|_{C^{1}(\overline{(0,L)\times\Omega})}$, while $\|\Upsilon\|_{C^{1}(\overline{(0,L)\times\Omega})}$ depends in turn on $\|\mathbf{q}\|_{C^{1,\alpha}_*(\bar{\Omega})}$.

Now for any $(t_1,\mathbf{z}_1)=(t_1,x_1,\mathbf{y}_1),~ (t_2,\mathbf{z}_2)=(t_2,x_2,\mathbf{y}_2)\in \overline{(0,L)\times\Omega}$ with $(t_1,\mathbf{z}_1)\neq(t_2,\mathbf{z}_2)$, it is obtained from $\eqref{4715}$ and $\eqref{4718}$ that
\begin{eqnarray}\label{4721}
&&\frac{\left|\frac{\dd \Upsilon(t_1;\mathbf{z}_1)}{\dd t}-\frac{\dd \Upsilon(t_2;\mathbf{z}_2)}{\dd t}\right|}{|(t_1,\mathbf{z}_1)-(t_2,\mathbf{z}_2)|^{\alpha}} \leq \frac{\left|\frac{\dd \Upsilon(t_1;\mathbf{z}_1)}{\dd t}-\frac{\dd \Upsilon(t_2;\mathbf{z}_1)}{\dd t}\right|}{|(t_1,\mathbf{z}_1)-(t_2,\mathbf{z}_2)|^{\alpha}}+\frac{\left|\frac{\dd \Upsilon(t_2;\mathbf{z}_1)}{\dd t}-\frac{\dd \Upsilon(t_2;\mathbf{z}_2)}{\dd t}\right|}{|(t_1,\mathbf{z}_1)-(t_2,\mathbf{z}_2)|^{\alpha}}\nonumber\\
&&\quad \leq C\left(\frac{|t_1-t_2|^{\alpha}}{|(t_1,\mathbf{z}_1)-(t_2,\mathbf{z}_2)|^{\alpha}}+\frac{|\mathbf{z}_1-\mathbf{z}_2|^{\alpha}}{|(t_1,\mathbf{z}_1)-(t_2,\mathbf{z}_2)|^{\alpha}}\right)\leq 2C;
\end{eqnarray}
by $\eqref{4716}$ and $\eqref{4719}$, one has
\begin{eqnarray}\label{4722}
&&\frac{\left|\p_x\Upsilon(t_1;\mathbf{z}_1)-\p_x\Upsilon(t_2;\mathbf{z}_2)\right|}{|(t_1,\mathbf{z}_1)-(t_2,\mathbf{z}_2)|^{\alpha}} \leq \frac{\left|\p_x\Upsilon(t_1;\mathbf{z}_1)-\p_x \Upsilon(t_2;\mathbf{z}_1)\right|}{|(t_1,\mathbf{z}_1)-(t_2,\mathbf{z}_2)|^{\alpha}}+\frac{\left|\p_x \Upsilon(t_2;\mathbf{z}_1)-\p_x \Upsilon(t_2;\mathbf{z}_2)\right|}{|(t_1,\mathbf{z}_1)-(t_2,\mathbf{z}_2)|^{\alpha}}\nonumber\\
&&\leq C\left(\frac{|t_1-t_2|^{\alpha}}{|(t_1,\mathbf{z}_1)-(t_2,\mathbf{z}_2)|^{\alpha}}+\frac{|\mathbf{z}_1-\mathbf{z}_2|^{\alpha}}{|(t_1,\mathbf{z}_1)-(t_2,\mathbf{z}_2)|^{\alpha}}\right)\leq 2C;
\end{eqnarray}
for any $i=1,\,2,\,\cdots,\,n-1$, by $\eqref{4717}$ and $\eqref{4720}$, one also has
\begin{eqnarray}\label{4723}
&&\frac{\left|\p_{y_i}\Upsilon(t_1;\mathbf{z}_1)-\p_{y_i}\Upsilon(t_2;\mathbf{z}_2)\right|}{|(t_1,\mathbf{z}_1)-(t_2,\mathbf{z}_2)|^{\alpha}} \leq \frac{\left|\p_{y_i}\Upsilon(t_1;\mathbf{z}_1)-\p_{y_i} \Upsilon(t_2;\mathbf{z}_1)\right|}{|(t_1,\mathbf{z}_1)-(t_2,\mathbf{z}_2)|^{\alpha}}+\frac{\left|\p_{y_i} \Upsilon(t_2;\mathbf{z}_1)-\p_{y_i} \Upsilon(t_2;\mathbf{z}_2)\right|}{|(t_1,\mathbf{z}_1)-(t_2,\mathbf{z}_2)|^{\alpha}}\nonumber\\
&&\quad \leq C\left(\frac{|t_1-t_2|^{\alpha}}{|(t_1,\mathbf{z}_1)-(t_2,\mathbf{z}_2)|^{\alpha}}+\frac{|\mathbf{z}_1-\mathbf{z}_2|^{\alpha}}{|(t_1,\mathbf{z}_1)-(t_2,\mathbf{z}_2)|^{\alpha}}\right)\leq 2C.
\end{eqnarray}
Recalling we have shown $\Upsilon\in C^{1}(\overline{(0,L)\times\Omega})$, then combined with $\eqref{4721},\,\eqref{4722}$ and $\eqref{4723}$, we proved $\Upsilon\in C^{1, \alpha}(\overline{(0,L)\times\Omega})$, with $\norm{\Upsilon}_{C^{1, \alpha}(\overline{(0,L)\times\Omega})}\le C=C(L, \|\mathbf{q}\|_{C^{1,\alpha}_*(\bar{\Omega})})$. In particular,  $\Xi(x,\mathbf{y})$ is also a function in $C^{1,\alpha}(\bar{\Omega})$.
\smallskip

Along the characteristic curve $\Upsilon(t; x, \mathbf{y})$, problem $\eqref{4501}$ can be written as
\begin{eqnarray}\label{4724}
\begin{cases}\dl
\frac{\dd \mathbf{E}(t,\Upsilon(t;x,\mathbf{y}))}{\dd t} = \mathbf{A}(t)\mathbf{E}(t,\Upsilon(t;x,\mathbf{y}))+\mathbf{F}(t,\Upsilon(t;x,\mathbf{y}))&\quad t \in (0,L),\\
\mathbf{E} =\mathbf{E}_0(\Xi(x,\mathbf{y}))&\quad t=0.
\end{cases}
\end{eqnarray}
The solution is given by
\begin{eqnarray}\label{4725}
 \mathbf{E}(x,\mathbf{y})=\Phi(x)\mathbf{E}_0(\Xi(x,\mathbf{y}))+\Phi(x)\int_0^x\Phi^{-1}(t)\mathbf{F}(t,\Upsilon(t;x,\mathbf{y}))\,\dd t,
\end{eqnarray}
where $\Phi(t)$ is the standard fundamental matrix, satisfying
\begin{eqnarray*}
\begin{cases}
\Phi'(t)=\mathbf{A}(t)\Phi(t)&\quad t \in (0,L),\\
\Phi(0)=I&\quad t=0,
\end{cases}
\end{eqnarray*}
with $I$ the $m\times m$ identity matrix.

Next, we prove that $\Phi(t)\in C^{1,\alpha}([0,L])$. By the continuity and differentiability  of solutions of ordinary differential equations on initial values and parameters, $\Phi(t)\in C^{1}([0,L])$. For any $t_1,\,t_2\in (0,L)$ with $t_1 \neq t_2$,
\begin{eqnarray*}
&&|\Phi'(t_1)-\Phi'(t_2)| = |\mathbf{A}(t_1)\Phi(t_1)-\mathbf{A}(t_2)\Phi(t_2)|\nonumber\\
&&\quad \leq |\mathbf{A}(t_1)||\Phi(t_1) -\Phi(t_2)|+|\mathbf{A}(t_1)-\mathbf{A}(t_2)||\Phi(t_2)|\nonumber\\
&&\quad \leq\|\mathbf{A}\|_{C([0,L])}\|\Phi\|_{C^{0,\alpha}([0,L])}|t_1-t_2|^{\alpha}+\|\mathbf{A}\|_{C^{0,\alpha}([0,L])}\|\Phi\|_{C([0,L])}|t_1-t_2|^{\alpha},
\end{eqnarray*}
where $C$ is a constant  depends only on $\|A\|_{C^{0,\alpha}([0,L])}$. Thus, $\Phi(t)\in C^{1,\alpha}([0,L])$.

Taking derivatives of $\eqref{4725}$ with respect to $x$ and $y_i, ~(i=1,\,2,\,\cdots,\,n-1)$, we get
\begin{align}\label{4726}
\p_x\mathbf{E}(x,\mathbf{y})=&\Phi'(x)\mathbf{E}_0(\Xi(x,\mathbf{y}))+\Phi(x)\p_{\Xi}\mathbf{E}_0(\Xi(x,\mathbf{y}))\p_x\Xi(x,\mathbf{y})\nonumber\\
& +\Phi'(x)\int_0^x\Phi^{-1}(t)\mathbf{F}(t,\Upsilon(t;x,\mathbf{y}))\,\dd t+\mathbf{F}(x,\mathbf{y})\nonumber\\
& +\Phi(x)\int_0^x\Phi^{-1}(t)\p_{\Upsilon}\mathbf{F}(t,\Upsilon(t;x,\mathbf{y}))\p_x\Upsilon(t;x,\mathbf{y})\,\dd t,
\end{align}
and
\begin{align}\label{4727}
\p_{y_i}\mathbf{E}(x,\mathbf{y})=&\Phi(x)\p_{\Xi}\mathbf{E}_0(\Xi(x,\mathbf{y}))\p_{y_i}\Xi(x,\mathbf{y})
\nonumber\\&+\Phi(x)\int_0^x\Phi^{-1}(t)\p_{\Upsilon}\mathbf{F}(t,\Upsilon(t;x,\mathbf{y}))\p_{y_i}\Upsilon(t;x,\mathbf{y})\,\dd t.
\end{align}
%\rm{Step 1.}
By virtue of \eqref{4725},
\begin{eqnarray}\label{4728}
&&|\mathbf{E}| \leq \|\Phi\|_{C([0,L])}\|\mathbf{E}_0\|_{C(\Sigma_0)}+L\|\Phi\|_{C([0,L])}\|\Phi^{-1}\|_{C([0,L])}\|\mathbf{F}\|_{C(\bar{\Omega})}\nonumber\\
&&\quad\qquad\quad \leq C\left(\|\mathbf{E}_0\|_{C(\Sigma_0)}+\|\mathbf{F}\|_{C(\bar{\Omega})}\right).
\end{eqnarray}
The constant $C$ just depends on $L$ and $\|\Phi\|_{C([0,L])}, \|\Phi^{-1}\|_{C([0,L])}$, while the latter is bounded by $\norm{A}_{C^{0,\alpha}([0,L])}$.
Thanks to \eqref{4726},
\begin{eqnarray}\label{4729}
&&|\p_x\mathbf{E}| \leq  \|\Phi\|_{C^1([0,L])}\|\mathbf{E}_0\|_{C(\Sigma_0)}+\|\Phi\|_{C([0,L])}\|\mathbf{E}_0\|_{C^1(\Sigma_0)}\|\Xi\|_{C^{1}(\bar{\Omega})}\nonumber\\
&&\quad \quad + L\|\Phi\|_{C^1([0,L])}\|\Phi^{-1}\|_{C([0,L])}\|\mathbf{F}\|_{C(\bar{\Omega})}+\|\mathbf{F}\|_{C(\bar{\Omega})}\nonumber\\
&&\quad \quad + L\|\Phi\|_{C([0,L])}\|\Phi^{-1}\|_{C([0,L])}\|\mathbf{F}\|_{C^{1}_*(\bar{\Omega})}\|\Upsilon\|_{C^{1}(\overline{(0,L)\times\Omega})}\nonumber\\
&&\quad \leq C\left(\|\mathbf{E}_0\|_{C^1(\Sigma_0)}+\|\mathbf{F}\|_{C^1_*(\bar{\Omega})}\right).
\end{eqnarray}
We notice that  $C$ only depends on $L,\, \norm{A}_{C^{0,\alpha}([0,L])}$ and $\norm{\mathbf{q}}_{C^{1,\alpha}(\bar{\Omega})}$, by previous estimates of $\norm{\Phi}_{C([0,L])}$ and $\|\Upsilon\|_{C^{1}(\overline{(0,L)\times\Omega})}$.
For any $i=1,\,2,\,\cdots,\,n-1$, utilizing $\eqref{4727}$, we get
\begin{eqnarray}\label{4730}
&&|\p_{y_i}\mathbf{E}| \leq \|\Phi\|_{C([0,L])}\|\mathbf{E}_0\|_{C^1(\Sigma_0)}\|\Xi\|_{C^{1}(\bar{\Omega})}+L\|\Phi\|_{C([0,L])}\|\Phi^{-1}\|_{C([0,L])}\|\mathbf{F}\|_{C^{1}_*(\bar{\Omega})}\|\Upsilon\|_{C^{1}(\overline{(0,L)\times\Omega})}\nonumber\\
&&\quad\qquad \leq C\left(\|\mathbf{E}_0\|_{C^1(\Sigma_0)}+\|\mathbf{F}\|_{C^1_*(\bar{\Omega})}\right).
\end{eqnarray}
%where $C$ is a constant that only depends on $ \|\Phi\|_{C([0,L])}, \|\Phi^{-1}\|_{C([0,L])}, \|\Xi\|_{C^{1}(\bar{\Omega})}, \|\Upsilon\|_{C^{1}(\overline{(0,L)\times\Omega})}$ and $\Omega$.
Therefore, from $\eqref{4728}, \eqref{4729}$ and $\eqref{4730}$, we have
\begin{eqnarray}\label{4731}
\|\mathbf{E}\|_{C^1(\bar{\Omega})}\leq C\left(\|\mathbf{E}_0\|_{C^1(\Sigma_0)}+\|\mathbf{F}\|_{C^1_*(\bar{\Omega})}\right).
\end{eqnarray}
%where $C$ is a constant that only depends on $ \|\Phi\|_{C^1([0,L])},\,\|\Phi^{-1}\|_{C([0,L])}, \|\Xi\|_{C^{1}(\bar{\Omega})},\,\|\Upsilon\|_{C^{1}(\overline{(0,L)\times\Omega})}$ and $\Omega$.

%\rm{Step 2.}
We then estimate the H\"{o}lder semi-norms of $\mathbf{E}$. For any $\mathbf{z}_1=(x_1,\mathbf{y}_1),\,\mathbf{z}_2=(x_2,\mathbf{y}_2)\in \bar{\Omega}$,  $\mathbf{z}_1\neq\mathbf{z}_2$, %substituting $\mathbf{z}_1$ and $\mathbf{z}_2$ into
from $\eqref{4726}$, we have
%\begin{small}
\begin{eqnarray}\label{4732}
&&|\p_x\mathbf{E}(\mathbf{z}_1)-\p_x\mathbf{E}(\mathbf{z}_2)|\leq|\Phi'(x_1)\mathbf{E}_0(\Xi(\mathbf{z}_1))-\Phi'(x_2)\mathbf{E}_0(\Xi(\mathbf{z}_2))|\nonumber\\
&&\quad \quad +|\Phi(x_1)\p_{\Xi}\mathbf{E}_0(\xi(\mathbf{x}_1))\p_x\Xi(\mathbf{x}_1)-\Phi(x_2)\p_{\Xi}\mathbf{E}_0(\Xi(\mathbf{z}_2))\p_x\Xi(\mathbf{z}_2)|+|\mathbf{F}(\mathbf{z}_1)-\mathbf{F}(\mathbf{z}_2)|\nonumber\\
&&\quad \quad+\Big|\Phi'(x_1)\int_0^{x_1}\Phi^{-1}(t)\mathbf{F}(t,\Upsilon(t;\mathbf{z}_1))\dd t-\Phi'(x_2)\int_0^{x_2}\Phi^{-1}(t)\mathbf{F}(t,\Upsilon(t;\mathbf{z}_2))\,\dd t\Big|\nonumber\\
&&\quad \quad +\Big|\Phi(x_1)\int_0^{x_1}\Phi^{-1}(t)\p_{\Upsilon}\mathbf{F}(t,\Upsilon(t;\mathbf{z}_1))\p_x\Upsilon(t;\mathbf{z}_1)\,\dd t \nonumber\\
&&\qquad\qquad-\Phi(x_2)\int_0^{x_2}\Phi^{-1}(t)\p_{\Upsilon}\mathbf{F}(t,\Upsilon(t;\mathbf{z}_2))\p_x\Upsilon(t;\mathbf{z}_2)\,\dd t\Big|\nonumber\\
&&\quad \leq|\mathbf{E}_0(\Xi(\mathbf{z}_1))||\Phi'(x_1)-\Phi'(x_2)|+|\Phi'(x_2)||\mathbf{E}_0(\Xi(\mathbf{z}_1))-\mathbf{E}_0(\Xi(\mathbf{z}_2))|\nonumber\\
&&\quad +|\Phi(x_1)||\p_{\Xi}\mathbf{E}_0(\Xi(\mathbf{z}_2))||\p_x\Xi(\mathbf{z}_1)-\p_x\Xi(\mathbf{z}_2)|+|\p_x\Xi(\mathbf{z}_2)||\p_{\Xi}\mathbf{E}_0(\Xi(\mathbf{z}_2))||\Phi(x_1)-\Phi(x_2)|\nonumber\\
&&\quad \quad +|\Phi(x_1)||\p_x\Xi(\mathbf{z}_1)||\p_{\Xi}\mathbf{E}_0(\Xi(\mathbf{x}_1))-\p_{\Xi}\mathbf{E}_0(\xi(\mathbf{z}_2))|\nonumber\\
&&\qquad\qquad\qquad+|\Phi'(x_1)-\Phi'(x_2)|\Big|\int_0^{x_1}\Phi^{-1}(t)\mathbf{F}(t,\Upsilon(t;\mathbf{z}_1))\,\dd t\Big|\nonumber\\
&&\quad \quad +|\Phi'(x_2)|\Big|\int_0^{x_1}\Phi^{-1}(t)[\mathbf{F}(t,\Upsilon(t;\mathbf{z}_1))-\mathbf{F}(t,\Upsilon(t;\mathbf{z}_2))]\,\dd t\Big|\nonumber\\
&&\quad \quad +|\Phi'(x_2)|\Big|\int_{x_2}^{x_1}\Phi^{-1}(t)\mathbf{F}(t,\Upsilon(t;\mathbf{z}_2))\,\dd t\Big|+|\mathbf{F}(\mathbf{z}_1)-\mathbf{F}(\mathbf{z}_2)|\nonumber\\
&&\quad \quad +|\Phi(x_1)-\Phi(x_2)|\Big|\int_0^{x_1}\Phi^{-1}(t)\p_{\Upsilon}\mathbf{F}(t,\Upsilon(t;\mathbf{z}_1))\p_x\Upsilon(t;\mathbf{z}_1)\,\dd t\Big|\nonumber\\
&&\quad \quad +|\Phi(x_2)|\Big|\int_0^{x_1}\Phi^{-1}(t)\p_{\Upsilon}\mathbf{F}(t,\Upsilon(t;\mathbf{z}_1))[\p_x\Upsilon(t;\mathbf{z}_1)-\p_x\Upsilon(t;\mathbf{z}_2)]\,\dd t\Big|\nonumber\\
&&\quad \quad +|\Phi(x_2)|\Big|\int_0^{x_1}\Phi^{-1}(t)\p_x\Upsilon(t;\mathbf{z}_2)[\p_{\Upsilon}\mathbf{F}(t,\Upsilon(t;\mathbf{z}_1))-\p_{\Upsilon}\mathbf{F}(t,\Upsilon(t;\mathbf{z}_2))]\,\dd t\Big|\nonumber\\
%\end{eqnarray*}
%\end{small}
%\begin{small}
%\begin{eqnarray}\label{4732}
&&\quad \quad +|\Phi(x_2)|\Big|\int_{x_2}^{x_1}\Phi^{-1}(t)\p_{\Upsilon}\mathbf{F}(t,\Upsilon(t;\mathbf{z}_2))\p_x\Upsilon(t;\mathbf{z}_2)\,\dd t\Big|\nonumber\\
&&\leq \|\mathbf{E}_0\|_{C(\Sigma_0)}\|\Phi\|_{C^{1,\alpha}([0,L])}|x_1-x_2|^{\alpha}+\|\mathbf{E}_0\|_{C^{0,\alpha}(\Sigma_0)}\|\Xi\|^{\alpha}_{C^{1}(\bar{\Omega})}\|\Phi\|_{C^{1}([0,L])}|\mathbf{z}_1-\mathbf{z}_2|^{\alpha}\nonumber\\
&&\quad \quad +\|\mathbf{E}_0\|_{C^{1}(\Sigma_0)}\|\Xi\|_{C^{1,\alpha}(\bar{\Omega})}\|\Phi\|_{C([0,L])}|\mathbf{z}_1-\mathbf{z}_2|^{\alpha}\nonumber\\
&&\qquad\qquad+\|\mathbf{E}_0\|_{C^{1}(\Sigma_0)}\|\Xi\|_{C^{1}(\bar{\Omega})}\|\Phi\|_{C^{0,\alpha}([0,L])}|x_1-x_2|^{\alpha}\nonumber\\
&&\quad  +\|\mathbf{E}_0\|_{C^{1,\alpha}(\Sigma_0)}\|\Xi\|^{1+\alpha}_{C^{1}(\bar{\Omega})}\|\Phi\|_{C([0,L])}|\mathbf{z}_1-\mathbf{z}_2|^{\alpha} \nonumber\\ &&\qquad\qquad+L\|\Phi\|_{C^{1,\alpha}([0,L])}\|\Phi^{-1}\|_{C([0,L])}\|\mathbf{F}\|_{C(\bar{\Omega})}|x_1-x_2|^{\alpha}\nonumber\\
&&\quad +L\|\Phi\|_{C^{1}([0,L])}\|\Phi^{-1}\|_{C([0,L])}\|\mathbf{F}\|_{C^{0,\alpha}(\bar{\Omega})}\|\Upsilon\|^{\alpha}_{C^{1}(\overline{(0,L)\times\Omega})}|\mathbf{z}_1-\mathbf{z}_2|^{\alpha}\nonumber\\
&&\qquad\qquad+\|\mathbf{F}\|_{C^{0,\alpha}(\bar{\Omega})}|\mathbf{z}_1-\mathbf{z}_2|^{\alpha} +\|\Phi\|_{C^{1}([0,L])}\|\Phi^{-1}\|_{C([0,L])}\|\mathbf{F}\|_{C(\bar{\Omega})}|x_1-x_2|\nonumber\\
&&\quad
+L\|\Phi\|_{C^{0,\alpha}([0,L])}\|\Phi^{-1}\|_{C([0,L])}\|\mathbf{F}\|_{C^1_*(\bar{\Omega})} \times\|\Upsilon\|_{C^{1}(\overline{(0,L)\times\Omega})}|x_1-x_2|^{\alpha}\nonumber\\
&&\quad\qquad\quad+L\|\Phi\|_{C([0,L])}\|\Phi^{-1}\|_{C([0,L])}\|\mathbf{F}\|_{C^{1}_*(\bar{\Omega})}\|\Upsilon\|_{C^{1,\alpha}(\overline{(0,L)\times\Omega})}|\mathbf{z}_1-\mathbf{z}_2|^{\alpha}\nonumber\\
&&\quad\qquad\qquad +L\|\Phi\|_{C([0,L])}\|\Phi^{-1}\|_{C([0,L])}\|\mathbf{F}\|_{C^{1,\alpha}_*(\bar{\Omega})}\|\Upsilon\|^{1+\alpha}_{C^{1}(\overline{(0,L)\times\Omega})}|\mathbf{z}_1-\mathbf{z}_2|^{\alpha}\nonumber\\
&&\quad \quad +\|\Phi\|_{C([0,L])}\|\Phi^{-1}\|_{C([0,L])}\|\mathbf{F}\|_{C^{1}_*(\bar{\Omega})}\|\Upsilon\|_{C^{1}(\overline{(0,L)\times\Omega})}|x_1-x_2|\nonumber\\
&&\quad \leq C\left(\|\mathbf{E}_0\|_{C^{1,\alpha}(\Sigma_0)}+\|\mathbf{F}\|_{C^{1,\alpha}_*(\bar{\Omega})}\right)|\mathbf{z}_1-\mathbf{z}_2|^{\alpha}.
\end{eqnarray}
%\end{small}
%where $C$ is a constant that only depends on $\|\Phi\|_{C^{1,\alpha}([0,l])},\,\|\Phi^{-1}\|_{C([0,L])},\,\|\Xi\|^{1,\alpha}_{C^{1}(\bar{\Omega})},\,\|\Upsilon\|_{C^{1,\alpha}(\overline{(0,L)\times\Omega})}$ and $\Omega$.
For any $i=1,\,2,\,\cdots,\,n-1$, from  $\eqref{4727}$, there holds
%\begin{small}
\begin{eqnarray}\label{4733}
&&|\p_{y_i}\mathbf{E}(\mathbf{z}_1)-\p_{y_i}\mathbf{E}(\mathbf{z}_2)|\nonumber\\
&&\quad \leq|\Phi(x_1)\p_{\Xi}\mathbf{E}_0(\xi(\mathbf{x}_1))\p_{y_i}\Xi(\mathbf{x}_1)-\Phi(x_2)\p_{\Xi}\mathbf{E}_0(\Xi(\mathbf{z}_2))\p_{y_i}\Xi(\mathbf{z}_2)|\nonumber\\
&&\quad \quad +\Big|\Phi(x_1)\int_0^{x_1}\Phi^{-1}(t)\p_{\Upsilon}\mathbf{F}(t,\Upsilon(t;\mathbf{z}_1))\p_{y_i}\Upsilon(t;\mathbf{z}_1)\,\dd t\nonumber\\
&&\quad \quad \qquad\qquad -\Phi(x_2)\int_0^{x_2}\Phi^{-1}(t)\p_{\Upsilon}\mathbf{F}(t,\Upsilon(t;\mathbf{z}_2))\p_{y_i}\Upsilon(t;\mathbf{z}_2)\,\dd t\Big|\nonumber\\
&&\quad \leq|\Phi(x_1)||\p_{\Xi}\mathbf{E}_0(\Xi(\mathbf{z}_2))||\p_{y_i}\Xi(\mathbf{z}_1)-\p_{y_i}\Xi(\mathbf{z}_2)|\nonumber\\
&&\quad \quad+ |\p_{y_i}\Xi(\mathbf{z}_2)||\p_{\Xi}\mathbf{E}_0(\Xi(\mathbf{z}_2))||\Phi(x_1)-\Phi(x_2)|\nonumber\\
&&\quad \quad +|\Phi(x_1)||\p_{y_i}\Xi(\mathbf{z}_1)||\p_{\Xi}\mathbf{E}_0(\Xi(\mathbf{x}_1))-\p_{\Xi}\mathbf{E}_0(\xi(\mathbf{z}_2))|\nonumber\\
&&\quad \quad +|\Phi(x_1)-\Phi(x_2)|\Big|\int_0^{x_1}\Phi^{-1}(t)\p_{\Upsilon}\mathbf{F}(t,\Upsilon(t;\mathbf{z}_1))\p_{y_i}\Upsilon(t;\mathbf{z}_1)\,\dd t\Big|\nonumber\\
&&\quad \quad +|\Phi(x_2)|\Big|\int_0^{x_1}\Phi^{-1}(t)\p_{\Upsilon}\mathbf{F}(t,\Upsilon(t;\mathbf{z}_1))[\p_{y_i}\Upsilon(t;\mathbf{z}_1)-\p_{y_i}\Upsilon(t;\mathbf{z}_2)]\,\dd t\Big|\nonumber\\
&&\quad \quad +|\Phi(x_2)|\Big|\int_0^{x_1}\Phi^{-1}(t)\p_{y_i}\Upsilon(t;\mathbf{z}_2)[\p_{\Upsilon}\mathbf{F}(t,\Upsilon(t;\mathbf{z}_1)) -\p_{\Upsilon}\mathbf{F}(t,\Upsilon(t;\mathbf{z}_2))]\,\dd t\Big|\nonumber\\
&&\quad \quad +|\Phi(x_2)|\Big|\int_{x_2}^{x_1}\Phi^{-1}(t)\p_{\Upsilon}\mathbf{F}(t,\Upsilon(t;\mathbf{z}_2))\p_{y_i}\Upsilon(t;\mathbf{z}_2)\,\dd t\Big|\nonumber\\
&&\quad \leq \|\mathbf{E}_0\|_{C^{1}(\Sigma_0)}\|\Xi\|_{C^{1,\alpha}(\bar{\Omega})}\|\Phi\|_{C([0,L])}|\mathbf{z}_1-\mathbf{z}_2|^{\alpha}+\|\mathbf{E}_0\|_{C^{1}(\Sigma_0)}\|\Xi\|_{C^{1}(\bar{\Omega})}\nonumber\\
&&\quad \quad \times\|\Phi\|_{C^{0,\alpha}([0,L])}|x_1-x_2|^{\alpha}+\|\mathbf{E}_0\|_{C^{1,\alpha}(\Sigma_0)}\|\Xi\|^{1+\alpha}_{C^{1}(\bar{\Omega})}\|\Phi\|_{C([0,L])}|\mathbf{z}_1-\mathbf{z}_2|^{\alpha}\nonumber\\
&&\quad \quad +L\|\Phi\|_{C^{0,\alpha}([0,L])}\|\Phi^{-1}\|_{C([0,L])}\|\mathbf{F}\|_{C^1_*(\bar{\Omega})}\|\Upsilon\|_{C^{1}(\overline{(0,L)\times\Omega})}|x_1-x_2|^{\alpha}\nonumber\\
&&\quad \quad +L\|\Phi\|_{C([0,L])}\|\Phi^{-1}\|_{C([0,L])}\|\mathbf{F}\|_{C^{1}_*(\bar{\Omega})}\|\Upsilon\|_{C^{1,\alpha}(\overline{(0,L)\times\Omega})}|\mathbf{z}_1-\mathbf{z}_2|^{\alpha}\nonumber\\
&&\quad \quad +L\|\Phi\|_{C([0,L])}\|\Phi^{-1}\|_{C([0,L])}\|\mathbf{F}\|_{C^{1,\alpha}_*(\bar{\Omega})}\|\Upsilon\|^{1+\alpha}_{C^{1}(\overline{(0,L)\times\Omega})}|\mathbf{z}_1-\mathbf{z}_2|^{\alpha}\nonumber\\
&&\quad \quad +\|\Phi\|_{C([0,L])}\|\Phi^{-1}\|_{C([0,L])}\|\mathbf{F}\|_{C^{1}_*(\bar{\Omega})}\|\Upsilon\|_{C^{1}(\overline{(0,L)\times\Omega})}|x_1-x_2|\nonumber\\
&&\quad \leq C\left(\|\mathbf{E}_0\|_{C^{1,\alpha}(\Sigma_0)}+\|\mathbf{F}\|_{C^{1,\alpha}_*(\bar{\Omega})}\right)|\mathbf{z}_1-\mathbf{z}_2|^{\alpha}.
\end{eqnarray}
%\end{small}
%where $C$ is a constant which only depends on $\|\Phi\|_{C^{1,\alpha}([0,l])},\,\|\Phi^{-1}\|_{C([0,L])},\,\|\Xi\|^{1,\alpha}_{C^{1}(\bar{\Omega})},\,\|\Upsilon\|_{C^{1,\alpha}(\overline{(0,L)\times\Omega})}$ and $\Omega$.
Similar to the computations in \eqref{4721}-\eqref{4723}, inequality \eqref{4502} could be obtained from \eqref{4731}, \eqref{4732}, and \eqref{4733}. We thus completed the proof of Theorem \ref{Thm41}.
\end{proof}

\section{Two-point boundary-value problem for ordinary differential equation of tangential velocity on cross-sections}\label{sec8}
%After linearization, the tangential velocity $v$ satisfies the problem $\eqref{4424}$. In order to explore the existence, uniqueness and regularity of the solution, we reduce the problem $\eqref{4424}$ to the following form
In this section we consider the following problem, which is  to solve \eqref{4455} for the tangential velocity $v$, as needed in the nonlinear iteration scheme constructed in the next section:
\begin{eqnarray}\label{4519}
%\begin{cases}
\p_y v(\mathbf{x})=f(\mathbf{x})~ \text{in}~ \Omega,\quad
v(x,0)=g_-(x)~ \text{on}~ W_-,\quad
v(x,\pi)=g_+(x)~ \text{on}~ W_+,
%\end{cases}
\end{eqnarray}
where $\Omega$ is defined by \eqref{eq16}.
%Since the problem $\eqref{4519}$ is a two-point boundary value problem, if
It is obvious that for problem \eqref{4519} to have a solution, the following compatibility condition should hold:
\begin{eqnarray}\label{4520}
\int_0^{\pi}f(x,t)\,\,\dd t=g_+(x)-g_-(x), \qquad \forall\, x\in[0, l].
\end{eqnarray}
Then the solution is
\begin{eqnarray}\label{4521}
v(\mathbf{x})=v(x,y)=\int_0^{y}f(x,t)\,\dd t+g_-(x).
\end{eqnarray}
%Therefore, we can get the following theorem:
\begin{theorem}\label{Thm44}
For any fixed $\alpha \in (0, 1)$ and $k \in \mathbb{N}^{+}$, if $f\in C^{k - 1, \alpha}(\bar{\Omega}), g_-(x)\in C^{k, \alpha}(\overline{W}_-)$ and they satisfy the compatibility condition $\eqref{4520}$, then problem $\eqref{4519}$ is uniquely solvable, and the solution satisfies the following estimate
\begin{eqnarray}\label{4522}
\|v\|_{C_*^{k, \alpha}(\bar{\Omega})} \leq C\left(\|f\|_{C^{k - 1, \alpha}(\bar{\Omega})}+\|g_-\|_{C^{k, \alpha}(\overline{W}_-)}\right),
\end{eqnarray}
where $C=C(\alpha,\Omega)$.
\end{theorem}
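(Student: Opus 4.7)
\medskip

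\noindent\textbf{Proof proposal.} The explicit formula \eqref{4521}, namely
\[
v(x,y)=\int_0^{y} f(x,t)\,\dd t + g_-(x),
\]
will be the basis of the whole argument. Existence is immediate by direct substitution: $\partial_y v=f$ and $v(x,0)=g_-(x)$ hold trivially, while $v(x,\pi)=g_-(x)+\int_0^\pi f(x,t)\,\dd t = g_+(x)$ thanks to the compatibility condition \eqref{4520}. Uniqueness follows from the observation that the difference $w$ of two solutions satisfies $\partial_y w\equiv 0$ with $w|_{W_-}=0$, hence $w\equiv 0$.

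The crux of the theorem is the $C_*^{k,\alpha}$ estimate, so the plan is to differentiate the formula and inspect each mixed derivative case by case. For any multi-index $\beta=(\beta_1,\beta_2)$ with $|\beta|\leq k$ and $\beta\neq (k,0)$, one of the following two alternatives holds:
\begin{itemize}
\item[(i)] $\beta_2\geq 1$: differentiating under the integral sign yields
\[
\partial_x^{\beta_1}\partial_y^{\beta_2} v(x,y) = \partial_x^{\beta_1}\partial_y^{\beta_2-1} f(x,y),
\]
which, since $\beta_1+(\beta_2-1)\leq k-1$, lies in $C^{k-|\beta|,\alpha}(\bar\Omega)$ by hypothesis on $f$.
\item[(ii)] $\beta_2=0$ and $\beta_1\leq k-1$: then
\[
\partial_x^{\beta_1} v(x,y) = \int_0^{y}\partial_x^{\beta_1} f(x,t)\,\dd t + g_-^{(\beta_1)}(x),
\]
which is continuous since $\partial_x^{\beta_1} f\in C^{k-1-\beta_1,\alpha}(\bar\Omega)$ and $g_-^{(\beta_1)}\in C^{k-\beta_1,\alpha}(\overline{W}_-)$.
\end{itemize}
In both cases the sup-norm of the corresponding derivative is controlled linearly by $\|f\|_{C^{k-1,\alpha}(\bar\Omega)}+\|g_-\|_{C^{k,\alpha}(\overline{W}_-)}$. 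For $|\beta|=k$, the H\"older seminorm in case (i) is inherited directly from the $C^{k-1,\alpha}$ seminorm of $f$; in case (ii), one estimates
\[
\bigl|\partial_x^{\beta_1} v(\mathbf{z}_1)-\partial_x^{\beta_1} v(\mathbf{z}_2)\bigr|
\]
by splitting the difference into the part from the integral (which contributes a Lipschitz term in $y$ of order $\|\partial_x^{\beta_1} f\|_{C^0}|y_1-y_2|$ and a Hölder term in $x$ using the $C^{0,\alpha}$-regularity of $\partial_x^{\beta_1} f$ in $x$) and the part from $g_-^{(\beta_1)}$ (which is handled by $\|g_-\|_{C^{k,\alpha}}$). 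Summing these bounds over all admissible $\beta$ yields \eqref{4522}.

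The step I expect to be the most delicate is the bookkeeping in case (ii) at the top order $\beta_1=k-1$, where one must exploit that $|y_1-y_2|\leq (\mathrm{diam}\,\Omega)^{1-\alpha}|y_1-y_2|^\alpha$ to upgrade Lipschitz-in-$y$ to H\"older-in-$y$ with the same exponent, so that both $x$- and $y$-differences are absorbed into a single $|\mathbf{z}_1-\mathbf{z}_2|^\alpha$ factor. Apart from this, there is essentially no obstruction; the whole point of introducing the anisotropic space $C_*^{k,\alpha}(\bar\Omega)$ is precisely to exclude the one derivative $\partial_x^k v$ that would require $\partial_x^k f$, which is not available under the assumption $f\in C^{k-1,\alpha}(\bar\Omega)$. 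This explains why the isotropic norm on the right-hand side of \eqref{4522} suffices to dominate the anisotropic norm on the left.
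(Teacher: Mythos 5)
Your proposal is correct and follows essentially the same route as the paper: the paper also works directly from the explicit formula \eqref{4521}, differentiates term by term, and in its $k=2$ computation \eqref{4523} uses exactly the bound $|y_1-y_2|\le \pi^{1-\alpha}|y_1-y_2|^{\alpha}$ to absorb the Lipschitz-in-$y$ contribution of the integral term into the H\"older seminorm. The paper only writes out $k=2$ and remarks that the general case is similar; your multi-index bookkeeping is the natural way to carry that out.
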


\begin{proof}
The existence and uniqueness of solutions are guaranteed by the compatibility condition $\eqref{4520}$. Next, we show the estimate. We only present the details for the case of $k = 2$, as the general case could be proved similarly.

The derivative of $\eqref{4521}$ with respect to $x$ is
\begin{eqnarray*}
\p_xv(\mathbf{x})=\p_xv(x,y)=\p_x\left[\int_0^{y}f(x,t)\,\dd t+g_-(x)\right]=\int_0^{y}\p_xf(x,t)\,\dd t+g'_-(x).
\end{eqnarray*}
Then, for any $\mathbf{x}=(x_1,y_1), \, \mathbf{y}=(x_2,y_2) \in \bar{\Omega}$, satisfying $\mathbf{x}\neq\mathbf{y}$, there hold
%\begin{itemize}
%\item[i)]
\begin{eqnarray}\label{4523}
&&|\p_xv(\mathbf{x})-\p_xv(\mathbf{y})|=\Big|\int_0^{y_1}\p_xf(x_1,t)\,\dd t+g'_-(x_1)-\int_0^{y_2}\p_xf(x_2,t)\,\dd t-g'_-(x_2)\Big|\nonumber\\
&&\quad \leq \Big|\int_0^{y_1}[\p_xf(x_1,t)-\p_xf(x_2,t)]\,\dd t\Big|+\Big|\int_{y_1}^{y_2}\p_xf(x_2,t)\,\dd t\Big|+|g'_-(x_1)-g'_-(x_2)|\nonumber\\
&&\quad \leq \int_0^{y_1}|\p_xf(x_1,t)-\p_xf(x_2,t)|\,\dd t+\int_{y_1}^{y_2}|\p_xf(x_2,t)|\,\dd t+\|g'_-\|_{C^{1, \alpha}(\overline{W}_-)}|x_1-x_2|^{\alpha}\nonumber\\
&&\quad \leq\pi\|f\|_{C^{1, \alpha}(\bar{\Omega})}|x_1-x_2|^{\alpha}+\|f\|_{C^{1}(\bar{\Omega})}|y_1-y_2|+\|g_-\|_{C^{2, \alpha}(\bar{W}_-)}|x_1-x_2|^{\alpha}\nonumber\\
&&\quad \leq\pi\|f\|_{C^{1, \alpha}(\bar{\Omega})}|x_1-x_2|^{\alpha}+\pi^{1-\alpha}\|f\|_{C^{1}(\bar{\Omega})}|y_1-y_2|^{\alpha} +\|g_-\|_{C^{2, \alpha}(\overline{W}_-)}|x_1-x_2|^{\alpha}\nonumber\\
&&\quad \leq C\left(\|f\|_{C^{1, \alpha}(\bar{\Omega})}+\|g_-\|_{C^{2, \alpha}(\overline{W}_-)}\right)|\mathbf{x}-\mathbf{y}|^{\alpha},\\
%\end{eqnarray}
%\item[ii)]
%\begin{eqnarray}
&&\label{4524}
|\p_{xy}^2v(\mathbf{x})-\p_{xy}^2v(\mathbf{y})|=|\p_xf(\mathbf{x})-\p_xf(\mathbf{y})|\leq\|f\|_{C^{1,\alpha}}|\mathbf{x}-\mathbf{y}|^{\alpha},\\
%\end{eqnarray}
%\item[iii)]
%\begin{eqnarray}
&&\label{4525}
|\p^2_{y}v(\mathbf{x})-\p^2_{y}v(\mathbf{y})|=|\p_yf(\mathbf{x})-\p_yf(\mathbf{y})|\leq\|f\|_{C^{1,\alpha}}|\mathbf{x}-\mathbf{y}|^{\alpha}.
\end{eqnarray}
%\end{itemize}
Thus, from $\eqref{4523},\,\eqref{4524}$ and $\eqref{4525}$, we get the estimate $\eqref{4522}$.
\end{proof}

\section{Iteration scheme and proof of main theorem}\label{sec45}
In this section, we prove Theorem \ref{ThmSY} by introducing a nonlinear iteration scheme based upon the coupled boundary-value problems stated in Problem \Rmnum{2},  and the well-posedness of linear problems established in the previous four sections.

\subsection{Iteration set}
For the duct  $\Omega$  defined by \eqref{eq16}, $k =2,\,3$, and $p,\,s,\,E\in C^{k, \alpha}(\bar{\Omega})$, $v \in C_*^{k, \alpha}(\bar{\Omega})$, we set $U = (p, s, E, v)$, and define the norm
\begin{eqnarray*}
\|U\|_k \triangleq \|p\|_{C^{k, \alpha}(\bar{\Omega})} + \|s\|_{C^{k, \alpha}(\bar{\Omega})} + \|E\|_{C^{k, \alpha}(\bar{\Omega})} + \|v\|_{C_*^{k, \alpha}(\bar{\Omega})}.
\end{eqnarray*}
Given a background solution $U_b$ that satisfies S-condition, let $M$ and $\epsilon$ be positive constants which are to be specified later (see \eqref{eq614add}),
depending only on $\alpha\in(0,1)$, $U_b$ and length $l$ of the duct. Set
\begin{eqnarray*}
X_{M\epsilon} \triangleq \{U:~\|U - U_b\|_3\leq M\epsilon,\,U\,\text{satisfies symmetry condition \eqref{4208}}\}.
\end{eqnarray*}
It is straightforward to check that $X_{M\epsilon}$, as a ball, is  bounded, closed and convex in $(C^{k,\alpha}(\bar{\Omega}))^3\times C_*^{k,\alpha}(\bar{\Omega})$, under the norm $\|\cdot\|_k,\,(k =2,\,3$).

In the following, we always require that
\begin{align}\label{eq61add}
M\epsilon < h_0,
\end{align} where $h_0>0$ is a constant such that for any $U\in X_{M\epsilon}$,  the coefficients of the elliptic operator $\mathcal{L}_2$ determined by such $U$  (see the terms in `$\{~\}$' in $\eqref{4449}_2$) satisfy \eqref{eq433add}, namely
$\|\mathcal{F}_2\| < \frac{1}{\|\mathcal{F}_1^{-1}\|}$. By Remark \ref{rmknewa}, $h_0$ depends only on $U_b$ and $l, \alpha$.

\subsection{Iteration scheme and nonlinear mapping $\mathcal {T}$}\label{secgzT}
We now construct a nonlinear mapping $\mathcal {T}$ on $X_{M\epsilon}$ by the following iteration scheme.
First, for any $U \in X_{M\epsilon}$, we determine the source terms $$F_0,\,F_p,\,G_p,\,p(l) - p_b(l),\,F_{E}',\,F_{s}',\,F_{v}'$$ of the problems \eqref{4601}, \eqref{4603}, and \eqref{4604} stated below. Then we solve problem $\eqref{4601}$ to get $\tilde{p}$. Substituting $\tilde{p}$ into problem $\eqref{4603}$, we get $\tilde{E}$ and $\widetilde{A(s)}$. Next, we substitute the obtained $\tilde{p},\,\tilde{E}$ and $\widetilde{A(s)}$ into problem $\eqref{4604}$ and obtain $\tilde{v}$. Then we verify that $\tilde{U}=(\tilde{p},\,\tilde{s},\,\tilde{E},\,\tilde{v})$ obtained this way satisfying the symmetry condition required in $X_{M\epsilon}$. For appropriate positive numbers $M, \epsilon$, it can be proved that the iterative mapping $\mathcal {T}: U\mapsto\tilde{U}$ is a contraction under the norm $\|\cdot\|_2$. Thanks to Schauder fixed-point theorem, the mapping $\mathcal {T}$ has a unique fixed point $U$ in $X_{M\epsilon}$ (contraction guarantees the uniqueness of the fixed point). According to the construction of mapping $\mathcal {T}$, its fixed point $U$ is a solution to \eqref{4452}-\eqref{4455} listed in Problem \Rmnum{2}. Finally, by Lemma \ref{Lem41} and the momentum equation in $y$-direction, we improve the regularity of $v$ to obtain the estimate \eqref{4209}, thus answer Problem \Rmnum{2} in an affirmative way.

We start to define the  mapping $\mathcal {T}$. For any $U\in X_{M\epsilon}$, we write $\mathcal {T}(U)=\tilde{U}$.  In the following, we solve $\hat{U} = \tilde{U} - U_b$, thus $\tilde{U}=\hat{U}+U_b$.

\subsubsection{Solving $\tilde{p}$}

We firstly determine $\tilde{p}$ from the following mixed boundary value problem:
\begin{equation}\label{4601}
\begin{cases}
\mathcal{L}(\hat{p}) = \mathcal{L}_1(\hat{p}) + \mathcal{L}_2(\hat{p}) = F_0 + F_p&\quad \text{in}\quad \Omega, \\
\p_y\hat{p}= 0&\quad \text{on}\quad W_-\cup W_+,\\
\p_x\hat{p} + \lambda\gamma_0\hat{p}=G_p&\quad \text{on}\quad \Sigma_0,\\
\hat{p}=p_l(y) - p_b(l)&\quad \text{on}\quad \Sigma_l.
\end{cases}
\end{equation}
The nonhomogeneous terms are specified as follows. The $F_0$ (see \eqref{4447}) is given by boundary conditions of $E$ and $s$ on the entrance $\Sigma_0$ (see \eqref{4202}), and the transform \eqref{eq452new} determined by the vector field $\p_x+\frac{v}{u}\p_y$, where $(u, v)$ is given by the specified $U$. It is directly verified that \begin{align*}
                                                                          \|F_0(U)\|_{C^{1, \alpha}(\bar{\Omega})} \leq C\epsilon.
                                                                           \end{align*}
For $F_p$ (see \eqref{eq463addnew2}), by $\eqref{4405}$,\,$\eqref{4408}$,\,$\eqref{4410}$,\,$\eqref{4438}$,\,$\eqref{4440}$ and $\eqref{eq462addnew12}$,  for the given $U \in X_{M\epsilon}$, the $\hat{U}$ in $F_p$ is replaced by $U - U_b$. Observing that $v\in C_*^{3,\alpha}(\bar{\Omega})$ implies $\p_xv\in C^{1,\alpha}(\bar{\Omega})$ by definition, straightforward calculation yields
\begin{eqnarray*}
\|F_p\|_{C^{1, \alpha}(\bar{\Omega})} \leq \Big\|\frac{\gamma p}{c^2_b}(F_1 + F_2)\Big\|_{C^{1, \alpha}(\bar{\Omega})} + \Big\|\frac{1}{c^2_b}F_4\Big\|_{C^{1, \alpha}(\bar{\Omega})} + \|\lambda F_6\|_{C^{1, \alpha}(\bar{\Omega})}\leq CM^2\epsilon^2.
\end{eqnarray*}
%Therefore, the nonhomogeneous term in problem $\eqref{4601}$ satisfies the following estimate
%\begin{eqnarray*}
%\|F_0(U) + F_p(U)\|_{C^{1, \alpha}(\bar{\Omega})} \leq \|F_0(U)\|_{C^{1, \alpha}(\bar{\Omega})} + \|F_p(U)\|_{C^{1, \alpha}(\bar{\Omega})}\leq C(\epsilon + M^2\epsilon^2).
%\end{eqnarray*}
By $\eqref{4450}$, $G_p = G_1 + G_2 + G_3$ (for $G_1,\,G_2$, see $\eqref{4403}$; for $G_3$, see $\eqref{4451}$), using the inlet boundary condition $\eqref{4206}$ in Theorem \ref{ThmSY}, we get $\|G_1\|_{C^{2, \alpha}(\Sigma)}\leq CM\epsilon^2$. And $\eqref{4203}$ implies $\|G_2\|_{C^{2, \alpha}(\Sigma)}\leq C \epsilon$. For $G_3$, by the  boundary condition $\eqref{4206}$, there holds $\|G_3\|_{C^{2, \alpha}(\Sigma)}\leq C\epsilon + CM\epsilon^2 + CM^2\epsilon^2$. Therefore, %the inlet boundary condition in the problem $\eqref{4601}$ satisfies
\begin{eqnarray*}
&&\|G_p(U)\|_{C^{2, \alpha}(\Sigma_0)} \leq C \left(\|G_1(U)\|_{C^{2, \alpha}(\Sigma_0)} + \|G_2(U)\|_{C^{2, \alpha}(\Sigma_0)} + \|G_3(U)\|_{C^{2, \alpha}(\Sigma_0)}\right)\nonumber\\
&&\quad\qquad\qquad\qquad\qquad \leq C(\epsilon + M\epsilon^2 + M^2\epsilon^2).
\end{eqnarray*}
Recalling $\eqref{4206}$ in Theorem \ref{ThmSY},  $\|p_l(y) - p_b(l)\|_{C^{3, \alpha}(\Sigma_l)}\leq C\epsilon$. Then we obtain the following lemma from Theorem \ref{Thm43}.

\begin{lemma}\label{Lem45}
Problem $\eqref{4601}$ has a unique solution $\hat{p}\in C^{3, \alpha}(\bar{\Omega})$. Furthermore,  the following estimate holds:
\begin{eqnarray}\label{4602}
&&\|\hat{p}\|_{C^{3, \alpha}(\bar{\Omega})} \leq C\left(\|F_0(U)\|_{C^{1, \alpha}(\bar{\Omega})} + \|F(U)\|_{C^{1, \alpha}(\bar{\Omega})} + \|G(U)\|_{C^{2, \alpha}(\Sigma_0)}\right.\nonumber\\
&&\left.\quad \quad + \|p_l - p_b\|_{C^{3, \alpha}(\Sigma_l)}\right)\leq C (\epsilon + M\epsilon^2 + M^2\epsilon^2 ).
\end{eqnarray}
\end{lemma}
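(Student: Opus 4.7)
The plan is to apply Theorem \ref{Thm43} at regularity level $k=3$ to the mixed boundary-value problem \eqref{4601}, treating the frozen $U \in X_{M\epsilon}$ as data that enters through the coefficients of $\mathcal{L}_2$ and through the four nonhomogeneous terms $F_0$, $F_p$, $G_p$, and $p_l(y) - p_b(l)$. The two abstract hypotheses of Theorem \ref{Thm43} are secured by our standing assumptions: the S-condition holds for the background $U_b$ because we have removed the countable set $\mathcal{S}\subset\mathbb{R}$ in Theorem \ref{ThmSY} and Lemma \ref{Lem44}, and the smallness condition \eqref{eq433add} on $\mathcal{L}_2$ holds because the coefficients in braces in \eqref{4449} vanish identically at $U=U_b$ and depend continuously on $U-U_b$ in the $C^{1,\alpha}$ topology, so that the choice $M\epsilon < h_0$ from \eqref{eq61add} gives $\|\mathcal{F}_2\| < 1/\|\mathcal{F}_1^{-1}\|$. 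Thus what remains is to verify the a priori bounds on the right-hand sides.

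For the linear-in-$\epsilon$ data, the estimate $\|p_l - p_b\|_{C^{3,\alpha}(\Sigma_l)} \leq C\epsilon$ is direct from hypothesis \eqref{4206} of Theorem \ref{ThmSY}. For $F_0$, defined in \eqref{4447}, the bound $\|F_0\|_{C^{1,\alpha}(\bar{\Omega})} \leq C\epsilon$ follows by combining \eqref{4206} with the $C^{1,\alpha}$ regularity of the streamline map $\xi(x,y)=\gamma(0;x,y)$; the latter is precisely what Theorem \ref{Thm41} delivers for the characteristic ODE of the vector field $\p_x + \frac{v}{u}\p_y$, whose coefficients inherit $C_*^{3,\alpha}(\bar{\Omega})$ regularity from $U\in X_{M\epsilon}$.

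The quadratic-in-$\epsilon$ estimates require more care. For $F_p = F_5 + \lambda F_6$ I would invoke Definition \ref{Def41}: every summand is either $(m(\mathbf{x}) - m_b(x))$ times a bounded factor, a product of components of $\hat{U}$, or a product of $\hat{U}$ with $\mathrm{D}\hat{U}$. On $X_{M\epsilon}$ each building block is $O(M\epsilon)$ in $C^{1,\alpha}(\bar{\Omega})$, so products give $\|F_p\|_{C^{1,\alpha}(\bar{\Omega})} \leq CM^2\epsilon^2$. For $G_p = G_1 + G_2 + \lambda G_3$ on $\Sigma_0$, we similarly split: $G_1$ (from \eqref{4403}) and $G_3$ (from \eqref{4451}) are higher-order and yield $CM\epsilon^2$ plus $CM^2\epsilon^2$, whereas $G_2$ carries the honestly first-order boundary perturbations and contributes $C\epsilon$, producing $\|G_p\|_{C^{2,\alpha}(\Sigma_0)} \leq C(\epsilon + M\epsilon^2 + M^2\epsilon^2)$. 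Feeding these four bounds into the abstract inequality \eqref{4518} of Theorem \ref{Thm43} gives \eqref{4602}. The main technical obstacle is not any single estimate but tracking regularity consistently through $F_p$: several terms contain $\p_x v$, which is only guaranteed to lie in $C^{1,\alpha}$ because $v$ belongs to the \emph{anisotropic} space $C_*^{3,\alpha}(\bar{\Omega})$ built into $X_{M\epsilon}$. This is precisely the feature for which the spaces of Section \ref{sec53} were introduced, and verifying that it is enough to close every product arising in $F_p$ and $G_p$ in the isotropic norms required by Theorem \ref{Thm43} is the core bookkeeping task.
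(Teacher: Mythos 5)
Your proposal is correct and follows essentially the same route as the paper: verify the S-condition and the smallness condition \eqref{eq433add} via \eqref{eq61add}, bound $F_0$, $F_p$, $G_p$ (split as $G_1+G_2+\lambda G_3$) and $p_l-p_b$ exactly as the paper does, and conclude by applying Theorem \ref{Thm43} with $k=3$. Your closing observation that $\p_x v\in C^{1,\alpha}(\bar{\Omega})$ only because $v\in C_*^{3,\alpha}(\bar{\Omega})$ is precisely the remark the paper makes before stating the $F_p$ estimate.
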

From this, we determined  $\tilde{p} = \hat{p} + p_b\in C^{3, \alpha}(\bar{\Omega})$.

\subsubsection{Solving $\tilde{E}$ and $\widetilde{A(s)}$}

Next, we specify $\tilde{E}$ and $\widetilde{A(s)}$. Consider the following Cauchy problem % of nonhomogeneous variable coefficient linear ordinary differential equations:
\begin{eqnarray}
\begin{cases}\label{4603}
{\mathrm{D}}'_{\mathbf{u}}\hat{E} = \lambda a_1(\lambda,x)\hat{E} + \lambda a_2(\lambda,x)\widehat{A(s)} + \lambda a_3(\lambda,x)\hat{p}+ \lambda F_{E}'&\quad \text{in}\quad \Omega,\\
{\mathrm{D}}'_{\mathbf{u}}\widehat{A(s)} =   \lambda b_1(\lambda,x) \hat{E} + \lambda b_2(\lambda,x)\widehat{A(s)} + \lambda b_3(\lambda,x)\hat{p} + \lambda F_{s}'&\quad \text{in}\quad \Omega,\\
\hat{E}=E_0(y)-E_b;\,\widehat{A(s)}=A(s_0)(y)-A(s_b)&\quad \text{on}\quad \Sigma_0.
\end{cases}
\end{eqnarray}
For the $U \in X_{M\epsilon}$, we have the velocity field $\mathbf{u}=(u,v)$ and thus ${\mathrm{D}}'_{\mathbf{u}}$ above is defined. The nonhomogeneous terms $F_{E}',\,F_{s}'$ are also specified by the $U$, and recall that they do not contain any derivative of $u,\,v$  (see $\eqref{4415}$ and $\eqref{4419}$), hence they belong to $ C^{3, \alpha}_*(\bar{\Omega})$, since the latter is a Banach algebra. Furthermore, by the restriction of $m(\mathbf{x}) - m_b(x)$ in the condition $\eqref{4206}$ of Theorem \ref{ThmSY}, we infer that
$$\|\lambda F_{E}'\|_{C^{3, \alpha}_*(\bar{\Omega})} + \|\lambda F_{s}'\|_{C^{3, \alpha}_*(\bar{\Omega})}\leq \lambda(\|F_{E}'\|_{C^{3, \alpha}_*(\bar{\Omega})} + \|F_{s}'\|_{C^{3, \alpha}_*(\bar{\Omega})})\leq C(\epsilon + M^2\epsilon^2).$$
Notice that $C$ depends on $\lambda$,  which is fixed for the given background solution that satisfies the S-condition.
From Theorem \ref{Thm41}, Cauchy problem $\eqref{4603}$ is uniquely solvable in $(C^{3, \alpha}(\bar{\Omega}))^2$, and we have the following estimate
\begin{eqnarray}\label{4604}
\|\hat{E}\|_{C^{3, \alpha}(\bar{\Omega})}+\|\widehat{A(s)}\|_{C^{3, \alpha}(\bar{\Omega})}  \leq C\Big(\|E_0 - E_b\|_{C^{3, \alpha}(\Sigma_{0})} + \|A(s_0) - A(s_b)\|_{C^{3, \alpha}(\Sigma_{0})}\nonumber\\
\phantom{=\;\;}+ \|\hat{p}\|_{C^{3, \alpha}(\bar{\Omega})} + \|\lambda F_{E}'\|_{C^{3, \alpha}_*(\bar{\Omega})} + \|\lambda F_{s}'\|_{C^{3, \alpha}_*(\bar{\Omega})}\Big) \leq C (\epsilon + M \epsilon^2 + M^2\epsilon^2).
\end{eqnarray}
Then we set $\tilde{E} = \hat{E} + E_b,~ \widetilde{A(s)}=\widehat{A(s)}+A(s_b)$, and they are both in the space $C^{3, \alpha}(\bar{\Omega})$.

\subsubsection{Solving $\tilde{v}$}

The updated tangential velocity is specified by solving the following family of two-point boundary-value problems depending on the parameter $x\in[0,l]$:
\begin{eqnarray}\label{4605}
\begin{cases}
\p_y\tilde{v}=c_1(\lambda,x)\p_x\hat{p}+c_2(\lambda,x)\hat{p}+c_3(\lambda,x)\hat{E}+c_4(\lambda,x)\widehat{A(s)}\\
\quad \quad-\frac{c_1(\lambda,x)}{\pi}\int_{0}^{\pi}\p_x\hat{p}\,\dd y -\frac{c_2(\lambda,x)}{\pi}\int_{0}^{\pi}\hat{p}\,\dd y -\frac{c_3(\lambda,x)}{\pi}\int_{0}^{\pi}\hat{E}\,\dd y\\
\quad \quad\qquad\quad-\frac{c_4(\lambda,x)}{\pi}\int_{0}^{\pi}\widehat{A(s)}\,\dd y+F_{v}'&\quad \text{in}\quad \Omega,\\
\tilde{v}=0&\quad \text{on}\quad W_-\cup W_+.
\end{cases}
\end{eqnarray}
For the $U \in X_{M\epsilon}$, the nonhomogeneous term $F_{v}'$ do not contain any derivative of $u,\,v$ with respect to $x$, and $F_{v}'\in C^{2, \alpha}(\bar{\Omega})$ (see $\eqref{4425}$). By the assumption of $m(\mathbf{x}) - m_b(x)$ in $\eqref{4206}$, there holds \begin{align}\label{eq67add}\|F'_{v}\|_{C^{2, \alpha}(\bar{\Omega})}\leq C(\epsilon + M^2\epsilon^2).\end{align}

Integrating the right-hand side of $\eqref{4605}_1$ with respect to $y$ in $[0,\pi]$, we get
\begin{eqnarray}\label{4606}
&&\int_{0}^{\pi} \Big[c_1(\lambda,x)\p_x\hat{p}+c_2(\lambda,x)\hat{p}+c_3(\lambda,x)\hat{E}+c_4(\lambda,x)\widehat{A(s)}-\frac{c_1(\lambda,x)}{\pi}\int_{0}^{\pi}\p_x\hat{p}\,\dd y\nonumber\\
&&\quad \quad -\frac{c_2(\lambda,x)}{\pi}\int_{0}^{\pi}\hat{p}\,\dd y-\frac{c_3(\lambda,x)}{\pi}\int_{0}^{\pi}\hat{E}\,\dd y-\frac{c_4(\lambda,x)}{\pi}\int_{0}^{\pi}\widehat{A(s)}\,\dd y+F_{v}'\Big]\,\dd y\equiv 0,
\end{eqnarray}
thanks to the definition $\eqref{4425}$. Thus,  problem $\eqref{4605}$ satisfies the compatibility condition $\eqref{4520}$ in Theorem \ref{Thm44}. So it has a unique solution $\tilde{v}$ defined in $\bar{\Omega}$, which enjoys the following estimate
\begin{eqnarray}\label{4607}
&&\|\tilde{v}\|_{C_*^{3, \alpha}(\bar{\Omega})} \leq C\left(\|\hat{E}\|_{C^{3, \alpha}(\bar{\Omega})} + \|\widehat{A(s)}\|_{C^{3, \alpha}(\bar{\Omega})} + \|\hat{p}\|_{C^{3, \alpha}(\bar{\Omega})} + \|F'_{v}\|_{C^{2, \alpha}(\bar{\Omega})}\right)\nonumber\\
&&\qquad\qquad\quad \leq C (\epsilon + M \epsilon^2 + M^2\epsilon^2).
\end{eqnarray}
The last inequality follows from \eqref{4602}, \eqref{4604}, and \eqref{eq67add}.
%\begin{theorem}\label{Thm45}
%For the Cauchy problem $\eqref{4605}$, has a unique solution $\tilde{v}\in C_*^{3, \alpha}(\bar{\Omega})$, which satisfies the estimate $\eqref{4607}$.
%\end{theorem}

\subsubsection{Symmetry}
In the above we have obtained $\tilde{U}$. We now show that it satisfies the symmetry condition \eqref{4208} required in the definition of the iteration set $X_{M\epsilon}.$

Firstly we show that $\p_y\tilde{E}=\p_y\widetilde{A(s)}\equiv 0$ on $W_-\cup W_+$. Taking the derivative with respect to $y$ for the transport equations $\eqref{4603}_1$, $\eqref{4603}_2$,  we get
\begin{eqnarray}\label{4608}
\begin{cases}
{\mathrm{D}}'_{\mathbf{u}}(\p_y\hat{E}) = - \p_y\Big[\frac{v}{u}\Big]\p_y\hat{E} + \lambda a_1(\lambda,x)\p_y\hat{E} + \lambda a_2(\lambda,x)\p_y\widehat{A(s)}\\
\quad \quad \quad \quad + \lambda a_3(\lambda,x)\p_y\hat{p} + \lambda \p_yF_{E}'&\quad \text{in} \quad \bar{\Omega},\\
{\mathrm{D}}'_{\mathbf{u}}(\p_y\widehat{A(s)}) = - \p_y\Big[\frac{v}{u}\Big]\p_y\widehat{A(s)} + \lambda b_1(\lambda,x) \p_y\hat{E} + \lambda b_2(\lambda,x) \p_y\widehat{A(s)}\\
\quad \quad \quad \quad + \lambda b_3(\lambda,x) \p_y\hat{p} + \lambda \p_yF_{s}'&\quad \text{in} \quad \bar{\Omega}.
\end{cases}
\end{eqnarray}
The symmetry $\eqref{4207}$ guarantees that $\p_y\hat{E}=0,\,\p_y\widehat{A(s)}=0$ at the inlet $\Sigma_0\cap( W_-\cup W_+)$, while boundary conditions in \eqref{4601} and \eqref{4605} yield $\p_y \tilde{p}=\tilde{v}\equiv0$ on $W_-\cup W_+$. For the $U\in X_{M\epsilon}$, %$\p_y\Big[\frac{v}{u}\Big]=\frac{u\p_yv-v\p_yu}{u^2}=\frac{\p_yv}{u}$ and
one  checks that $\p_yF_{E}' = \p_yF_{s}' \equiv 0$  on $W_-\cup W_+$  (cf. \eqref{4415}\eqref{4419}). Therefore, restricting $\eqref{4608}$ to $W_-\cup W_+$, we have
\begin{eqnarray}\label{4609}
\begin{cases}
{\mathrm{D}}'_{\mathbf{u}}(\p_y\hat{E}|_{W_-\cup W_+}) =  - \p_y\Big[\frac{v}{u}\Big]\p_y\hat{E}|_{W_-\cup W_+}+\lambda a_1(\lambda,x)\p_y\hat{E}|_{W_-\cup W_+} \\
\qquad\qquad+ \lambda a_2(\lambda,x)\p_y\widehat{A(s)}|_{W_-\cup W_+}&\quad x \in (0, l),\\
{\mathrm{D}}'_{\mathbf{u}}(\p_y\widehat{A(s)}|_{W_-\cup W_+}) =- \p_y\Big[\frac{v}{u}\Big]\p_y\widehat{A(s)}|_{W_-\cup W_+} + \lambda b_1(\lambda,x) \p_y\hat{E}|_{W_-\cup W_+} \\
\qquad\qquad+ \lambda b_2(\lambda,x) \p_y\widehat{A(s)}|_{W_-\cup W_+}&\quad x \in (0, l),\\
\p_y\hat{E}|_{W_-\cup W_+}=0,\quad\p_y\widehat{A(s)}|_{W_-\cup W_+}=0&\quad x=0.
\end{cases}
\end{eqnarray}
This shows that $\p_y\hat{E} = \p_y\widehat{A(s)}\equiv 0$, that is, $\p_y\tilde{E} = \p_y\widetilde{A(s)}\equiv 0$, on $W_-\cup W_+$. It follows that $\p_y\tilde{\rho}=\p_y\widetilde{c}=\p_y\tilde{u}\equiv0$ on $W_-\cup W_+$.

Next, we prove that $\p^2_y\tilde{v}\equiv 0$ on $W_-\cup W_+$. Acting $\p_y$ on $\eqref{4605}_1$,
\begin{eqnarray}\label{4610}
&&\p^2_y\tilde{v}=c_1(\lambda,x)\p_x(\p_y\hat{p})+c_2(\lambda,x)\p_y\hat{p}+c_3(\lambda,x)\p_y\hat{E}+c_4(\lambda,x)\p_y\widehat{A(s)}\nonumber\\
&&\quad \quad +\p_y\Big[-\frac{c_1(\lambda,x)}{\pi}\int_{0}^{\pi}\p_x\hat{p}\,\dd y-\frac{c_2(\lambda,x)}{\pi}\int_{0}^{\pi}\hat{p}\,\dd y -\frac{c_3(\lambda,x)}{\pi}\int_{0}^{\pi}\hat{E}\,\dd y\nonumber\\
&&\quad \quad -\frac{c_4(\lambda,x)}{\pi}\int_{0}^{\pi}\widehat{A(s)}\,\dd y\Big]+\p_y F_{v}'=c_1(\lambda,x)\p_x(\p_y\hat{p})+c_2(\lambda,x)\p_y\hat{p}\nonumber\\
&&\quad \quad +c_3(\lambda,x)\p_y\hat{E}+c_4(\lambda,x)\p_y\widehat{A(s)}+\p_y F_{v},
\end{eqnarray}
where by $\eqref{4425}$, $\p_y F_{v}'=\p_y F_{v}-\frac{1}{\pi}\p_y \int_{0}^{\pi}F_{v}\,\dd y=\p_y F_{v}$.
Therefore, by restricting $\eqref{4610}$ on $W_-\cup W_+$, get
\begin{eqnarray*}
&&\p^2_y\tilde{v}|_{W_-\cup W_+}=c_1(\lambda,x)\p_x(\p_y\hat{p})|_{W_-\cup W_+}+c_2(\lambda,x)(\p_y\hat{p})|_{W_-\cup W_+}+c_3(\lambda,x)(\p_y\hat{E})|_{W_-\cup W_+}\nonumber\\
&&\quad \quad + c_4(\lambda,x)(\p_y\widehat{A(s)})|_{W_-\cup W_+}+(\p_y F_{v})|_{W_-\cup W_+}\equiv0.
\end{eqnarray*}
Notice that from $\eqref{4422}$, there holds $(\p_y F_{v})|_{W_-\cup W_+}=0$.

Finally, we prove that $\p^3_y\tilde{p}\equiv 0$ on $W_-\cup W_+$. By $\eqref{4446}_1$, %both sides of $y$ are derived,
there holds
\begin{eqnarray*}
&&e_1(\lambda,x)\p_{x}^2(\p_y\hat{p})-\p^3_{y}\hat{p}+
e_2(\lambda,x)\p_{x}(\p_y\hat{p})+e_3(\lambda,x)(\p_y\hat{p})+\p_y\Big[e_6(\lambda,x)\nonumber\\
&&\quad \quad \times\int_{0}^{\pi}\p_x\hat{p}\,\dd y+e_7(\lambda,x)\int_{0}^{\pi}\hat{p}\,\dd y\Big]+\lambda e_{10}(\lambda, x)\int^{x}_{0}a_4(\lambda, t)\p_y\hat{p}(t, y)\,\dd t\nonumber\\
&&\quad \quad +\lambda e_{11}(\lambda, x)\int^{x}_{0}b_4(\lambda, t)\p_y\hat{p}(t, y)\,\dd t+\p_y\Big[\lambda e_{12}(\lambda,x)\int_{0}^{\pi}\int^{x}_0a_4(\lambda, t)\hat{p}(t, y)\,\dd t\dd y\nonumber\\
&&\quad \quad +\lambda e_{13}(\lambda,x)\int_{0}^{\pi}\int^{x}_0b_4(\lambda, t)\hat{p}(t, y)\,\dd t\dd y\Big]+\frac{1}{c_b^2}\Big[2\p_y\hat{E}- \frac{(\gamma + 1)\p_y\tilde{c}^2}{\gamma - 1}\Big]\p_{x}^2\hat{p}\nonumber\\
&&\quad \quad +\frac{1}{c_b^2}\Big[2\hat{E}- \frac{\gamma + 1}{\gamma - 1}(\tilde{c}^2 - c^2_b)\Big]\p_{x}^2(\p_y\hat{p})-\frac{\p_y\tilde{c}^2}{c_b^2}\p^2_{y}\hat{p}-\frac{\tilde{c}^2-c_b^2}{c_b^2}\p^3_{y}\hat{p}\nonumber\\
&&\quad \quad +\frac{\p_y\tilde{v}^2}{c_b^2}\p^2_{y}\hat{p}+\frac{\tilde{v}^2}{c_b^2}\p^3_{y}\hat{p}+\frac{2\tilde{v}\p_y\tilde{u}+2\tilde{u}\p_y\tilde{v}}{c_b^2}\p^2_{xy}\hat{p}+\frac{2\tilde{u}\tilde{v}}{c_b^2}\p^2_{xy}(\p_y\hat{p})\nonumber\\
&&\quad =\p_y F_0+\p_y F_p,
\end{eqnarray*}
that is,
\begin{eqnarray}\label{4611}
&&e_1(\lambda,x)\p_{x}^2(\p_y\hat{p})-\p^3_{y}\hat{p}+
e_2(\lambda,x)\p_{x}(\p_y\hat{p})+e_3(\lambda,x)(\p_y\hat{p})\nonumber\\
&&\quad +\lambda e_{10}(\lambda, x)\int^{x}_{0}a_4(\lambda, t)\p_y\hat{p}(t, y)\,\dd t+\lambda e_{11}(\lambda, x)\int^{x}_{0}b_4(\lambda, t)\p_y\hat{p}(t, y)\,\dd t\nonumber\\
&&\quad +\frac{1}{c_b^2}\Big[2\p_y\hat{E}- \frac{(\gamma + 1)\p_y\tilde{c}^2}{\gamma - 1}\Big]\p_{x}^2\hat{p}+\frac{1}{c_b^2}\Big[2\hat{E}- \frac{\gamma + 1}{\gamma - 1}(\tilde{c}^2 - c^2_b)\Big]\p_{x}^2(\p_y\hat{p})\nonumber\\
&&\quad +\frac{\p_y\tilde{v}^2}{c_b^2}\p^2_{y}\hat{p}+\frac{\tilde{v}^2}{c_b^2}\p^3_{y}\hat{p}+\frac{2\tilde{v}\p_y\tilde{u}+2\tilde{u}\p_y\tilde{v}}{c_b^2}\p^2_{xy}\hat{p}+\frac{2\tilde{u}\tilde{v}}{c_b^2}\p^2_{xy}(\p_y\hat{p})\nonumber\\
&&=\p_y F_0+\p_y F_p,
\end{eqnarray}
Since $\p_y\tilde{p}\equiv0$ on $W_-\cup W_+$, we see $\p_{x}(\p_y\hat{p})=\p_{x}^2(\p_y\hat{p})=0$. With the aid of the symmetry condition $\eqref{4207}$ at the inlet and the symmetry condition of $U$, we also have $\p_y F_0+\p_y F_p=0$ on $W_-\cup W_+$. Therefore, restricting $\eqref{4611}$ on $W_-\cup W_+$ yields $\p^3_{y}\hat{p}\equiv0$. Hence $\p^3_{y}\tilde{p}\equiv0$  on $W_-\cup W_+$.

From the above analysis, it is seen that for any $U\in X_{M\epsilon}$, there exists a unique $\tilde{U} = (\tilde{p},\,\tilde{s},\,\tilde{E},\,\tilde{v})$, where $\tilde{p}, \tilde{s}, \tilde{E}\in C^{3, \alpha}(\bar{\Omega})$ and $\tilde{v}\in C^{3, \alpha}_*(\bar{\Omega})$. By the estimates of $\eqref{4602}$, $\eqref{4604}$, and $\eqref{4607}$, we also have
\begin{eqnarray*}
\|\tilde{U} - U_b\|_3\leq C(\epsilon + M\epsilon^2 + M^2\epsilon^2).
\end{eqnarray*}
Selecting
\begin{align}\label{eq614add}
M = \max\{2C, 1\}\quad\text{and} \quad \epsilon_0\leq \min\{1/M(M + 1),h_0/M,1\},
\end{align}
one has
\begin{eqnarray*}
\|\tilde{U} - U_b\|_3\leq M\epsilon < h_0.
\end{eqnarray*}
%where $\epsilon \leq \epsilon_0$, and satisfies the symmetry property \eqref{4209}.
Thus, we construct the desired mapping $\mathcal {T}:~ X_{M\epsilon} \rightarrow X_{M\epsilon}, U\mapsto \tilde{U}.$

\subsection{Contraction of mapping $\mathcal {T}$}\label{secysT}
In this subsection, it is proved that the mapping $\mathcal {T}$ is contractive with respect to the norm $\|\cdot\|_2$; that is, for any $U^{(1)}, ~U^{(2)}\in X_{M\epsilon}$,  there holds
\begin{eqnarray*}
\|\mathcal {T}(U^{(1)}) - \mathcal {T}(U^{(2)})\|_2 = \|\tilde{U}^{(1)} - \tilde{U}^{(2)}\|_2\leq \frac{1}{2}\|U^{(1)} - U^{(2)}\|_2,
\end{eqnarray*}
if $\epsilon$ is sufficiently small. Here, for convenience, we have set $\tilde{U}^{(i)} = \mathcal {T}(U^{(i)}),\,i = 1,\,2.$ Then $$\bar{U} \triangleq \tilde{U}^{(1)} - \tilde{U}^{(2)} = (\hat{U}^{(1)} + U_b) - (\hat{U}^{(2)} + U_b) = \hat{U}^{(1)} - \hat{U}^{(2)}.$$

\subsubsection{Estimate of $\bar{p}$}
From \eqref{4601}, we see that $\bar{p}$ is a solution to the following problem:
\begin{equation*}
\begin{cases}
\mathcal {L}^{(1)}(\bar{p}) + (\mathcal {L}^{(1)}_2 - \mathcal {L}^{(2)}_2)(\hat{p}^{(2)}) = F_p(U^{(1)}) - F_p(U^{(2)})&\quad \text{in} \quad \Omega, \\
\p_y\bar{p} = 0&\quad \text{on} \quad  W_-\cup W_+,\\
\p_x\bar{p} + \gamma_0\bar{p} = G_p(U^{(1)}) - G_p(U^{(2)})&\quad \text{on} \quad \Sigma_0,\\
\bar{p} = 0&\quad \text{on} \quad \Sigma_1,
\end{cases}
\end{equation*}
where, for $i,\,j=1,\,2$,
\begin{eqnarray*}
&&\mathcal {L}^{(i)}_2(\hat{p}^{(j)}) \triangleq \frac{1}{c_b^2}\left[2({E}^{(i)}-E_b)- \frac{\gamma + 1}{\gamma - 1}((c^{(i)})^2 - c^2_b)\right]\p_x^2\hat{p}^{(j)}-\frac{(c^{(i)})^2-c_b^2}{c_b^2}\p^2_y\hat{p}^{(j)}\nonumber\\
&&\quad \qquad\quad + \frac{(v^{(i)})^2}{c_b^2}\p^2_y\hat{p}^{(j)}+\frac{2u^{(i)}v^{(i)}}{c_b^2}\p^2_{xy}\hat{p}^{(j)}.
\end{eqnarray*}
By direct computation, we have
\begin{eqnarray}\label{46012}
&&\|F_p(U^{(1)}) - F_p(U^{(2)})\|_{C^{0, \alpha}(\bar{\Omega})} \leq \|F_5(U^{(1)}) - F_5(U^{(2)})\|_{C^{0, \alpha}(\bar{\Omega})}\nonumber\\&&\qquad\qquad\qquad+\|\lambda  F_6(U^{(1)}) - \lambda F_6(U^{(2)})\|_{C^{0, \alpha}(\bar{\Omega})}
\leq C\epsilon\|U^{(1)} - U^{(2)}\|_2,
\end{eqnarray}
and
\begin{eqnarray}\label{46112}
\|G_p(U^{(1)}) - G_p(U^{(2)})\|_{C^{1, \alpha}(\Sigma_0)}
\leq C\epsilon\|U^{(1)} - U^{(2)}\|_2.
\end{eqnarray}
Thanks to Theorem \ref{Thm42} and \eqref{46012},\,\eqref{46112}, we get
\begin{eqnarray}\label{4612}
&&\|\bar{p}\|_{C^{2, \alpha}(\bar{\Omega})} \leq C\Big(\|F_p(U^{(1)}) - F_p(U^{(2)})\|_{C^{0, \alpha}(\bar{\Omega})} + \|G_p(U^{(1)}) - G_p(U^{(2)})\|_{C^{1, \alpha}(\Sigma_0)}\nonumber\\
&&\quad \quad + (\|U^{(1)} - U^{(2)}\|_{2}\|\hat{p}^{(2)}\|_{C^{3, \alpha}(\bar{\Omega})})\Big)
\leq C\epsilon\|U^{(1)} - U^{(2)}\|_2.
\end{eqnarray}

\subsubsection{Estimate of $\bar{E}$ and $\overline{A(s)}$}
Similarly, we verify that $\bar{E}$ and $\overline{A(s)}$ are solutions to the following problem:
\begin{eqnarray}\label{46113}
\begin{cases}
{\mathrm{D}}'_{\mathbf{u}^{(1)}}\bar{E} + {\mathrm{D}}'_{\mathbf{u}^{(1)} - \mathbf{u}^{(2)}}\hat{E}^{(2)} = \lambda a_1(\lambda, x)\bar{E} + \lambda a_2(\lambda, x)\overline{A(s)}\\
\quad + \lambda a_3(\lambda, x)\bar{p} + F_{E}'(U^{(1)}) - F_{E}'(U^{(2)})&\quad \text{in} \quad \Omega,\\
{\mathrm{D}}'_{\mathbf{u}^{(1)}}\overline{A(s)} + {\mathrm{D}}'_{\mathbf{u}^{(1)} - \mathbf{u}^{(2)}}\widehat{A(s)}^{(2)} = \lambda b_1(\lambda, x)\bar{E} + \lambda b_2(\lambda, x)\overline{A(s)}\\
\quad  + \lambda b_3(\lambda, x)\bar{p} + F_{s}'(U^{(1)}) - F_{s}'(U^{(2)})&\quad \text{in} \quad \Omega,\\
\bar{E} = 0; \overline{A(s)} = 0&\quad \text{on} \quad \Sigma_0.
\end{cases}
\end{eqnarray}
By Theorem \ref{Thm41}, the following estimation can be obtained:
\begin{align}\label{4613}
&\|\bar{E}\|_{C^{2, \alpha}(\bar{\Omega})} + \|\overline{A(s)}\|_{C^{2, \alpha}(\bar{\Omega})}\nonumber\\
&  \leq C\left(\|\mathbf{u}^{(1)} - \mathbf{u}^{(2)}\|_{C^{2, \alpha}(\bar{\Omega})}\Big(\|\hat{E}^{(2)}\|_{C^{3, \alpha}(\bar{\Omega})}+ \|\widehat{A(s)}^{(2)}\|_{C^{3, \alpha}(\bar{\Omega})}\Big)\right.\nonumber\\
\phantom{=\;\;}&\quad\left. + \|\bar{p}\|_{C^{2, \alpha}(\bar{\Omega})}+ \|F_{E}'(U^{(1)}) - F_{E}'(U^{(2)})\|_{C^{2, \alpha}_*(\bar{\Omega})} + \|F_{s}'(U^{(1)}) - F_{s}'(U^{(2)})\|_{C^{2, \alpha}_*(\bar{\Omega})}\right)\nonumber\\
&\leq C\epsilon\|U^{(1)} - U^{(2)}\|_2.
\end{align}
\begin{remark}\label{RemEs}
Notice that the terms ${\mathrm{D}}'_{\mathbf{u}^{(1)} - \mathbf{u}^{(2)}}\hat{E}^{(2)}$ and ${\mathrm{D}}'_{\mathbf{u}^{(1)} - \mathbf{u}^{(2)}}\widehat{A(s)}^{(2)}$ in $\eqref{46113}$ lead to the decreasing of regularity of $\bar{E}$ and $\overline{A(s)}$. This always occurs in the Picard iteration scheme of quasi-linear hyperbolic equations. It is the reason why one can only obtain contraction in a weaker topology.
\end{remark}

\subsubsection{Estimate of $\bar{v}$}
By \eqref{4605}, $\bar{v}$ solves
\begin{eqnarray}\label{4614}
\begin{cases}
\p_y\bar{v}=c_1(\lambda,x)\p_x\bar{p}+c_2(\lambda,x)\bar{p}+c_3(\lambda,x)\bar{E}+c_4(\lambda,x)\overline{A(s)}\\
\quad \quad -\frac{c_1(\lambda,x)}{\pi}\int_{0}^{\pi}\p_x\bar{p}\,\dd y-\frac{c_2(\lambda,x)}{\pi}\int_{0}^{\pi}\bar{p}\,\dd y -\frac{c_3(\lambda,x)}{\pi}\int_{0}^{\pi}\bar{E}\,\dd y\\
\quad \quad -\frac{c_4(\lambda,x)}{\pi}\int_{0}^{\pi}\overline{A(s)}\,\dd y+F_{v}'(U^{(1)})-F_{v}'(U^{(2)})&\quad \text{in}\quad \Omega,\\
\bar{v}=0&\quad \text{on}\quad W_-\cup W_+.
\end{cases}
\end{eqnarray}
%The direct calculation shows that the problem $\eqref{4614}$ satisfies the compatibility condition of the form $\eqref{4520}$ in theorem \ref{Thm44}. Therefore,
According to Theorem \ref{Thm44}, there holds
\begin{eqnarray}\label{4615}
&&\|\bar{v}\|_{C_*^{2, \alpha}(\bar{\Omega})} \leq C\left(\|\bar{E}\|_{C^{2, \alpha}(\bar{\Omega})} + \|\overline{A(s)}\|_{C^{2, \alpha}(\bar{\Omega})} + \|\bar{p}\|_{C^{2, \alpha}(\bar{\Omega})}+ \|F'_{v}(U^{(1)})-F'_{v}(U^{(2)})\|_{C^{1, \alpha}(\bar{\Omega})}\right)\nonumber\\
&&\quad\qquad\quad \leq C\epsilon\|U^{(1)} - U^{(2)}\|_2.
\end{eqnarray}
In summary, \eqref{4612}, \eqref{4613}, and \eqref{4615} give the estimate
\begin{eqnarray}\label{4616}
\|\tilde{U}^{(1)} - \tilde{U}^{(2)}\|_2\leq  C\epsilon\|U^{(1)} - U^{(2)}\|_2.
\end{eqnarray}
%can be derived from $\eqref{4612}-\eqref{4615}$.
Taking $\epsilon_0 <\frac{1}{2C}$, then the mapping $\mathcal {T}$ contracts under the norm  $\|\cdot\|_2$.

\subsection{Proof of main theorem}
%\subsection{Fixed-point of iteration mapping $\mathcal {T}$ in $X_{M\epsilon}$}
%In this subsection, we prove that the mapping $\mathcal {T}$ has a unique fixed-point in $X_{M\epsilon}$. The uniqueness follows from \eqref{4616}, while the existence relies on the Schauder fixed-point theorem.
We now prove Theorem \ref{ThmSY}. It is separated into two steps. Firstly by the following theorem, we infer that there is a unique solution $U$ to Problem \Rmnum{2} in $X_{M\epsilon}$. Next, we improve the regularity of $v$ in the normal direction to show it solves Problem \Rmnum{1}.
\begin{theorem}\label{Thm47}
The mapping $\mathcal {T}$ has a unique fixed-point in $X_{M\epsilon}$.
\end{theorem}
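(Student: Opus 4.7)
The strategy is to apply the Banach fixed-point theorem to $\mathcal{T}:X_{M\epsilon}\to X_{M\epsilon}$ regarded as a self-map of the metric space $(X_{M\epsilon},\|\cdot\|_2)$, rather than of $(X_{M\epsilon},\|\cdot\|_3)$. The invariance $\mathcal{T}(X_{M\epsilon})\subset X_{M\epsilon}$ was already verified in Section~\ref{secgzT}, and the contraction estimate
\[
\|\mathcal{T}(U^{(1)})-\mathcal{T}(U^{(2)})\|_2\le C\epsilon\,\|U^{(1)}-U^{(2)}\|_2\le \tfrac12\|U^{(1)}-U^{(2)}\|_2
\]
for sufficiently small $\epsilon_0$ was established in Section~\ref{secysT}. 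Hence what remains is to verify that $(X_{M\epsilon},\|\cdot\|_2)$ is a genuine complete metric space, and then to invoke the Banach contraction principle.

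First, I will argue that $X_{M\epsilon}$ is closed (indeed compact) in the $\|\cdot\|_2$-topology. Since $X_{M\epsilon}$ is, by definition, a closed ball of radius $M\epsilon$ in the product space $(C^{3,\alpha}(\bar\Omega))^3\times C^{3,\alpha}_*(\bar\Omega)$ intersected with the linear subspace determined by the symmetry conditions \eqref{4208}, Lemmas~\ref{Lem52} and \ref{Lem53} imply that $X_{M\epsilon}$ is precompact in $(C^{2,\alpha}(\bar\Omega))^3\times C^{2,\alpha}_*(\bar\Omega)$. To close it up, I need to check that any $\|\cdot\|_2$-limit $U_\infty$ of a sequence $\{U_n\}\subset X_{M\epsilon}$ still belongs to $X_{M\epsilon}$. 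The proofs of Lemmas~\ref{Lem52}--\ref{Lem53} yield $U_\infty\in C^{3,\alpha}(\bar\Omega)$ with $\|U_\infty-U_b\|_3\le \liminf_n\|U_n-U_b\|_3\le M\epsilon$; extracting a further subsequence along which the third-order derivatives converge locally uniformly and passing to the limit in the pointwise symmetry identities in \eqref{4208} (these are closed conditions involving only continuous derivatives of the limit) shows that $U_\infty$ satisfies \eqref{4208} as well. Thus $X_{M\epsilon}$ is a closed subset of the Banach space $(C^{2,\alpha}(\bar\Omega))^3\times C^{2,\alpha}_*(\bar\Omega)$, hence a complete metric space with respect to $\|\cdot\|_2$.

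With this in hand, the Banach fixed-point theorem applied to the strict contraction $\mathcal{T}:(X_{M\epsilon},\|\cdot\|_2)\to(X_{M\epsilon},\|\cdot\|_2)$ yields existence of a unique fixed point $U\in X_{M\epsilon}$. Uniqueness within the larger $\|\cdot\|_3$-ball is automatic because any two fixed points in $X_{M\epsilon}$ are in particular two fixed points in the $\|\cdot\|_2$-complete space $X_{M\epsilon}$, which the contraction principle forbids.

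The only genuine subtlety is the closedness of the symmetry constraints under the weaker $\|\cdot\|_2$-convergence, which I expect to be the main (mild) obstacle: the condition $\p^3_y p\equiv 0$ on $W_\pm$ involves a third-order derivative, so one cannot read it off directly from $\|\cdot\|_2$-convergence. This is resolved by using the uniform $C^{3,\alpha}$-bound on the sequence to extract a subsequence whose third-order derivatives converge uniformly on $\bar\Omega$ to those of the limit (again by Arzela--Ascoli, since the third derivatives form a bounded family in $C^{0,\alpha}$), after which the symmetry identities pass to the limit pointwise on $W_-\cup W_+$. Everything else is a direct citation of the work already done in Sections~\ref{secgzT}--\ref{secysT}.
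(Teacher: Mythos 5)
Your proposal is correct, and it rests on exactly the same technical ingredients as the paper's proof: the closedness of $X_{M\epsilon}$ under the weaker norm $\|\cdot\|_2$ (via Lemmas~\ref{Lem52}--\ref{Lem53} and lower semicontinuity of the $\|\cdot\|_3$-norm), together with the contraction estimate \eqref{4616}. The only difference is the fixed-point theorem invoked. The paper upgrades closedness to compactness and convexity of $X_{M\epsilon}$ in $(X,\|\cdot\|_2)$ and then applies the Schauder fixed-point theorem to get existence, with uniqueness supplied separately by \eqref{4616}; you instead observe that closedness alone makes $(X_{M\epsilon},\|\cdot\|_2)$ a complete metric space and apply the Banach contraction principle, obtaining existence and uniqueness in one stroke. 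Your route is slightly more economical (no convexity or compactness needed once the contraction is in hand), and you are in fact more careful than the paper on the one genuinely delicate point: the paper asserts $X_{M\epsilon}=\bar X_{M\epsilon}$ by citing Lemmas~\ref{Lem52}--\ref{Lem53}, which only handle the closed ball, whereas you explicitly address why the symmetry constraints \eqref{4208} --- in particular the third-order condition $\p_y^3 p\equiv 0$ on $W_\pm$ --- survive $\|\cdot\|_2$-limits, by extracting a subsequence with uniformly convergent third derivatives from the uniform $C^{3,\alpha}$-bound. Both arguments are valid; yours is a legitimate and arguably cleaner alternative.
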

\begin{proof}
Let $X \triangleq \{U~|~ \|U\|_2 < + \infty\}$, $Y \triangleq \{U~|~ \|U\|_3 < + \infty\}$. Then $(X, \|\cdot\|_2)$ and $(Y, \|\cdot\|_3)$ both are Banach spaces. From the embedding theorem of H\"{o}lder spaces,  $(Y, \|\cdot\|_3)$ is compactly embedded in $(X, \|\cdot\|_2)$. Therefore, $X_{M \epsilon}$ is a relatively compact set in $(X, \|\cdot\|_2)$, that is, $\bar{X}_{M \epsilon}$ is a compact set in $X_{M \epsilon}$ in $(X, \|\cdot\|_2)$, where $\bar{X}_{M \epsilon}$ is the closure of $X_{M \epsilon}$ in $(X, \|\cdot\|_2)$. From Lemma \ref{Lem52} and Lemma \ref{Lem53}, $X_{M \epsilon} = \bar{X}_{M \epsilon}$. Therefore, $X_{M \epsilon}$ is a convex compact set in $(X, \|\cdot\|_2)$.

In Section \ref{secgzT}, we have defined the mapping $\mathcal {T}$ on $X_{M\epsilon}$. In Section \ref{secysT}, we actually showed that $\mathcal {T}$ is compressed in $(X_{M\epsilon}, \|\cdot\|_2)$, thus continuous (under the norm $\|\cdot\|_2$). By Schauder fixed-point theorem \cite[Theorem 11.1]{Gilbarg-Tudinger-1998}, there is a fixed-point $U$ in $X_{M\epsilon}$ for the mapping $\mathcal {T}$. Notice that the uniqueness of fixed-point follows directly from $\eqref{4616}$.% of mapping $\mathcal {T}$, we have the fixed point is unique.
\end{proof}

%\subsection{Improving the regularity of tangential velocity $v$ with respect to normal direction}
%By theorem \ref{Thm47}, we find that the solution of problem $\uppercase\expandafter{\romannumeral3}$ is unique, but there are still some defects.
For $U\in X_{M\epsilon}$, we know that $v\in C^{3,\alpha}_*(\bar{\Omega})$, thus it losses regularity for one derivative with respect to the normal direction $x$, comparing to the tangent variable $y$. However, we could apply Lemma \ref{Lem41} to infer that $U$ is a solution to Problem \Rmnum{1}. Therefore, the equation of conservation of momentum in the $y$-direction holds, which could be written as% From the conservation of momentum in the $y$ direction, there holds
\begin{eqnarray*}
\p_xv =  - \frac{v}{u}\p_y v- \frac{1}{u \rho}\p_y p -\frac{\lambda m(\mathbf{x})v}{u}.
\end{eqnarray*}
From this, we easily have
\begin{eqnarray*}
\|v\|_{C^{3, \alpha}(\bar{\Omega})} \leq  C\left(\|v\|_{C_*^{3, \alpha}(\bar{\Omega})} + \| \hat{p}\|_{C^{3, \alpha}(\bar{\Omega})}\right)\leq C\epsilon.
\end{eqnarray*}
Therefore, we get $E,\,s,\,p,\,v\in C^{3,\alpha}(\bar{\Omega})$ and they satisfy
\begin{eqnarray*}
\|E - E_b\|_{C^{3,\alpha}(\bar{\Omega})} + \|s - s_b\|_{C^{3,\alpha}(\bar{\Omega})} + \|p - p_b\|_{C^{3,\alpha}(\bar{\Omega})} + \|v\|_{C^{3,\alpha}(\bar{\Omega})} \leq C\epsilon.
\end{eqnarray*}
In this way, we completed the proof of Theorem \ref{ThmSY}, thus answered Problem \Rmnum{1} affirmatively.

\section*{Acknowledgments}
This work is supported by the National Natural Science Foundation of China under Grants
No.11871218,  No.12071298, and by the Science and Technology Commission of Shanghai Municipality  under Grants No.18DZ2271000, No.21JC1402500, and by the independent design project of Zhejiang Normal University No.2021ZS08.

%%%----------------------------------------------------------------------------

\end{document}